\documentclass{amsart}

\usepackage{amsmath}
\usepackage{amssymb}
\usepackage{xcolor}

\overfullrule=2pt

\allowdisplaybreaks

\newtheorem{thm}{Theorem}
\newtheorem{cor}[thm]{Corollary}
\newtheorem{lem}[thm]{Lemma}
\newtheorem{propo}[thm]{Proposition}

\theoremstyle{definition}

\theoremstyle{remark}
\newtheorem{rem}{Remark}

\newcommand{\mb}[1]{\mathbf{#1}}
\newcommand{\supp}{\operatorname{supp}}
\title[Multidimensional discrete harmonic analysis]{Harmonic analysis for a multidimensional discrete Laplacian}

\author[\'O. Ciaurri]{\'Oscar Ciaurri}
\address{Departamento de Matem\'aticas y Computaci\'on,
Universidad de La Rioja, Complejo Cient\'{\i}fico-Tecnol\'ogico,
Calle Madre de Dios 53, 26006 Logro\~no, Spain}
\email{oscar.ciaurri@unirioja.es}

\keywords{Discrete harmonic analysis, heat semigroup, Poisson semigroup, fractional integral, Riesz transform, fractional Laplacian, $g_k$-function.}
\subjclass[2010]{Primary: 42C10.}
\thanks{The author was supported by grant PID2021-124332NB-C22 AEI, from Spanish Government.}


\begin{document}

\begin{abstract}
In this paper we analyze some classical operators in harmonic analysis associated to the multidimensional discrete Laplacian
\[
\Delta_N f(\mathbf{n})=\sum_{i=1}^{N}(f(\mathbf{n}+\mathbf{e}_i)-2f(\mathbf{n})+f(\mathbf{n}-\mathbf{e}_i)), \qquad \mathbf{n}\in \mathbb{Z}^N.
\]
We deal with the heat and Poisson semigroups, the fractional integrals, the Riesz transforms, the fractional powers of the Laplacian, and the $g_k$-square functions.
\end{abstract}

\maketitle

\tableofcontents

\section{Introduction}
Our target is to analyze mapping properties of some operators related to the multidimensional discrete Laplacian
\[
\Delta_N f(\mb{n})=\sum_{i=1}^{N}\Delta_{N,i}f(\mb{n}),
\]
where
\[
\Delta_{N,i}f(\mb{n})=f(\mathbf{n}+\mathbf{e}_i)-2f(\mathbf{n})+f(\mathbf{n}-\mathbf{e}_i),
\]
with $\{\mathbf{e}_i\}_{i=1,\dots,N}$ being $N$-dimensional vectors each of whose components are all zero, except the $i$-th that equals one. This operator is the natural extension to higher dimensions of the one-dimensional discrete Laplacian studied in \cite{CGRTV} and \cite{CRSTV}. We have to observe that
\[
\Delta_{N,i}f(\mb{n})=\delta_i^{-}\delta_i^{+}f(\mb{n})
\]
with
\[
\delta_i^{+}f(\mb{n})=f(\mb{n}+\mb{e}_i)-f(\mb{n})\qquad \text{and} \qquad \delta_i^{-}f(\mb{n})=f(\mb{n})-f(\mb{n}-\mb{e}_i).
\]
Note that $\delta_i^{-}\delta_i^{+}=\delta_i^{+}\delta_i^{-}$.
When we consider sequences of one variable we write $\delta^{+} f(n)=f(n+1)-f(n)$ and similarly for $\delta^{-}f$ and $\Delta f$.

We will focus in this paper on the study of the heat and Poisson semigroups, the fractional integrals, the Riesz transforms, the fractional powers of the Laplacian, and the $g_k$-square functions related to $\Delta_N$.

The heat semigroup $W_t=e^{t\Delta_N}$ is the solution of the $N$-dimensional heat equation
\[
\begin{cases}
\Delta_N u(\mb{n},t)=\dfrac{d}{dt}u(\mb{n},t),\\[3pt]
u(\mb{n},t)=f(\mb{n}),
\end{cases}
\]
and it is given by (see \cite{GI} for the one-dimensional case)
\begin{equation}
\label{eq:heat}
W_tf(\mb{n})=\sum_{\mb{k}\in \mathbb{Z}^N}G_{t,N}(\mb{n}-\mb{k})f(\mb{k}),
\end{equation}
where, for $\mb{n}=(n_1,\dots,n_N)$,
\[
G_{t,N}(\mb{n})=\prod_{k=1}^{N}(e^{-2t}I_{n_k}(2t)),
\]
with $I_a$ being the modified Bessel function of first kind and order $a$. This semigroup will be the main tool to analyze our discrete multidimensional operators.

M. Riesz initiated the study of discrete operators in harmonic analysis in \cite{Riesz}, where he proved the boundedness of the Hilbert transform in $L^p(\mathbb{R})$ and its discrete analogue in $\ell^p(\mathbb{Z})$.
Later, in \cite{Calderon-Zygmund} A. P. Calderon and A. Zygmund obtained the boundedness in $\ell^p(\mathbb{Z}^N)$ of the discrete analogues of the singular integrals from his result in $L^p(\mathbb{R}^N)$. Weighted inequalities for the discrete Hilbert and discrete maximal operator in the one-dimensional case were proved by  R. Hunt, B. Muckenhoupt, and R. Wheeden in \cite{HMW}. In the last few decades, several discrete analogues of continuous operators have been analyzed; some significative contributions are due to I. Arkhipov and K. I. Oskolkov \cite{Ar-Os}, J. Bourgain \cite{Bour}, and E. M. Stein and S. Wainger \cite{Ste-Wa-1,Ste-Wa-2}. In [13], we can find a brief history and a nice exposition on discrete analogues of classical operators in harmonic analysis.

In general, along the paper will work on weighted Lebesgue spaces of sequences on $\mathbb{Z}^N$. More precisely, we define
\[
\ell^p(\mathbb{Z}^n,w)=\left\{f=\{f(\mb{n})\}_{\mb{n}\in \mathbb{Z}^N}:\|f\|_{\ell^p(\mathbb{Z}^N,w)}<\infty\right\}, \qquad 1\le p\le \infty,
\]
where the weight $w=\{w(\mb{n})\}_{\mb{n}\in \mathbb{Z}^N}$ is a sequence of positive numbers and
\[
\|f\|_{\ell^p(\mathbb{Z}^N,w)}=\left(\sum_{\mb{n}\in \mathbb{Z}^N}|f(\mb{n})|^pw(\mb{n})\right)^{1/p}, \qquad 1\le p<\infty,
\]
and
\[
\|f\|_{\ell^\infty(\mathbb{Z}^N,w)}=\sup\{|f(\mb{n})|:\mb{n}\in \mathbb{Z}^N\}.
\]
When $w(\mb{n})=1$ for all $\mb{n}\in \mathbb{Z}^N$ we will write $\ell^p(\mathbb{Z}^N)$ only.

The paper is organized as follows. In the next section we present the main tools to deal with the kernel $G_{t,N}$. Section \ref{sec:semi} will be focused on the analysis of the heat and Poisson semigroups. Section \ref{sec:Riesz} contains our results about the fractional integrals and the Riesz transforms. In Section \ref{sec:frac} we present some results on the fractional powers of $\Delta_N$. Finally, in Section \ref{sec:gk} we analyze the mapping properties of the $g_k$-square functions.

\section{Some facts about the modified Bessel functions of first kind}
The modified Bessel function of first kind and order $a$, with $a\in \mathbb{R}$, is given by
\[
I_a(x)=\sum_{k=0}^{\infty}\frac{c_{k,a}}{k!}\left(\frac{x}{2}\right)^{2k+a},
\]
where
\[
c_{k,a}=\frac{1}{\Gamma(a+k+1)},
\]
understanding that for $a\le -1$
\[
c_{k,a}=\frac{(a+k+1)\cdots (a+[-a])}{\Gamma(a+[-a]+1)}, \qquad 0\le k <\lfloor -a \rfloor.
\]
In this section we compile some relevant inequalities for the modified Bessel functions of first kind. They will be the main tools to obtain our results.
\subsection{An AM-GM type inequality for the modified Bessel functions of first kind}
In \cite{Indios}, we find the identity
\begin{equation}
\label{eq:indios}
I_a(x)I_b(x)=\sum_{k=0}^{\infty}\binom{a+b+2k}{k}c_{k,a}c_{k,b}\left(\frac{x}{2}\right)^{2k+a+b},
\end{equation}
which can be proved by applying the Cauchy product of two absolutely convergent series. From this fact, we can obtain the next inequalities for the modified Bessel functions.
\begin{propo}
For $a_1,\dots, a_n>-1$ and $x\ge 0$ the inequalities
\begin{equation}
\label{eq:AM-GM-I}
\frac{\left(\Gamma\left(\frac{a_1+\cdots+a_n}{n}+1\right)\right)^n}{\Gamma(a_1+1)\cdots \Gamma(a_n+1)}\left(I_{\frac{a_1+\cdots+a_n}{n}}(x)\right)^n\le I_{a_1}(x)\cdots I_{a_n}(x)\le \left(I_{\frac{a_1+\cdots+a_n}{n}}(x)\right)^n
\end{equation}
hold.
\end{propo}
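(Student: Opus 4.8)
The plan is to reduce the general statement to the case $n=2$, where the identity \eqref{eq:indios} turns both sides into power series that can be compared monomial by monomial, and then to bootstrap to arbitrary $n$ by an elementary convexity argument. Throughout I would assume $x>0$, the case $x=0$ being a limiting case obtained by letting $x\to 0^{+}$; for $x>0$ every term of $I_a(x)=\sum_{k\ge0}\frac{c_{k,a}}{k!}(x/2)^{2k+a}$ is positive, so $I_a(x)>0$ whenever $a>-1$.

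First I would treat $n=2$. Putting $m=(a+b)/2$ and applying \eqref{eq:indios} once to $I_aI_b$ and once to $I_m^{2}$ gives
\[
I_a(x)I_b(x)=\sum_{k\ge0}\binom{a+b+2k}{k}c_{k,a}c_{k,b}\Bigl(\frac{x}{2}\Bigr)^{2k+a+b},\qquad \bigl(I_m(x)\bigr)^2=\sum_{k\ge0}\binom{a+b+2k}{k}c_{k,m}^{2}\Bigl(\frac{x}{2}\Bigr)^{2k+a+b},
\]
so both series carry the same monomials and the same binomial factors, and the latter are nonnegative since $a+b>-2$. It therefore suffices to compare the scalars $c_{k,a}c_{k,b}$ and $c_{k,m}^{2}$. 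The right inequality of \eqref{eq:AM-GM-I} reduces to $\Gamma(m+k+1)^{2}\le\Gamma(a+k+1)\Gamma(b+k+1)$, which is exactly the log-convexity of $\Gamma$ evaluated at $m+k+1=\tfrac12\bigl((a+k+1)+(b+k+1)\bigr)$. The left inequality, after using $\Gamma(c+k+1)/\Gamma(c+1)=\prod_{j=1}^{k}(c+j)$, reduces to $\prod_{j=1}^{k}(a+j)(b+j)\le\prod_{j=1}^{k}(m+j)^{2}$, which holds factor by factor by the AM--GM inequality (note $a+j,b+j>0$). Summing the compared series settles $n=2$.

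To reach arbitrary $n$, I would set $\widetilde I_a(x):=\Gamma(a+1)I_a(x)$; then the two halves of the $n=2$ case say precisely that $t\mapsto\log I_t(x)$ is midpoint concave and $t\mapsto\log\widetilde I_t(x)$ is midpoint convex on $(-1,\infty)$. Since these maps are continuous in $t$ they are concave, respectively convex, and Jensen's inequality applied to the points $a_1,\dots,a_n$ yields $\prod_{i=1}^{n}I_{a_i}(x)\le\bigl(I_{\bar a}(x)\bigr)^{n}$ and $\prod_{i=1}^{n}\widetilde I_{a_i}(x)\ge\bigl(\widetilde I_{\bar a}(x)\bigr)^{n}$ with $\bar a=(a_1+\cdots+a_n)/n$; undoing the normalization in the second of these gives the left inequality of \eqref{eq:AM-GM-I}. (If one prefers to avoid the passage from midpoint convexity to convexity, the same two inequalities follow from the $n=2$ case by Cauchy's forward--backward induction, dividing by $I_{\bar a}(x)>0$ in the descending step.)

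I expect the main obstacle to be the left (lower) inequality in the $n=2$ step: one has to carry the constant $\Gamma(m+1)^{2}/(\Gamma(a+1)\Gamma(b+1))$ through the coefficientwise comparison and recognize that, thanks to the Pochhammer identity, it collapses to the factorwise bound $\prod_{j=1}^k(a+j)(b+j)\le\prod_{j=1}^k(m+j)^2$. Once that is seen, the right inequality is immediate from log-convexity of $\Gamma$, and the reduction of general $n$ to $n=2$ is purely formal.
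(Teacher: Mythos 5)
Your proof is correct, and it shares the paper's skeleton: both reduce to $n=2$ by comparing, via \eqref{eq:indios}, the coefficients $c_{k,a}c_{k,b}$ against $c_{k,\frac{a+b}{2}}^2$ (the common generalized binomial factors being positive since $a+b>-2$), and then bootstrap to general $n$. Where you differ is in the execution of both steps. For the coefficient comparison, the paper writes the ratio $c_{k,a}c_{k,b}/c_{k,(a+b)/2}^2$ as a Weierstrass infinite product and bounds each factor, while you get the upper bound from log-convexity of $\Gamma$ and the lower bound from the telescoped Pochhammer products $\prod_{j=1}^k(a+j)(b+j)\le\prod_{j=1}^k\bigl(\frac{a+b}{2}+j\bigr)^2$; your version is a bit cleaner (a finite product, no appeal to the Weierstrass formula), though in spirit both rest on the same factorwise AM--GM. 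For the passage to general $n$, the paper runs Cauchy's forward--backward induction (powers of two plus padding with the mean order), whereas your primary route upgrades the $n=2$ case to genuine concavity of $t\mapsto\log I_t(x)$ and convexity of $t\mapsto\log\bigl(\Gamma(t+1)I_t(x)\bigr)$ via continuity in the order, and then applies Jensen; this is slightly less self-contained (it needs midpoint convexity plus continuity implies convexity, and continuity of $I_t(x)$ in $t$, both standard) but it isolates the structural fact — log-concavity/log-convexity in the order — which is exactly the content of the more general result of Yang--Zheng that the paper cites, and your fallback Cauchy induction coincides with the paper's argument. Your treatment of $x=0$ by passing to the limit $x\to0^+$ is acceptable and no weaker than the paper's implicit handling of that endpoint.
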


\begin{proof}
We will provide an \textit{\`a la Cauchy} proof, obtaining the cases $n=2^m$ first and extending the result for general $n$.
To prove the case $n=2$ we are going to check that
\begin{equation}
\label{eq:double-ineq}
\frac{c_{0,a_1}c_{0,a_2}}{\big(c_{0,\frac{a_1+a_2}{2}}\big)^2}\le \frac{c_{k,a_1}c_{k,a_2}}{\big(c_{k,\frac{a_1+a_2}{2}}\big)^2}\le 1.
\end{equation}
From the Weiertrass' product formula for the Gamma function
\[
\Gamma(z)=\frac{e^{-\gamma z}}{z}\prod_{n=1}^{\infty}\left(\left(1+\frac{z}{n}\right)^{-1}e^{z/n}\right),\qquad z\in \mathbb{C}\setminus\{0,-1,-2,\dots\},
\]
we have
\[
\frac{c_{k,a_1}c_{k,a_2}}{\big(c_{k,\frac{a_1+a_2}{2}}\big)^2}=\prod_{n=1}^{\infty} \frac{(n+a_1+k+1)(n+a_2+k+1)}{\left(n+\frac{a_1+a_2}{2}+k+1\right)^2}.
\]
Then, by using that
\[
\frac{(n+a_1+1)(n+a_2+1)}{\left(n+\frac{a_1+a_2}{2}+1\right)^2}\le \frac{(n+a_1+k+1)(n+a_2+k+1)}{\left(n+\frac{a_1+a_2}{2}+k+1\right)^2}
\le 1
\]
the inequalities in \eqref{eq:double-ineq} follow.

Now, from \eqref{eq:indios}, \eqref{eq:double-ineq} and the identity
\[
\frac{c_{0,a_1}c_{0,a_2}}{\big(c_{0,\frac{a_1+a_2}{2}}\big)^2}=
\frac{\left(\Gamma\left(\frac{a_1+a_2}{2}+1\right)\right)^2}{\Gamma(a_1+1)\, \Gamma(a_2+1)},
\]
we have
\begin{multline*}
\frac{\left(\Gamma\left(\frac{a_1+a_2}{2}+1\right)\right)^2}{\Gamma(a_1+1)\, \Gamma(a_2+1)}\binom{(a_1+a_2)/2+(a_1+a_2)/2+2k}{k}\big(c_{k,\frac{a_1+a_2}{2}}\big)^2\\\le
\binom{a_1+a_2+2k}{k}c_{k,a_1}c_{k,a_2}\le \binom{(a_1+a_2)/2+(a_1+a_2)/2+2k}{k}\big(c_{k,\frac{a_1+a_2}{2}}\big)^2
\end{multline*}
and
\[
\frac{\left(\Gamma\left(\frac{a_1+a_2}{2}+1\right)\right)^2}{\Gamma(a_1+1)\, \Gamma(a_2+1)}\left(I_{\frac{a_1+a_2}{2}}(x)\right)^2\le I_{a_1}(x)I_{a_2}(x)\le \left(I_{\frac{a_1+a_2}{2}}(x)\right)^2.
\]
With an elementary induction process, it can be deduced \eqref{eq:AM-GM-I} for $n=2^m$. When $n$ is not a power of two, we can find $m$ such that $n<2^m$. Taking the values $b_1=a_1$, \dots, $b_n=a_n$, and
\[
b_k=\frac{a_1+\cdots +a_n}{n}, \qquad n+1\le k\le 2^m,
\]
from \eqref{eq:AM-GM-I} with $n=2^m$, we have
\begin{multline*}
\frac{\left(\Gamma\left(\frac{b_1+\cdots+b_{2^m}}{2^m}+1\right)\right)^{2^m}}{\Gamma(b_1+1)\cdots \Gamma(b_{2^m}+1)}\left(I_{\frac{b_1+\cdots+b_{2^m}}{2^m}}(x)\right)^{2^m}\\\le I_{b_1}(x)I_{b_2}(x)\cdots I_{b_n}(x)I_{b_{n+1}}(x)\cdots I_{b_{2^m}}(x)\le \left(I_{\frac{b_1+\cdots+b_{2^m}}{2^m}}(x)\right)^{2^m}.
\end{multline*}
Finally, applying that
\[
\frac{b_1+\cdots+b_{2^m}}{2^m}=\frac{a_1+\cdots+a_n}{n}
\]
and
\[
I_{b_{n+1}}(x)\cdots I_{b_{2^m}}(x)=\left(I_{\frac{a_1+\cdots+a_n}{n}}(x)\right)^{2^m-n}
\]
the general case of \eqref{eq:AM-GM-I} follows.
\end{proof}

The result in the previous proposition is a particular case of \cite[Theorem 2.7]{Chinos}. We have included this new proof because it is elementary and to do this paper self-contained.

Taking the normalized modified Bessel functions of the first kind
\[
\mathcal{I}_a(x)=\frac{2^a \Gamma(a+1)}{x^a}I_a(x),
\]
that verify $\mathcal{I}_a(0)=1$, the left inequality in \eqref{eq:AM-GM-I} reads
\[
\left(\mathcal{I}_{\frac{a_1+\cdots+a_n}{n}}(x)\right)^n\le \mathcal{I}_{a_1}(x)\cdots \mathcal{I}_{a_n}(x).
\]
\subsection{Some inequalities for the difference of modified Bessel functions of first kind}
It is easy to check that the modified Bessel functions decrease with the order; i.e., $I_a(x)<I_b(x)$ when $-1<b<a$ and $x>0$. We will use this fact sometimes along the paper without explicit mention to it.

In our next result we analyze the difference of two consecutive modified Bessel functions of first kind.

\begin{lem}
  For $a>-1$ and $x>0$ the inequalities
  \begin{equation}
  \label{eq:diff-I}
  0<I_a(x)-I_{a+1}(x)< \frac{a+1}{x}I_{a}(x)
  \end{equation}
  hold.
\end{lem}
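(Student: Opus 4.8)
The left inequality is immediate: since $a+1>a>-1$ and $I_\nu(x)$ strictly decreases in $\nu$ for $x>0$ (as noted just above), $I_{a+1}(x)<I_a(x)$. For the right inequality I would reduce everything to a monotonicity statement. Put $\psi(x)=(a+1)I_a(x)-x\bigl(I_a(x)-I_{a+1}(x)\bigr)$; the claim is $\psi(x)>0$ for $x>0$. Using the recurrence $I_a'(x)=I_{a+1}(x)+\tfrac{a}{x}I_a(x)$ one rewrites
\[
\psi(x)=xI_{a+1}(x)+(a+1-x)I_a(x)=xI_a'(x)+(1-x)I_a(x),
\]
and, as an aside, $\psi(x)=e^{x}\tfrac{d}{dx}\bigl(xe^{-x}I_a(x)\bigr)$, so the inequality says exactly that $x\mapsto xe^{-x}I_a(x)$ is strictly increasing on $(0,\infty)$.

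The key step is to produce a first-order linear ODE for $\psi$. Differentiating $\psi=xI_a'+(1-x)I_a$ and eliminating $I_a''$ with the modified Bessel equation $x^2I_a''+xI_a'-(x^2+a^2)I_a=0$, a short computation gives $\psi'(x)=\tfrac{1-x}{x}\psi(x)+\tfrac{x+a^2-1}{x}I_a(x)$, hence with integrating factor $e^x/x$,
\[
\frac{d}{dx}\!\left(\frac{e^x}{x}\psi(x)\right)=\frac{e^x\,(x+a^2-1)}{x^2}\,I_a(x).
\]
Since $I_a(x)>0$ for $x>0$ (all coefficients in its series are positive when $a>-1$), the sign of this derivative equals the sign of $x-(1-a^2)$. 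Thus $F(x):=\tfrac{e^x}{x}\psi(x)$, which has the same sign as $\psi$, is strictly decreasing on $(0,1-a^2)$ and strictly increasing on $(1-a^2,\infty)\cap(0,\infty)$.

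It remains to verify positivity of $\psi$ at one well-chosen point. On $0<x\le a+1$ one has $\psi(x)=xI_{a+1}(x)+(a+1-x)I_a(x)>0$ trivially, the first summand being positive and the second nonnegative. If $a\ge 0$ then $a+1\ge 1-a^2$, so $F$ is nondecreasing on $[a+1,\infty)$ and $F(x)\ge F(a+1)>0$ there; combined with the initial interval, $\psi>0$ on all of $(0,\infty)$. If $-1<a<0$ then $0<a+1<1-a^2<1$, so $F$ attains its minimum over $(0,\infty)$ at $x_0=1-a^2$ and it suffices to show $\psi(x_0)>0$. Here $x_0=(1-a)(1+a)$ and $a+1-x_0=a(1+a)$, so $\psi(x_0)=(1+a)\bigl[(1-a)I_{a+1}(x_0)+aI_a(x_0)\bigr]$; using the elementary bound $\tfrac{I_{a+1}(x)}{I_a(x)}>\tfrac{x}{x+2(a+1)}$ — a consequence of $I_a(x)-I_{a+2}(x)=\tfrac{2(a+1)}{x}I_{a+1}(x)$ together with $0<I_{a+2}(x)<I_{a+1}(x)$ — its right-hand side at $x_0$ equals $\tfrac{1-a}{3-a}$, and since $\tfrac{1-a}{3-a}\ge\tfrac{-a}{1-a}$ (which reduces to $1+a\ge 0$) we obtain $\psi(x_0)>0$.

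The point of proceeding this way — and the main obstacle otherwise — is that the purely local tools (the power series, the continued fraction for $I_{a+1}/I_a$, crude termwise estimates) only establish the inequality for $x$ in a bounded range depending on $a$; no finite amount of such information closes the gap for all $x>0$. The linear ODE for $\psi$ is what turns the problem global, and after that the only genuinely delicate spot is checking $\psi>0$ at the critical point $x_0=1-a^2$ when $-1<a<0$, where the naive "two nonnegative summands" argument breaks down and the ratio bound above is needed.
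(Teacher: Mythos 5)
Your proof is correct, but it takes a genuinely different route from the paper. The paper's argument is essentially two lines once one quotes the sharp ratio bound of Ruiz-Antol\'{\i}n and Segura recorded in \eqref{eq:cantabros}: from $f_{a+1}(x)<I_{a+1}(x)/I_a(x)$ one gets $1-I_{a+1}(x)/I_a(x)<g_{a+1}(x)\le (a+1)/x$, and the lemma follows. You instead work with $\psi(x)=(a+1)I_a(x)-x\bigl(I_a(x)-I_{a+1}(x)\bigr)=xI_a'(x)+(1-x)I_a(x)$, derive the first-order ODE $\frac{d}{dx}\bigl(\frac{e^x}{x}\psi(x)\bigr)=\frac{e^x(x+a^2-1)}{x^2}I_a(x)$ from the modified Bessel equation (I checked the computation; it is right), so that $e^x\psi(x)/x$ can only have its minimum at $x_0=1-a^2$, and then you verify positivity there for $-1<a<0$ via the elementary ratio bound $I_{a+1}(x)/I_a(x)>x/(x+2(a+1))$ obtained from the three-term recurrence and monotonicity in the order; the algebra at $x_0$ (including $\frac{1-a}{3-a}\ge\frac{-a}{1-a}\iff 1+a\ge0$) is correct, and the case $a\ge0$ is handled correctly by the trivial positivity on $(0,a+1]$ plus monotonicity beyond. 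What your approach buys is self-containedness: the naive recurrence bound alone only yields the constant $2(a+1)$, and your ODE/monotonicity step recovers the sharp constant $a+1$ without citing any external inequality. What the paper's approach buys is brevity and uniformity: the bound \eqref{eq:cantabros} is needed elsewhere in the paper anyway (e.g.\ for \eqref{eq:diff2-I} and \eqref{eq:diff-3}), so invoking it here costs nothing extra. Your closing remark overstates the obstacle, though: the ratio bound $f_{a+1}(x)<I_{a+1}(x)/I_a(x)$ is a single global estimate that closes the gap for all $x>0$ at once, as the paper's proof shows; it is only the cruder truncations (low-order continued-fraction convergents, termwise series estimates) that fail for large $x$.
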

\begin{proof}
The lower bound is a consequence of the monotonicity respect to the order of the functions  $I_a(x)$. To obtain the upper bound, we use the inequalities \cite[Theorem 2]{Cantabros}
\begin{equation}
\label{eq:cantabros}
f_{a+1}(x)<\frac{I_{a+1}(x)}{I_a(x)}<f_{a+1/2}(x),
\end{equation}
where
\[
f_\alpha(x)=\frac{x}{\alpha+\sqrt{\alpha^2+x^2}}
\]
and the upper bound holds for $a\ge -1/2$ and the lower one for $a\ge-1$. Then, it is clear that
\begin{equation}
\label{eq:cantabros2}
g_{a+1/2}(x)<1-\frac{I_{a+1}(x)}{I_a(x)}<g_{a+1}(x),
\end{equation}
with
\[
g_\alpha(x)=1-f_\alpha(x)=\frac{2\alpha}{\alpha+x+\sqrt{\alpha^2+x^2}}
\]
and now the upper bound holds for $a\ge -1$ and the lower one for $a\ge -1/2$.
Then, by using the upper bound in \eqref{eq:cantabros2}, for $a>-1$, we have
\[
I_{a}(x)-I_{a+1}(x)=\left(1-\frac{I_{a+1}(x)}{I_a(x)}\right)I_{a}(x)< g_{a+1}(x)I_{a}(x)\le  \frac{a+1}{x}I_a(x).\qedhere
\]
\end{proof}

\begin{lem}
  For $a\ge -1/2$ and $x>0$ the inequality
  \begin{equation}
  \label{eq:diff2-I}
  \left|I_a(x)-2I_{a+1}(x)+I_{a+2}(x)\right|<\left(\frac{3}{2x}+\frac{(a+1)(a+2)}{x^2}\right)I_{a}(x)
  \end{equation}
  holds.
\end{lem}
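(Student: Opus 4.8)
The plan is to establish the two one-sided bounds $I_a(x)-2I_{a+1}(x)+I_{a+2}(x)<R$ and $I_a(x)-2I_{a+1}(x)+I_{a+2}(x)>-R$, where $R=\big(\tfrac{3}{2x}+\tfrac{(a+1)(a+2)}{x^2}\big)I_a(x)>0$; together these give \eqref{eq:diff2-I}. I will use freely the standard recurrence $I_a(x)-I_{a+2}(x)=\tfrac{2(a+1)}{x}I_{a+1}(x)$ (a direct consequence of the power series expansions), the monotonicity $I_{a+1}(x)<I_a(x)$, and the elementary identity $g_\alpha(x)=\dfrac{\alpha+x-\sqrt{\alpha^2+x^2}}{x}$, obtained by rationalizing the denominator in $g_\alpha(x)=\dfrac{2\alpha}{\alpha+x+\sqrt{\alpha^2+x^2}}$.

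\textbf{Upper bound.} By the recurrence,
\[
I_a(x)-2I_{a+1}(x)+I_{a+2}(x)=\frac{2(a+1)}{x}I_{a+1}(x)-2\bigl(I_{a+1}(x)-I_{a+2}(x)\bigr),
\]
and the lower estimate in \eqref{eq:cantabros2} with $a$ replaced by $a+1$, namely $I_{a+1}(x)-I_{a+2}(x)>g_{a+3/2}(x)I_{a+1}(x)$, yields
\[
I_a(x)-2I_{a+1}(x)+I_{a+2}(x)<\Bigl(\tfrac{2(a+1)}{x}-2g_{a+3/2}(x)\Bigr)I_{a+1}(x)=\frac{2\sqrt{(a+3/2)^2+x^2}-2x-1}{x}\,I_{a+1}(x).
\]
If the bracketed factor is $\le 0$ the right-hand side is negative, hence $<R$, and we are done; otherwise I bound $I_{a+1}(x)$ by the larger $I_a(x)$, so that it suffices to check $\tfrac{2\sqrt{(a+3/2)^2+x^2}-2x-1}{x}\le\tfrac{3}{2x}+\tfrac{(a+1)(a+2)}{x^2}$, i.e.
\[
2x\sqrt{(a+3/2)^2+x^2}\le 2x^2+\tfrac52 x+(a+1)(a+2).
\]
Both sides being positive, I may square; after cancelling the $4x^4$ terms this reduces to $0\le 10x^3+\tfrac{21}{4}x^2+5(a+1)(a+2)x+(a+1)^2(a+2)^2$, which holds since $(a+1)(a+2)\ge\tfrac34>0$ for $a\ge-\tfrac12$.

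\textbf{Lower bound.} Here I write $I_a-2I_{a+1}+I_{a+2}=(I_a-I_{a+1})-(I_{a+1}-I_{a+2})$ and estimate the two differences via \eqref{eq:cantabros2}: $I_a(x)-I_{a+1}(x)>g_{a+1/2}(x)I_a(x)$ (valid for $a\ge-\tfrac12$) and $I_{a+1}(x)-I_{a+2}(x)<g_{a+2}(x)I_{a+1}(x)\le g_{a+2}(x)I_a(x)$. Hence $I_a(x)-2I_{a+1}(x)+I_{a+2}(x)>\bigl(g_{a+1/2}(x)-g_{a+2}(x)\bigr)I_a(x)$, and by the identity for $g_\alpha$,
\[
g_{a+1/2}(x)-g_{a+2}(x)=\frac{-\tfrac32+\sqrt{(a+2)^2+x^2}-\sqrt{(a+1/2)^2+x^2}}{x}\ge-\frac{3}{2x},
\]
because $\sqrt{(a+2)^2+x^2}\ge\sqrt{(a+1/2)^2+x^2}$ (using $0\le a+\tfrac12\le a+2$). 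Therefore $I_a(x)-2I_{a+1}(x)+I_{a+2}(x)>-\tfrac{3}{2x}I_a(x)\ge-R$, and the proof is complete.

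I expect the upper bound to be the only delicate point: one must use the \emph{sharp} lower estimate $I_{a+1}-I_{a+2}>g_{a+3/2}(x)I_{a+1}$ rather than the cruder $\tfrac{a+2}{x}I_{a+1}$ coming from \eqref{eq:diff-I}, since only the former leaves a polynomial with nonnegative coefficients after clearing denominators (the weaker bound fails for large $x$). The lower bound, by contrast, is elementary, and the extra term $\tfrac{(a+1)(a+2)}{x^2}$ is only genuinely needed on the upper side for small $x$.
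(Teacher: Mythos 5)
Your proof is correct, but it follows a genuinely different route from the paper's. The paper divides by $I_a$ and splits $1-2\tfrac{I_{a+1}}{I_a}+\tfrac{I_{a+2}}{I_a}=S_1+S_2$, with $S_1=\tfrac{I_{a+2}}{I_{a+1}}-\tfrac{I_{a+1}}{I_a}$ a difference of consecutive ratios and $S_2=\bigl(1-\tfrac{I_{a+1}}{I_a}\bigr)\bigl(1-\tfrac{I_{a+2}}{I_{a+1}}\bigr)$ a product of first differences; then \eqref{eq:cantabros} together with \eqref{eq:diff-fa} gives $|S_1|\le f_{a+1/2}(x)-f_{a+2}(x)\le 3/(2x)$, while \eqref{eq:cantabros2} gives $S_2\le g_{a+1}(x)g_{a+2}(x)\le (a+1)(a+2)/x^2$, so each term of the stated bound arises from one piece of the decomposition; the same scheme ($T_1,T_2,T_3$) is then reused for the third-order difference in the next lemma. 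You instead invoke the exact three-term recurrence $I_a(x)-I_{a+2}(x)=\tfrac{2(a+1)}{x}I_{a+1}(x)$ and prove the two one-sided estimates separately from \eqref{eq:cantabros2}, closing the upper bound by squaring and checking that the resulting polynomial $10x^3+\tfrac{21}{4}x^2+5(a+1)(a+2)x+(a+1)^2(a+2)^2$ is nonnegative (I verified the algebra, including the cancellation $(a+1)(a+2)-(a+3/2)^2=-\tfrac14$, and the range restrictions on \eqref{eq:cantabros2} are respected). Your argument is self-contained modulo the recurrence and even yields the sharper one-sided information that the $(a+1)(a+2)/x^2$ term is not needed for the lower bound; its drawbacks are that the two terms of the bound appear only through the brute-force polynomial verification rather than structurally, and that the method would not scale as cleanly to the third difference treated immediately afterwards, where the paper's ratio decomposition (plus continued fractions) is what does the work. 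One small imprecision in your closing remark: the alternative you dismiss, ``$\tfrac{a+2}{x}I_{a+1}$ from \eqref{eq:diff-I},'' is an \emph{upper} bound on $I_{a+1}-I_{a+2}$, whereas what your upper-bound step requires is a good \emph{lower} bound on that difference (for which \eqref{eq:diff-I} only supplies positivity); your substantive point, that the trivial bound fails for large $x$ and the sharp $g_{a+3/2}$ estimate is needed, is nevertheless right.
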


\begin{proof}
We estimate
\[
\left|1-2\frac{I_{a+1}(x)}{I_{a}(x)}+\frac{I_{a+2}(x)}{I_{a}(x)}\right|.
\]
To do this we use the obvious identity
\begin{equation*}
1-2\frac{I_{a+1}(x)}{I_{a}(x)}+\frac{I_{a+2}(x)}{I_{a}(x)}=S_1+S_2
\end{equation*}
with
\[
S_1=\frac{I_{a+2}(x)}{I_{a+1}(x)}-\frac{I_{a+1}(x)}{I_{a}(x)}
\]
and
\[
S_2=\left(1-\frac{I_{a+1}(x)}{I_{a}(x)}\right)\left(1-\frac{I_{a+2}(x)}{I_{a+1}(x)}\right).
\]
By using \eqref{eq:cantabros} and that $f_\alpha(x)<f_\beta(x)$ when $\beta<\alpha$, for $a\ge -1/2$ we have
\[
|S_1|< \max\{f_{a+1}(x)-f_{a+3/2}(x), f_{a+1/2}(x)-f_{a+2}(x)\}=f_{a+1/2}(x)-f_{a+2}(x).
\]
Now, with the bound
\begin{equation}
\label{eq:diff-fa}
f_{a}(x)-f_{a+p}(x)=\frac{p + \sqrt{x^2+a^2} -\sqrt{x^2+(a+p)^2}}{x}\le \frac{p}{x},\qquad p>0
\end{equation}
we have $|S_1|< 3/(2x)$.

For $S_2$ we use \eqref{eq:cantabros2} to obtain that
\[
S_2< g_{a+1}(x)g_{a+2}(x)\le  \frac{(a+1)(a+2)}{x^2}
\]
and the proof is finished.
\end{proof}

Finally, we estimate an alternating sum of four consecutive modified Bessel functions of first kind.

\begin{lem}
  For $a\ge -1/2$ and $x>0$ the inequality
  \begin{multline}
  \label{eq:diff-3}
  \left|I_a(x)-3I_{a+1}(x)+3I_{a+2}(x)-I_{a+3}(x)\right|<\\C\left(\frac{a+2}{x^2}+\frac{(a+1)(a+2)(a+3)}{x^3}\right)I_{a}(x)
  \end{multline}
  holds.
\end{lem}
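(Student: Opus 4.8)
I would follow the pattern of the proofs of \eqref{eq:diff-I} and \eqref{eq:diff2-I}. After dividing by $I_a(x)$, the left side of \eqref{eq:diff-3} equals $|P|\,I_a(x)$, where, setting $\rho_j=I_{a+j+1}(x)/I_{a+j}(x)$ (so that $I_{a+1}/I_a=\rho_0$, $I_{a+2}/I_a=\rho_0\rho_1$, $I_{a+3}/I_a=\rho_0\rho_1\rho_2$) and $s_j=1-\rho_j$,
\[
P=1-3\rho_0+3\rho_0\rho_1-\rho_0\rho_1\rho_2.
\]
Starting from $I_a-3I_{a+1}+3I_{a+2}-I_{a+3}=(I_a-2I_{a+1}+I_{a+2})-(I_{a+1}-2I_{a+2}+I_{a+3})$ and the identity $1-2\rho_j+\rho_j\rho_{j+1}=(\rho_{j+1}-\rho_j)+s_js_{j+1}$ used in the proof of \eqref{eq:diff2-I}, an elementary rearrangement yields
\[
P=-\Big[(\rho_0-2\rho_1+\rho_2)-s_0(\rho_2-\rho_1)+s_1(s_2-s_0)-s_0s_1s_2\Big].
\]
The point of this particular grouping is that the two second-order pieces $s_0s_1$ and $s_1s_2$, each only of size $O\big((a+2)(a+3)/x^2\big)$ on its own, are combined into $s_1(s_2-s_0)=s_1(\rho_0-\rho_2)$, and $\rho_0-\rho_2$ is a difference of Bessel ratios, hence $O(1/x)$; this is what prevents a spurious $O(a^2/x^2)$ term.

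Three of the four terms are then estimated exactly as in the previous lemmas. From \eqref{eq:cantabros2} and $g_\alpha(x)\le\alpha/x$ one gets $|s_0|<\tfrac{a+1}{x}$, $|s_1|<\tfrac{a+2}{x}$, $|s_2|<\tfrac{a+3}{x}$, whence $|s_0s_1s_2|<\frac{(a+1)(a+2)(a+3)}{x^3}$. From \eqref{eq:cantabros} and \eqref{eq:diff-fa}, in the same way the bound on $S_1$ was obtained in the proof of \eqref{eq:diff2-I}, one gets $|\rho_2-\rho_1|<\tfrac3{2x}$ and $|\rho_0-\rho_2|=|s_2-s_0|<\tfrac5{2x}$, so $|s_0(\rho_2-\rho_1)|<\frac{3(a+1)}{2x^2}$ and $|s_1(s_2-s_0)|<\frac{5(a+2)}{2x^2}$.

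The delicate term is $\rho_0-2\rho_1+\rho_2$, a second difference of consecutive Bessel ratios: here \eqref{eq:cantabros} is not enough, since the sandwich $f_{a+j+1}(x)<\rho_j<f_{a+j+1/2}(x)$ has width of order $1/x$ in the order and brute force only gives $O(1/x)$. I would first sharpen the upper bound in \eqref{eq:cantabros}. The standard recurrence $I_a(x)-I_{a+2}(x)=\frac{2(a+1)}{x}I_{a+1}(x)$, divided by $I_{a+1}(x)$, gives $\rho_0^{-1}=\frac{2(a+1)}{x}+I_{a+2}(x)/I_{a+1}(x)$; bounding the last ratio above by $f_{a+3/2}(x)$ with \eqref{eq:cantabros} and using $f_\alpha(x)=\tfrac1x\big(\sqrt{\alpha^2+x^2}-\alpha\big)$, a short computation gives
\[
\frac{1}{\rho_0}<\frac{1}{f_{a+1/2}(x)}+\frac{\sqrt{(a+3/2)^2+x^2}-\sqrt{(a+1/2)^2+x^2}}{x}\le\frac{1}{f_{a+1/2}(x)}+\frac{a+1}{x^2},
\]
and hence, using $\tfrac1{1+t}>1-t$ and $0<f_{a+1/2}(x)\le1$, the sharpened two-sided bound $f_{a+1/2}(x)-\tfrac{a+1}{x^2}<\rho_0<f_{a+1/2}(x)$. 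Applying this at $a,a+1,a+2$ and writing $\rho_j=f_{a+j+1/2}(x)-\theta_j$ with $0\le\theta_j<\frac{a+j+1}{x^2}$,
\[
\rho_0-2\rho_1+\rho_2=\big(f_{a+1/2}(x)-2f_{a+3/2}(x)+f_{a+5/2}(x)\big)-(\theta_0-2\theta_1+\theta_2),
\]
where the first bracket, being the second difference (step $1$) of $\tfrac1x\sqrt{\alpha^2+x^2}$ (the linear part of $f_\alpha$ cancels), lies in $[0,1/x^2]$ because $\alpha\mapsto\sqrt{\alpha^2+x^2}$ is convex with second derivative $x^2(\alpha^2+x^2)^{-3/2}\le1/x$, while $|\theta_0-2\theta_1+\theta_2|\le\theta_0+2\theta_1+\theta_2<\frac{4(a+2)}{x^2}$. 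Thus $|\rho_0-2\rho_1+\rho_2|<\frac{5(a+2)}{x^2}$, and collecting the four estimates in the identity for $P$ gives $|P|<C\big(\frac{a+2}{x^2}+\frac{(a+1)(a+2)(a+3)}{x^3}\big)$, which is \eqref{eq:diff-3}.

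The main obstacle is this last step: the estimate $|\rho_0-2\rho_1+\rho_2|=O\big((a+2)/x^2\big)$ requires knowing that $I_{a+1}(x)/I_a(x)$ lies within $O\big((a+1)/x^2\big)$ of its \emph{upper} estimate $f_{a+1/2}(x)$ (the lower one, $f_{a+1}(x)$, being one order too crude), and then verifying that differencing twice does not amplify the remainders $\theta_j$ --- individually of size $O\big((a+3)/x^2\big)$ --- beyond $O\big((a+2)/x^2\big)$. This, together with the need to avoid the $O(a^2/x^2)$ term coming from $s_0s_1$ and $s_1s_2$, is what forces the particular grouping of $P$ above.
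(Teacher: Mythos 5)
Your proof is correct, and your algebraic decomposition is in fact identical to the paper's: writing $\rho_j=I_{a+j+1}/I_{a+j}$ and $s_j=1-\rho_j$, your identity for $P$ is exactly the paper's splitting into $T_1=-(\rho_0-2\rho_1+\rho_2)$, $T_2=s_0(\rho_2-\rho_1)+s_1(\rho_2-\rho_0)$ and $T_3=s_0s_1s_2$, and your treatment of $T_2$ and $T_3$ (via \eqref{eq:cantabros}, \eqref{eq:cantabros2} and \eqref{eq:diff-fa}, as in the proof of \eqref{eq:diff2-I}) coincides with the paper's. Where you genuinely diverge is on the delicate term $\rho_0-2\rho_1+\rho_2$: the paper estimates it through the continued-fraction expansion of $I_{a+1}/I_a$ from \cite[10.33.1]{NIST}, using Khinchin's error bound \eqref{eq:Kin} with the second convergents $P_2/Q_2$ and an explicit computation of their second difference, which yields the $a$-uniform bound $|T_1|\le C/x^2$; you instead sharpen the two-sided bound \eqref{eq:cantabros} by means of the three-term recurrence $I_a(x)-I_{a+2}(x)=\tfrac{2(a+1)}{x}I_{a+1}(x)$, obtaining $f_{a+1/2}(x)-\tfrac{a+1}{x^2}<\rho_0<f_{a+1/2}(x)$, and then control the second difference of $f_{a+j+1/2}$ by convexity of $\alpha\mapsto\sqrt{\alpha^2+x^2}$, which gives $|T_1|\le C(a+2)/x^2$. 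Your route is more elementary and self-contained (no continued-fraction machinery beyond what is already in \eqref{eq:cantabros}), at the price of a slightly weaker bound for $T_1$ — of size $(a+2)/x^2$ rather than $1/x^2$ — which is immaterial here since the right-hand side of \eqref{eq:diff-3} already contains the term $(a+2)/x^2$ coming from $T_2$; the paper's argument, on the other hand, would remain useful if one ever needed the $a$-uniform $O(1/x^2)$ control of the second difference of ratios.
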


\begin{proof}
As in the previous proof, we focus on an appropriate bound for
\[
\left|1-3\frac{I_{a+1}(x)}{I_a(x)}+3\frac{I_{a+2}(x)}{I_a(x)}-\frac{I_{a+3}(x)}{I_a(x)}\right|.
\]
In this time, we have
\[
1-3\frac{I_{a+1}(x)}{I_a(x)}+3\frac{I_{a+2}(x)}{I_a(x)}-\frac{I_{a+3}(x)}{I_a(x)}=T_1+T_2+T_3,
\]
with
\[
T_1=-\frac{I_{a+1}(x)}{I_a(x)}+2\frac{I_{a+2}(x)}{I_{a+1}(x)}-\frac{I_{a+3}(x)}{I_{a+2}(x)},
\]
\begin{multline*}
T_2=
\left(1-\frac{I_{a+1}(x)}{I_a(x)}\right)
\left(\frac{I_{a+3}(x)}{I_{a+2}(x)}-\frac{I_{a+2}(x)}{I_{a+1}(x)}\right)
\\+
\left(1-\frac{I_{a+2}(x)}{I_{a+1}(x)}\right)
\left(\frac{I_{a+3}(x)}{I_{a+2}(x)}-\frac{I_{a+1}(x)}{I_{a}(x)}\right)
\end{multline*}
and
\[
T_3=\left(1-\frac{I_{a+1}(x)}{I_a(x)}\right)\left(1-\frac{I_{a+2}(x)}{I_{a+1}(x)}\right)
\left(1-\frac{I_{a+3}(x)}{I_{a+2}(x)}\right).
\]

We are going to estimate $|T_1|$ by using continued fractions. Let
\[
\xi:=\langle q_0, q_1,q_2,\dots\rangle =q_0+\frac{1}{q_1+\frac{1}{q_2+\frac{1}{q_3+\cdots}}}
\]
be a continued fraction. If
\[
\langle q_0,q_1,\dots,q_n \rangle=\frac{P_n}{Q_n},
\]
it is known \cite[Ch. 2, Theorem 9]{Kinchin} that
\begin{equation}
\label{eq:Kin}
\left|\xi-\frac{P_n}{Q_n}\right|\le \frac{1}{Q_nQ_{n+1}}, \qquad n\ge 0.
\end{equation}
In \cite[10.33.1]{NIST}, we find the expansion
\[
\frac{I_{a+1}(x)}{I_a(x)}=\langle q_0(a,x),q_1(a,x),q_2(a,x),\dots \rangle, \qquad x>0, \quad a>-1,
\]
with $q_0(a,x)=0$ and $q_k(a,x)=2(a+k)/x$ for $k\ge 1$. Then, taking $n=2$ in \eqref{eq:Kin},
\begin{multline*}
|T_1|\le \frac{1}{Q_2(a,x)Q_3(a,x)}+\frac{2}{Q_2(a+1,x)Q_3(a+1,x)}+\frac{1}{Q_2(a+2,x)Q_3(a+2,x)}\\+
\left|\frac{P_2(a,x)}{Q_2(a,x)}-2\frac{P_2(a+1,x)}{Q_2(a+1,x)}+\frac{P_2(a+2,x)}{Q_2(a+2,x)}\right|.
\end{multline*}
By using that
\[
Q_2(a,x)=4(a+1)(a+2)+x^2\qquad \text{and}\qquad Q_3(a,x)=4(a+2)(2(a+1)(a+3)+x^2)
\]
and applying the inequalities
\[
(a+1)^2+x^2\le Q_2(a,x)\le Q_2(a+1,x)
\]
and
\[
4(a+2)((a+1)^2+x^2)\le Q_3(a,x)\le Q_3(a+1,x), 
\]
for $a> -1$ and $x>0$, we have
\begin{multline}
\label{eq:T1-1}
\frac{1}{Q_2(a,x)Q_3(a,x)}+\frac{2}{Q_2(a+1,x)Q_3(a+1,x)}+\frac{1}{Q_2(a+2,x)Q_3(a+2,x)}\\\le \frac{1}{(a+2)((a+1)^2+x^2)^2}.
\end{multline}
Moreover, due to $P_2(a,x)=2(a+2)x$, for $a\ge -1/2$
\begin{multline*}
\frac{P_2(a,x)}{Q_2(a,x)}-2\frac{P_2(a+1,x)}{Q_2(a+1,x)}+\frac{P_2(a+2,x)}{Q_2(a+2,x)}\\=
\frac{16x(4(a+2)(a+3)(a+4) - x^2(8+3 a))}{Q_2(a,x)Q_2(a+1,x)Q_2(a+2,x)}
\end{multline*}
and
\begin{multline}
\label{eq:T1-2}
\left|\frac{P_2(a,x)}{Q_2(a,x)}-2\frac{P_2(a+1,x)}{Q_2(a+1,x)}+\frac{P_2(a+2,x)}{Q_2(a+2,x)}\right|\\
\le C\frac{x(a+3)((a+2)(a+4)+x^2)}{((a+1)^2+x^2)^3}\le \frac{C}{(a+1)^2+x^2}.
\end{multline}
In this way, from \eqref{eq:T1-1} and \eqref{eq:T1-2}, we obtain that
\[
|T_1|\le \frac{C}{x^2}, \qquad a\ge-1/2.
\]

Now, from \eqref{eq:cantabros} and \eqref{eq:cantabros2}, it is clear that
\begin{equation*}
|T_2|\le g_{a+1}(x)(f_{a+3/2}(x)-f_{a+3}(x))+g_{a+2}(x)(f_{a+1/2}(x)-f_{a+3}(x))
\end{equation*}
and applying \eqref{eq:diff-fa} we have
\[
|T_2|\le \frac{C}{x}(g_{a+1}(x)+g_{a+2}(x))\le C\frac{a+2}{x^2}.
\]

Finally, from \eqref{eq:cantabros2}, the bound
\[
T_3< g_{a+1}(x)g_{a+2}(x)g_{a+3}(x)\le \frac{(a+1)(a+2)(a+3)}{x^3}
\]
can be deduced and the proof of \eqref{eq:diff-3} is completed.
\end{proof}

\subsection{A uniform bound for the modified Bessel functions of first kind}
Now, we present a uniform bound respect to the order for the function $I_a(x)$. It will be fundamental all along the paper.

\begin{lem}
  For $a>-1/2$, let $\alpha$ be a real number such that $-1/2\le \alpha<a$. Then the inequality
  \begin{equation}
  \label{eq:bound-I}
  x^{-\alpha}e^{-x}I_a(x)\le \frac{C}{(a+1)^{2\alpha+1}}
  \end{equation}
holds with a constant $C$ independent of $a$.
\end{lem}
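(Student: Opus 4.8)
The strategy is to reduce the uniform bound to two well-known pieces of asymptotic information about $I_a(x)$: its behavior for $x$ small (or, more precisely, $x$ small relative to the order $a$) and its behavior for $x$ large. The natural break point is $x \asymp a+1$, so I would split the analysis into the regime $0 < x \le a+1$ and the regime $x > a+1$.

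In the first regime, $0 < x \le a+1$, the controlling estimate is the power-series / integral-representation bound
\[
I_a(x) \le \frac{1}{\Gamma(a+1)}\left(\frac{x}{2}\right)^{a} e^{x^2/(4(a+1))}
\]
(or any clean variant of it, e.g. $I_a(x) \le \frac{(x/2)^a}{\Gamma(a+1)}\,\mathcal I_a(x)$ with $\mathcal I_a$ bounded), which follows by term-by-term estimation of the defining series. Since $x \le a+1$ here, the exponential factor $e^{x^2/(4(a+1))} \le e^{x/4}\le e^x$ is harmless, and $e^{-x}$ in the statement kills it. One is then left to bound
\[
x^{-\alpha}\left(\frac{x}{2}\right)^{a}\frac{1}{\Gamma(a+1)} = \frac{x^{a-\alpha}}{2^a\,\Gamma(a+1)}.
\]
Because $\alpha < a$ and $0 < x \le a+1$, one has $x^{a-\alpha} \le (a+1)^{a-\alpha}$, so the quantity is at most $(a+1)^{a-\alpha}/(2^a\Gamma(a+1))$. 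Now Stirling's formula, $\Gamma(a+1)\sim \sqrt{2\pi a}\,(a/e)^a$, shows this behaves like $C\, a^{-\alpha-1/2}$ up to factors $(e/2)^a$, which is \emph{not} bounded — so one cannot simply throw away $e^{-x}$; the decay must be retained for $x$ comparable to $a$. The honest fix is to note that on $x\le a+1$ the worst case is $x$ close to $a+1$, and there $e^{-x}$ together with the $(e/2)^a$-type growth from the small-$x$ bound need to be tracked more carefully — in practice one uses instead the sharper uniform asymptotic (Debye-type) expansion, or simply the bound $x^{-\alpha}e^{-x}I_a(x)\le x^{-\alpha}e^{-x}I_\alpha(x)$ is false since $I$ increases as the order decreases... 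Let me restructure: in this regime use monotonicity in the order to reduce to a fixed small order only when $x$ is genuinely small, and for $x\asymp a$ invoke the uniform asymptotics $I_a(a z)\sim \frac{e^{a\eta(z)}}{\sqrt{2\pi a}(1+z^2)^{1/4}}$ with $\eta(z)=\sqrt{1+z^2}+\log\frac{z}{1+\sqrt{1+z^2}}$, from which $x^{-\alpha}e^{-x}I_a(x)$ with $x=az$ becomes $a^{-\alpha}z^{-\alpha}e^{a(\eta(z)-z)}(2\pi a)^{-1/2}(1+z^2)^{-1/4}$, and since $\eta(z)-z<0$ for all $z>0$ the exponential is $\le 1$ (in fact exponentially small once $z$ is bounded away from nothing), giving a bound $\le C a^{-\alpha-1/2}\sup_z z^{-\alpha}(1+z^2)^{-1/4}$; the supremum over $z$ is finite precisely because $\alpha>-1/2$ (so $z^{-\alpha}(1+z^2)^{-1/4}$ is integrable-boundedness-type controlled near $z=0$ when $\alpha<0$, and decays at $\infty$), and then $a^{-\alpha-1/2}\le C(a+1)^{-2\alpha-1}$ because $\alpha>-1/2$ forces $\alpha+1/2 \le 2\alpha+1$.

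In the second regime, $x > a+1$, the large-argument asymptotic $I_a(x) = \frac{e^x}{\sqrt{2\pi x}}\big(1+O(1/x)\big)$ (uniformly in $a$ on bounded ratios, or more carefully $I_a(x)\le \frac{e^x}{\sqrt{2\pi x}}$ for suitable ranges) applies. Then $x^{-\alpha}e^{-x}I_a(x)\le C x^{-\alpha-1/2}$, and since $x > a+1$ and $\alpha+1/2>0$ we get $x^{-\alpha-1/2} < (a+1)^{-\alpha-1/2}\le C(a+1)^{-2\alpha-1}$, using once more $\alpha>-1/2$ hence $\alpha+1/2\le 2\alpha+1$. Care is needed because the plain bound $I_a(x)\le e^x/\sqrt{2\pi x}$ is only known to hold for $a\ge 0$ or in restricted ranges; for $-1/2<a<0$ one uses monotonicity in the order, $I_a(x)\le I_{-1/2}(x)=\sqrt{2/(\pi x)}\cosh x\le C e^x/\sqrt{x}$, which covers that sliver, and for $a\ge 1/2$ with $x>a+1$ one can fall back on the uniform Debye expansion as in the first regime (there $z=x/a>1+1/a$, bounded away from $0$, and $\eta(z)-z<0$ strictly, so the contribution is in fact exponentially small in $a$ — more than enough).

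\textbf{Main obstacle.} The delicate point, and the one where a careless argument breaks, is the transition region $x\asymp a$: neither the small-argument series bound (which loses a factor like $(e/2)^a$) nor the naive large-argument bound is valid there, so one genuinely needs a \emph{uniform-in-order} asymptotic (Debye/uniform Airy-free expansion) to see the cancellation $e^{-x}I_a(x)\asymp (2\pi a)^{-1/2} e^{a(\eta(x/a)-x/a)}$ with the exponent negative. Extracting from the paper's already-established tools: the continued-fraction and ratio bounds \eqref{eq:cantabros}, \eqref{eq:cantabros2} give good control of consecutive ratios but not of the absolute size of $I_a$, so the cleanest self-contained route is probably to prove the single inequality $e^{-x}I_a(x)\le \frac{C}{\sqrt{a+1}}\big(\frac{x}{a+1}\big)^{a}e^{-x}\cdot(\text{something bounded})$ via the integral representation $I_a(x)=\frac{(x/2)^a}{\sqrt\pi\,\Gamma(a+1/2)}\int_{-1}^{1}e^{xt}(1-t^2)^{a-1/2}\,dt$ and Laplace's method, then multiply by $x^{-\alpha}$ and optimize. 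Everything after that is the bookkeeping with $\alpha>-1/2$ described above.
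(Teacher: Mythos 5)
There is a genuine gap, and it sits exactly at the step where you convert your intermediate bound into the claimed one. In both of your regimes you end up with a bound of order $(a+1)^{-(\alpha+1/2)}$ and then assert that $(a+1)^{-(\alpha+1/2)}\le C(a+1)^{-(2\alpha+1)}$ ``because $\alpha>-1/2$ forces $\alpha+1/2\le 2\alpha+1$''. This is backwards: since $\alpha\ge -1/2$ gives $2\alpha+1\ge \alpha+1/2$, for $a\ge 1$ one has $(a+1)^{-(2\alpha+1)}\le (a+1)^{-(\alpha+1/2)}$, with equality only at $\alpha=-1/2$. So the exponent $2\alpha+1$ in the lemma is strictly stronger than what your argument produces, and the upgrade you invoke is false for every $\alpha>-1/2$. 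The stronger exponent is not cosmetic: the supremum of $x^{-\alpha}e^{-x}I_a(x)$ is attained near $x\sim a^2/(2\alpha+1)$, where $e^{-x}I_a(x)\approx (2\pi x)^{-1/2}e^{-a^2/(2x)}$, and it is precisely the Gaussian-type factor $e^{-a^2/(2x)}$ (equivalently, in the paper's proof, the ratio $\Gamma(a-\alpha)/\Gamma(a+1/2)\sim a^{-\alpha-1/2}$ multiplying the $a^{-\alpha-1/2}$ coming from $\sup_{0<z<1} z^{\alpha+1/2}(1-z)^{a-1/2}$) that supplies the second factor $a^{-\alpha-1/2}$ and hence the full decay $a^{-2\alpha-1}$. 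Both of your regimes throw that information away: in the Debye regime you bound $e^{a(\eta(z)-z)}\le 1$, and in the regime $x>a+1$ you use only $e^{-x}I_a(x)\le Cx^{-1/2}$, which at $x$ slightly larger than $a+1$ gives nothing better than $(a+1)^{-\alpha-1/2}$.

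Two further problems in the same spirit: after discarding the exponential, your quantity $\sup_{z>0}z^{-\alpha}(1+z^2)^{-1/4}$ is infinite whenever $\alpha>0$ (the blow-up as $z\to 0^+$ is in reality controlled by the factor $e^{a\eta(z)}$ you dropped, since $\eta(z)\to-\infty$ there); and your first attempt via the series bound is abandoned mid-argument, so the regime $x\asymp a$ is never actually closed. The repair is to keep the order-dependent decay all the way through, which is what the paper does in a self-contained way: insert the integral representation $I_a(x)=\frac{x^a}{\sqrt{\pi}2^a\Gamma(a+1/2)}\int_{-1}^1 e^{-xs}(1-s^2)^{a-1/2}\,ds$, change variables $x(1+s)=2w$, bound $\left(\frac{w}{x}\right)^{\alpha+1/2}\left(1-\frac{w}{x}\right)^{a-1/2}\le Ca^{-\alpha-1/2}$ by the elementary inequality $z^p(1-z)^q\le p^pq^q/(p+q)^{p+q}$, and then the remaining integral produces $\Gamma(a-\alpha)$, so that $\Gamma(a-\alpha)/(\Gamma(a+1/2)a^{\alpha+1/2})\le Ca^{-2\alpha-1}$; the sliver $-1/2<a<1/2$ is treated separately by a direct splitting. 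If you prefer your Debye route, you must retain $e^{-a(z-\eta(z))}$ and optimize $z^{-\alpha-1/2}e^{-a(z-\eta(z))}$ over $z$, which does reproduce the extra factor $a^{-\alpha-1/2}$, but that is substantially more work than the paper's elementary argument and relies on uniform asymptotics the paper deliberately avoids.
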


\begin{proof}
From the identity \cite[10.32.2]{NIST}
\[
I_a(x)=\frac{x^a}{\sqrt{\pi}2^a\Gamma(a+1/2)}\int_{-1}^{1}e^{-xs}(1-s^2)^{a-1/2}\, ds, \qquad a>-1/2,
\]
by using the change of variable $x(1+s)=2w$, we can deduce that
\begin{align*}
x^{-\alpha}e^{-x}I_a(x)&=\frac{2^{a}x^{-\alpha-1/2}}{\sqrt{\pi}\Gamma(a+1/2)}\int_{0}^{x}e^{-2w}w^{a-1/2}
\left(1-\frac{w}{x}\right)^{a-1/2}\, dw\\&
=\frac{2^{a}}{\sqrt{\pi}\Gamma(a+1/2)}\int_{0}^{x}e^{-2w}w^{a-1-\alpha}\left(\frac{w}{x}\right)^{\alpha+1/2}
\left(1-\frac{w}{x}\right)^{a-1/2}\, dw.
\end{align*}
We start with the case $a\ge 1/2$. Using the inequality
\[
z^p(1-z)^q\le \frac{p^p q^q}{(p+q)^{p+q}}, \qquad p,q\ge 0,\quad 0<z<1,
\]
where in the cases $p=0$, $q=0$, and $p=q=0$ the bound has to be understood as one,
we have
\[
\left(\frac{w}{x}\right)^{\alpha+1/2}
\left(1-\frac{w}{x}\right)^{a-1/2}\le \frac{C}{a^{\alpha+1/2}},
\]
with $C$ depending on $\alpha$ but not on $a$, 
and
\begin{align*}
x^{-\alpha}e^{-x}I_a(x)&\le C\frac{2^{a}}{\Gamma(a+1/2)a^{\alpha+1/2}}\int_{0}^{t}e^{-2w}w^{a-1-\alpha}\, dw
\\&\le \frac{C}{\Gamma(a+1/2)a^{\alpha+1/2}}\int_{0}^{\infty}e^{-r}r^{a-1-\alpha}\, dr\\&=C\frac{\Gamma(a-\alpha)}{\Gamma(a+1/2)a^{\alpha+1/2}}\le \frac{C}{a^{2\alpha+1}},
\end{align*}
where in the last step we have applied that $\Gamma(r+p)/\Gamma(r+q)\sim r^{p-q}$, and this is enough for our purpose.

For $-1/2<a<1/2$ we have to prove that $x^{-\alpha}e^{-x}I_a(x)\le C$. We consider the decomposition
\begin{multline*}
x^{-\alpha}e^{-x}I_a(x)=\frac{2^{a}}{\sqrt{\pi}\Gamma(a+1/2)}\\\times\left(\int_{0}^{x/2}+\int_{x/2}^{x}
e^{-2w}w^{a-1-\alpha}\left(\frac{w}{x}\right)^{\alpha+1/2}
\left(1-\frac{w}{x}\right)^{a-1/2}\, dw\right).
\end{multline*}
The first integral can be controlled easily. In fact,
\begin{multline*}
\int_{0}^{x/2}e^{-2w}w^{a-1-\alpha}\left(\frac{w}{x}\right)^{\alpha+1/2}
\left(1-\frac{w}{x}\right)^{a-1/2}\, dw\\\le \frac{1}{2^{\alpha+a}} \int_{0}^{x/2}e^{-2w}w^{a-1-\alpha}\, dw\le  \frac{\Gamma(a-\alpha)}{2^{2a}}.
\end{multline*}
For the second integral, with the change of variable $w=x(1-r)$, we have
\begin{multline*}
\int_{x/2}^{x}e^{-2w}w^{a-1-\alpha}\left(\frac{w}{x}\right)^{\alpha+1/2}
\left(1-\frac{w}{x}\right)^{a-1/2}\, dw\\
\begin{aligned}
&\le  \left(\frac{x}{2}\right)^{a-1-\alpha}\int_{x/2}^{x}e^{-2w}\left(1-\frac{w}{x}\right)^{a-1/2}\, dw
\\&=\frac{x^{a-\alpha}}{2^{a-1-\alpha}}\int_{0}^{1/2}e^{-2x(1-r)}r^{a-1/2}\, dr\\&\le \frac{x^{a-\alpha}}{2^{a-1-\alpha}}e^{-x}\int_{0}^{1/2}r^{a-1/2}\, dr\le \frac{(a-\alpha)^{a-\alpha}e^{-a+\alpha}}{2^{a-1-\alpha}(a+1/2)},
\end{aligned}
\end{multline*}
where in the last step we have applied that $x^b e^{-x}\le b^be^{-b}$ for $x,b>0$. Then, using that $0<a-\alpha<1$
\[
x^{-\alpha}e^{-x}I_a(x)\le \frac{2^{a}}{\sqrt{\pi}\Gamma(a+1/2)}\left(\frac{\Gamma(a-\alpha)}{2^{2a}}+
\frac{(a-\alpha)^{a-\alpha}e^{-a+\alpha}}{2^{a-1-\alpha}(a+1/2)}\right)\le C
\]
and the proof is finished.
\end{proof}

\subsection{Further properties of the modified Bessel functions of first kind}
In this subsection we collect some known properties of the modified Bessel functions which will be used in this paper.

For each $k\in \mathbb{Z}$, it is verified that (see \cite[10.27.1]{NIST})
\begin{equation}
\label{eq:par}
I_{-k}(t)=I_{k}(t).
\end{equation}
Moreover, from its definition it is clear that $I_k(t)> 0$ for $t> 0$,
\begin{equation}
\label{eq:Ik-zero}
I_k(0)=0, \qquad k\not =0,
\end{equation}
and $I_0(0)=1$.

We have the generating function (see \cite[10.35.1]{NIST})
\begin{equation}
\label{eq:gen}
e^{-t(u+u^{-1})/2}=\sum_{k\in \mathbb{Z}}u^kI_{k}(t)
\end{equation}
and it implies, taking $u=1$,
\begin{equation}
\label{eq:sum-L1}
\sum_{k\in \mathbb{Z}}e^{-t}I_k(t)=1.
\end{equation}
Another easy consequence of the given generating function is the Neumann's identity (see \cite[Chapter II, 7.10]{Feller})
\begin{equation}
\label{eq:Neumann}
I_n(t_1+t_2)=\sum_{k\in \mathbb{Z}}I_k(t_1)I_{n-k}(t_2).
\end{equation}
It can be obtained from \eqref{eq:gen} by using the Cauchy product of two series.

The asymptotic expansion for large order and $t$ fix (see \cite[10.41.1]{NIST})
\begin{equation}
\label{eq:asym-I}
I_\nu(t)\sim \frac{1}{\sqrt{2\pi \nu}}\left(\frac{et}{2\nu}\right)^\nu
\end{equation}
will be used in some points.
\section{The heat semigroup and beyond}
\label{sec:semi}
Basic properties of the operator \eqref{eq:heat} can be deduced easily. First, by using \eqref{eq:sum-L1}, we have
\begin{equation}
\label{eq:sum-L1-multi}
\sum_{\mb{k}\in \mathbb{Z}^N}G_{t,N}(\mb{k})=1
\end{equation}
and, due to the positivity of $G_{t,N}$,
\[
|W_t f(\mb{n})|\le \|f\|_{\ell^{\infty}(\mathbb{Z}^N)}\sum_{\mb{k}\in \mathbb{Z}^N}G_{t,N}(\mb{n}-\mb{k})\le \|f\|_{\ell^{\infty}(\mathbb{Z}^N)}.
\]
So $W_t$ is well defined for any sequence in $\ell^\infty(\mathbb{Z}^N)$. Moreover, from \eqref{eq:Ik-zero} and \eqref{eq:Neumann}, we deduce the properties
\[
W_0 f(\mb{n})=f(\mb{n})\qquad \text{and}\qquad W_{t_1}W_{t_2}f(\mb{n})=W_{t_1+t_2}f(\mb{n})
\]
and the family of operators $W_t$ is a semigroup. By using again the positive of the heat kernel, we obtain that $W_t$ is a positive operator (it is positive for positive sequences) and from \eqref{eq:sum-L1} it is possible to prove that it is Markovian; i. e.,
\[
W_t \mb{1}(\mb{n})=\mb{1}(\mb{n}),
\]
where $\mb{1}(\mb{n})$ is the sequence having one in all its entries.

Defining the convolution on $\mathbb{Z}^N$ by
\[
f\ast g(\mb{n})=\sum_{\mb{k}\in \mathbb{Z}^N}f(\mb{n}-\mb{k})g(\mb{k}),
\]
we can observe that
\[
W_t f(\mb{n})=G_{t,N}\ast f(\mb{n}).
\]
Then, by Young's inequality
\begin{equation}
\label{eq:Young}
\|f\ast g\|_{\ell^p(\mathbb{Z}^N)}\le \|f\|_{\ell^q(\mathbb{Z}^N)}\|g\|_{\ell^r(\mathbb{Z}^N)},
\end{equation}
with $1/q+1/r=1+1/p$ and $1\le p,q,r\le \infty$, we obtain that
\begin{equation}
\label{eq:contrac-heat}
\|W_t f\|_{\ell^p(\mathbb{Z}^N)}\le \|G_{t,N}\|_{\ell^1(\mathbb{Z}^N)}\|f\|_{\ell^p(\mathbb{Z}^N)}\le \|f\|_{\ell^p(\mathbb{Z}^N)},\qquad 1\le p\le \infty
\end{equation}
where in the last step we have applied \eqref{eq:sum-L1-multi}.

For a sequence $f\in \ell^1(\mathbb{Z})$, its Fourier transform is given by
\[
\mathcal{F}f(x)=\sum_{\mb{k} \in \mathbb{Z}^N}f(\mb{k})e^{2\pi i\langle \mb{k},x \rangle}, \qquad x\in [-1/2,1/2]^N.
\]
It is well known that $\mathcal{F}$ can be extended as an isometry from the space $\ell^2(\mathbb{Z}^N)$ into $L^2([-1/2,1/2]^N)$ with inverse
\[
\mathcal{F}^{-1}f(\mb{n})=\int_{[-1/2,1/2]^N}f(x)e^{-2\pi i\langle \mb{n},x \rangle}\, dx, \qquad f\in L^2([-1/2,1/2]^N).
\]
Moreover, the Parseval's identity
\[
\|\mathcal{F}f\|_{L^2([-1/2,1/2]^N)}=\|f\|_{\ell^2(\mathbb{Z}^N)}
\]
holds.

Now, from the integral representation \cite[10.32.3]{NIST}
\begin{equation}
\label{eq:integral-I}
I_n(t)=\frac{1}{\pi}\int_{0}^{\pi}e^{t\cos \theta}\cos(n\theta)\, d\theta,\qquad  n\in \mathbb{Z},
\end{equation}
we get
\[
e^{-t}I_n(t)=\int_{-1/2}^{1/2}e^{-2t\sin^2(\pi \theta)}e^{-2\pi i n \theta}\, d\theta.
\]
and
\begin{equation}
\label{eq:Gt-Fourier}
G_{t,N}(\mb{n})=\int_{[-1/2,1/2]^N} e^{-4t \sum_{k=1}^{N}\sin^2(\pi x_k)}e^{-2\pi i \langle \mb{n},x\rangle}\, dx.
\end{equation}
Then
\[
\mathcal{F}(G_{t,N})(x)=e^{-4t \sum_{k=1}^{N}\sin^2(\pi x_k)}.
\]
Hence, using
\[
\mathcal{F}(f\ast g)=\mathcal{F}f\cdot \mathcal{F}g,
\]
we deduce that
\[
\lim_{t\to 0}\|W_tf-f\|_{\ell^2(\mathbb{Z}^N)}=\lim_{t\to 0}\left\|(e^{-4t \sum_{k=1}^{N}\sin^2(\pi x_k)}-1)\mathcal{F}f\right\|_{L^2([-1/2,1/2]^N)}=0
\]
and we have the convergence in $\ell^2(\mathbb{Z}^N)$ of the heat semigroup. Moreover, due to the structure of the spaces $\ell^p(\mathbb{Z}^N)$, we have the pointwise convergence
\[
W_tf(\mb{n})\longrightarrow f(\mb{n}), \qquad \mb{n}\in \mathbb{Z}^N.
\]
Note that, by using the inequality $1-e^{-s}\le s$ with $s\ge 0$, it is verified that
\begin{equation}
\label{eq:cota-l2}
|W_t f(\mb{n})-f(\mb{n})|\le C t\|f\|_{\ell^2(\mathbb{Z}^N)}.
\end{equation}
\subsection{Decay of the heat semigroup}
Now we focus on the study of the decay of the heat semigroup in time. This is a natural question when we are deling with this operator. We start with a $\ell^q-\ell^p$ estimate. The analogous for $\mathbb{R}^N$ can be seen, for example, in \cite[Ch. 3, p. 44]{Caze}. Our discrete result can be found in \cite{Ignat} but we include here it to do this paper self-contained and to provide a new and simple proof.
\begin{thm}
\label{thm:bound-heat}
For $1\le q\le p\le \infty$, the inequality
\begin{equation}
\label{eq:decay}
\|W_tf\|_{\ell^p(\mathbb{Z}^N)}\le C t^{-N/2(1/q-1/p)}\|f\|_{\ell^q(\mathbb{Z}^N)}
\end{equation}
holds.
\end{thm}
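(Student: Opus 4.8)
The plan is to interpolate between two endpoint estimates: the trivial $\ell^1\to\ell^1$ (equivalently $\ell^\infty\to\ell^\infty$) contraction already recorded in \eqref{eq:contrac-heat}, and an $\ell^1\to\ell^\infty$ decay estimate. Since $W_tf=G_{t,N}\ast f$, Young's inequality \eqref{eq:Young} reduces everything to bounding $\|G_{t,N}\|_{\ell^r(\mathbb{Z}^N)}$ for the appropriate $r$ with $1/r=1-(1/q-1/p)$. By the product structure $G_{t,N}(\mb{n})=\prod_{k=1}^N(e^{-2t}I_{n_k}(2t))$ we have
\[
\|G_{t,N}\|_{\ell^r(\mathbb{Z}^N)}=\Bigl(\sum_{m\in\mathbb{Z}}(e^{-2t}I_m(2t))^r\Bigr)^{N/r},
\]
so the whole problem collapses to the one-dimensional estimate $\sum_{m\in\mathbb{Z}}(e^{-2t}I_m(2t))^r\le C\, t^{-(r-1)/2}$ for $t$ large, together with the obvious bound for $t$ bounded (where $G_{t,N}\in\ell^1$ uniformly and $t^{-N/2(1/q-1/p)}$ is bounded below, so the estimate is immediate from \eqref{eq:contrac-heat}).

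For the one-dimensional sum I would split according to whether $|m|\le 2t$ or $|m|>2t$. In the range $|m|\le 2t$, the uniform bound \eqref{eq:bound-I} with $\alpha=0$ (or any small $\alpha$; here $e^{-x}I_a(x)\le C$ suffices, but I want the decay) is not quite enough on its own because it only gives boundedness; instead I would use \eqref{eq:bound-I} in the scaled form $e^{-2t}I_m(2t)=(2t)^{-\alpha}e^{-2t}I_m(2t)\cdot(2t)^\alpha\le C\,(2t)^\alpha/(m+1)^{2\alpha+1}$ — no, more directly: apply \eqref{eq:bound-I} with a fixed $\alpha\in(-1/2,0)$ so that $(2t)^{-\alpha}e^{-2t}I_m(2t)\le C/(m+1)^{2\alpha+1}$, giving $e^{-2t}I_m(2t)\le C(2t)^{\alpha}(m+1)^{-2\alpha-1}$; then raise to the $r$-th power and sum, which converges in $m$ once $r(2\alpha+1)>1$, i.e. for $\alpha$ close enough to $-1/2$, yielding a factor $t^{r\alpha}$. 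Choosing $\alpha$ appropriately (letting $\alpha\downarrow -1/2$) one recovers the exponent $-(r-1)/2$ up to an $\varepsilon$; to get it cleanly one should instead use the sharper asymptotic and split the sum into $|m|\lesssim\sqrt t$ and $\sqrt t\lesssim|m|\le 2t$. In the former block there are $O(\sqrt t)$ terms, each of size $O(t^{-1/2})$ by the local limit / saddle-point behaviour (again extractable from \eqref{eq:bound-I} with $\alpha=1/2$: $e^{-2t}I_m(2t)\le C(2t)^{1/2}/(m+1)^{2}$ is the wrong direction, so one uses $\alpha$ slightly below $1/2$, or the standard fact $e^{-2t}I_m(2t)\le Ct^{-1/2}$ uniformly); the tail block is geometrically small thanks to the exponential decay of $I_m(2t)$ for $|m|>2t$ visible from \eqref{eq:asym-I}. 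Adding the blocks gives $\sum_m(e^{-2t}I_m(2t))^r\le C t^{-1/2}\cdot t^{-(r-1)/2}\cdot$(something)$\,=C t^{-(r-1)/2}$ after checking the bookkeeping.

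Once $\|G_{t,N}\|_{\ell^r(\mathbb{Z}^N)}\le C\,t^{-\frac N2(1-1/r)}=C\,t^{-\frac N2(1/q-1/p)}$ is in hand for all $t>0$ (large $t$ from the above, small/bounded $t$ from $\|G_{t,N}\|_{\ell^r}\le\|G_{t,N}\|_{\ell^1}\le 1$ combined with $t^{-N/2(1/q-1/p)}\ge c$ on $0<t\le 1$), Young's inequality \eqref{eq:Young} with $1/q+1/r=1+1/p$ finishes the proof. The main obstacle is the sharp one-dimensional kernel estimate: getting the exact power $t^{-1/2}$ per coordinate rather than $t^{-1/2+\varepsilon}$ requires a genuinely Gaussian-type (local central limit) control on $e^{-2t}I_m(2t)$, which the crude uniform bound \eqref{eq:bound-I} supplies only after an optimization over the auxiliary parameter $\alpha$ or, more robustly, after combining \eqref{eq:bound-I} for the central regime with the large-order asymptotic \eqref{eq:asym-I} for the tail; carefully splitting the sum at $|m|\asymp\sqrt t$ and at $|m|\asymp t$ and summing the three pieces is the only nontrivial bit of work.
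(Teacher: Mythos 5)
Your proposal is correct in substance but takes a genuinely more laborious route than the paper. Both arguments reduce, via Young's inequality \eqref{eq:Young}, to the kernel estimate $\|G_{t,N}\|_{\ell^r(\mathbb{Z}^N)}\le Ct^{-N/2(1-1/r)}$; from there the paper finishes in one line by Littlewood's inequality \eqref{eq:Lya} combined with $\|G_{t,N}\|_{\ell^1(\mathbb{Z}^N)}=1$ (that is, \eqref{eq:sum-L1-multi}) and the uniform pointwise bound $G_{t,N}(\mb{n})\le Ct^{-N/2}$ obtained from \eqref{eq:AM-GM-I} and \eqref{eq:bound-I} with $\alpha=-1/2$ (see \eqref{eq:size-G-t}), with no case distinction in $t$ and no regime splitting in $\mb{n}$. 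You instead use the product structure to reduce to the one-dimensional sum $\sum_m(e^{-2t}I_m(2t))^r$ and estimate it by splitting at $|m|\asymp\sqrt t$ and $|m|\asymp t$. This does work, and in fact more cleanly than you fear: on the block $|m|\gtrsim\sqrt t$ a single \emph{fixed} $\alpha$ with $r(2\alpha+1)>1$ in \eqref{eq:bound-I} already gives $t^{r\alpha}\sum_{m\ge\sqrt t}m^{-r(2\alpha+1)}\le Ct^{-(r-1)/2}$, so there is no $\varepsilon$-loss, the deferred ``bookkeeping'' is harmless, and the extra tail split at $|m|\asymp t$ via \eqref{eq:asym-I} is not even needed. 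Two remarks, though: the uniform bound $e^{-2t}I_m(2t)\le Ct^{-1/2}$ is \eqref{eq:bound-I} with $\alpha=-1/2$ (not $\alpha=1/2$, as you first write); and, more to the point, the two facts you already have in hand --- this uniform bound and $\sum_m e^{-2t}I_m(2t)=1$ --- yield the sharp estimate immediately through $\sum_m a_m^r\le\bigl(\sup_m a_m\bigr)^{r-1}\sum_m a_m$, which is precisely the interpolation step \eqref{eq:Lya} used in the paper; so what you single out as ``the only nontrivial bit of work'' can be dispensed with entirely.
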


\begin{proof}
From the Young's inequality for the convolution, it is enough to prove that
\begin{equation}
\label{eq:upper-G}
\|G_{t,N}\|_{\ell^r(\mathbb{Z}^N)}\le C t^{-N/2(1-1/r)}
\end{equation}
and to do that we use Littlewood's inequality
\begin{equation}
\label{eq:Lya}
\|f\|_{\ell^r(\mathbb{Z}^N)}\le \|f\|_{\ell^1(\mathbb{Z}^N)}^{1/r}\|f\|_{\ell^\infty(\mathbb{Z}^N)}^{1-1/r}, \qquad 1\le r\le\infty,
\end{equation}
the identity \eqref{eq:sum-L1-multi} and the bound $G_{t, N}(\mb{n})\le C t^{-N/2}$.
This last estimate can be deduced from \eqref{eq:AM-GM-I} and \eqref{eq:bound-I} with $\alpha=-1/2$. Indeed, using the notations $\mb{n}_+=(|n_1|,\dots,|n_N|)$ and $|\mb{n}|=|n_1|+\cdots+|n_N|$ and taking into account that $G_{t,N}(\mb{n})=G_{t,N}(\mb{n}_+)$, it is verified that
\begin{equation}
\label{eq:size-G-t}
G_{t, N}(\mb{n})\le \left(e^{-2t}I_{|\mb{n}|/N}(2t)\right)^N=t^{-N/2}\left(t^{1/2}e^{-2t}I_{|\mb{n}|/N}(2t)\right)^N\le Ct^{-N/2}. \qedhere
\end{equation}
\end{proof}

\begin{rem}
(a)  By using \eqref{eq:AM-GM-I} and \eqref{eq:bound-I} with $\alpha=0$, it is easy to check that 
\begin{equation}
\label{eq:Gt-decay-n}
G_{t,N}(\mb{n})\le C (1+|\mb{n}|)^{-N}\le C. 
\end{equation}
Then, for $0<t<T$, with $T$ finite,  we deduce the bound
\begin{equation}
\label{eq:heat-0}
\|G_{t,N}\|_{\ell^r(\mathbb{Z}^N)}\le C
\end{equation}
and the estimate
\begin{equation*}
\|W_tf\|_{\ell^p(\mathbb{Z}^N)}\le C \|f\|_{\ell^q(\mathbb{Z}^N)}.
\end{equation*}
Moreover, combining \eqref{eq:decay} and \eqref{eq:heat-0}, it is possible to obtain the inequality
\[
\|W_tf\|_{\ell^p(\mathbb{Z}^N)}\le C\min\{1, t^{-N/2(1/q-1/p)}\} \|f\|_{\ell^q(\mathbb{Z}^N)}, \qquad 1\le q\le p\le \infty.
\]

(b) From the previous theorem it is possible to prove that for $t>1$
\begin{equation}
\label{eq:sim-G}
\|G_{t,N}\|_{\ell^r(\mathbb{Z}^N)}\sim t^{-N/2(1-1/r)}, \qquad 1\le r\le 2.
\end{equation}
The upper bound is given in \eqref{eq:upper-G}. To prove the lower one, we consider the sequence $f(\mb{k})=G_{t,N}(\mb{k})$ to obtain
\[
W_tf(\mb{n})=\sum_{\mb{k}\in \mathbb{Z}^N}G_{t,N}(\mb{k})G_{t,N}(\mb{n-k})=G_{2t,N}(\mb{n}),
\]
where in the las step we have applied \eqref{eq:Neumann}. Then, from \eqref{eq:decay}, we deduce that
\[
\|G_{2t,N}\|_{\ell^2(\mathbb{Z}^N)}\le t^{-N/2(1/r-1/2)}\|G_{t,N}\|_{\ell^r(\mathbb{Z}^N)}, \qquad 1\le r \le 2.
\]
In this way, proving that
\begin{equation}
\label{eq:lower-G}
\|G_{2t,N}\|_{\ell^2(\mathbb{Z}^N)}\ge C t^{-N/4}
\end{equation}
the proof of the lower bound in \eqref{eq:sim-G} will be completed. From Parseval's identity
\begin{align*}
\|G_{2t,N}\|_{\ell^2(\mathbb{Z}^N)}^2&=\int_{[-1/2,1/2]^N}e^{-8t\sum_{k=1}^{N}\sin^2(\pi x_k)}\, dx=
\left(\int_{-1/2}^{1/2}e^{-8t\sin^2(\pi s)}\, ds\right)^N
\\&\ge \left(\int_{-1/(2\sqrt{t})}^{1/(2\sqrt{t})}e^{-8t\sin^2(\pi s)}\, ds\right)^N\ge Ct^{-N/2},
\end{align*}
where we have used that $e^{-8t\sin^2(\pi s)}\ge K$ for $s\in [-1/(2\sqrt{t}),1/(2\sqrt{t})]$, and the proof of \eqref{eq:lower-G} is finished.
\end{rem}

The next result contains the main novelty of this subsection. For the classical heat equation in $\mathbb{R}^N$, with an initial data having polynomial decay at infinity, in \cite{Duo-Zua} the authors study how the mass of the solution is distributed for large value of $t$. In fact, it is proved the inequality
\[
\left\|H_tf-\left(\int_{\mathbb{R}^N}f(x)\, dx\right)G_t\right\|_{L^p(\mathbb{R}^N)}\le C t^{-1/2-N/2(1/q-1/p)}\||\cdot|f\|_{L^q(\mathbb{R}^N)},
\]
where $H_t$ is the solution of the heat equation, $G_t$ its kernel (the Gaussian function), $1\le q\le p\le\infty$, $1\le q<N/(N-1)$, $f\in L^1(\mathbb{R}^N)$, and $|\cdot|f\in L^q(\mathbb{R}^N)$.
Remember that the mass of the solution $H_t$ is a conservative quantity; i.e.,
\[
\int_{\mathbb{R}^N}H_tf(x)\, dx=\int_{\mathbb{R}^N}f(x)\, dx
\]
Note that \eqref{eq:sum-L1-multi} implies that
\[
\sum_{\mb{k}\in \mathbb{Z}^N}W_tf(\mb{k})=\sum_{\mb{k}\in \mathbb{Z}^N}f(\mb{k}),
\]
so in the discrete setting the mass of the solution of the heat equation is conservative also. 
The discrete analogous of this result is given in \cite{Ignat} only in the case $q=1$, with $1\le p \le \infty$, and the case $1<q\le p$ is left as an open problem. Our next result closes this problem completely.


\begin{thm}
\label{thm:decay}
Let  $N\ge 1$, $1\le q\le p\le\infty$, $1\le q<N/(N-1)$, and $|\cdot|f\in \ell^q(\mathbb{Z}^N)$. Then the inequality
\begin{equation}
\label{eq:DZI}
\left\|W_tf-\left(\sum_{\mb{k}\in \mathbb{Z}^N}f(\mb{k})\right)G_{t,N}\right\|_{\ell^p(\mathbb{Z}^N)}\le C t^{-1/2-N/2(1/q-1/p)}\||\cdot|f\|_{\ell^q(\mathbb{Z}^N)}
\end{equation}
holds.
\end{thm}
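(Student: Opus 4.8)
The plan is to start from the identity
\[
W_tf(\mb{n})-\Big(\sum_{\mb{k}\in\mathbb{Z}^N}f(\mb{k})\Big)G_{t,N}(\mb{n})=\sum_{\mb{k}\not=\mathbf{0}}\big(G_{t,N}(\mb{n}-\mb{k})-G_{t,N}(\mb{n})\big)f(\mb{k}),
\]
in which the term $\mb{k}=\mathbf{0}$ disappears. First note that, since $q<N/(N-1)$ means $q'>N$ (with $q'$ the conjugate exponent of $q$), Hölder's inequality and $\sum_{\mb{k}\not=\mathbf{0}}|\mb{k}|^{-q'}<\infty$ give $f\in\ell^1(\mathbb{Z}^N)$ whenever $|\cdot|f\in\ell^q(\mathbb{Z}^N)$, so $\sum_{\mb{k}}f(\mb{k})$ is well defined and all the rearrangements are legitimate. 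Taking $\ell^p(\mathbb{Z}^N)$ norms, using Minkowski's inequality in $\mb{k}$ and then Hölder's inequality with exponents $q$ and $q'$ (writing $f(\mb{k})=|\mb{k}|f(\mb{k})\cdot|\mb{k}|^{-1}$), we arrive at
\[
\Big\|W_tf-\Big(\sum_{\mb{k}}f(\mb{k})\Big)G_{t,N}\Big\|_{\ell^p(\mathbb{Z}^N)}\le \||\cdot|f\|_{\ell^q(\mathbb{Z}^N)}\Big(\sum_{\mb{k}\not=\mathbf{0}}\frac{\|G_{t,N}(\cdot-\mb{k})-G_{t,N}\|_{\ell^p(\mathbb{Z}^N)}^{q'}}{|\mb{k}|^{q'}}\Big)^{1/q'},
\]
with the usual replacement by a supremum when $q=1$. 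Thus everything is reduced to estimating increments of the kernel, and the hypothesis $q'>N$ is precisely what will make the resulting series summable.

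To bound the increments I exploit the product structure $G_{t,N}(\mb{n})=\prod_{j=1}^{N}g_t(n_j)$, where $g_t(m)=e^{-2t}I_m(2t)=G_{t,1}(m)$. Telescoping first over the $N$ coordinates and then, inside each coordinate, by unit increments, one writes $G_{t,N}(\mb{n}-\mb{k})-G_{t,N}(\mb{n})$ as a sum of $|\mb{k}|=|k_1|+\dots+|k_N|$ terms, each of which is (up to a sign) a tensor product of $N-1$ translates of $g_t$ and one translate of the first difference $\mathcal{D}g_t(m):=g_t(m)-g_t(m+1)$. Since the $\ell^p(\mathbb{Z}^N)$ norm of such a tensor product is the product of the one-dimensional norms and is translation invariant, we get, with $A:=\|g_t\|_{\ell^p(\mathbb{Z})}$ and $B:=\|\mathcal{D}g_t\|_{\ell^p(\mathbb{Z})}$,
\[
\|G_{t,N}(\cdot-\mb{k})-G_{t,N}\|_{\ell^p(\mathbb{Z}^N)}\le A^{N-1}\min\{2A,\,|\mb{k}|\,B\},
\]
where the alternative $2A$ is just the triangle inequality, using $\|G_{t,N}\|_{\ell^p(\mathbb{Z}^N)}=A^N$. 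Now \eqref{eq:upper-G} in dimension one gives $A\le Ct^{-\frac12(1-1/p)}$. For $B$ I estimate $\mathcal{D}g_t$ at the two endpoint exponents: by \eqref{eq:par} the sequence $g_t$ is even and strictly decreasing on $\{0,1,2,\dots\}$ with limit $0$, so $\|\mathcal{D}g_t\|_{\ell^1(\mathbb{Z})}=2g_t(0)\le Ct^{-1/2}$ by \eqref{eq:bound-I} (with $\alpha=-1/2$); and for $m\ge1$, \eqref{eq:diff-I} and \eqref{eq:bound-I} (used with $\alpha=\pm 1/2$) give $|g_t(m)-g_t(m+1)|<\frac{m+1}{2t}\,g_t(m)\le \frac{m+1}{2t}\min\{Ct^{-1/2},Ct^{1/2}(m+1)^{-2}\}\le Ct^{-1}$, while $|g_t(0)-g_t(1)|<\frac{1}{2t}g_t(0)\le Ct^{-1}$; by evenness these cover all indices, so $\|\mathcal{D}g_t\|_{\ell^\infty(\mathbb{Z})}\le Ct^{-1}$. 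Littlewood's inequality \eqref{eq:Lya} then yields $B\le Ct^{-\frac12-\frac12(1-1/p)}$.

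Finally, I insert these bounds into the series and split it at $|\mb{k}|=R:=2A/B$: on $\{0<|\mb{k}|\le R\}$ the minimum is $|\mb{k}|B$ and on $\{|\mb{k}|>R\}$ it is $2A$, whence, using the elementary estimates $\#\{\mb{k}:0<|\mb{k}|\le R\}\le CR^N$ and $\sum_{|\mb{k}|>R}|\mb{k}|^{-q'}\le CR^{N-q'}$ (valid for every $R>0$ because $q'>N$) together with $R^N B^{q'}=2^NA^N B^{q'-N}$ and $R^{N-q'}A^{q'}=2^{N-q'}A^N B^{q'-N}$,
\[
\sum_{\mb{k}\not=\mathbf{0}}\frac{\|G_{t,N}(\cdot-\mb{k})-G_{t,N}\|_{\ell^p(\mathbb{Z}^N)}^{q'}}{|\mb{k}|^{q'}}\le C\,A^{(N-1)q'+N}\,B^{q'-N},
\]
so that the last factor in the displayed estimate is at most $C\,A^{\,N-1+N/q'}\,B^{\,1-N/q'}$. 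Substituting $A\le Ct^{-\frac12(1-1/p)}$ and $B\le Ct^{-\frac12-\frac12(1-1/p)}$ and collecting exponents (with $\gamma=1-1/p$ and $\beta=1/q'$ one gets the power $-\tfrac\gamma2(N-1+N\beta)+(-\tfrac12-\tfrac\gamma2)(1-N\beta)=-\tfrac12-\tfrac N2(\gamma-\beta)=-\tfrac12-\tfrac N2(1/q-1/p)$) gives exactly \eqref{eq:DZI}. The only genuinely delicate point is the pair of one-dimensional bounds for $\mathcal{D}g_t$ obtained from the Bessel estimates established above, which encode the fact that a discrete first difference of the heat kernel gains a full power $t^{-1/2}$; the rest is bookkeeping. (One could alternatively first reduce to the diagonal case $p=q$ by means of the semigroup identity $W_tf-(\sum_{\mb{k}}f(\mb{k}))G_{t,N}=W_{t/2}\big(W_{t/2}f-(\sum_{\mb{k}}f(\mb{k}))G_{t/2,N}\big)$ and Theorem \ref{thm:bound-heat}, but the argument above already handles all $p$ at once.)
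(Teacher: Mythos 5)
Your proof is correct, but it follows a genuinely different route from the paper's. The paper treats $N=1$ separately (writing the error as $(\delta^{+}G_{t,1})\ast F$ with $F$ the tail sums of $f$, then Young's inequality, \eqref{eq:norm-diff} and Hardy's inequality) and, for $N>1$, proves the two endpoint cases $p=q$ and $p=\infty$ by splitting the sum over the three regions $|\mb{k}|\le|\mb{n}|/2$, $|\mb{k}|\sim|\mb{n}|$, $|\mb{k}|\ge2|\mb{n}|$, invoking Lemma \ref{lem:diff-G}, \eqref{eq:norm-H}, Lemma \ref{lem:norm-Gt-weight} and \eqref{eq:bound-diff-P}, and finally interpolates. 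You instead bound the error directly by $\sum_{\mb{k}\neq\mb{0}}|f(\mb{k})|\,\|G_{t,N}(\cdot-\mb{k})-G_{t,N}\|_{\ell^p}$, exploit the tensor structure of the kernel to telescope the increment into unit differences of $G_{t,1}$, obtaining the key bound $A^{N-1}\min\{2A,|\mb{k}|B\}$ with $A\le Ct^{-\frac12(1-1/p)}$ (from \eqref{eq:upper-G}) and $B\le Ct^{-1+\frac1{2p}}$ (from \eqref{eq:diff-I}, \eqref{eq:bound-I} and \eqref{eq:Lya}), and then apply H\"older with the weight $|\mb{k}|^{-1}$ and optimize the split at $|\mb{k}|\sim A/B\sim\sqrt t$; the hypothesis $q<N/(N-1)$ enters exactly as $q'>N$, which makes $\sum_{\mb{k}\neq\mb{0}}|\mb{k}|^{-q'}$ and the tail sums converge, and I checked that your exponent bookkeeping, including the endpoint cases $q=1$ ($q'=\infty$, supremum) and $p=\infty$, does give $-\tfrac12-\tfrac N2(1/q-1/p)$. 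Your argument handles all $1\le q\le p\le\infty$ in one stroke, needs no interpolation and none of the multidimensional difference lemmas (only one-dimensional Bessel estimates), which is a real simplification; what the paper's longer route buys is a set of pointwise, region-wise estimates (\eqref{eq:DZI-pp}, \eqref{eq:DZI-Inq}, Lemma \ref{lem:diff-G}, \eqref{eq:norm-H}) that are reused elsewhere in the paper, e.g.\ for the maximal operators and the kernels $K_\sigma$. Two small points in your write-up deserve an explicit word but are correct as used: the bound $|g_t(0)-g_t(1)|\le Ct^{-1}$ for small $t$ requires $g_t(0)\le1$ rather than $g_t(0)\le Ct^{-1/2}$, and the case $m\ge1$ uses $\min\{a,b\}\le\sqrt{ab}$ to get the uniform $Ct^{-1}$.
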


To prove our result we need some preliminart lemmas.
\begin{lem}
\label{lem:diff-G}
  Let $N\ge 1$, $\mb{n},\mb{m}\in \mathbb{Z}^N$ such that $|\mb{n}|>2|\mb{n}-\mb{m}|$,  and
  \[
  H_{t,N}(z)=\frac{z}{t}
  \left(G_{t,1}\left(z\right)\right)^N,\qquad z>0.
  \]
  Then the inequality
  \begin{equation}
  \label{eq:diff-G}
  |G_{t,N}(\mb{n})-G_{t,N}(\mb{m})|\le C |\mb{n}-\mb{m}| H_{t,N}\left(\frac{|\mb{n}|+|\mb{m}|}{K}\right)
  \end{equation}
  holds with constants $C$ and $K>1$ independent of $\mb{n}$ and $\mb{m}$.
\end{lem}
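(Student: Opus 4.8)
The plan is to reduce to the case of nonnegative entries, telescope the product defining $G_{t,N}$, and then combine the one–dimensional estimate \eqref{eq:diff-I} with the \emph{upper} bound in the AM--GM inequality \eqref{eq:AM-GM-I}. All constants produced will depend only on $N$.

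First I would reduce to the case $n_k,m_k\ge 0$ for every $k$. Since $I_{-k}=I_k$ we have $G_{t,N}(\mb{n})=G_{t,N}(\mb{n}_+)$ and $|\mb{n}_+|=|\mb{n}|$ (and likewise for $\mb{m}$), while $|\mb{n}_+-\mb{m}_+|=\sum_k\big||n_k|-|m_k|\big|\le\sum_k|n_k-m_k|=|\mb{n}-\mb{m}|$; in particular $\mb{n}_+,\mb{m}_+$ still satisfy $|\mb{n}_+|>2|\mb{n}_+-\mb{m}_+|$, so the general case follows from applying the inequality to $\mb{n}_+,\mb{m}_+$. Next I would record the one–variable estimate: for integers $a\ge b\ge 0$ and $x>0$, telescoping \eqref{eq:diff-I} and using $I_j(x)\le I_b(x)$ for $j\ge b$ gives
\[
0\le I_b(x)-I_a(x)=\sum_{j=b}^{a-1}\big(I_j(x)-I_{j+1}(x)\big)<(a-b)\,\frac{a}{x}\,I_b(x).
\]
Then I would use the telescoping identity
\[
G_{t,N}(\mb{n})-G_{t,N}(\mb{m})=\sum_{j=1}^N\Big(\prod_{k<j}e^{-2t}I_{m_k}(2t)\Big)\big(e^{-2t}I_{n_j}(2t)-e^{-2t}I_{m_j}(2t)\big)\Big(\prod_{k>j}e^{-2t}I_{n_k}(2t)\Big)
\]
and bound the $j$–th summand. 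Assuming $n_j>m_j$ (the case $n_j<m_j$ is symmetric, $n_j=m_j$ gives $0$), the displayed one–variable bound with $x=2t$ controls the middle factor, so the $j$–th summand is at most
\[
(n_j-m_j)\,\frac{n_j}{2t}\,e^{-2tN}\Big(\prod_{k\le j}I_{m_k}(2t)\Big)\Big(\prod_{k>j}I_{n_k}(2t)\Big),
\]
and here I would apply the right–hand inequality of \eqref{eq:AM-GM-I} to the $N$ orders $m_1,\dots,m_j,n_{j+1},\dots,n_N$, bounding the product of Bessel factors by $\big(I_A(2t)\big)^N$ with $A=\frac1N\big(\sum_{k\le j}m_k+\sum_{k>j}n_k\big)$.

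The heart of the matter is to keep $A$ bounded below by a multiple of $|\mb{n}|+|\mb{m}|$, and this is exactly where the hypothesis enters. Writing $\sum_{k\le j}m_k+\sum_{k>j}n_k=|\mb{n}|-\sum_{k\le j}(n_k-m_k)\ge|\mb{n}|-|\mb{n}-\mb{m}|$ together with $|\mb{n}-\mb{m}|<\tfrac12|\mb{n}|$ gives $\sum_{k\le j}m_k+\sum_{k>j}n_k>\tfrac12|\mb{n}|$; and $|\mb{m}|\le|\mb{n}|+|\mb{n}-\mb{m}|<\tfrac32|\mb{n}|$ gives $|\mb{n}|+|\mb{m}|<\tfrac52|\mb{n}|$, so $A>\frac{1}{5N}(|\mb{n}|+|\mb{m}|)$. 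By monotonicity of $I_a$ in the order, $\big(I_A(2t)\big)^N\le\big(I_{(|\mb{n}|+|\mb{m}|)/(5N)}(2t)\big)^N$, and then $e^{-2tN}\big(I_{(|\mb{n}|+|\mb{m}|)/(5N)}(2t)\big)^N=\big(G_{t,1}((|\mb{n}|+|\mb{m}|)/(5N))\big)^N$ is, by the definition of $H_{t,N}$, a multiple of $H_{t,N}((|\mb{n}|+|\mb{m}|)/(5N))$. Combining this with $n_j\le|\mb{n}|\le|\mb{n}|+|\mb{m}|$ makes the $j$–th summand at most $\tfrac{5N}{2}\,|n_j-m_j|\,H_{t,N}\big((|\mb{n}|+|\mb{m}|)/(5N)\big)$; summing over $j$ and using $\sum_j|n_j-m_j|=|\mb{n}-\mb{m}|$ yields \eqref{eq:diff-G} with $K=5N$ and $C=\tfrac{5N}{2}$. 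Everything but the lower bound on $A$ is routine; that bound — which also guarantees that all the orders appearing are comparable to $|\mb{n}|+|\mb{m}|$ — is the only point where $|\mb{n}|>2|\mb{n}-\mb{m}|$ is genuinely used.
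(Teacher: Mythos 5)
Your proof is correct. It shares the paper's skeleton — reduce to nonnegative entries via $I_{-k}=I_k$, telescope the product (your identity is exactly \eqref{eq:rara}), control the one-dimensional difference by summing \eqref{eq:diff-I}, and collapse the remaining product with the upper bound in \eqref{eq:AM-GM-I} so that everything is expressed through $G_{t,1}$ at the averaged order. Where you genuinely diverge is in the final, crucial step of bounding that averaged order from below: the paper first reduces to the case where all coordinates of $\mb{n}$ and $\mb{m}$ differ, then invokes permutation invariance and the monotone arrangement \eqref{eq:mono-n} and runs a case-by-case analysis ($i=1$, $2\le i\le N-1$, $i=N$, each split according to which of $n_i,m_i$ is smaller), whereas you observe that for \emph{any} mixed sum of the form $\sum_{k\in S}m_k+\sum_{k\notin S}n_k$ one has the one-line estimate $\ge|\mb{n}|-|\mb{n}-\mb{m}|>\tfrac12|\mb{n}|>\tfrac15(|\mb{n}|+|\mb{m}|)$, directly from the hypothesis $|\mb{n}|>2|\mb{n}-\mb{m}|$. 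This makes both of the paper's preliminary reductions (distinct coordinates, rearrangement) unnecessary, handles the cases $n_j>m_j$ and $n_j<m_j$ uniformly, and yields explicit constants ($K=5N$, $C=5N/2$); what the paper's longer route buys is essentially nothing extra for this lemma, so your streamlining is a genuine improvement rather than a loss of generality. Two cosmetic points: state the telescoped one-variable bound with $\le$ (it is an equality $0=0$ when $a=b$), and note once that the reduced pair $\mb{n}_+,\mb{m}_+$ inherits the hypothesis because $|\mb{n}_+|=|\mb{n}|$ while $|\mb{n}_+-\mb{m}_+|\le|\mb{n}-\mb{m}|$ — which you do.
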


\begin{proof}
Without lost of generality we suppose that $n_i\not= m_i$ for $i=1,\dots,N$. In the opposite case the situation can be obviously reduced to this one easily. Indeed, let us suppose that $n_i=m_i$ for $j$ values of $i$ with $0<j<N$, then we consider the decompositions
\[
\mb{n}=\mb{n}_1\cup \mb{n}_2\qquad \text{and} \qquad \mb{m}=\mb{n}_1\cup \mb{m}_2,
\]
where $\mb{n}_1$ are the common values of $\mb{n}$ and $\mb{m}$ and $\mb{n}_2$ and $\mb{m}_2$ are the other components.
Then, if we suppose that the result has been proved for different values, we have
\begin{align*}
|G_{t,N}(\mb{n})-G_{t,N}(\mb{m})|&=G_{t,j}(\mb{n}_1)|G_{t,N-j}(\mb{n}_2)-G_{t,N-j}(\mb{m}_2)|\\&\le C
G_{t,j}(\mb{n}_1)|\mb{n}_2-\mb{m}_2| H_{t,N-j}\left(\frac{|\mb{n}_2|+|\mb{m}_2|}{K}\right)\\& \le C |\mb{n}-\mb{m}| H_{t,N}\left(\frac{|\mb{n}|+|\mb{m}|}{2KN}\right).
\end{align*}
To check the last inequality we use \eqref{eq:AM-GM-I} to have
\begin{multline*}
G_{t,j}(\mb{n}_1)\left(G_{t,1}\left(\frac{|\mb{n}_2|+|\mb{m}_2|}{K}\right)\right)^{N-j}\\\le  \left(G_{t,1}\left(\frac{(N-j)(|\mb{n}_2|+|\mb{m}_2|)/K+|\mb{n}_1|}{N}\right)\right)^N,
\end{multline*}
then the inequality follows applying the monotonicity of the functions $I_a$ and the estimate
\begin{align*}
\frac{(N-j)(|\mb{n}_2|+|\mb{m}_2|)}{K}+|\mb{n}_1|&\ge \frac{(N-j)(|\mb{n}_2|+|\mb{m}_2|)+|\mb{n}_1|}{K}
\\&\ge \frac{|\mb{n}_2|+|\mb{m}_2|+2|\mb{n}_1|}{2K}=\frac{|\mb{n}|+|\mb{m}|}{2K}.
\end{align*}

  From \eqref{eq:par}, it is clear that $G_{t,N}(\mb{n})=G_{t,N}(\mb{n}_+)$. Moreover, we have to observe that $G_{t,N}(\mb{n})$ is invariant under permutations of $\mb{n}$. With these previous reductions, we can consider $\mb{n}=(n_1,\dots,n_N)$ and $\mb{m}=(m_1,\cdots,m_N)$ with
  \begin{equation}
  \label{eq:mono-n}
  0\le n_1\le n_{2}<\cdots\le n_{N}\qquad \text{and}\qquad 0\le m_{N}\le m_{N-1}\le \cdots\le m_{1}.
  \end{equation}

An important tool in the proof will be the identity
\begin{equation}
\label{eq:rara}
G_{t,N}(\mb{n})-G_{t,N}(\mb{m})
=\sum_{i=1}^{N}\left(G_{t,1}(n_i)-G_{t,1}(m_i)\right)
\prod_{j=1}^{i-1}G_{t,1}(m_j)\prod_{k=i+1}^{N}G_{t,1}(n_k),
\end{equation}
where the empty products has to be understood as one. The proof of this relation we can done induction on $N$. To do that we consider $\tilde{\mb{n}}=(\mb{n},n_{N+1})$ and $\tilde{\mb{m}}=(\mb{m},m_{N+1})$ and we apply the identity
\begin{multline*}
G_{t,N+1}(\tilde{\mb{n}})-G_{t,N+1}(\tilde{\mb{m}})=(G_{t,N}(\mb{n})-G_{t,N}(\mb{m}))G_{t,1}(n_{N+1})\\+
G_{t,N}(\mb{m})(G_{t,1}(n_{N+1})-G_{t,1}(m_{N+1})).
\end{multline*}

For $n_i<m_i$, applying \eqref{eq:diff-I} and the monotonicity of the modified Bessel with respect to order, we have
\begin{align*}
|G_{t,1}(n_i)-G_{t,1}(m_i)|&\le \sum_{j=n_i}^{m_i-1}|\delta^+ G_{t,1}(j)|
\\&\le \frac{1}{t} \sum_{j=n_i}^{m_i-1}(j+1)
G_{t,1}(j)\le \frac{(m_i-n_i)(m_i+n_i+1)}{t}G_{t,1}(n_i).
\end{align*}
Then, in general, we obtain the bound
\begin{equation}
\label{eq:lem-aux-1}
|G_{t,1}(n_i)-G_{t,1}(m_i)|\le \frac{|m_i-n_i|(m_i+n_i+1)}{2t}\max\{G_{t,1}(n_i),G_{t,1}(m_i)\}
\end{equation}
and, from \eqref{eq:rara},
\begin{multline}
\label{eq:lem-aux-2}
|G_{t,N}(\mb{n})-G_{t,N}(\mb{m})|\\\le \sum_{i=1}^{N}\frac{|m_i-n_i|(m_i+n_i+1)}{2t}\max\{G_{t,1}(n_i),G_{t,1}(m_i)\}\prod_{j=1}^{i-1} G_{t,1}(m_i)\kern-2.5pt \prod_{k=i+1}^{N}\kern-4.5pt G_{t,1}(n_k).
\end{multline}
Now, applying \eqref{eq:AM-GM-I}, we deduce the estimate
\begin{multline*}
\max\{G_{t,1}(n_i),G_{t,1}(m_i)\}\prod_{j=1}^{i-1}G_{t,1}(m_j)\prod_{k=i+1}^{N}G_{t,1}(n_k)
\\\le \left(G_{t,1}\left(\frac{m_1+\cdots+m_{i-1}+\min\{m_i,n_i\}+n_{i+1}+\cdots+n_N}{N}\right)\right)^N.
\end{multline*}
From the condition $|\mb{n}|>2|\mb{n}-\mb{m}|$, we can deduce that $|\mb{n}|/2<|\mb{m}|< 3|\mb{n}|/2$. Then
if $n_1<m_1$
\[
\min\{m_1,n_1\}+n_2+\cdots+n_N=|\mb{n}|>\frac{2}{5}(|\mb{n}|+|\mb{m}|),
\]
and if $m_1<n_1$, by our supposition \eqref{eq:mono-n}, we can prove easily that
\[
\min\{m_1,n_1\}+n_2+\cdots+n_N\ge \frac{|\mb{n}|+|\mb{m}|}{N+1}.
\]
Similarly, for $i=2,\dots,N-1$, by applying again \eqref{eq:mono-n}, we have
\[
m_1+\cdots+m_{i-1}+\min\{m_i,n_i\}+n_{i+1}+\cdots+n_N\ge \frac{|\mb{n}|+|\mb{m}|}{N+1}.
\]
Moreover, for $n_N<m_N$ it is verified that
\[
m_1+m_2+\cdots+\min\{m_N,n_N\}\ge \frac{|\mb{n}|+|\mb{m}|}{N+1},
\]
and for $m_N<n_N$
\[
m_1+m_2+\cdots+\min\{m_N,n_N\}=|\mb{m}|>\frac{|\mb{n}|+|\mb{m}|}{3}.
\]
As conclusion,
\[
\frac{m_1+\cdots+m_{i-1}+\min\{m_i,n_i\}+n_{i+1}+\cdots+n_N}{N}\ge \frac{|\mb{n}|+|\mb{m}|}{K}
\]
with $K>1$. In this way
\begin{equation*}
\max\{G_{t,1}(n_i),G_{t,1}(m_i)\}\prod_{j=1}^{i-1}G_{t,1}(m_j)\prod_{k=i+1}^{N}G_{t,1}(n_k)
\le \left(G_{t,1}\left(\frac{|\mb{n}|+|\mb{m}|}{K}\right)\right)^N
\end{equation*}
and by \eqref{eq:lem-aux-2} the proof is completed.
\end{proof}

\begin{rem}
\label{rem:lemma-heat}
(a) From our proof of \eqref{eq:diff-G} it is clear that it holds for $|\mb{n}|\sim |\mb{m}|$ also.

(b) From \eqref{eq:lem-aux-1} and \eqref{eq:lem-aux-2}, using \eqref{eq:bound-I} with $\alpha=-1/2$, it is easy to deduce that
 \begin{equation}
\label{eq:H-infty}
 |G_{t,N}(\mb{n})-G_{t,N}(\mb{m})|\le C |\mb{m}-\mb{n}|(|\mb{n}|+|\mb{m}|) t^{-N/2-1}, \qquad \mb{n},\mb{m}\in \mathbb{Z}^N.
 \end{equation}

(c) Moreover, we can check that for $|\mb{n}|\ge B|\mb{m}|$, for some positive constant $B$, the inequality
  \begin{equation}
  \label{eq:diff-porsi}
  |G_{t,N}(\mb{n})-G_{t,N}(\mb{m})|\le C \frac{|\mb{n}-\mb{m}|(|\mb{n}|+|\mb{m}|)}{t} \left(G_{t,1}\left(\frac{|\mb{m}|}{K}\right)\right)^N,
  \end{equation}
  holds for some constant $K$.
\end{rem}

The following estimate will be useful in the proof of Theorem \ref{thm:decay}.

\begin{lem}
The inequality
\begin{equation}
\label{eq:bound-diff-P}
\frac{|G_{t,N}(\mb{n})-G_{t,N}(\mb{k})|}{|\mb{n}-\mb{k}|}\le C t^{-1/2-N/2}
\end{equation}
holds
\end{lem}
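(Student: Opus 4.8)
The plan is to reduce \eqref{eq:bound-diff-P} to a uniform bound on a single coordinatewise difference of $G_{t,N}$, and then to telescope along a lattice path joining $\mb{n}$ and $\mb{k}$. Recall that $|\mb{n}-\mb{k}|=\sum_{i=1}^{N}|n_i-k_i|$ is exactly the number of unit steps of a monotone path $\mb{n}=\mb{p}^{(0)},\mb{p}^{(1)},\dots,\mb{p}^{(L)}=\mb{k}$ whose consecutive points differ by a single $\pm\mb{e}_i$. By the triangle inequality,
\[
|G_{t,N}(\mb{n})-G_{t,N}(\mb{k})|\le \sum_{\ell=1}^{L}\bigl|G_{t,N}(\mb{p}^{(\ell)})-G_{t,N}(\mb{p}^{(\ell-1)})\bigr|,
\]
so, for $\mb{n}\ne\mb{k}$, the estimate \eqref{eq:bound-diff-P} will follow (after dividing by $|\mb{n}-\mb{k}|=L$) once we show that
\[
|\delta_i^{+}G_{t,N}(\mb{p})|\le C\,t^{-1/2-N/2},\qquad \mb{p}\in\mathbb{Z}^N,\quad 1\le i\le N,\quad t>0;
\]
the same bound for $\delta_i^{-}$ is then immediate from $\delta_i^{-}G_{t,N}(\mb{p})=\delta_i^{+}G_{t,N}(\mb{p}-\mb{e}_i)$.

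To prove this bound I would first record two elementary one-dimensional facts about $G_{t,1}(m)=e^{-2t}I_{|m|}(2t)$, both read off from the uniform estimate \eqref{eq:bound-I}: that $G_{t,1}(m)\le C\,t^{-1/2}$ for every $m\in\mathbb{Z}$ (the case $\alpha=-1/2$), and that $(|m|+1)\,G_{t,1}(m)\le C$ for every $m\in\mathbb{Z}$. For the second assertion, when $|m|\le t^{1/2}$ it follows from the first, whereas when $|m|>t^{1/2}$ (so $|m|\ge 1$) it follows from \eqref{eq:bound-I} with $\alpha=1/2$, which gives $G_{t,1}(m)\le C\,t^{1/2}(|m|+1)^{-2}$; the only borderline instance, $m=0$ with $t<1$, is trivial since $e^{-2t}I_0(2t)\le 1$. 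Combining $(|m|+1)G_{t,1}(m)\le C$ with the difference estimate \eqref{eq:diff-I} — which bounds $e^{-2t}|I_a(2t)-I_{a+1}(2t)|$ by $\frac{a+1}{2t}\,e^{-2t}I_a(2t)$ — and with the reflection \eqref{eq:par} to handle $m<0$, one obtains the key gain of a full power of $t$:
\[
|\delta^{+}G_{t,1}(m)|\le \frac{C}{t},\qquad m\in\mathbb{Z},\quad t>0.
\]
Then, factoring the kernel coordinatewise,
\[
|\delta_i^{+}G_{t,N}(\mb{p})|=|\delta^{+}G_{t,1}(p_i)|\prod_{j\ne i}G_{t,1}(p_j)\le \frac{C}{t}\Bigl(\frac{C}{t^{1/2}}\Bigr)^{N-1}=C\,t^{-1/2-N/2},
\]
which is exactly what the first step requires.

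The heart of the matter — and essentially the only point that is not bookkeeping — is the passage from the obvious size $O(t^{-1/2})$ of a difference of two consecutive values to the size $O(t^{-1})$ of a single-coordinate difference; this rests entirely on the uniform bound $(|m|+1)G_{t,1}(m)\le C$, i.e.\ on playing the regime $\alpha=-1/2$ of \eqref{eq:bound-I} against the regime $\alpha=1/2$. Everything else (the telescoping over a monotone lattice path, the factorization of $G_{t,N}$, and the reflection reducing $m<0$ to $m\ge 0$) is routine. One could instead argue by cases according to whether $|\mb{n}|+|\mb{k}|$ is below a fixed multiple of $t^{1/2}$, using \eqref{eq:H-infty} in that case and the size bounds \eqref{eq:size-G-t}, \eqref{eq:Gt-decay-n} together with Lemma \ref{lem:diff-G} in the complementary one, but the telescoping argument is shorter and avoids a case distinction.
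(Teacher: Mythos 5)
Your argument is correct, but it follows a genuinely different route from the paper. The paper proves \eqref{eq:bound-diff-P} on the Fourier side: starting from \eqref{eq:Gt-Fourier} it writes the difference as an integral against $e^{-4t\sum_i\sin^2(\pi x_i)}$, uses $|(e^{-iz}-1)/z|\le C$ to absorb the factor $|\mb{n}-\mb{k}|$ at the price of a factor $|x|$, and then invokes the two elementary estimates $\int_{-1/2}^{1/2}e^{-4t\sin^2(\pi x)}\,dx\le Ct^{-1/2}$ and $\int_{-1/2}^{1/2}e^{-4t\sin^2(\pi x)}|x|\,dx\le Ct^{-1}$, so that one coordinate contributes $t^{-1}$ and the remaining $N-1$ contribute $t^{-1/2}$ each. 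You instead work on the lattice side: telescoping along an $\ell^1$-geodesic (legitimate, since $|\mb{n}-\mb{k}|=\sum_i|n_i-k_i|$ is exactly the number of unit steps) reduces the lemma to the uniform bound $\|\delta_i^{\pm}G_{t,N}\|_{\ell^\infty(\mathbb{Z}^N)}\le Ct^{-1/2-N/2}$, which you obtain from the factorization of $G_{t,N}$, the one-dimensional gain $|\delta^{+}G_{t,1}(m)|\le C/t$ coming from \eqref{eq:diff-I} together with $(|m|+1)G_{t,1}(m)\le C$, and $G_{t,1}\le Ct^{-1/2}$ for the spectator coordinates. Your key one-dimensional step is exactly the $r=\infty$ case of the paper's later estimate \eqref{eq:norm-diff} (where it is derived more directly from \eqref{eq:bound-I} with $\alpha=0$, which you could cite instead of playing $\alpha=\pm 1/2$ against each other), so in effect you are extending that bound to $N$ dimensions, whereas the paper prefers the Fourier computation for the multidimensional difference. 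The two proofs are comparable in length; the paper's needs nothing about Bessel functions beyond the integral representation, while yours produces the slightly stronger statement that every single-coordinate difference of $G_{t,N}$ is $O(t^{-1/2-N/2})$ uniformly, from which \eqref{eq:bound-diff-P} follows at once.
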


\begin{proof}
From \eqref{eq:Gt-Fourier} we have
\begin{align*}
\frac{|G_{t,N}(\mb{n})-G_{t,N}(\mb{k})|}{|\mb{n}-\mb{k}|}&\le \int_{[-1/2,1/2]^N}e^{-4t\sum_{i=1}^{N}\sin^2(\pi x_i)}\frac{|e^{-2\pi i \langle \mb{n}-\mb{k},x\rangle}-1|}{|\mb{n}-\mb{k}|}\, dx\\&\le
C \int_{[-1/2,1/2]^N}e^{-4t\sum_{i=1}^{N}\sin^2(\pi x_i)}|x|\, dx, 
\end{align*}
where we have applied that $|(e^{-iz}-1)/z|\le C$ for $z\in \mathbb{R}$. Now, with the estimates
\[
\int_{-1/2}^{1/2}e^{-4t\sin^2(\pi x)}\, dx\le C t^{-1/2} \qquad\text{and}\qquad \int_{-1/2}^{1/2}e^{-4t\sin^2(\pi x)}|x|\, dx\le C t^{-1}
\]
the result is obtained.
\end{proof}

The function $H_{t,N}$ in Lemma \eqref{lem:diff-G} will appear in convolution with a specific sequence in the proof of Theorem \eqref{thm:decay}, so we need the size of its norm.

\begin{lem}
  For $N\ge 1$ and $1\le r \le \infty$, it is verified that
  \begin{equation}
  \label{eq:norm-H}
  \left\|H_{t,N}\left(\frac{|\cdot|}{K}\right)\right\|_{\ell^r(\mathbb{Z}^N)}\le C t^{-1/2-N/2(1-1/r)},
  \end{equation}
  with the constant $C$ being independent of $t$.
\end{lem}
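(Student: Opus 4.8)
The plan is to establish the two endpoint cases $r=1$ and $r=\infty$ and then interpolate. Once we know
\begin{equation*}
\Big\|H_{t,N}\Big(\frac{|\cdot|}{K}\Big)\Big\|_{\ell^1(\mathbb{Z}^N)}\le C t^{-1/2}
\qquad\text{and}\qquad
\Big\|H_{t,N}\Big(\frac{|\cdot|}{K}\Big)\Big\|_{\ell^\infty(\mathbb{Z}^N)}\le C t^{-1/2-N/2},
\end{equation*}
the estimate \eqref{eq:norm-H} for a general $r$ follows by applying Littlewood's inequality \eqref{eq:Lya} to the single sequence $\mb{n}\mapsto H_{t,N}(|\mb{n}|/K)$, since $\big(t^{-1/2}\big)^{1/r}\big(t^{-1/2-N/2}\big)^{1-1/r}=t^{-1/2-N/2(1-1/r)}$. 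From the definition of $H_{t,N}$ in Lemma \ref{lem:diff-G} and the equality $G_{t,1}(z)=e^{-2t}I_z(2t)$ we have $H_{t,N}(|\mb{n}|/K)=\dfrac{|\mb{n}|}{Kt}\big(e^{-2t}I_{|\mb{n}|/K}(2t)\big)^N$. We may assume $t\ge 1$, the bounded range of $t$ being elementary.

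The key device for both endpoints is to use the uniform bound \eqref{eq:bound-I} with \emph{two different} choices of the exponent $\alpha$, according to a splitting of $\mathbb{Z}^N$ into $\{|\mb{n}|\le\sqrt t\,\}$ and $\{|\mb{n}|>\sqrt t\,\}$. On the first region we take $\alpha=-1/2$, i.e.\ $e^{-2t}I_{|\mb{n}|/K}(2t)\le C t^{-1/2}$, exactly as in \eqref{eq:size-G-t}. On the second region we fix an $\alpha$ with $2N\alpha>1$ (say $\alpha=1$) and use $e^{-2t}I_{|\mb{n}|/K}(2t)\le C t^{\alpha}(1+|\mb{n}|)^{-(2\alpha+1)}$, which is \eqref{eq:bound-I} after absorbing the powers of $K$; this is legitimate once $|\mb{n}|/K>\alpha$, and the finitely many remaining indices are harmless (indeed, for $1\le t\le K^2$ all the quantities involved are bounded above and below by positive constants, so the claimed inequality is trivial there).

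For $r=\infty$ the first region contributes $\dfrac{\sqrt t}{Kt}(Ct^{-1/2})^N\le Ct^{-1/2-N/2}$, while on the second region, using $1-N(2\alpha+1)<0$,
\begin{align*}
H_{t,N}\Big(\frac{|\mb{n}|}{K}\Big)&\le C\,t^{N\alpha-1}\,|\mb{n}|\,(1+|\mb{n}|)^{-N(2\alpha+1)}\le C\,t^{N\alpha-1}(1+|\mb{n}|)^{1-N(2\alpha+1)}\\
&\le C\,t^{N\alpha-1}(\sqrt t)^{1-N(2\alpha+1)}=C\,t^{-1/2-N/2}.
\end{align*}
For $r=1$ we group the lattice points according to the value $j=|\mb{n}|$, using $\#\{\mb{n}\in\mathbb{Z}^N:|\mb{n}|=j\}\le C(1+j)^{N-1}$ and $j(1+j)^{N-1}\le(1+j)^N$, so that
\begin{equation*}
\Big\|H_{t,N}\Big(\frac{|\cdot|}{K}\Big)\Big\|_{\ell^1(\mathbb{Z}^N)}\le\frac{C}{t}\sum_{j\ge 0}(1+j)^N\big(e^{-2t}I_{j/K}(2t)\big)^N.
\end{equation*}
Here the part $0\le j\le\sqrt t$ is dominated by $\dfrac{C}{t}\,t^{-N/2}\sum_{0\le j\le\sqrt t}(1+j)^N\le\dfrac{C}{t}\,t^{-N/2}(\sqrt t)^{N+1}=Ct^{-1/2}$, and the part $j>\sqrt t$ by $\dfrac{C}{t}\,t^{N\alpha}\sum_{j>\sqrt t}(1+j)^{-2N\alpha}\le C\,t^{N\alpha-1}(\sqrt t)^{1-2N\alpha}=Ct^{-1/2}$, the series converging because $2N\alpha>1$. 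Combining the two regions proves the endpoint bounds, and hence \eqref{eq:norm-H}. The only point that needs care is the choice of $\alpha$: near the origin one is forced to keep the sharp on-diagonal value $\alpha=-1/2$, while far out $\alpha$ must be large enough for the polynomial volume factor $(1+|\mb{n}|)^{N-1}|\mb{n}|$ to be controlled by the decay $(1+|\mb{n}|)^{-N(2\alpha+1)}$; once this is done, the split at scale $\sqrt t$ makes both contributions bounded by $Ct^{-1/2}$ (respectively $Ct^{-1/2-N/2}$), independently of the admissible value of $\alpha$.
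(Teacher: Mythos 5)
Your overall strategy is the same as the paper's: prove the endpoint bounds for $r=1$ and $r=\infty$ by applying the uniform estimate \eqref{eq:bound-I} with two different values of $\alpha$ according to a splitting of $\mathbb{Z}^N$ at the scale $|\mb{n}|\sim\sqrt t$, and then obtain the general $r$ from Littlewood's inequality \eqref{eq:Lya}. For $t\ge\max(1,K^2)$ your computations are correct, and the differences from the paper are only cosmetic: the paper gets the $\ell^\infty$ bound without any splitting, from the single choice $\alpha=-1/2+1/(2N)$ (which makes the factor $|\mb{n}|$ be absorbed by the decay $(1+|\mb{n}|/K)^{-1}$), and on the far region it takes $\alpha=1/N$ rather than your $\alpha=1$; both choices give enough decay for the sums you perform.

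The genuine gap is the sentence ``the bounded range of $t$ being elementary.'' For $0<t<1$ the claim is not automatic: your split at $\sqrt t$ degenerates (the near region reduces to $\mb{n}=\mb{0}$), and on the rest you would want \eqref{eq:bound-I} with $\alpha=1$, which is not applicable when $1\le|\mb{n}|\le K$ because the lemma requires $\alpha<|\mb{n}|/K$. These finitely many terms are exactly the delicate ones as $t\to0^+$: since $I_{a}(2t)\sim t^{a}/\Gamma(a+1)$, one has $H_{t,N}(|\mb{n}|/K)\sim c\,t^{N|\mb{n}|/K-1}$, which blows up, so a genuine $t$-uniform estimate is needed to compare it with the right-hand side of \eqref{eq:norm-H}. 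This is precisely where the paper does something you omit: it chooses $\alpha=1/N$, for which the power $t^{N\alpha}$ cancels the factor $1/t$ in $H_{t,N}$ exactly, producing the $t$-independent bound $|H_{t,N}(|\mb{n}|/K)|\le C(1+|\mb{n}|)^{-(N+1)}$ of \eqref{eq:lem-2}; summing it gives $\|H_{t,N}(|\cdot|/K)\|_{\ell^1(\mathbb{Z}^N)}\le C\le C t^{-1/2}$ on any bounded $t$-range, and the $\ell^\infty$ endpoint is handled similarly. You should either import this choice of $\alpha$ (together with a word about the finitely many indices where \eqref{eq:bound-I} does not apply) or supply some other argument for small $t$; as written, the small-$t$ case is asserted rather than proved, and it is not a triviality.
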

\begin{proof}
From \eqref{eq:bound-I} with  $\alpha=-1/2+1/(2N)$, we have
\begin{equation}
\label{eq:lem-1}
\left|H_{t,N}\left(\frac{|\mb{n}|}{K}\right)\right| \le C t^{-N/2-1/2}|\mb{n}|\left(t^{1/2-1/(2N)}G_{t,1}\left(\frac{|\mb{n}|}{K}\right)\right)^{N}\le Ct^{-N/2-1/2}
\end{equation}
and
\[
\left\|H_{t,N}\left(\frac{|\cdot|}{K}\right)\right\|_{\ell^\infty(\mathbb{Z}^N)}\le C t^{-N/2-1/2},
\]

By using again \eqref{eq:bound-I} with $\alpha=1/N$ we deduce that
\begin{equation}
\label{eq:lem-2}
\left|H_{t,N}\left(\frac{|\mb{n}|}{K}\right)\right|\le C |\mb{n}|\left(t^{-1/N}G_{t,1}\left(\frac{|\mb{n}|}{K}\right)\right)^N
\le \frac{C}{(|\mb{n}|+1)^{N+1}}.
\end{equation}
Now from the previous estimate, for $0<t\le T$  with $T<100$, for example, it is obtained that
\[
\left\|H_{t,N}\left(\frac{|\cdot|}{K}\right)\right\|_{\ell^1(\mathbb{Z}^N)}\le C \sum_{\mb{n} \in \mathbb{Z}^N}\frac{1}{(|\mb{n}|+1)^{N+1}}\le C \sum_{m=1}^{\infty}\frac{1}{m^2}\le C.
\]
For $t>T$ we consider the decomposition
\[
\left\|H_{t,N}\left(\frac{|\cdot|}{K}\right)\right\|_{\ell^1(\mathbb{Z}^N)}=S_1+S_2,
\]
where
\[
S_1=\sum_{|\mb{n}|\le n_t}\left|H_{t,N}\left(\frac{|\mb{n}|}{K}\right)\right|
\qquad
\text{and}
\qquad
S_2=\sum_{|\mb{n}|> n_t}\left|H_{t,N}\left(\frac{|\mb{n}|}{K}\right)\right|,
\]
where $n_t=\lfloor \sqrt{t}\rfloor$ (this notation will used in some points along the paper without explicit mention to it). For $S_1$, applying \eqref{eq:lem-1}, we have
\[
S_1\le C t^{-N/2-1/2}\sum_{|\mb{n}|\le n_t}1\le C t^{-N/2-1/2}\sum_{m=0}^{n_t}(m+1)^{N-1}\le C t^{-1/2}.
\]
Now, by using \eqref{eq:lem-2}, for $S_2$ we obtain the estimate
\[
S_2\le C \sum_{|\mb{n}|> n_t}\frac{1}{|\mb{n}|^{N+1}}\le C\sum_{m=n_t}^{\infty}\frac{1}{m^2}\le C t^{-1/2}.
\]
In this way
\[
\left\|H_{t,N}\left(\frac{|\cdot|}{K}\right)\right\|_{\ell^1(\mathbb{Z}^N)}\le C t^{-1/2}.
\]
and the result follows from \eqref{eq:Lya}.
\end{proof}

In some points we will need the weighted estimate of the heat kernel in the following lemma.

\begin{lem}
\label{lem:norm-Gt-weight}
For $N\ge 1$, $1\le r \le \infty$, and $N(1-r)< \gamma < N$, it is verified that
\begin{equation}
\label{eq:Gt-pesos}
\left\|\left(G_{t,1}\left(\frac{|\cdot|}{N}\right)\right)^N\right\|_{\ell^r(\mathbb{Z}^N,(|\mb{n}|+1)^{-\gamma})}\le C t^{-1/2((\gamma-N)/r+N)},
\end{equation}
with the constant $C$ being independent of $t$.
\end{lem}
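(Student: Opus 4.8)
The plan is to reduce the inequality, for $1\le r<\infty$, to the sum estimate
\[
S_t:=\sum_{\mb{n}\in \mathbb{Z}^N}\left(G_{t,1}\left(\frac{|\mb{n}|}{N}\right)\right)^{Nr}(|\mb{n}|+1)^{-\gamma}\le C\, t^{(N-\gamma-Nr)/2},
\]
since taking $r$-th roots turns the exponent into $\frac{N-\gamma}{2r}-\frac{N}{2}=-\frac{1}{2}\left(\frac{\gamma-N}{r}+N\right)$. The case $r=\infty$ is disposed of first and directly: the weighted $\ell^\infty$ norm is the plain supremum, and \eqref{eq:size-G-t} already gives $\left(G_{t,1}(|\mb{n}|/N)\right)^N\le C t^{-N/2}$, which is exactly the asserted bound when $r=\infty$. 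For $S_t$ the tools are three: the uniform size bound $\left(G_{t,1}(|\mb{n}|/N)\right)^N\le C t^{-N/2}$ from \eqref{eq:size-G-t}; the decay bound, obtained from \eqref{eq:bound-I} applied with a free parameter $\alpha$, namely $\left(G_{t,1}(|\mb{n}|/N)\right)^N\le C\, t^{N\alpha}(|\mb{n}|+1)^{-N(2\alpha+1)}$, valid whenever $-1/2\le \alpha<|\mb{n}|/N$; and the elementary facts that $\#\{\mb{n}\in \mathbb{Z}^N:|\mb{n}|=m\}\le C(m+1)^{N-1}$, that $\sum_{m=0}^{M}(m+1)^s\le C(M+1)^{s+1}$ for $s>-1$, and that $\sum_{m>M}(m+1)^s\le C(M+1)^{s+1}$ for $s<-1$.

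First I would treat $0<t\le 1$. Applying the decay bound with $\alpha=0$ for $\mb{n}\ne \mb{0}$ (the term $\mb{n}=\mb{0}$ being trivially $\le 1$), one gets $S_t\le C+C\sum_{m\ge 1}(m+1)^{N-1-Nr-\gamma}$. This series converges precisely because $N-1-Nr-\gamma<-1$, i.e. $\gamma>N(1-r)$, which is one of the hypotheses; hence $S_t\le C$. Since the same hypothesis makes $N-\gamma-Nr<0$, the target exponent $(N-\gamma-Nr)/2$ is negative, so $t^{(N-\gamma-Nr)/2}\ge 1$ on $(0,1]$ and the bound holds there.

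Next I would treat $t\ge 1$ by splitting $S_t$ at $|\mb{n}|=n_t=\lfloor \sqrt t\rfloor\ge 1$. On $|\mb{n}|\le n_t$ the size bound contributes at most $C t^{-Nr/2}\sum_{m=0}^{n_t}(m+1)^{N-1-\gamma}$; since $\gamma<N$ we have $N-1-\gamma>-1$, so this partial sum is $\le C(n_t+1)^{N-\gamma}\le C t^{(N-\gamma)/2}$, yielding a contribution $\le C t^{(N-\gamma-Nr)/2}$. On $|\mb{n}|>n_t$ I would use the decay bound with a fixed $\alpha$ chosen so that $\frac{1}{2}\left(\frac{N-\gamma}{Nr}-1\right)<\alpha<0$; such $\alpha$ exists because $\gamma>N(1-r)$ gives $\frac{N-\gamma}{Nr}<1$ (so the lower endpoint is negative), and automatically $\alpha\ge -1/2$ because $\gamma<N$ gives $\frac{N-\gamma}{Nr}>0$. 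The admissibility $\alpha<|\mb{n}|/N$ holds for every $|\mb{n}|\ge n_t+1\ge 2$ since $\alpha<0$, and the lower bound on $\alpha$ is exactly what forces $N-1-Nr(2\alpha+1)-\gamma<-1$, so $\sum_{m>n_t}(m+1)^{N-1-Nr(2\alpha+1)-\gamma}\le C t^{(N-Nr(2\alpha+1)-\gamma)/2}$; multiplying by $t^{Nr\alpha}$ the powers of $t$ combine to $t^{Nr\alpha+(N-Nr(2\alpha+1)-\gamma)/2}=t^{(N-\gamma-Nr)/2}$, the same exponent as the first piece. Adding the two ranges completes the case $t\ge 1$ and the proof.

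The delicate point is the choice of $\alpha$ in the tail: it must be large enough for the tail series to converge, yet small enough that $\alpha<|\mb{n}|/N$ for every $\mb{n}$ in the summation range; a workable $\alpha$ always exists precisely because the two hypotheses $N(1-r)<\gamma<N$ place the admissible interval for $\alpha$ inside $[-1/2,0)$. Everything else is routine bookkeeping of powers of $t$.
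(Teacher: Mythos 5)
Your proposal is correct and follows essentially the same route as the paper's proof: split the sum at $n_t=\lfloor\sqrt{t}\rfloor$, use \eqref{eq:bound-I} with $\alpha=-1/2$ (the uniform bound $Ct^{-N/2}$) on the range $|\mb{n}|\le n_t$ and a decay bound from \eqref{eq:bound-I} on the tail, with the hypotheses $N(1-r)<\gamma<N$ entering exactly where you invoke them. The only deviations are minor: the paper simply takes $\alpha=0$ in the tail (which already yields summability and the exponent $t^{(N-\gamma-Nr)/2}$ because $\gamma>N(1-r)$), so your interval of admissible negative $\alpha$ is more general than needed, and your separate treatment of $0<t\le 1$ spells out a case the paper passes over implicitly.
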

\begin{proof}
From the bound \eqref{eq:size-G-t} the result for $r=\infty$ is clear, so we analyze the estimate for $1\le r <\infty$. From \eqref{eq:bound-I} with $\alpha=-1/2$ and $\alpha=0$, we get
\begin{multline*}
\left\|\left(G_{t,1}\left(\frac{|\cdot|}{N}\right)\right)^N\right\|^r_{\ell^r(\mathbb{Z}^N,(|\mb{n}|+1)^{-\gamma})}\\
\begin{aligned}
&\le C\left(t^{-Nr/2}\sum_{|\mb{n}|\le n_t}(|\mb{n}|+1)^{-\gamma}+\sum_{|\mb{n}|\ge n_t}(|\mb{n}|+1)^{-\gamma-Nr}\right)
\\& \le C\left(t^{-Nr/2}\sum_{m=0}^{n_t}(m+1)^{-\gamma+N-1}+\sum_{m= n_t}^{\infty}(m+1)^{-\gamma-Nr+N-1}\right)\\&\le C t^{-1/2(\gamma-N+Nr)}
\end{aligned}
\end{multline*}
finishing the proof.
\end{proof}

The proof of our main result will be divided in the cases $N=1$ and $N>1$ and to deal with the first one we will apply the next estimate.
\begin{lem}
For $1\le r \le \infty$, the inequality
\begin{equation}
\label{eq:norm-diff}
\|\delta^+ G_{t,1}\|_{\ell^r(\mathbb{Z})}\le C t^{-1/2-(1-1/r)/2}
\end{equation}
holds.
\end{lem}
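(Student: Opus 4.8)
The plan is to establish the two endpoint estimates
\[
\|\delta^+ G_{t,1}\|_{\ell^\infty(\mathbb{Z})}\le Ct^{-1}\qquad\text{and}\qquad \|\delta^+ G_{t,1}\|_{\ell^1(\mathbb{Z})}\le Ct^{-1/2},
\]
and then interpolate between them with Littlewood's inequality \eqref{eq:Lya}. Since $|\delta^+ G_{t,1}(n)|=|G_{t,1}(n+1)-G_{t,1}(n)|$ and the distance between $n$ and $n+1$ is $1$, the $\ell^\infty$ bound is nothing but \eqref{eq:bound-diff-P} applied with $N=1$ to the pair $n$, $n+1$, and this holds for every $n\in\mathbb{Z}$.

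For the $\ell^1$ bound I would exploit that $\{G_{t,1}(n)\}_{n\in\mathbb{Z}}$ is unimodal. Indeed, by \eqref{eq:par} it is even in $n$, and since $G_{t,1}(n)=e^{-2t}I_{|n|}(2t)$ and the modified Bessel functions decrease with the order, it is strictly decreasing in $|n|$; moreover $G_{t,1}(n)\to 0$ as $|n|\to\infty$, all terms in \eqref{eq:sum-L1} being positive. Consequently the sum $\sum_{n\in\mathbb{Z}}|G_{t,1}(n+1)-G_{t,1}(n)|$ telescopes: its part over $n\ge 0$ equals $G_{t,1}(0)$, and its part over $n\le -1$ equals $G_{t,1}(0)$ as well, so that $\|\delta^+ G_{t,1}\|_{\ell^1(\mathbb{Z})}=2G_{t,1}(0)=2e^{-2t}I_0(2t)$. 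By \eqref{eq:size-G-t} (equivalently, by \eqref{eq:bound-I} with $\alpha=-1/2$) one has $G_{t,1}(0)\le Ct^{-1/2}$, which gives the claimed $\ell^1$ bound.

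Finally, Littlewood's inequality \eqref{eq:Lya} yields
\[
\|\delta^+ G_{t,1}\|_{\ell^r(\mathbb{Z})}\le \|\delta^+ G_{t,1}\|_{\ell^1(\mathbb{Z})}^{1/r}\,\|\delta^+ G_{t,1}\|_{\ell^\infty(\mathbb{Z})}^{1-1/r}\le C\bigl(t^{-1/2}\bigr)^{1/r}\bigl(t^{-1}\bigr)^{1-1/r}=Ct^{-1/2-(1-1/r)/2},
\]
which is exactly the asserted estimate. The argument is essentially computational and I do not expect a genuine obstacle; the only point deserving a word of justification is the unimodality of $G_{t,1}$, which is used to evaluate its total variation exactly, and this is immediate from $I_{-k}=I_k$ together with the monotonicity of $I_a$ in the order.
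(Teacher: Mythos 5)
Your argument is correct, and it reaches both endpoint estimates by a route that differs from the paper's in the key step. For the $\ell^\infty$ bound the paper does not invoke \eqref{eq:bound-diff-P}; it rederives the pointwise estimate directly from \eqref{eq:diff-I} together with \eqref{eq:bound-I} at $\alpha=0$, getting $|\delta^+G_{t,1}(n)|\le Ct^{-1}|n|\,G_{t,1}(n)\le Ct^{-1}$ — but your citation of \eqref{eq:bound-diff-P} with $N=1$ is perfectly legitimate since that lemma is proved independently (via the Fourier representation) and gives exactly $Ct^{-1}$. The more substantive difference is the $\ell^1$ endpoint: the paper again uses \eqref{eq:diff-I}, splits the sum at $n_t=\lfloor\sqrt t\rfloor$, and applies \eqref{eq:bound-I} with $\alpha=-1/2$ on the inner range and $\alpha=1$ on the outer range, whereas you exploit the evenness $I_{-k}=I_k$ and the monotonicity of $I_a$ in the order, so that $n\mapsto G_{t,1}(n)$ is unimodal with limit $0$ and the total variation telescopes to the exact identity $\|\delta^+G_{t,1}\|_{\ell^1(\mathbb{Z})}=2G_{t,1}(0)=2e^{-2t}I_0(2t)\le Ct^{-1/2}$. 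Your computation is sharper (an identity rather than an estimate) and avoids the dyadic-in-$\sqrt t$ splitting altogether; the paper's splitting technique, on the other hand, is the one that generalizes to the multidimensional and weighted kernel estimates used elsewhere (e.g. \eqref{eq:norm-H} and Lemma \ref{lem:norm-Gt-weight}), where no monotonicity is available. The final interpolation via Littlewood's inequality \eqref{eq:Lya} is identical in both proofs, and your exponent bookkeeping $-1/(2r)-(1-1/r)=-1/2-(1-1/r)/2$ is correct.
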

\begin{proof}
As usual, we prove the inequality for $r=1$ and $r=\infty$ and the result follows from \eqref{eq:Lya}. From \eqref{eq:diff-I} and \eqref{eq:bound-I} with $\alpha=0$, we have 
\[
|\delta^+ G_{t,1}(n)|\le Ct^{-1}|n|G_{t,1}(n)\le Ct^{-1}
\]
and this is enough for $r=\infty$. For $r=1$ we use \eqref{eq:diff-I} and decompose the sum 
\[
\|\delta^+ G_{t,1}\|_{\ell^1(\mathbb{Z})}\le C t^{-1}(B_1+B_2), 
\]
where
\[
B_1=\sum_{|n|\le n_t}|n|G_{t,1}(n)\qquad \text{and} \qquad B_2=\sum_{|n|> n_t}|n|G_{t,1}(n). 
\]
To treat $B_1$, by using \eqref{eq:bound-I} with $\alpha=-1/2$, we deduce that
\[
t^{-1}B_1\le t^{-3/2}\sum_{|n|\le n_t}|n|\le C t^{-1/2}
\]
and this is enough for our purpose. The factor $B_2$ can be analyzed with the estimate \eqref{eq:bound-I} taking $\alpha=1$. Indeed,
\[
t^{-1}B_2\le C \sum_{|n|>n_t}\frac{1}{n^2}\le C t^{-1/2} 
\]
and the proof is completed.
\end{proof}


\begin{proof}[Proof of the Theorem \ref{thm:decay}]

From the restriction $1\le q <N/(N-1)$, applying H\"older inequality, we have
\[
\|f\|_{\ell^1(\mathbb{Z}^N)}\le C\||\cdot|f\|_{\ell^q(\mathbb{Z}^N)},
\]
so, in particular $f\in \ell^1(\mathbb{Z}^N)$ and the left hand side of \eqref{eq:DZI} is finite.

We start showing the inequality \eqref{eq:DZI} for $N=1$. In \cite[Theorem 6.1]{Ab-et-al} we find (with our notation) the identity
\[
W_tf(n)-\left(\sum_{n\in \mathbb{Z}}f(n)\right)G_{t,1}(n)=\left(\delta^+ G_{t,1}\right)\ast F(n),
\]
where
\[
F(n)=\sum_{j=-\infty}^{n-1}f(j),\quad \text{for $n\le 0$},\qquad \text{and}\qquad F(n)=-\sum_{j=n}^{\infty}f(j), \quad \text{for $n\ge 1$}.
\]
Then, by using \eqref{eq:Young} and \eqref{eq:norm-diff}, we have
\[
\left\|W_tf-\left(\sum_{k\in \mathbb{Z}}f(k)\right)G_{t,1}\right\|_{\ell^p(\mathbb{Z})}\le C t^{-1/2-(1/q-1/p)/2}\|F\|_{\ell^q(\mathbb{Z})}.
\]
Finally, applying the well known inequality
\[
\sum_{n=1}^{\infty}\left|\sum_{j=n}^{\infty}f(j)\right|^q\le C \sum_{n=1}^{\infty}|nf(n)|^{q}, \qquad 1\le q<\infty, 
\]
which is a consequence of the classical Hardy's inequality for $q>1$ and for $q=1$ is obvious, the result follows in this case because
\begin{align*}
\|F\|_{\ell^q(\mathbb{Z})}^q& \le \sum_{n=1}^{\infty}\left(\left(\sum_{j=n}^{\infty}|f(-j)|\right)^q+\left(\sum_{j=n}^{\infty}|f(j)|\right)^q\right)
\\& \le C \sum_{n=1}^{\infty}n^q(|f(-n)|^q+|f(n)|^q)= C\||\cdot|f\|_{\ell^q(\mathbb{Z})}^q. 
\end{align*}

Now, we consider $N>1$. Firstly, we are going to prove the inequality
\begin{equation}
\label{eq:DZI-pp}
\left\|W_tf-\left(\sum_{\mb{k}\in \mathbb{Z}^N}f(\mb{k})\right)G_{t,N}\right\|_{\ell^p(\mathbb{Z}^N)}\le C t^{-1/2}\||\cdot|f\|_{\ell^p(\mathbb{Z}^N)}
\end{equation}
for $1\le p <N/(N-1)$. To do that, for a fix $\mb{n}\in \mathbb{Z}^N$, we consider the sets $A_1=\{\mb{k}\in \mathbb{Z}^N:|\mb{k}|\le |\mb{n}|/2\}$, $A_2=\{\mb{k}\in \mathbb{Z}^N:|\mb{n}|/2<|\mb{k}|< 2|\mb{n}|\}$, and $A_3=\{\mb{k}\in \mathbb{Z}^N:|\mb{k}|\ge 2|\mb{n}|\}$. With them we do the decomposition
\[
\left|W_tf(\mb{n})-\left(\sum_{\mb{k}\in \mathbb{Z}^N}f(\mb{k})\right)G_{t,N}(\mb{n})\right|\le P_1f(\mb{n})+P_2f(\mb{n})+P_3f(\mb{n}),
\]
with
\[
P_if(\mb{n})=\left|\sum_{\mb{k}\in \mathbb{Z}^N}\chi_{A_i}(\mb{n},\mb{k})f(\mb{n}-\mb{k})(G_{t,N}(\mb{k})-G_{t,N}(\mb{n}))\right|, \qquad i=1,2,3.
\]

From \eqref{eq:AM-GM-I}, in $A_1$ it is verified that $|G_{t,N}(\mb{k})-G_{t,N}(\mb{n})|\le C (G_{t,1}(|\mb{k}|/N))^N$ and 
\begin{align}
\label{eq:P1-aux}
P_1f(\mb{n})&\le C \sum_{|\mb{k}|\le |\mb{n}|/2}|f(\mb{n}-\mb{k})|(G_{t,1}(|\mb{k}|/N))^N\\&\le C \sum_{|\mb{k}|\le |\mb{n}|/2}|\mb{n}-\mb{k}||f(\mb{n}-\mb{k})|\frac{(G_{t,1}(|\mb{k}|/N))^N}{|\mb{k}|+1}.\notag
\end{align}
Then, by Minkowsky integral inequality and Lemma \ref{lem:norm-Gt-weight} (with $r=\gamma=1$), we obtain that
\[
\|P_1f\|_{\ell^p(\mathbb{Z}^N)}\le C\||\cdot|f\|_{\ell^p(\mathbb{Z}^N)}\sum_{k\in \mathbb{Z}^N}\frac{(G_{t,1}(|\mb{k}|))^N}{|\mb{k}|+1}\le C t^{-1/2}\||\cdot|f\|_{\ell^p(\mathbb{Z}^N)}.
\]

For $P_2f$, by using that $|\mb{k}|\sim |\mb{n}|$, we can apply Remark \ref{rem:lemma-heat} (a) to get 
\[
P_2(\mb{n})\le C\sum_{\mb{k}\in \mathbb{Z}^N}|\mb{n}-\mb{k}||f(\mb{n}-\mb{k})|H_{t,N}(\mb{k})=CF\ast H_{t,N}(\mb{n}),
\]
where $F(\mb{n})=|\mb{n}||f(\mb{n})|$. Then, by applying \eqref{eq:norm-H} and the Young inequality \eqref{eq:Young} for the convolution, we have
\[
\left\|P_2f\right\|_{\ell^p(\mathbb{Z}^N)}\le C t^{-1/2}\||\cdot|f\|_{\ell^p(\mathbb{Z}^N)}.
\]

Finally, we treat $P_3f$. First, we have to observe that for $p<N/(N-1)$, by H\"older's inequality, we obtain that
\begin{align*}
\sum_{|\mb{k}|\ge 2|\mb{n}|}|f(\mb{n}-\mb{k})|&\le \||\cdot|f\|_{\ell^p(\mathbb{Z}^N)}\left(\sum_{|\mb{k}|\ge 2|\mb{n}|}\frac{1}{|\mb{n}-\mb{k}|^{p'}}\right)^{1/p'}\\&\le C \||\cdot|f\|_{\ell^p(\mathbb{Z}^N)}(|\mb{n}|+1)^{N/p'-1}.
\end{align*}
Then, by using that in $A_3$ it is satisfied the estimate $|G_{t,N}(\mb{n})-G_{t,N}(\mb{k})|\le C (G_{t,N}(|\mb{n}|/N))^N$, 
\begin{equation}
\label{eq:P3-aux-0}
P_3f(\mb{n})\le C (G_{t,1}(|\mb{n}|/N))^N(|\mb{n}|+1)^{N/p'-1} \||\cdot|f\|_{\ell^p(\mathbb{Z}^N)}.
\end{equation}
In this way, applying Lemma \ref{lem:norm-Gt-weight} (with $r=p$ and $\gamma=p(1-N/p')$) the inequality
\[
\left\|P_2f\right\|_{\ell^p(\mathbb{Z}^N)}\le C t^{-1/2}\||\cdot|f\|_{\ell^p(\mathbb{Z}^N)}
\]
is attained and the proof of \eqref{eq:DZI-pp} is completed.

Now, we are going to check that
\begin{equation}
\label{eq:DZI-Inq}
\left\|W_tf-\left(\sum_{\mb{k}\in \mathbb{Z}^N}f(\mb{k})\right)G_{t,N}\right\|_{\ell^\infty(\mathbb{Z}^N)}\le C t^{-1/2-N/(2q)}\||\cdot|f\|_{\ell^q(\mathbb{Z}^N)}
\end{equation}
for $1\le q<N/(N-1)$. We consider the same decomposition than in the previous case. To analyze $P_1$, we consider the decomposition
\[
\|P_1f\|_{\ell^\infty(\mathbb{Z}^N)}=P_{1,1}+P_{1,2}
\]
where
\[
P_{1,1}=\sup_{|\mb{n}|/2\le n_t}|P_1f(\mb{n})|\qquad \text{and} \qquad P_{1,2}=\sup_{|\mb{n}|/2> n_t}|P_1f(\mb{n})|.
\]
So, from \eqref{eq:H-infty} and H\"older's inequality, it is verified that
\begin{align*}
P_{1,1}&\le C t^{-N/2-1}\sup_{|\mb{n}|/2\le n_t}|\mb{n}|\sum_{|\mb{k}|\le |\mb{n}|/2}|\mb{n}-\mb{k}||f(\mb{n}-\mb{k})|
\\&\le C t^{-N/2-1}\||\cdot|f\|_{\ell^q(\mathbb{Z}^N)}\sup_{|\mb{n}|/2\le n_t}|\mb{n}|\left(\sum_{|\mb{k}|\le |\mb{n}|/2}1\right)^{1/q'}
\\&\le C t^{-N/2-1}\||\cdot|f\|_{\ell^q(\mathbb{Z}^N)}\sup_{|\mb{n}|/2\le n_t}|\mb{n}|^{1+N/q'}\le C t^{-1/2-N/(2q)}\||\cdot|f\|_{\ell^q(\mathbb{Z}^N)}
\end{align*}
and this is enough. Now, to deal with $P_{1,2}$ we split the sum defining $P_1f$ in two pieces: one for $|\mb{k}|\le n_t$, denoted $P_{1,2}'$, and another one for $n_t<|\mb{k}|\le |\mb{n}|/2$, with the notation $P_{1,2}''$ for this one. Then, from \eqref{eq:bound-diff-P} and using H\"older's inequality, we get
\begin{align*}
P_{1,2}'&\le C t^{-1/2-N/2}\sup_{|\mb{n}|/2>n_t}\sum_{|\mb{k}|\le n_t}|\mb{n}-\mb{k}||f(\mb{n}-\mb{k})|\\&\le Ct^{-1/2-N/2}\||\cdot|f\|_{\ell^q(\mathbb{Z}^N)}\left(\sum_{|\mb{k}|\le n_t}1\right)^{1/q'}\le Ct^{-1/2-N/(2q)}\||\cdot|f\|_{\ell^q(\mathbb{Z}^N)}.
\end{align*}
Now, proceeding as in the proof of \eqref{eq:P1-aux}, we deduce that
\begin{align*}
P_{1,2}''&\le C\sup_{|\mb{n}|/2>n_t}\sum_{n_t<|\mb{k}|\le |\mb{n}|/2}|\mb{n}-\mb{k}||f(\mb{n}-\mb{k})|\frac{(G_{t,1}(|\mb{k}|/N))^N}{|\mb{k}|+1}
\\&\le C \||\cdot|f\|_{\ell^q(\mathbb{Z}^N)}\left(\sum_{n_t<|\mb{k}|}\frac{(G_{t,1}(|\mb{k}|/N))^{q'N}}{(|\mb{k}|+1)^{q'}}\right)^{1/q'}.
\end{align*}
Finally, the required estimate is obtained by using that $G_{t,1}(|\mb{k}|/N)\le C/ (|\mb{k}|+1)$. Indeed,
\begin{align*}
P_{1,2}''&\le C \||\cdot|f\|_{\ell^q(\mathbb{Z}^N)}\left(\sum_{|\mb{k}|> n_t}\frac{1}{(|\mb{k}|+1)^{q'(N+1)}}\right)^{1/q'}\\&\le Ct^{-1/2-N/(2q)}\||\cdot|f\|_{\ell^q(\mathbb{Z}^N)}.
\end{align*}
 
To obtain the estimate
\[
\|P_2f\|_{\ell^\infty(\mathbb{Z}^N)}\le C t^{-1/2-N/(2q)}\||\cdot|f\|_{\ell^q(\mathbb{Z}^N)}
\]
we proceed as in the proof of \eqref{eq:DZI-pp}, so we omit the details. 

Finally, we have to bound $P_3f$ and to do this we take the decomposition 
\[
P_3f(\mb{n})\le P_{3,1}+P_{3,2},
\]
with
\[
P_{3,1}=\sup_{2|\mb{n}|\le n_t}|P_3f(\mb{n})|\qquad \text{and} \qquad P_{3,2}=\sup_{2|\mb{n}|> n_t}|P_3f(\mb{n})|.
\]
For $P_{3,2}$, proceeding as in \eqref{eq:P3-aux-0} and using the bound \eqref{eq:size-G-t}, we deduce that
\begin{align*}
P_{3,2}&\le C \||\cdot|f\|_{\ell^q(\mathbb{Z}^N)} \sup_{2|\mb{n}|>n_t}(G_{t,1}(|\mb{n}|/N))^N(|\mb{n}|+1)^{N/q'-1} \\&\le C t^{-1/2-N/(2q)}\||\cdot|f\|_{\ell^q(\mathbb{Z}^N)}.
\end{align*}
Now, to treat $P_{3,1}$ we decompose the sum defining $P_3f$ in two pieces, $P_{3,2}'$ and $P_{3,2}''$, where $2|\mb{n}|\le |\mb{k}|\le n_t$ and $n_t<|\mb{k}|$, respectively. For $P_{3,2}'$ the required bound can be obtained as for $P_{1,2}'$ and we omit the details. To conclude the result for $P_{3,1}''$ we observe that, due to the restriction $q<N/(N-1)$,
\begin{align*}
\sum_{|\mb{k}|>n_t}|f(\mb{n}-\mb{k})|&\le \||\cdot|f\|_{\ell^q(\mathbb{Z}^N)}\left(\sum_{|\mb{k}|>n_t}\frac{1}{|\mb{n}-\mb{k}|^{q'}}\right)^{1/q'}\\&\le C t^{-1/2+N/(2q')}\||\cdot|f\|_{\ell^q(\mathbb{Z}^N)}.
\end{align*}
Then, applying \eqref{eq:size-G-t},
\begin{align*}
P_{3,1}''&\le C t^{-1/2+N/(2q')} \||\cdot|f\|_{\ell^q(\mathbb{Z}^N)}\sup_{2|\mb{n}|\le n_t} (G_{t,1}(|\mb{n}|/N))^N\\&\le C t^{-1/2-N/(2q)} \||\cdot|f\|_{\ell^q(\mathbb{Z}^N)}
\end{align*}
and the proof of \eqref{eq:DZI-Inq} is completed.

The proof of the theorem will be finished interpolating between \eqref{eq:DZI-pp} and \eqref{eq:DZI-Inq}.
\end{proof}

\begin{rem}
By using \eqref{eq:heat-0} and Minkowski integral inequality, for $0<t<T$ (with $T>100$ for example) we obtain
\begin{multline*}
\left\|W_tf-\left(\sum_{\mb{k}\in \mathbb{Z}^N}f(\mb{k})\right)G_{t,N}\right\|_{\ell^p(\mathbb{Z}^N)}\\\le
\|f\|_{\ell^1(\mathbb{Z}^N)}\left(\sup_{\mb{k}\in \mathbb{Z}^N}\left\|G_{t,N}(\cdot-\mb{k})\right\|_{\ell^p(\mathbb{Z}^N)}+\left\|G_{t,N}\right\|_{\ell^p(\mathbb{Z}^N)}\right)
\le C\||\cdot|f\|_{\ell^q(\mathbb{Z}^N)}.
\end{multline*}
Then, combining this fact with \eqref{eq:DZI} and with the hypotheses of Theorem \ref{thm:decay}, we get
\begin{equation}
\label{eq:decay-fine}
  \left\|W_tf-\left(\sum_{\mb{k}\in \mathbb{Z}^N}f(\mb{k})\right)G_{t,N}\right\|_{\ell^p(\mathbb{Z}^N)}\le C \min\{1,t^{-1/2-N/2(1/q-1/p)}\}\||\cdot|f\|_{\ell^q(\mathbb{Z}^N)}.
\end{equation}
\end{rem}

\subsection{Maximal operators for the heat and the Poisson semigroups}
From the well known identity
\[
e^{-|\beta| t}=\frac{1}{\sqrt{\pi}}\int_{0}^{\infty}\frac{e^{-u}}{\sqrt{u}}e^{-t^2\beta^2/(4u)}\, du
=\frac{t}{2\sqrt{\pi}}\int_{0}^{\infty}\frac{e^{-t^2/(4v)}}{\sqrt{v}}e^{-v\beta^2}\frac{dv}{v},
\]
we can define by subordination the Poisson semigroup
\begin{equation}
\label{eq:def-Poisson}
P_tf(\mb{n})=\frac{1}{\sqrt{\pi}}\int_{0}^{\infty}\frac{e^{-u}}{\sqrt{u}}W_{t^2/(4u)}f(\mb{n})\, du
=\frac{t}{2\sqrt{\pi}}\int_{0}^{\infty}\frac{e^{-t^2/(4v)}}{\sqrt{v}}W_{v}f(\mb{n})\frac{dv}{v}.
\end{equation}
It is an easy exercise to show that $P_t$ satisfies the Laplace type equation
\[
\frac{d^2}{dt^2}P_tf(\mb{n})+\Delta_N P_tf(\mb{n})=0.
\]

In this part of the paper we are interested in the maximal operators
\[
W^\star f(\mb{n})=\sup_{t>0}|W_tf(\mb{n})|
\]
and
\[
P^\star f(\mb{n})=\sup_{t>0}|P_tf(\mb{n})|.
\]
In fact, we want analyze weighted inequalities for these operators with weights in the Muckenhoupt class. We say that a positive sequence $w$ is a weight in $A_p(\mathbb{Z}^N)$ for $1<p<\infty$ when
\[
\left(\frac{1}{\mu(Q)}\sum_{\mb{n}\in Q}w(\mb{n})\right)\left(\frac{1}{\mu(Q)}\sum_{\mb{n}\in Q}(w(\mb{n}))^{-1/(p-1)}\right)^{p-1}\le C,
\]
where $Q$ is any cube in $\mathbb{Z}^N$ and $\mu(A)=\sum_{\mb{n}\in A}1$ for any $A\subset \mathbb{Z}^N$. The weight $w$ belongs to $A_1(\mathbb{Z}^N)$ when
\[
\left(\frac{1}{\mu(Q)}\sum_{\mb{n}\in Q}w(\mb{n})\right)\sup\left\{(w(\mb{n})^{-1}:\mb{n}\in \mathbb{Z}^N\right\}\le C.
\]

To obtain our result, we will use the Calder\'on-Zygmund theory adapts to our setting. We say that $K$, defined on $\mathbb{Z}^N\setminus\{\mb{0}\}$ and taking values in a Banach space $A$, is a standard kernel when
\begin{equation}
\label{eq:size-CZ}
\|K(\mb{n})\|_{A}\le \frac{C}{|\mb{n}|^{N}}
\end{equation}
and
\begin{equation}
\label{eq:smooth-CZ}
\|K(\mb{n})-K(\mb{n}+\mb{m})\|_{A}\le C \frac{|\mb{m}|}{|\mb{n}|^{N+1}}, \qquad |\mb{n}|>2|\mb{m}|.
\end{equation}
Let $T$ be an operator bounded from $\ell^q(\mathbb{Z}^N)$ to $\ell_A^q(\mathbb{Z}^N)$ (where we understand that $f\in \ell_A^q(\mathbb{Z}^N)$ when $\|f\|_{A}\in \ell^q(\mathbb{Z}^N)$) for some $1<q<\infty$ and for $f\in \ell^q(\mathbb{Z}^N)$ with compact support
\[
Tf(\mb{n})=K\ast f(\mb{n}), \qquad \mb{n}\notin \supp f,
\]
where $K$ is a standard kernel taking values in $A$, we say that $T$ is Calder\'on-Zygmund operator.

A Calder\'on-Zygmung operator $Tf$ is bounded from $\ell^{p}(\mathbb{Z}^N,w)$ into $\ell^{p}_A(\mathbb{Z}^N,w)$ for $1<p<\infty$ and $w\in A_p(\mathbb{Z}^N)$, and from $\ell^{1}(\mathbb{Z}^N,w)$ into $\ell^{1,\infty}_A(\mathbb{Z}^N,w)$ for $w\in A_1(\mathbb{Z}^N)$ (see \cite{RRT,RT}). As usual the space $\ell^{1,\infty}(\mathbb{Z}^{N},w)$ is defined as the space of sequences such that $\|f\|_{\ell^{1,\infty}(\mathbb{Z}^N,w)}<\infty$, where
\[
\|f\|_{\ell^{1,\infty}(\mathbb{Z}^N,w)}=\sup_{\lambda>0}\left(\lambda\sum_{\mb{n}\in A_\lambda}w(\mb{n})\right)
\]
and
\[
A_\lambda=\{\mb{n}\in\mathbb{Z}^N:|f(\mb{n})|>\lambda\}.
\]

\begin{thm}
  For $T$ equal $W^\star$ or $P^\star$ and $w\in A_p(\mathbb{Z}^N)$, the inequalities
  \[
  \|Tf\|_{\ell^{p}(\mathbb{Z}^N,w)}\le \|f\|_{\ell^{p}(\mathbb{Z}^N,w)}, \qquad 1<p<\infty,\qquad \ell^2(\mathbb{Z}^N)\cap \ell^p(\mathbb{Z}^N,w),
  \]
  and
  \[
  \|Tf\|_{\ell^{1,\infty}(\mathbb{Z}^N,w)}\le \|f\|_{\ell^{1}(\mathbb{Z}^N,w)}, \qquad \ell^2(\mathbb{Z}^N)\cap \ell^1(\mathbb{Z}^N,w),
  \]
  hold.
\end{thm}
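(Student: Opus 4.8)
The plan is to view $W^\star$ and $P^\star$ as the $A$-norms of vector-valued convolution operators with values in the Banach space $A=L^\infty((0,\infty))$ (with the supremum norm), and then to feed them into the weighted Calder\'on--Zygmund theory recalled just before the statement. For $f\in\ell^2(\mathbb{Z}^N)$ set $\vec{W}f(\mb{n})=\{W_tf(\mb{n})\}_{t>0}$ and $\vec{P}f(\mb{n})=\{P_tf(\mb{n})\}_{t>0}$, so that $\|\vec{W}f(\mb{n})\|_A=W^\star f(\mb{n})$ and $\|\vec{P}f(\mb{n})\|_A=P^\star f(\mb{n})$. By \eqref{eq:heat} and \eqref{eq:def-Poisson} these are convolution operators, $\vec{W}f=\mb{K}_W\ast f$ and $\vec{P}f=\mb{K}_P\ast f$, with $\mb{K}_W(\mb{n})=\{G_{t,N}(\mb{n})\}_{t>0}$ and $\mb{K}_P(\mb{n})=\{P_{t,N}(\mb{n})\}_{t>0}$, where $P_{t,N}(\mb{n})=\frac{1}{\sqrt{\pi}}\int_0^\infty u^{-1/2}e^{-u}G_{t^2/(4u),N}(\mb{n})\,du$.

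Next comes the required unweighted $\ell^2$ bound. The semigroup $\{W_t\}$ is strongly continuous on $\ell^2(\mathbb{Z}^N)$, consists of self-adjoint operators (by \eqref{eq:par} the kernel $G_{t,N}$ is even), is a positivity-preserving contraction on every $\ell^p(\mathbb{Z}^N)$ by \eqref{eq:contrac-heat}, and is Markovian; hence Stein's maximal theorem for symmetric diffusion semigroups gives $\|W^\star f\|_{\ell^p(\mathbb{Z}^N)}\le C\|f\|_{\ell^p(\mathbb{Z}^N)}$ for $1<p\le\infty$ (alternatively, one checks directly that $W^\star f\le C Mf$, with $M$ the discrete Hardy--Littlewood maximal operator, using that by \eqref{eq:size-G-t} and \eqref{eq:Gt-decay-n} the kernel $G_{t,N}$ admits a radially decreasing majorant of total mass bounded uniformly in $t$). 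From \eqref{eq:def-Poisson} and $\frac{1}{\sqrt{\pi}}\int_0^\infty u^{-1/2}e^{-u}\,du=1$ one gets the pointwise domination $P^\star f(\mb{n})\le W^\star f(\mb{n})$, so the same bound holds for $P^\star$. In particular $\vec{W}$ and $\vec{P}$ map $\ell^2(\mathbb{Z}^N)$ boundedly into $\ell^2_A(\mathbb{Z}^N)$.

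The remaining task is the standard kernel estimates. For the size condition \eqref{eq:size-CZ}, \eqref{eq:Gt-decay-n} gives $G_{t,N}(\mb{n})\le C(1+|\mb{n}|)^{-N}$ uniformly in $t$, hence $\|\mb{K}_W(\mb{n})\|_A=\sup_{t>0}G_{t,N}(\mb{n})\le C|\mb{n}|^{-N}$, and integrating against the finite measure $u^{-1/2}e^{-u}\,du$ the same bound holds for $\|\mb{K}_P(\mb{n})\|_A$. For the smoothness condition \eqref{eq:smooth-CZ}, fix $\mb{n},\mb{m}\in\mathbb{Z}^N$ with $|\mb{n}|>2|\mb{m}|$; applying Lemma \ref{lem:diff-G} with its second argument equal to $\mb{n}+\mb{m}$ (so its hypothesis $|\mb{n}|>2|\mb{n}-(\mb{n}+\mb{m})|$ holds, and $|\mb{n}+\mb{m}|\sim|\mb{n}|$) yields $|G_{t,N}(\mb{n})-G_{t,N}(\mb{n}+\mb{m})|\le C|\mb{m}|\,H_{t,N}(c|\mb{n}|)$ for some $c>0$; together with the bound $H_{t,N}(y)\le C(y+1)^{-N-1}$, uniform in $t$ (this is the estimate obtained for $H_{t,N}$ in the proof of \eqref{eq:norm-H} via \eqref{eq:bound-I} with $\alpha=1/N$), we get $\sup_{t>0}|G_{t,N}(\mb{n})-G_{t,N}(\mb{n}+\mb{m})|\le C|\mb{m}|\,|\mb{n}|^{-N-1}$, which is \eqref{eq:smooth-CZ} for $\mb{K}_W$; integrating against $u^{-1/2}e^{-u}\,du$ transfers the same estimate to $\mb{K}_P$.

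With these three steps, $\vec{W}$ and $\vec{P}$ are $A$-valued Calder\'on--Zygmund operators in the sense recalled above, so they are bounded from $\ell^p(\mathbb{Z}^N,w)$ into $\ell^p_A(\mathbb{Z}^N,w)$ for $1<p<\infty$ and $w\in A_p(\mathbb{Z}^N)$, and from $\ell^1(\mathbb{Z}^N,w)$ into $\ell^{1,\infty}_A(\mathbb{Z}^N,w)$ for $w\in A_1(\mathbb{Z}^N)$. Taking $A$-norms pointwise gives $\|W^\star f\|_{\ell^p(\mathbb{Z}^N,w)}=\|\vec{W}f\|_{\ell^p_A(\mathbb{Z}^N,w)}$, and likewise for $P^\star$, which is the assertion for $f\in\ell^2(\mathbb{Z}^N)\cap\ell^p(\mathbb{Z}^N,w)$ (membership in $\ell^2$ being what makes the operators well defined). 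The only genuinely non-formal input is the unweighted $\ell^2$ estimate of the second step; that, together with checking that the smoothness estimate is genuinely uniform in $t$, is where one should take care, since all the other ingredients are already available in Section~2 and Lemma~\ref{lem:diff-G}.
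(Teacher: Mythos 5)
Your proof is correct and follows essentially the same route as the paper: view $W^\star$ as an $L^\infty((0,\infty))$-valued Calder\'on--Zygmund convolution operator, get the unweighted $\ell^p$ (hence $\ell^2$) bound from Stein's maximal theorem via \eqref{eq:contrac-heat}, take the size bound from \eqref{eq:Gt-decay-n}, and obtain the smoothness bound from Lemma \ref{lem:diff-G} combined with \eqref{eq:bound-I} for $\alpha=1/N$. The only cosmetic difference is that the paper disposes of $P^\star$ in one line by subordination/pointwise domination, whereas you additionally verify the kernel estimates for the subordinated Poisson kernel, which is harmless but not needed.
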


\begin{proof}
We focus on $W^\star$ because the result for $P^\star$ follows from its definition by subordination \eqref{eq:def-Poisson}. By using that
\[
W^\ast f(\mb{n})=\|W_{t}f(\mb{n})\|_{L^\infty((0,\infty))},
\]
we prove that $W^\star$ is a Calder\'on-Zygmund operator taking values in $L^{\infty}((0,\infty))$. From \eqref{eq:contrac-heat}, applying Stein's Maximal Theorem of diffusion semigroups (see \cite[Chapter III, Section 2]{Stein}), we have
\[
\|W^\ast f\|_{\ell^p(\mathbb{Z}^N)}\le C \|f\|_{\ell^p(\mathbb{Z}^N)}, \qquad 1<p<\infty.
\]
In particular we have the case $p=2$. To conclude it is enough with the estimates
\begin{equation}
\label{eq:size-Wt}
\|G_{t,N}(\mb{n})\|_{L^\infty((0,\infty))}\le \frac{C}{(|\mb{n}|+1)^{N}}
\end{equation}
and
\begin{equation}
\label{eq:smooth-Wt}
\|G_{t,N}(\mb{n})-G_{t,N}(\mb{n}+\mb{m})\|_{L^\infty((0,\infty))}\le C\frac{|\mb{m}|}{(|\mb{n}|+|\mb{n}+\mb{m}|)^{N+1}}, \qquad |\mb{n}|>2|\mb{m}|.
\end{equation}

The bound \eqref{eq:size-Wt} has been proved yet in \eqref{eq:Gt-decay-n}.

Now, from  \eqref{eq:diff-G} and applying \eqref{eq:bound-I} with $\alpha=1/N$, we deduce that for $|\mb{n}|>2|\mb{m}|$ it is verified
\begin{align*}
|G_{t,N}(\mb{n})-G_{t,N}(\mb{n}+\mb{m})|&\le C |\mb{m}|(|\mb{n}|+|\mb{n}+\mb{m}|)\left(t^{-1/N}G_{t,1}\left(\frac{|\mb{n}|+|\mb{n}+\mb{m}|}{K}\right)\right)^{N}
\\&\le C\frac{|\mb{m}|}{(|\mb{n}|+|\mb{n}+\mb{m}|)^{N+1}}
\end{align*}
and the proof of \eqref{eq:smooth-Wt} is completed.
\end{proof}

\section{The fractional integrals and the Riesz transforms}
\label{sec:Riesz}
From the identity
\[
\frac{1}{\lambda^\sigma}=\frac{1}{\Gamma(\sigma)}\int_{0}^{\infty}e^{-\lambda t}t^{\sigma-1}\, dt,
\]
we define the fractional integrals (or negative powers of the Laplacian $\Delta_N$) by the relation
\[
(-\Delta_N)^{-\sigma}f(\mb{n})=\frac{1}{\Gamma(\sigma)}\int_{0}^{\infty} W_t f(\mb{n})t^{\sigma-1}\, dt.
\]
By using that (see \cite{Lebedev} or take $\alpha=-1/2$ in \eqref{eq:asym-I})
\[
e^{-t}I_k(t)\sim t^{-1/2}, \qquad t\to \infty,
\]
it is clear that the powers $(-\Delta_N)^{-\sigma}$ are well defined for $0<2\sigma<N$.

Having these operators, for $N\ge 2$ we can define the Riesz transforms as
\[
R_if(\mb{n})=\delta_i^+ (-\Delta_N)^{-1/2}f(\mb{n}), \qquad i=1,\dots,N.
\]
In \cite{CGRTV} it was analyzed the Riesz transform for $N=1$ defining it as a limit. In fact, it was defined as
\[
Rf(n)=\lim_{\sigma\to (1/2)^{-}}\delta^+ (-\Delta_1)^{-\sigma}f(n). \qquad n\in \mathbb{Z}.
\]
From this, the identity
\[
Rf(n)=\sum_{k\in\mathbb{Z}}\frac{f(n)}{n-k+1/2}
\]
was obtained; i.e., it matches with the discrete Hilbert transform. 

In this section, firstly, we show some basic properties of the fractional integrals and we provide a Hardy-Littlewood-Sobolev inequality in our setting and an extension of it. We continue with the study of multidimensional discrete Schauder inequalities for $(-\Delta_N)^{-\sigma}$. To conclude we analyze the mapping properties of the Riesz transforms on the H\"older classes and on $\ell^p(\mathbb{Z}^N,w)$


\subsection{Basic properties of the fractional integrals}
In this part, we provide some properties of the fractional integrals, we prove a Hardy-Littlewood-Sobolev inequality for them and an extension of it. 

In some points a long the paper we will consider the spaces
\begin{equation}
\label{eq:def-l-sigma}
\ell_\sigma(\mathbb{Z}^N)=\ell^1(\mathbb{Z}^N,w_\sigma)
\end{equation}
with $w_{\sigma}(\mb{n})=(1+|\mb{n}|)^{2\sigma-N}$. Moreover, we will denote $\|f\|_{\ell_\sigma(\mathbb{Z}^N)}=\|f\|_{\ell^1(\mathbb{Z}^N,w_\sigma)}$.

\begin{thm}
Let $0<\sigma <N/2$ and $f\in \ell_\sigma(\mathbb{Z}^N)$. We have the pointwise formula
\begin{equation}
\label{eq:Ksigma-convo}
(-\Delta_N)^{-\sigma}f(\mb{n})=\sum_{\mb{k}\in\mathbb{Z}^N} f(\mb{n}-\mb{k})K_{\sigma}(\mb{k})
\end{equation}
where 
\[
K_{\sigma}(\mb{n})=\frac{1}{\Gamma(\sigma)}\int_{0}^{\infty}G_{t,N}(\mb{n})t^{\sigma-1}\, dt
\]
and it verifies that
\begin{equation}
\label{eq:bound-frac}
0<K_\sigma(\mb{n})\le \frac{C}{(|\mb{n}|+1)^{N-2\sigma}}.
\end{equation}
\end{thm}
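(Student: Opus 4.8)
The plan is to establish the three assertions of the theorem in order: the convolution formula \eqref{eq:Ksigma-convo}, the positivity of $K_\sigma$, and the size bound \eqref{eq:bound-frac}. First I would justify the interchange of summation and integration that turns the definition $(-\Delta_N)^{-\sigma}f(\mb{n})=\frac{1}{\Gamma(\sigma)}\int_{0}^{\infty}W_tf(\mb{n})t^{\sigma-1}\,dt$ into $\sum_{\mb{k}}f(\mb{n}-\mb{k})K_\sigma(\mb{k})$. Since $G_{t,N}\ge 0$, this is a question of checking that $\sum_{\mb{k}}|f(\mb{n}-\mb{k})|\int_0^\infty G_{t,N}(\mb{k})t^{\sigma-1}\,dt<\infty$, i.e. that $\sum_{\mb{k}}|f(\mb{n}-\mb{k})|K_\sigma(\mb{k})<\infty$. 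Granting for the moment the bound \eqref{eq:bound-frac}, this sum is controlled by $\sum_{\mb{k}}|f(\mb{n}-\mb{k})|(1+|\mb{k}|)^{2\sigma-N}$, and since $(1+|\mb{k}|)^{2\sigma-N}\le C(1+|\mb{n}-\mb{k}|)^{2\sigma-N}(1+|\mb{n}|)^{|2\sigma-N|}$ (or a similar elementary comparison of the two arguments), this is finite precisely because $f\in\ell_\sigma(\mathbb{Z}^N)=\ell^1(\mathbb{Z}^N,w_\sigma)$ with $w_\sigma(\mb{n})=(1+|\mb{n}|)^{2\sigma-N}$. So the convolution formula reduces, via Tonelli/Fubini, to the kernel estimate.

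The positivity $K_\sigma(\mb{n})>0$ is immediate: $G_{t,N}(\mb{n})>0$ for $t>0$ (product of the strictly positive factors $e^{-2t}I_{n_k}(2t)$), and $t^{\sigma-1}>0$, so the integral is strictly positive; also it is finite by the upper bound, which is the real content. For the size bound I would split the integral at $t=1$ (or at $t\sim(1+|\mb{n}|)^2$, whichever is cleaner). For the range $0<t\le 1$: if $\mb{n}\ne\mb{0}$, use \eqref{eq:Gt-decay-n}, namely $G_{t,N}(\mb{n})\le C(1+|\mb{n}|)^{-N}$, giving a contribution $\le C(1+|\mb{n}|)^{-N}\le C(1+|\mb{n}|)^{-(N-2\sigma)}$ since $N\ge N-2\sigma$ — but one must be a little more careful to get the sharp exponent, so actually for small $t$ I would rather use the bound $G_{t,N}(\mb{n})\le Ct^{-N/2}$ from \eqref{eq:size-G-t} combined with a bound like $G_{t,N}(\mb{n})\le Ct^{-N/2}e^{-c|\mb{n}|^2/t}$-type decay, or more simply estimate $\int_0^{(1+|\mb{n}|)^2}G_{t,N}(\mb{n})t^{\sigma-1}\,dt$ using \eqref{eq:Gt-decay-n} to get $C(1+|\mb{n}|)^{-N}\cdot(1+|\mb{n}|)^{2\sigma}/\sigma=C(1+|\mb{n}|)^{-(N-2\sigma)}$. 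For the range $t\ge(1+|\mb{n}|)^2$: use the asymptotic $e^{-2t}I_{n_k}(2t)\sim (4\pi t)^{-1/2}$, more precisely the uniform bound \eqref{eq:bound-I} with $\alpha=-1/2$ applied to the AM-GM inequality \eqref{eq:AM-GM-I}, giving $G_{t,N}(\mb{n})\le (e^{-2t}I_{|\mb{n}|/N}(2t))^N\le Ct^{-N/2}$; then $\int_{(1+|\mb{n}|)^2}^\infty t^{-N/2}t^{\sigma-1}\,dt=C(1+|\mb{n}|)^{2\sigma-N}$ because $\sigma-1-N/2<-1$ exactly when $2\sigma<N$. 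Adding the two pieces yields \eqref{eq:bound-frac}.

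The main obstacle is getting the correct exponent $N-2\sigma$ (not merely $N$ or something weaker) in the small-$t$ part of the integral, which forces one to exploit the spatial decay of $G_{t,N}$ rather than just its sup bound: the clean way is the split at $t\sim(1+|\mb{n}|)^2$, where for $t\lesssim(1+|\mb{n}|)^2$ one uses the polynomial spatial decay $G_{t,N}(\mb{n})\le C(1+|\mb{n}|)^{-N}$ from \eqref{eq:Gt-decay-n} (uniformly in $t$), and for $t\gtrsim(1+|\mb{n}|)^2$ one uses the temporal decay $G_{t,N}(\mb{n})\le Ct^{-N/2}$ from \eqref{eq:size-G-t}. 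A secondary technical point is the domination argument making the convolution formula rigorous; this is routine once \eqref{eq:bound-frac} is in hand, so I would prove the kernel bound first and deduce \eqref{eq:Ksigma-convo} afterwards, even though the theorem states them in the opposite order.
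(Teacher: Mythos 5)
Your proposal is correct and follows essentially the same route as the paper: positivity of $K_\sigma$ from positivity of $G_{t,N}$, the kernel bound by splitting the integral at $t\sim|\mb{n}|^2$ with the spatial bound \eqref{eq:Gt-decay-n} for small $t$ and the temporal bound \eqref{eq:size-G-t} for large $t$, and Tonelli/Fubini for the convolution formula. The only (harmless) difference is organizational: you prove \eqref{eq:bound-frac} first and justify the interchange by dominating with the weight comparison $(1+|\mb{k}|)^{2\sigma-N}\le C(1+|\mb{n}-\mb{k}|)^{2\sigma-N}(1+|\mb{n}|)^{N-2\sigma}$, whereas the paper first checks $W_tf$ is defined on $\ell_\sigma(\mathbb{Z}^N)$ via the large-order Bessel asymptotics and applies Tonelli to the positive and negative parts of $f$.
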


\begin{proof}
First we are going to check that the heat operator is well defined for $f\in \ell_{\sigma}(\mathbb{Z}^N)$.
Indeed, for each $L> 0$ with $t$ fixed, by using \eqref{eq:AM-GM-I} and the asymptotic \eqref{eq:asym-I},
\begin{align*}
\sum_{{|\mb{k}|>L}}G_{t,N}(\mb{k})|f(\mb{n}-\mb{k})|&\le C e^{-2Nt} N^{N/2} \sum_{{|\mb{k}|>L}}\left(\frac{(Net)^{|\mb{k}|/N}}{|\mb{k}|^{|\mb{k}|/N+1/2}}\right)^N|f(\mb{n}-\mb{k})|
\\&\le C \|f\|_{\ell_\sigma(\mathbb{Z}^N)}e^{-2Nt}N^{N/2}\sup_{{|\mb{k}|>L}}
\frac{(Net)^{|\mb{k}|}(1+|\mb{n}-\mb{k}|)^{N-2\sigma}}{|\mb{k}|^{|\mb{k}|+N/2}}
\\&\le C \|f\|_{\ell_\sigma(\mathbb{Z}^N)},
\end{align*}
where the last constant depends on $t$, $N$, $L$, $\mb{n}$, and $\sigma$ but not on $f$.

Let us prove \eqref{eq:Ksigma-convo}. To do it we can suppose that $f$ is a non negative sequence because in other case we can consider its decomposition in positive and negative parts. Then, applying Tonelli's theorem, we have
\begin{align*}
(-\Delta_N)^{-\sigma}f(\mb{n})&=\frac{1}{\Gamma(\sigma)}\int_{0}^{\infty}\sum_{\mb{k}\in \mathbb{Z}^N}f(\mb{n}-\mb{k})G_t(\mb{k})t^{\sigma-1}\, dt\\&=\frac{1}{\Gamma(\sigma)}\sum_{\mb{k}\in \mathbb{Z}^N}f(\mb{n}-\mb{k})\int_{0}^{\infty}G_{t,N}(\mb{k})t^{\sigma-1}\, dt=
\sum_{\mb{k}\in \mathbb{Z}^N}f(\mb{n}-\mb{k})K_\sigma(\mb{k}).
\end{align*}

Now, let us go with the proof of \eqref{eq:bound-frac}. The positivity of $K_\sigma$ is a simple consequence of the positivity of $G_{t,N}$.
Now, to obtain the upper bound for the kernel, we consider the decomposition
\[
K_{\sigma}(\mb{n})=I_1+I_2
\]
with
\[
I_1=\frac{1}{\Gamma(\sigma)}\int_{0}^{|\mb{n}|^2}G_{t,N}(\mb{n})t^{\sigma-1}\, dt
\qquad\text{ and }\qquad
I_2=\frac{1}{\Gamma(\sigma)}\int_{|\mb{n}|^2}^\infty G_{t,N}(\mb{n})t^{\sigma-1}\, dt.
\]
We consider $\mb{n}\not=\mb{0}$, because the bound $K_\sigma(\mb{0})\le C$ is clear. For $I_1$ we use \eqref{eq:Gt-decay-n} to have
\[
I_1\le \frac{C}{|\mb{n}|^N}\int_{0}^{|\mb{n}|^2}t^{\sigma-1}\, dt=\frac{C}{|\mb{n}|^{N-2\sigma}},
\]
and for $I_2$ we apply the bound in \eqref{eq:size-G-t} to obtain that
\[
I_2\le C \int_{|\mb{n}|^2}^{\infty}\frac{dt}{t^{N/2+1-\sigma}}=\frac{C}{|\mb{n}|^{N-2\sigma}}.\qedhere
\]
\end{proof}

To prove our next theorem we need the following result that we can see in \cite[Theorem 1.6]{Chinos-Convo}.
\begin{thm}
Let be
  \[
  \mathcal{I}_\lambda f(\mb{n})=\sum_{\begin{smallmatrix}
                                        \mb{k}\in \mathbb{Z}^N \\
                                        \mb{k}\not= \mb{n}
                                      \end{smallmatrix}}\frac{f(\mb{k})}{|\mb{n}-\mb{k}|^{n-2\sigma}}, \qquad 0<2\sigma <N,
  \]
and $\langle \mb{n}\rangle=(1+|\mb{n}|^2)^{1/2}$. Taking $1\le p,q< \infty$, $s,t\in \mathbb{R}$ and the sets of conditions
\[
(\mathcal{C}_1)\qquad
\begin{cases}
s\le t,\\[3pt]
\dfrac{2\sigma}{N}+\max\left\{\dfrac{1}{p}+\dfrac{s}{N},0\right\}\le \max\left\{\dfrac{1}{q}+\dfrac{t}{N},0\right\},
\\[3pt]
\dfrac{2\sigma}{N}+\dfrac{1}{p}+\dfrac{s}{N}\le 1,\\[3pt]
\end{cases}
\]
\[
(\mathcal{C}_2)\qquad
\begin{cases}
s\le t=2\sigma-N,
q=1,\\[3pt]
\dfrac{1}{p}+\dfrac{s}{N}<0
\end{cases}
\]
and
\[
(\mathcal{C}_3)\qquad
\begin{cases}
s\le t,\\[3pt]
\dfrac{2\sigma}{N}+\dfrac{1}{p}+\dfrac{s}{N}=\dfrac{1}{q}+\dfrac{t}{N},
\\[3pt]
\dfrac{1}{p}\le \dfrac{1}{q},\quad 0<\dfrac{1}{p}+\dfrac{s}{N},\quad \dfrac{1}{q}+\dfrac{t}{N}<1,
\end{cases}
\]
it is verified that
\[
\|\langle \mb{\cdot}\rangle^s \mathcal{I}_{2\sigma}f\|_{\ell^p(\mathbb{Z}^N)}\le \|\langle \mb{\cdot}\rangle^t f\|_{\ell^q(\mathbb{Z}^N)}
\]
if and only if $(p,q,s,t)$ satisfies one of the conditions $\mathcal{C}_1$, $\mathcal{C}_2$, and $\mathcal{C}_3$. 
\end{thm}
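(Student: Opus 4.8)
The statement is the discrete Stein--Weiss inequality with inhomogeneous power weights, and being an \emph{iff} it has two halves: the positive estimate under each of $\mathcal{C}_{1},\mathcal{C}_{2},\mathcal{C}_{3}$, and the necessity of one of these conditions. For the positive direction the plan is to absorb the weights into the data: writing $g(\mb{k})=\langle\mb{k}\rangle^{t}f(\mb{k})$, the claim is equivalent to the $\ell^{q}(\mathbb{Z}^{N})\to\ell^{p}(\mathbb{Z}^{N})$ boundedness of the positive operator
\[
Sg(\mb{n})=\sum_{\mb{k}\neq\mb{n}}\frac{\langle\mb{n}\rangle^{s}}{\langle\mb{k}\rangle^{t}\,|\mb{n}-\mb{k}|^{N-2\sigma}}\,g(\mb{k}),
\]
and then to split the $\mb{k}$-sum into the three regions $R_{1}=\{|\mb{k}|\le|\mb{n}|/2\}$, $R_{2}=\{|\mb{n}|/2<|\mb{k}|<2|\mb{n}|\}$, $R_{3}=\{|\mb{k}|\ge2|\mb{n}|\}$, writing $S=S_{1}+S_{2}+S_{3}$ and estimating each piece separately.

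On $R_{1}$ and $R_{3}$ one has $|\mb{n}-\mb{k}|\sim\max\{|\mb{n}|,|\mb{k}|\}$, so the kernel of $S_{1}$ is comparable to $\langle\mb{n}\rangle^{s-N+2\sigma}\langle\mb{k}\rangle^{-t}$ and that of $S_{3}$ to $\langle\mb{n}\rangle^{s}\langle\mb{k}\rangle^{-t-N+2\sigma}$; in both cases the $\mb{k}$-summation decouples and is controlled by H\"older's inequality, the convergence of the resulting power series being exactly the sign conditions on $\tfrac1p+\tfrac sN$ and $\tfrac1q+\tfrac tN$, with the $\max$ in $\mathcal{C}_{1}$ recording whether the weight is so singular that the estimate degenerates to an $\ell^{\infty}$-type bound. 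The surviving power of $\langle\mb{n}\rangle$ is then summed in $\ell^{p}$, and this step forces the relation between $\tfrac{2\sigma}{N}+\tfrac1p+\tfrac sN$ and $\tfrac1q+\tfrac tN$, i.e.\ the second and third lines of $\mathcal{C}_{1}$ and the displayed equality of $\mathcal{C}_{3}$. On $R_{2}$ we have $\langle\mb{n}\rangle\sim\langle\mb{k}\rangle$, so on the dyadic shell $|\mb{n}|\sim2^{j}$ the operator $S_{2}$ is essentially $2^{j(s-t)}$ times a genuine discrete fractional integral; one applies the discrete Hardy--Littlewood--Sobolev inequality, valid in the range $\tfrac1q\le\tfrac1p+\tfrac{2\sigma}{N}$ (with strictness only at the trivial endpoints $q=1$, $p=\infty$), and then sums over $j$ the series with ratio $2^{j(s-t)}$, which converges thanks to $s\le t$ together with the scaling inequality. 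For $p=q$ one may instead invoke Schur's test with an auxiliary weight of the form $\langle\mb{n}\rangle^{-\varepsilon}$, and the off-diagonal range $\tfrac1p<\tfrac1q$ then follows by interpolation from restricted weak-type endpoints obtained by testing on characteristic functions of balls.

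The step I expect to be the main obstacle is the critical regime $\mathcal{C}_{3}$, where the exponents are exactly balanced so that no summable slack survives in the decomposition above and one genuinely needs a sharp Stein--Weiss estimate rather than a perturbation of it; the cleanest route there is to dominate the contribution of large $|\mb{n}|,|\mb{k}|$ by the corresponding integral over $\mathbb{R}^{N}$ and quote the continuous Stein--Weiss inequality, handling the bounded-$|\mb{n}|$ part by ordinary discrete Hardy--Littlewood--Sobolev, and then to check that the strict conditions $0<\tfrac1p+\tfrac sN$, $\tfrac1q+\tfrac tN<1$, $\tfrac1p\le\tfrac1q$ cannot be relaxed. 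For the converse one tests the inequality on $f=\delta_{\mb{0}}$ (which yields $\tfrac{2\sigma}{N}+\tfrac1p+\tfrac sN\le1$), on $f=\delta_{\mb{k}_{0}}$ with $|\mb{k}_{0}|\to\infty$ (which yields $s\le t$ and the $\max$-condition of $\mathcal{C}_{1}$), on $f=\chi_{\{|\mb{k}|\le R\}}$ with $R\to\infty$ (which yields the scaling inequality $\tfrac{2\sigma}{N}+\tfrac1p+\tfrac sN\le\tfrac1q+\tfrac tN$ and, through the growth of $\sum\langle\mb{n}\rangle^{ps}$, the strict sign conditions), and on slowly decaying sequences such as $f(\mb{k})=|\mb{k}|^{-a}(\log|\mb{k}|)^{-b}\chi_{\{|\mb{k}|\ge2\}}$ for the borderline exponents; matching the resulting list of necessary conditions against $\mathcal{C}_{1}\cup\mathcal{C}_{2}\cup\mathcal{C}_{3}$ closes the argument. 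Apart from the critical case, the remaining difficulty is organizational: aligning the many subcases so that sufficiency and exhaustiveness are seen to coincide.
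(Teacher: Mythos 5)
First, a point of comparison: the paper does not prove this statement at all. It is imported verbatim from \cite[Theorem 1.6]{Chinos-Convo} and used as a black box to deduce the weighted Hardy--Littlewood--Sobolev inequality for $(-\Delta_N)^{-\sigma}$, so your proposal has to stand on its own as a proof of the cited result. As such it is a reasonable outline of the standard strategy (three-region decomposition for sufficiency, test sequences for necessity), but several of the steps you defer are exactly where the content of the theorem lies, and at least one step fails as written.

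Concretely: in the diagonal region $R_2$ you sum a geometric series ``with ratio $2^{j(s-t)}$'', which diverges in the admissible case $s=t$; the correct mechanism there is the essential disjointness of the dyadic shells in both variables, which reduces matters to an inequality of the form $\sum_j\|g\|_{\ell^q(\mathrm{shell}_j)}^{p}\le\|g\|_{\ell^q(\mathbb{Z}^N)}^{p}$ and hence needs $p\ge q$, so you must also verify that the three condition sets force $p\ge q$ whenever $s=t$ (they do, but that verification is part of the case analysis you omit). You also quote the discrete Hardy--Littlewood--Sobolev range backwards: it holds for $\tfrac1p\le\tfrac1q-\tfrac{2\sigma}{N}$, not $\tfrac1q\le\tfrac1p+\tfrac{2\sigma}{N}$. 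The endpoint family $\mathcal{C}_2$ (with $q=1$, $t=2\sigma-N$, $\tfrac1p+\tfrac sN<0$) is never treated on the sufficiency side and does not fall out of your $R_1$/$R_2$/$R_3$ estimates as described. For the necessity half you list plausible test sequences, but the substance of an ``if and only if'' theorem of this type is the exact matching of the resulting necessary conditions with $\mathcal{C}_1\cup\mathcal{C}_2\cup\mathcal{C}_3$ --- including the $\max$ structure in $\mathcal{C}_1$, which inequalities are strict and which are not, and the log-refined borderline examples in the critical case --- and none of that matching is carried out. Finally, the proposed transference to a continuous Stein--Weiss estimate in the critical regime $\mathcal{C}_3$ is plausible but unproved: the near-diagonal cells (where $|\mb{n}-\mb{k}|=1$ while the continuous variables may coincide) and the inhomogeneous weights $\langle\cdot\rangle^{s}$ need explicit handling. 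In short, the architecture is credible, but as it stands this is a plan for reproving \cite[Theorem 1.6]{Chinos-Convo}, not a proof.
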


\begin{rem}
  The previous theorem in its full version includes the case $p,q=\infty$ but the definition of the Lebesgue for such value in \cite{Chinos-Convo} is different from ours one, so we omit this particular value from the theorem. 
\end{rem}

Now, we can present the weighted Hardy-Littlewood-Sobolev for the fractional powers $(-\Delta_N)^{-\sigma}$.
\begin{thm}
Let $0<2\sigma<N$, $1\le p,q< \infty$ and $s,t\in \mathbb{R}$. If $(q,p,s,t)$ verify one of the conditions $\mathcal{C}_1$, $\mathcal{C}_2$, and $\mathcal{C}_3$, then
\[
\|\langle \mb{\cdot}\rangle^s (-\Delta_N)^{-\sigma}f\|_{\ell^p(\mathbb{Z}^N)}\le \|\langle \mb{\cdot}\rangle^t f\|_{\ell^q(\mathbb{Z}^N)}.
\]
\end{thm}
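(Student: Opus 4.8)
The plan is to deduce this weighted Hardy–Littlewood–Sobolev inequality for $(-\Delta_N)^{-\sigma}$ from the corresponding statement for the purely polynomial operator $\mathcal{I}_{2\sigma}$ quoted above, using the pointwise comparison
\[
|(-\Delta_N)^{-\sigma}f(\mb{n})|\le \sum_{\mb{k}\in\mathbb{Z}^N}|f(\mb{n}-\mb{k})|K_\sigma(\mb{k})\le C\sum_{\mb{k}\in\mathbb{Z}^N}\frac{|f(\mb{n}-\mb{k})|}{(|\mb{k}|+1)^{N-2\sigma}},
\]
which follows at once from \eqref{eq:Ksigma-convo} and \eqref{eq:bound-frac}. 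The right-hand side is, up to the harmless replacement of $|\mb{n}-\mb{k}|^{-(N-2\sigma)}$ by $(|\mb{n}-\mb{k}|+1)^{-(N-2\sigma)}$, exactly $\mathcal{I}_{2\sigma}(|f|)(\mb{n})$ plus the diagonal term $|f(\mb{n})|$. So it suffices to (i) handle the $+1$ versus no $+1$ discrepancy and the missing diagonal term, and (ii) invoke the cited theorem.

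First I would make the reduction precise. Write $\tilde{\mathcal{I}}_{2\sigma}g(\mb{n})=\sum_{\mb{k}}(|\mb{n}-\mb{k}|+1)^{-(N-2\sigma)}g(\mb{k})$ for $g\ge0$. Splitting the sum into $\mb{k}=\mb{n}$ and $\mb{k}\ne\mb{n}$, and using $(|\mb{m}|+1)^{-(N-2\sigma)}\le |\mb{m}|^{-(N-2\sigma)}$ for $\mb{m}\ne\mb{0}$ together with $(|\mb{m}|+1)^{-(N-2\sigma)}\le 2^{N-2\sigma}|\mb{m}|^{-(N-2\sigma)}$ when $|\mb{m}|\ge1$ (so in fact $\tilde{\mathcal{I}}_{2\sigma}g\le C(\mathcal{I}_{2\sigma}g+g)$ pointwise), we get $|(-\Delta_N)^{-\sigma}f|\le C\bigl(\mathcal{I}_{2\sigma}(|f|)+|f|\bigr)$. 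Applying $\|\langle\cdot\rangle^s\,\cdot\,\|_{\ell^p}$ and the triangle inequality, the first term is controlled by $\|\langle\cdot\rangle^t f\|_{\ell^q}$ by the quoted theorem under any of $\mathcal{C}_1,\mathcal{C}_2,\mathcal{C}_3$, so the whole matter is reduced to the auxiliary bound
\[
\|\langle\mb{\cdot}\rangle^s f\|_{\ell^p(\mathbb{Z}^N)}\le C\|\langle\mb{\cdot}\rangle^t f\|_{\ell^q(\mathbb{Z}^N)}
\]
for the identity operator, under those same conditions.

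The main obstacle, therefore, is checking this embedding $\ell^q(\langle\cdot\rangle^{tq})\hookrightarrow \ell^p(\langle\cdot\rangle^{sp})$ within the constraints of $\mathcal{C}_1,\mathcal{C}_2,\mathcal{C}_3$. For $p\ge q$ this is immediate once the weights are monotone in the right direction: since $s\le t$ in all three condition sets, $\langle\mb{n}\rangle^{s}\le\langle\mb{n}\rangle^{t}$ pointwise, and $\ell^q\subset\ell^p$ for $p\ge q$, giving the bound with constant $1$. When instead $p<q$ (which can occur in $\mathcal{C}_1$), one uses Hölder's inequality with exponent $q/p$ and its conjugate: $\sum\langle\mb{n}\rangle^{sp}|f(\mb{n})|^p=\sum\bigl(\langle\mb{n}\rangle^{tp}|f(\mb{n})|^p\bigr)\langle\mb{n}\rangle^{(s-t)p}$, so one needs $\langle\cdot\rangle^{(s-t)p}\in\ell^{(q/p)'}(\mathbb{Z}^N)$, i.e. $(t-s)\cdot\frac{pq}{q-p}>N$. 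This summability has to be extracted from the inequalities defining $\mathcal{C}_1$; combining $\frac{2\sigma}{N}+\frac1p+\frac sN\le1$ with $\frac1q+\frac tN\ge\frac{2\sigma}{N}+\frac1p+\frac sN$ (the second and third lines of $\mathcal{C}_1$, after noting the first max is $\ge\frac1q+\frac tN$) yields $t-s\ge N\bigl(\frac1q-\frac1p\bigr)+2\sigma>N\bigl(\frac1q-\frac1p\bigr)$, i.e. $\frac{t-s}{N}>\frac1q-\frac1p=\frac1p\cdot\frac{q-p}{q}$, which rearranges to exactly $(t-s)\frac{pq}{q-p}>N$ — sharp enough, and the strictness comes from $\sigma>0$. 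I would carry out this bookkeeping case by case ($\mathcal{C}_1$ with $p<q$ being the only nontrivial one; $\mathcal{C}_2$ forces $q=1\le p$ and $\mathcal{C}_3$ has $\frac1p\le\frac1q$), conclude the identity-operator embedding, and assemble the pieces.
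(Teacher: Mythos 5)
Your proposal is correct and takes essentially the same route as the paper: the pointwise domination $|(-\Delta_N)^{-\sigma}f(\mb{n})|\le C\left(\mathcal{I}_{2\sigma}(|f|)(\mb{n})+|f(\mb{n})|\right)$ obtained from \eqref{eq:Ksigma-convo} and \eqref{eq:bound-frac}, followed by the quoted weighted inequality for $\mathcal{I}_{2\sigma}$. The only difference is that you make explicit the embedding $\|\langle\cdot\rangle^s f\|_{\ell^p(\mathbb{Z}^N)}\le C\|\langle\cdot\rangle^t f\|_{\ell^q(\mathbb{Z}^N)}$ needed for the diagonal term, which the paper treats as immediate; your case analysis is sound, apart from a harmless sign slip where $\tfrac1q-\tfrac1p$ should read $\tfrac1p-\tfrac1q$ (the final condition $(t-s)\tfrac{pq}{q-p}>N$ you arrive at is the correct one and does follow from $\mathcal{C}_1$ because $\sigma>0$).
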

\begin{proof}
The result is immediate from the previous theorem because, by \eqref{eq:bound-frac}, it is verified that
\[
|(-\Delta_N)^{-\sigma}f(\mb{n})|\le C \left(|\mathcal{I}_{2\sigma}f(\mb{n})|+|f(\mb{n})|\right).\qedhere
\]
\end{proof}

Taking $t=s=0$ in the previous theorem, we recover the classical Hardy-Littlewood-Sobolev inequality in our setting. However, we can go further.
\begin{cor}
Let $0<2\sigma<N$, $1<q<p<\infty$ with $1/p\le 1/q-2\sigma/N$, then the inequality
\[
\|(-\Delta_N)^{-\sigma}f\|_{\ell^p(\mathbb{Z}^N)}\le C \|f\|_{\ell^q(\mathbb{Z}^N)}
\]
holds.

Moreover, for $1<q<p\le 2$ the Hardy-Littlewood-Sobolev implies $1/p\le 1/q-2\sigma/N$
\end{cor}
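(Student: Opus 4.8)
The plan is to prove the two assertions separately. For the first (the Hardy--Littlewood--Sobolev inequality itself), I would simply observe that it is the special case $s=t=0$ of the preceding theorem, so the only work is to check that the triple $(q,p,0,0)$ satisfies one of the conditions $\mathcal{C}_1,\mathcal{C}_2,\mathcal{C}_3$ under the hypotheses $1<q<p<\infty$ and $1/p\le 1/q-2\sigma/N$. With $s=t=0$ condition $\mathcal{C}_2$ is excluded (it forces $t=2\sigma-N<0$), and $\mathcal{C}_3$ is the endpoint case $1/p=1/q-2\sigma/N$, which lies within the allowed range since $0<1/p<1/q<1$. For the strict inequality $1/p<1/q-2\sigma/N$ one checks $\mathcal{C}_1$: with $s=t=0$ the first line $s\le t$ is trivial, the second line becomes $2\sigma/N+1/p\le 1/q$ (using that $1/p,1/q>0$), which is exactly the hypothesis, and the third line $2\sigma/N+1/p\le 1$ follows since $2\sigma/N+1/p< 1/q<1$. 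Thus the hypotheses of the cited theorem hold and the inequality follows.

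For the second assertion I would argue by contradiction, or rather directly, using a dilation/scaling heuristic made rigorous by testing on a concrete family of sequences. Suppose $1<q<p\le 2$ and that the Hardy--Littlewood--Sobolev inequality $\|(-\Delta_N)^{-\sigma}f\|_{\ell^p(\mathbb{Z}^N)}\le C\|f\|_{\ell^q(\mathbb{Z}^N)}$ holds; I want to deduce $1/p\le 1/q-2\sigma/N$. The natural test object is $f=f_R=\chi_{Q_R}$, the indicator of a cube of side $R$ centred at the origin, for which $\|f_R\|_{\ell^q(\mathbb{Z}^N)}\sim R^{N/q}$. On the other hand, using the lower bound on $K_\sigma$ implicit in the positivity of $G_{t,N}$ together with the size estimate $K_\sigma(\mb{k})\sim (|\mb{k}|+1)^{2\sigma-N}$ (the matching lower bound is obtained as in the proof of \eqref{eq:bound-frac} by restricting the $t$-integral to $t\sim |\mb{k}|^2$ and using that $t^{1/2}e^{-2t}I_{|\mb{k}|/N}(2t)$ is bounded below there), one gets that for $|\mb{n}|\le R$ the value $(-\Delta_N)^{-\sigma}f_R(\mb{n})\gtrsim \sum_{|\mb{k}|\le R}(|\mb{k}|+1)^{2\sigma-N}\sim R^{2\sigma}$, hence $\|(-\Delta_N)^{-\sigma}f_R\|_{\ell^p(\mathbb{Z}^N)}\gtrsim R^{2\sigma}R^{N/p}$. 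Feeding these into the assumed inequality gives $R^{2\sigma+N/p}\le C R^{N/q}$ for all large $R$, which forces $2\sigma+N/p\le N/q$, i.e. $1/p\le 1/q-2\sigma/N$, as desired. The restriction $p\le 2$ is presumably used to guarantee that, in the relevant range, the scaling obstruction is the only one (for $p>2$ one could also test with a single Dirac delta, which in the discrete setting gives a competing condition $1/q-1/p\ge$ something, but for $p\le 2$ the cube test is the binding one); alternatively $p\le 2$ may be invoked so that a duality argument via the $\ell^{p'}$--$\ell^{q'}$ formulation with $p'\ge 2$ stays consistent.

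The main obstacle I anticipate is the lower bound on $(-\Delta_N)^{-\sigma}f_R$: one must be careful that the pointwise lower estimate $K_\sigma(\mb{k})\gtrsim (|\mb{k}|+1)^{2\sigma-N}$ really holds with a constant uniform in $\mb{k}$, which requires a lower bound on $e^{-2t}I_{a}(2t)$ for $t$ comparable to $a^2$; this can be extracted from the integral representation \cite[10.32.2]{NIST} used in the proof of \eqref{eq:bound-I}, or from the asymptotic \eqref{eq:asym-I} once $t/a\to$ const. A secondary subtlety is making sure the cube-test computation of $\|(-\Delta_N)^{-\sigma}f_R\|_{\ell^p}$ only needs a lower bound over $|\mb{n}|\lesssim R$, which is immediate since we only need a lower bound on the norm. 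Everything else is bookkeeping with the parameters.
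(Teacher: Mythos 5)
Your treatment of the first assertion coincides with the paper's: the inequality is precisely the case $s=t=0$ of the preceding weighted theorem, and checking the conditions (your verification of $\mathcal{C}_1$, which in fact already covers the endpoint $1/p=1/q-2\sigma/N$ as well) is the whole content of that step.

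For the second assertion your route is genuinely different from the paper's, and as written it has a gap. The paper tests the inequality on $f=G_{t,N}$ itself: by Neumann's identity \eqref{eq:Neumann}, $(-\Delta_N)^{-\sigma}G_{t,N}(\mathbf{n})=\int_0^\infty G_{t+s,N}(\mathbf{n})s^{\sigma-1}\,ds$, which is bounded below by (essentially) $t^{\sigma}G_{2t,N}(\mathbf{n})$, so the assumed inequality gives $t^{\sigma}\|G_{2t,N}\|_{\ell^p(\mathbb{Z}^N)}\le C\|G_{t,N}\|_{\ell^q(\mathbb{Z}^N)}$, and the two-sided estimate \eqref{eq:sim-G} then forces $t^{\sigma-N/2(1/q-1/p)}\le C$ for all $t>1$, hence the claim. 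The hypothesis $p\le 2$ enters only because the lower bound in \eqref{eq:sim-G} is proved (via Parseval) only for $1\le r\le 2$; your cube test $f_R=\chi_{Q_R}$, if completed, would give the necessity for all $1<q<p<\infty$, so your speculation about why $p\le2$ is needed is beside the point — it is an artifact of the paper's method, not an intrinsic restriction. The gap in your argument is the pointwise lower bound $K_\sigma(\mathbf{k})\gtrsim(1+|\mathbf{k}|)^{2\sigma-N}$: the paper only proves the upper bound \eqref{eq:bound-frac}, and you do not actually establish the lower one. What it requires is a uniform estimate $e^{-2t}I_a(2t)\ge c\,t^{-1/2}$ in the regime $t\gtrsim a^2$, so that $G_{t,N}(\mathbf{k})\gtrsim t^{-N/2}$ for $t\ge|\mathbf{k}|^2$ and one can integrate over $t\in(|\mathbf{k}|^2,2|\mathbf{k}|^2)$. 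Your first suggested source, the asymptotic \eqref{eq:asym-I}, cannot give this: it describes $I_\nu(t)$ for $t$ fixed and $\nu\to\infty$ (order much larger than argument), exactly the opposite regime, where the values are super-polynomially small. Your second suggestion, the integral representation \cite[10.32.2]{NIST} used in the proof of \eqref{eq:bound-I}, can indeed be made to yield the needed lower bound, but that argument still has to be carried out; until it is, the second half of your proof is incomplete, whereas the paper's choice of $G_{t,N}$ as test sequence avoids any kernel lower bound by exploiting the semigroup law.
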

\begin{proof}
We have to proof the second part of the result. To do this, we consider the sequence $f(\mb{n})=G_{t,N}(\mb{n})$ with $t>1$.
In this way,
\begin{align*}
(-\Delta_N)^{-\sigma}f(\mb{n})&=\int_{0}^{\infty}\sum_{\mb{k}\in \mathbb{Z}^N}G_{t,N}(\mb{k})G_{s,N}(\mb{n}-\mb{k})s^{\sigma-1}\, ds
\\&=\int_{0}^{\infty}G_{t+s,N}(\mb{n})s^{\sigma-1}\, ds\ge \int_{0}^{t}G_{t+s,N}(\mb{n})s^{\sigma-1}\, ds\ge G_{2t}(\mb{n})t^{\sigma}.
\end{align*}
Then, by the Hardy-Littlewood-Sobolev, we have
\[
t^\sigma\|G_{2t,N}\|_{\ell^p(\mathbb{Z}^N)}\le C \|G_{t,N}\|_{\ell^q(\mathbb{Z}^N)}
\]
and, applying \eqref{eq:sim-G}, it becomes $t^{\sigma-N/2(1/q-1/p)}\le C$ 
and this implies $1/p\le 1/q-2\sigma/N$.
\end{proof}

To finish this subsection we show a new inequality extending the classical Hardy-Littlewood-Sobolev one. 
\begin{thm}
Let $0<2\sigma<N$, $1\le q\le p\le \infty$ with $1\le q <N/(N-1)$ and $1/p< 1/q-(2\sigma-1)/N$, and $|\cdot|f\in \ell^q(\mathbb{Z}^N)$, then the inequality
\[
\left\|(-\Delta_N)^{-\sigma}f-\left(\sum_{\mb{k}\in \mathbb{Z}^N}f(\mb{k})\right)K_\sigma\right\|_{\ell^p(\mathbb{Z}^N)}\le C \||\cdot|f\|_{\ell^q(\mathbb{Z}^N)}
\]
holds.
\end{thm}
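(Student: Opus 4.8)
The plan is to reduce everything to the refined heat‑semigroup decay estimate \eqref{eq:decay-fine} by means of the subordination formula that defines $(-\Delta_N)^{-\sigma}$. First I would check that the left–hand side is meaningful. Since $1\le q<N/(N-1)$, H\"older's inequality gives $\|f\|_{\ell^1(\mathbb{Z}^N)}\le C\||\cdot|f\|_{\ell^q(\mathbb{Z}^N)}<\infty$, so $\sum_{\mb{k}}f(\mb{k})$ converges and, by \eqref{eq:bound-frac}, $\big(\sum_{\mb{k}}f(\mb{k})\big)K_\sigma\in\ell^\infty(\mathbb{Z}^N)$. Writing $(1+|\mb{n}|)^{2\sigma-N}=|\mb{n}|^{-1}(1+|\mb{n}|)^{2\sigma-N}\,|\mb{n}|$ for $\mb{n}\ne\mb{0}$ and applying H\"older once more, the condition $1/q>(2\sigma-1)/N$ (which follows from $1/p<1/q-(2\sigma-1)/N$ together with $1/p\ge 0$) yields $f\in\ell_\sigma(\mathbb{Z}^N)$; hence $(-\Delta_N)^{-\sigma}f$ is well defined and the convolution formula \eqref{eq:Ksigma-convo} holds.

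Next I would record the pointwise identity
\[
(-\Delta_N)^{-\sigma}f(\mb{n})-\Big(\sum_{\mb{k}\in\mathbb{Z}^N}f(\mb{k})\Big)K_\sigma(\mb{n})=\frac{1}{\Gamma(\sigma)}\int_{0}^{\infty}\Big(W_tf(\mb{n})-\big(\textstyle\sum_{\mb{k}}f(\mb{k})\big)G_{t,N}(\mb{n})\Big)t^{\sigma-1}\,dt .
\]
Splitting the integral in this way is legitimate because, for each fixed $\mb{n}$, the integrand on the right is absolutely integrable in $t$: the $\ell^\infty$ case of \eqref{eq:decay-fine} gives $\big|W_tf(\mb{n})-\big(\sum_{\mb{k}}f(\mb{k})\big)G_{t,N}(\mb{n})\big|\le C\min\{1,t^{-1/2-N/(2q)}\}\||\cdot|f\|_{\ell^q(\mathbb{Z}^N)}$, and $\int_{0}^{\infty}\min\{1,t^{-1/2-N/(2q)}\}t^{\sigma-1}\,dt<\infty$ since $\sigma>0$ near $0$ and $2\sigma-1<N/q$ near infinity.

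Then I would apply Minkowski's integral inequality in $\ell^p(\mathbb{Z}^N)$ and insert \eqref{eq:decay-fine} to obtain
\[
\Big\|(-\Delta_N)^{-\sigma}f-\big(\textstyle\sum_{\mb{k}}f(\mb{k})\big)K_\sigma\Big\|_{\ell^p(\mathbb{Z}^N)}\le C\||\cdot|f\|_{\ell^q(\mathbb{Z}^N)}\int_{0}^{\infty}\min\{1,t^{-1/2-N/2(1/q-1/p)}\}\,t^{\sigma-1}\,dt .
\]
Splitting this last integral at $t=1$, the piece over $(0,1)$ equals $1/\sigma$, while the piece over $(1,\infty)$ is $\int_{1}^{\infty}t^{\sigma-1-1/2-N/2(1/q-1/p)}\,dt$, which is finite exactly when $\sigma-1/2-\frac{N}{2}(1/q-1/p)<0$, i.e.\ $1/p<1/q-(2\sigma-1)/N$ — precisely the hypothesis. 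This yields the stated bound. The only genuinely delicate point is the bookkeeping of the first paragraph, namely verifying that $f\in\ell_\sigma(\mathbb{Z}^N)$ and that $\sum_{\mb{k}}f(\mb{k})$ converges under the stated hypotheses so that all objects are well defined; once that is in place, the estimate is a one–line consequence of \eqref{eq:decay-fine} and the convergence of an elementary $t$–integral.
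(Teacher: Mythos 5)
Your argument is correct and is essentially the paper's own proof: apply Minkowski's integral inequality to the subordination formula and insert the refined decay estimate \eqref{eq:decay-fine}, with the integral over $(1,\infty)$ converging precisely under the hypothesis $1/p<1/q-(2\sigma-1)/N$ and the integral over $(0,1)$ converging since $\sigma>0$. Your preliminary verification that $f\in\ell_\sigma(\mathbb{Z}^N)$ and that $\sum_{\mb{k}}f(\mb{k})$ converges is a correct (and slightly more careful) addition that the paper leaves implicit.
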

\begin{proof}
The result follows from Minkowsky integral inequality and \eqref{eq:decay-fine}. Indeed,
\begin{multline*}
\left\|(-\Delta_N)^{-\sigma}f-\left(\sum_{\mb{k}\in \mathbb{Z}^N}f(\mb{k})\right)K_\sigma\right\|_{\ell^p(\mathbb{Z}^N)}\\
\begin{aligned}
&\le
\int_{0}^{\infty}\left\|W_tf-\sum_{\mb{k}\in \mathbb{Z}^N}f(\mb{k})G_{t,N}\right\|_{\ell^p(\mathbb{Z}^N)}t^{\sigma-1}\, dt
\\
&\le C \||\cdot|f\|_{\ell^q(\mathbb{Z}^N)}\int_{0}^{\infty}\min\{1,t^{-1/2-N/2(1/q-1/p)}\}t^{\sigma-1}\, dt\\&\le C\||\cdot|f\|_{\ell^q(\mathbb{Z}^N)}.
\end{aligned}
\end{multline*}
\end{proof}

\subsection{Discrete Schauder estimates for the fractional integrals}

Now, we prove multidimensional discrete Schauder estimates for the fractional integrals and to do that we need to define the H\"older spaces. Given a sequence $f=\{f(\mb{k})\}_{\mb{k}\in \mathbb{Z}^N}$ and $0<\alpha\le 1$, we say that $f$ belongs to the H\"older space $C^{0,\alpha}(\mathbb{Z}^N)$ when
\[
[f]_{C^{0,\alpha}(\mathbb{Z}^N)}:=\sup_{\mb{n},\mb{m}\in\mathbb{Z}}\frac{|f(\mb{n})-f(\mb{m})|}{|\mb{n}-\mb{m}|^{\alpha}}
<\infty.
\]
For a natural number $k$ such that $k\ge 1$
we say that $f\in C^{k,\alpha}(\mathbb{Z}^N)$
when it is verified that
\[
[f]_{C^{k,\alpha}(\mathbb{Z}^N)}:=\sum_{m_1+\cdots+m_N=k}[(\delta_1^+)^{m_1}(\delta_2^{+})^{m_2}\cdots (\delta_N^+)^{m_N} f]_{C^{0,\alpha}(\mathbb{Z}^N)}<\infty.
\]

Sometimes it is common write the definition of H\"older classes combining $\delta_i^+$ and $\delta_i^{-}$ but such definitions are equivalents to the our one due to the relation $\delta_i^{-}f(\mb{n})=\delta_i^{+}f(\mb{n}-\mb{e}_i)$.

The next result is the multidimensional version of \cite[Theorem 1.6]{CRSTV}.

\begin{thm}
\label{thm:Schauder}
Let $k\ge 0$, $0<\alpha\le 1$, $0<\sigma<1/2$ and $f\in \ell_{\sigma}(\mathbb{Z}^N)$.
\begin{enumerate}
\item[a)] If $f\in C^{k,\alpha}(\mathbb{Z}^N)$ and $2\sigma+\alpha<1$ then $(-\Delta)^{-\sigma}f\in C^{k,2\sigma+\alpha}(\mathbb{Z}^N)$ and
    \[
    [(-\Delta)^{-\sigma}f]_{C^{k,2\sigma+\alpha}(\mathbb{Z}^N)}\le C [f]_{C^{k,\alpha}(\mathbb{Z}^N)}.
    \]
\item[b)] If $f\in C^{k,\alpha}(\mathbb{Z}^N)$ and $2\sigma+\alpha>1$ then $(-\Delta)^{-\sigma}f\in C^{k+1,2\sigma+\alpha-1}(\mathbb{Z}^N)$ and
    \[
    [(-\Delta)^{-\sigma}f]_{C^{k+1,2\sigma+\alpha-1}(\mathbb{Z}^N)}\le C [f]_{C^{k,\alpha}(\mathbb{Z}^N)}.
    \]
\item[c)] If $f\in \ell^\infty(\mathbb{Z}^N)$  then $(-\Delta)^{-\sigma}f\in C^{0,2\sigma}(\mathbb{Z}^N)$ and
    \[
    [(-\Delta)^{-\sigma}f]_{C^{0,2\sigma}(\mathbb{Z}^N)}\le C \|f\|_{\ell^{\infty}(\mathbb{Z}^N)}.
    \]
\end{enumerate}
\end{thm}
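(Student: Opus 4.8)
The plan is to reduce everything to the case $k=0$ and then exploit the convolution representation \eqref{eq:Ksigma-convo}, $(-\Delta_N)^{-\sigma}f = K_\sigma * f$, together with the pointwise bound \eqref{eq:bound-frac} for $K_\sigma$ and a companion bound for its first differences. For the reduction: since $\delta_i^+$ commutes with the heat semigroup $W_t$, it commutes with $(-\Delta_N)^{-\sigma}$, so for a multi-index with $m_1+\cdots+m_N=k$ one has $(\delta_1^+)^{m_1}\cdots(\delta_N^+)^{m_N}(-\Delta_N)^{-\sigma}f = (-\Delta_N)^{-\sigma}g$ where $g=(\delta_1^+)^{m_1}\cdots(\delta_N^+)^{m_N}f$. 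If $f\in C^{k,\alpha}(\mathbb{Z}^N)$ then $g\in C^{0,\alpha}(\mathbb{Z}^N)$ with $[g]_{C^{0,\alpha}}\le [f]_{C^{k,\alpha}}$, and $g\in\ell_\sigma(\mathbb{Z}^N)$ follows because a bounded sequence (Hölder sequences are bounded after subtracting a constant; note $\Delta_N$ annihilates constants, so we may assume $g$ bounded) lies in $\ell_\sigma(\mathbb{Z}^N)$ precisely when $2\sigma<N$. Thus (a) and (b) for general $k$ follow from the cases $k=0$ (for (a)) and $k=0$ of a "gain one derivative" statement (for (b)); part (c) is the $k=0$, $f\in\ell^\infty$ version.

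The analytic core is then to estimate, for the kernel $K_\sigma$, the quantities controlling Hölder regularity. First I would record the size bound \eqref{eq:bound-frac}, $0<K_\sigma(\mb{n})\le C(|\mb{n}|+1)^{2\sigma-N}$, and then derive a first-difference bound
\[
|\delta_i^+ K_\sigma(\mb{n})| \le \frac{C}{(|\mb{n}|+1)^{N-2\sigma+1}},
\]
obtained by writing $\delta_i^+ K_\sigma(\mb{n})=\frac{1}{\Gamma(\sigma)}\int_0^\infty \delta_i^+ G_{t,N}(\mb{n})\,t^{\sigma-1}\,dt$, splitting the integral at $t\sim|\mb{n}|^2$, and using \eqref{eq:diff-I} together with \eqref{eq:bound-I} (with $\alpha=0$ for the small-$t$ part giving $|\delta_i^+G_{t,N}(\mb{n})|\le Ct^{-1}G_{t,N}(\mb{n})\le Ct^{-1}(1+|\mb{n}|)^{-N}$ via \eqref{eq:Gt-decay-n}, and \eqref{eq:size-G-t} for the large-$t$ part). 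For part (c), given $\mb{n}\ne\mb{m}$ with $r=|\mb{n}-\mb{m}|$, I would split the sum $K_\sigma*f(\mb{n})-K_\sigma*f(\mb{m})=\sum_{\mb{k}}f(\mb{n}-\mb{k})(K_\sigma(\mb{k})-K_\sigma(\mb{k}+\mb{m}-\mb{n}))$ into $|\mb{k}|\le 2r$ (use the size bound on each term separately, bound $\sum_{|\mb{k}|\le 2r}(|\mb{k}|+1)^{2\sigma-N}\le Cr^{2\sigma}$) and $|\mb{k}|>2r$ (telescope $K_\sigma(\mb{k})-K_\sigma(\mb{k}+\mb{m}-\mb{n})$ into at most $r$ first differences and use the first-difference bound, giving $\sum_{|\mb{k}|>2r}r\,(|\mb{k}|+1)^{2\sigma-N-1}\le Cr\cdot r^{2\sigma-1}=Cr^{2\sigma}$, valid since $2\sigma-1<0$); both contributions are $\le C\|f\|_{\ell^\infty}r^{2\sigma}$, which is the claim.

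For (a), the same dyadic splitting is applied but now one can also subtract $f(\mb{n})$: write the difference as $\sum_{\mb{k}}(f(\mb{n}-\mb{k})-f(\mb{n}))(K_\sigma(\mb{k})-K_\sigma(\mb{k}+\mb{m}-\mb{n}))$ using $\sum_{\mb{k}}K_\sigma(\mb{k})$ divergence is not an issue because the two $K_\sigma$ terms cancel the constant; then on $|\mb{k}|\le 2r$ use $|f(\mb{n}-\mb{k})-f(\mb{n})|\le [f]_{C^{0,\alpha}}|\mb{k}|^\alpha$ and the size bound to get $\sum_{|\mb{k}|\le 2r}|\mb{k}|^\alpha(|\mb{k}|+1)^{2\sigma-N}\le Cr^{2\sigma+\alpha}$ (convergence of the local sum needs $\alpha+2\sigma-N>-N$, i.e. trivially true), while on $|\mb{k}|>2r$ telescoping plus the first-difference bound gives $[f]_{C^{0,\alpha}}\sum_{|\mb{k}|>2r}|\mb{k}|^\alpha\cdot r\,(|\mb{k}|+1)^{2\sigma-N-1}\le Cr\cdot r^{2\sigma+\alpha-1}=Cr^{2\sigma+\alpha}$, where this last sum converges precisely because $2\sigma+\alpha<1$. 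This is the step where the hypothesis $2\sigma+\alpha<1$ is essential, and it is the main obstacle: when $2\sigma+\alpha>1$ the far sum diverges, and one must instead gain a derivative, i.e. prove $\delta_i^+(-\Delta_N)^{-\sigma}f\in C^{0,2\sigma+\alpha-1}$. For (b), I would therefore first check that $\delta_i^+ K_\sigma$ is itself a kernel of "order $2\sigma-1$" with an analogous size bound $C(|\mb{n}|+1)^{2\sigma-1-N}$ (already shown above) and a second-difference bound $C(|\mb{n}|+1)^{2\sigma-2-N}$ obtained from $|\delta_i^+\delta_j^+ G_{t,N}(\mb{n})|\le Ct^{-2}G_{t,N}(\mb{n})$ (or $Ct^{-1}\cdot$ size in the mixed case), then repeat the dyadic argument for $\delta_i^+(-\Delta_N)^{-\sigma}f=(\delta_i^+K_\sigma)*f$ with exponent $\beta:=2\sigma+\alpha-1\in(0,1)$: near sum $\le\sum_{|\mb{k}|\le 2r}|\mb{k}|^\alpha(|\mb{k}|+1)^{2\sigma-1-N}\le Cr^\beta$ and far sum (telescoping the second difference) $\le\sum_{|\mb{k}|>2r}|\mb{k}|^\alpha r(|\mb{k}|+1)^{2\sigma-2-N}\le Cr^\beta$, the far sum converging exactly because $\beta<1$. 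Throughout, the hypothesis $\sigma<1/2$ guarantees $2\sigma-1<0$ so that the first-difference kernel genuinely decays faster than the critical rate, and $f\in\ell_\sigma(\mathbb{Z}^N)$ guarantees all the series defining $(-\Delta_N)^{-\sigma}f$ converge absolutely so that the manipulations (in particular the insertion of the constant $f(\mb{n})$) are legitimate.
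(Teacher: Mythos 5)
Your overall strategy is the same as the paper's: reduce to $k=0$ by commuting $\delta_i^+$ with $(-\Delta_N)^{-\sigma}$, write the operator as convolution with $K_\sigma$, split the sum at the scale $|\mb{n}-\mb{m}|$, use the size bound \eqref{eq:bound-frac} on the near part and a smoothness bound on the far part, and for part b) work with the differentiated kernel $\delta_i^+K_\sigma$ with one extra order of decay. Your telescoping of $K_\sigma(\mb{k})-K_\sigma(\mb{k}+\mb{m}-\mb{n})$ into at most $|\mb{n}-\mb{m}|$ unit first differences is a legitimate substitute for the paper's two-point estimates (Lemmas \ref{lem:diff-K} and \ref{lem:diff2-K}). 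However, two steps as written are not complete.

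First, the insertion of the constant $f(\mb{n})$ in parts a) and b) requires the identity $\sum_{\mb{k}}\bigl(K_\sigma(\mb{k})-K_\sigma(\mb{k}+\mb{m}-\mb{n})\bigr)=0$. Absolute convergence of the series defining $(-\Delta_N)^{-\sigma}f$ (your closing justification) only makes the inserted series well defined; it does not make it vanish, and since $K_\sigma\notin\ell^1(\mathbb{Z}^N)$ you cannot simply split it into two telescoping sums. This cancellation is exactly Lemma \ref{lem:null} of the paper, proved there by a Fubini argument on the heat representation (and it is provable alternatively by comparing partial sums over boxes of side $M$, whose symmetric-difference contribution is $O(M^{N-1}\cdot M^{2\sigma-N})=O(M^{2\sigma-1})\to0$, using $\sigma<1/2$). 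Without some such argument a) and b) are not proved, because $[f]_{C^{0,\alpha}(\mathbb{Z}^N)}$ does not control $|f(\mb{n})|$. Second, your derivations of the kernel-difference bounds use incorrect intermediate inequalities: \eqref{eq:diff-I} gives $|\delta^+G_{t,1}(n)|\le C\tfrac{|n|+1}{t}G_{t,1}(n)$, not $Ct^{-1}G_{t,N}(\mb{n})$, and with your choice of \eqref{eq:Gt-decay-n} (i.e. $\alpha=0$) the small-$t$ piece becomes $\int_0^{|\mb{n}|^2}t^{\sigma-2}\,dt$, which diverges; one needs $t^{-1}G_{t,N}(\mb{n})\le C|\mb{n}|^{-N-2}$, i.e. \eqref{eq:bound-I} with $\alpha=1/N$, as in the proof of Lemma \ref{lem:diff-K}. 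Likewise $|\delta_i^+\delta_j^+G_{t,N}(\mb{n})|\le Ct^{-2}G_{t,N}(\mb{n})$ is not what \eqref{eq:diff2-I} yields (the correct bound carries $\tfrac1t+\tfrac{|\mb{n}|^2}{t^2}$); the second-difference bound $C(1+|\mb{n}|)^{2\sigma-N-2}$ you want is true, but it is the content of Lemma \ref{lem:diff2-K} and needs the sharper Bessel estimates. Finally, a small slip in the reduction: bounded sequences do not belong to $\ell_\sigma(\mathbb{Z}^N)$ (its weight $(1+|\mb{n}|)^{2\sigma-N}$ is not summable for $\sigma>0$); what you actually need is only that $\ell_\sigma(\mathbb{Z}^N)$ is shift-invariant, so the iterated differences of $f$ remain in $\ell_\sigma(\mathbb{Z}^N)$.
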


Before starting with the proof of the Schauder estimates, we need the four following lemmas.

\begin{lem}
\label{lem:null}
  For $N\ge 1$ and $0<\sigma<1/2$, we have
  \[
  \sum_{k\in \mathbb{Z}^N}((-\Delta_N)^{-\sigma}(\mb{n}-\mb{k})-(-\Delta_N)^{-\sigma}(\mb{k}))=0.
  \]
\end{lem}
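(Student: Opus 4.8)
The plan is to reduce the identity to the elementary fact that the heat kernel $G_{t,N}$ carries total mass $1$ for every $t>0$, and then to justify interchanging the summation over $\mb{k}$ with the integral in $t$ that defines $K_\sigma$. Throughout, $(-\Delta_N)^{-\sigma}(\mb{m})$ stands for the convolution kernel $K_\sigma(\mb{m})=\frac{1}{\Gamma(\sigma)}\int_0^\infty G_{t,N}(\mb{m})\,t^{\sigma-1}\,dt$ of the previous theorem, which satisfies $0<K_\sigma(\mb{m})\le C(|\mb{m}|+1)^{-(N-2\sigma)}$. Since $N-2\sigma<N$, each of the sums $\sum_{\mb{k}}K_\sigma(\mb{n}-\mb{k})$ and $\sum_{\mb{k}}K_\sigma(\mb{k})$ diverges on its own, so the content of the lemma is that their (formal) difference is in fact an absolutely convergent series with value $0$.

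Fix $\mb{n}$ and set $\phi(t)=\sum_{\mb{k}\in\mathbb{Z}^N}|G_{t,N}(\mb{n}-\mb{k})-G_{t,N}(\mb{k})|$. I would establish two bounds for $\phi$. First, $\phi(t)\le 2$ for every $t>0$, because $\|G_{t,N}\|_{\ell^1(\mathbb{Z}^N)}=1$ by \eqref{eq:sum-L1-multi}. Second, $\phi(t)\le C|\mb{n}|\,t^{-1/2}$: using that $G_{t,N}$ is even in each coordinate by \eqref{eq:par}, we may replace $G_{t,N}(\mb{n}-\mb{k})$ by $G_{t,N}(\mb{k}-\mb{n})$; joining $\mb{0}$ to $\mb{n}$ by a lattice path of $|\mb{n}|$ unit steps and telescoping expresses $G_{t,N}(\mb{k}-\mb{n})-G_{t,N}(\mb{k})$ as a sum of $|\mb{n}|$ terms, each of the form $\pm\delta_i^{\pm}G_{t,N}$ evaluated at a translate of $\mb{k}$; summing in $\mb{k}$, exploiting the product structure of $G_{t,N}$ together with $\|G_{t,1}\|_{\ell^1(\mathbb{Z})}=1$ (which is \eqref{eq:sum-L1}), one gets $\phi(t)\le|\mb{n}|\,\|\delta^+G_{t,1}\|_{\ell^1(\mathbb{Z})}$, and \eqref{eq:norm-diff} with $r=1$ finishes this bound.

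Next I would record that for every fixed $t>0$, $\sum_{\mb{k}}(G_{t,N}(\mb{n}-\mb{k})-G_{t,N}(\mb{k}))=0$, since both sums are absolutely convergent and equal to $1$ after reindexing, again by \eqref{eq:sum-L1-multi}. Combining the two estimates for $\phi$,
\[
\sum_{\mb{k}}\int_0^\infty\big|G_{t,N}(\mb{n}-\mb{k})-G_{t,N}(\mb{k})\big|\,t^{\sigma-1}\,dt=\int_0^\infty\phi(t)\,t^{\sigma-1}\,dt\le C\int_0^1 t^{\sigma-1}\,dt+C|\mb{n}|\int_1^\infty t^{\sigma-3/2}\,dt<\infty,
\]
the last integral being finite exactly because $\sigma<1/2$; in particular $\sum_{\mb{k}}|K_\sigma(\mb{n}-\mb{k})-K_\sigma(\mb{k})|<\infty$. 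This integrability licenses Fubini's theorem, whence
\[
\sum_{\mb{k}}\big(K_\sigma(\mb{n}-\mb{k})-K_\sigma(\mb{k})\big)=\frac{1}{\Gamma(\sigma)}\int_0^\infty\Big(\sum_{\mb{k}}\big(G_{t,N}(\mb{n}-\mb{k})-G_{t,N}(\mb{k})\big)\Big)t^{\sigma-1}\,dt=0.
\]

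The only genuine work is the estimate $\phi(t)\le C|\mb{n}|\,t^{-1/2}$, and it is precisely the matching integrability $\int_1^\infty t^{\sigma-3/2}\,dt<\infty$ that forces the hypothesis $\sigma<1/2$. If one prefers to avoid exchanging sum and integral, an alternative is to evaluate the symmetric partial sums $\sum_{|\mb{k}|\le R}(K_\sigma(\mb{n}-\mb{k})-K_\sigma(\mb{k}))$: after reindexing these collapse to a sum of $K_\sigma$ over the symmetric difference of the $\ell^1$-balls $\overline{B}(\mb{0},R)$ and $\overline{B}(\mb{n},R)$, a shell containing $O(|\mb{n}|R^{N-1})$ lattice points on which $K_\sigma\le CR^{-(N-2\sigma)}$ by \eqref{eq:bound-frac}, so the partial sums are $O(|\mb{n}|R^{2\sigma-1})$ and tend to $0$ as $R\to\infty$, again thanks to $\sigma<1/2$.
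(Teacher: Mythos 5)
Your argument is correct, and it follows the same overall strategy as the paper: reduce the identity to the fact that $\sum_{\mb{k}}(G_{t,N}(\mb{n}-\mb{k})-G_{t,N}(\mb{k}))=0$ for each $t$, and then justify the interchange of the sum in $\mb{k}$ with the integral in $t$ defining $K_\sigma$, with $\sigma<1/2$ entering exactly through the integrability at $t=\infty$. Where you differ is in how the Fubini hypothesis is verified. The paper splits the integral at $t=|\mb{n}|^2$ and, for large $t$, splits the sum into $|\mb{k}|\le 2|\mb{n}|$ and $|\mb{k}|>2|\mb{n}|$, invoking the smoothness estimates \eqref{eq:H-infty}, \eqref{eq:diff-G} and the norm bound \eqref{eq:norm-H}. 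You instead bound the full quantity $\phi(t)=\sum_{\mb{k}}|G_{t,N}(\mb{n}-\mb{k})-G_{t,N}(\mb{k})|$ in one stroke: telescoping along a lattice path of $|\mb{n}|$ unit steps, using the tensor structure of $G_{t,N}$ together with $\|G_{t,1}\|_{\ell^1(\mathbb{Z})}=1$, gives $\phi(t)\le|\mb{n}|\,\|\delta^{+}G_{t,1}\|_{\ell^1(\mathbb{Z})}\le C|\mb{n}|t^{-1/2}$ by \eqref{eq:norm-diff} with $r=1$; combined with the trivial bound $\phi(t)\le 2$ this yields the required integrability. This is more elementary than the paper's case analysis (it avoids the multidimensional difference lemmas entirely) and makes the role of $\sigma<1/2$ completely transparent, at the cost of being specific to the difference at $\mb{0}$ versus $\mb{n}$ rather than reusing the general smoothness machinery the paper has already set up. One caveat about your closing remark: the partial-sum alternative only shows that the symmetric partial sums $\sum_{|\mb{k}|\le R}$ tend to $0$, i.e.\ conditional convergence in a particular summation order, whereas your main argument (like the paper's) gives absolute summability of $K_\sigma(\mb{n}-\mb{k})-K_\sigma(\mb{k})$, which is what is actually exploited when the lemma is applied later (e.g.\ in the Schauder estimates, where the resulting series is split and rearranged); so that alternative should be regarded as a side remark rather than a full substitute.
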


\begin{proof}
From the obvious identity
\[
 \sum_{k\in \mathbb{Z}^N}(G_{t,N}(\mb{n}-\mb{k})-G_{t,N}(\mb{k}))=0,
\]
we deduce that
\[
\int_{0}^{\infty} \sum_{k\in \mathbb{Z}^N}(G_{t,N}(\mb{n}-\mb{k})-G_{t,N}(\mb{k})) t^{\sigma-1}\,dt=0,
\]
Let us see now that, for $0<\sigma<1/2$, we can apply Fubini theorem. From \eqref{eq:sum-L1-multi}, it is clear that
\[
\int_{0}^{|\mb{n}|^2}\sum_{k\in \mathbb{Z}^N}|G_{t,N}(\mb{n}-\mb{k})-G_{t,N}(\mb{k})| t^{\sigma-1}\,dt\le
 2\int_{0}^{|\mb{n}|^2}t^{\sigma-1}\,dt\le C |\mb{n}|^{2\sigma}<\infty,
\]
so we have to focus on the integral in the interval $(|\mb{n}|^2,\infty)$. To do that we consider
\[
I_1= \int_{|\mb{n}|^2}^{\infty}\sum_{|\mb{k}|\le 2|\mb{n}|}|G_{t,N}(\mb{n}-\mb{k})-G_{t,N}(\mb{k})| t^{\sigma-1}\,dt
\]
and
\[
I_2= \int_{|\mb{n}|^2}^{\infty}\sum_{|\mb{k}|> 2|\mb{n}|}|G_{t,N}(\mb{n}-\mb{k})-G_{t,N}(\mb{k})| t^{\sigma-1}\,dt.
\]
For $I_1$ we use the bound \eqref{eq:H-infty} to obtain that
\begin{align*}
I_1&\le C |\mb{n}|^2\int_{|\mb{n}|^2}^{\infty}\left(\sum_{|\mb{k}|\le 2|\mb{n}|}1\right) t^{\sigma-N/2-2}\,dt
\\&\le C |\mb{n}|^{2+N}\int_{|\mb{n}|^2}^{\infty}t^{\sigma-N/2-2}\,dt\le C |\mb{n}|^{2\sigma}<\infty.
\end{align*}
To analyze $I_2$ we apply the identity $G_{t,N}(\mb{n}-\mb{k})=G_{t,N}(\mb{k}-\mb{n})$, \eqref{eq:diff-G} and \eqref{eq:norm-H} to deduce the estimate
\begin{align*}
I_2&\le C |\mb{n}|\int_{|\mb{n}|^2}^{\infty}\left(\sum_{|\mb{k}|> 2|\mb{n}|}H_{t,N}\left(\frac{|\mb{k}|}{K}\right)\right) t^{\sigma-1}\,dt
\\&\le  C |\mb{n}|\int_{|\mb{n}|^2}^{\infty} t^{\sigma-3/2}\,dt\le C|n|^{2\sigma}<\infty.
\end{align*}

In this way,
\begin{multline*}
 \sum_{k\in \mathbb{Z}^N}((-\Delta_N)^{-\sigma}(\mb{n}-\mb{k})-(-\Delta_N)^{-\sigma}(\mb{k}))\\=
 \int_{0}^{\infty} \sum_{k\in \mathbb{Z}^N}(G_{t,N}(\mb{n}-\mb{k})-G_{t,N}(\mb{k})) t^{\sigma-1}\,dt=0
\end{multline*}
and the proof is completed.
\end{proof}

\begin{lem}
\label{lem:diff-K}
  Let $N\ge 1$, $\mb{n},\mb{m}\in \mathbb{Z}^N$ such that $|\mb{n}|>2|\mb{n}-\mb{m}|$, and $0<2\sigma<N$.
  Then the inequality
  \begin{equation}
  \label{eq:diff-K}
  |K_{\sigma}(\mb{n})-K_\sigma(\mb{m})|\le \frac{C|\mb{n}-\mb{m}|}{(|\mb{n}|+|\mb{m}|)^{N+1-2\sigma}}
  \end{equation}
  holds with a constant $C$ independent of $\mb{n}$ and $\mb{m}$.
\end{lem}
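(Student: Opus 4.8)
The plan is to start from the subordination formula
\[
K_\sigma(\mb{n})-K_\sigma(\mb{m})=\frac{1}{\Gamma(\sigma)}\int_0^\infty\bigl(G_{t,N}(\mb{n})-G_{t,N}(\mb{m})\bigr)\,t^{\sigma-1}\,dt
\]
and to split the integral at $t=\rho^2$, where $\rho:=|\mb{n}|+|\mb{m}|$. First I would dispose of the trivial situations: if $\mb{n}=\mb{m}$ both sides vanish, and if $\rho$ stays below a fixed constant then $|\mb{n}-\mb{m}|\ge 1$ and the uniform bound \eqref{eq:bound-frac} for $K_\sigma$ makes the inequality obvious. So from now on $\mb{n}\ne\mb{m}$ and $\rho$ is large; note that the hypothesis $|\mb{n}|>2|\mb{n}-\mb{m}|$ gives $|\mb{n}|/2<|\mb{m}|<3|\mb{n}|/2$, hence $\rho\sim|\mb{n}|\sim|\mb{m}|$, which lets me interchange these quantities at will.

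For the tail $t>\rho^2$ I would invoke \eqref{eq:bound-diff-P}, which gives $|G_{t,N}(\mb{n})-G_{t,N}(\mb{m})|\le C|\mb{n}-\mb{m}|\,t^{-1/2-N/2}$. Since $\sigma<N/2$ the exponent $\sigma-\tfrac32-\tfrac N2$ is strictly below $-1$, so
\[
\int_{\rho^2}^{\infty}\bigl|G_{t,N}(\mb{n})-G_{t,N}(\mb{m})\bigr|\,t^{\sigma-1}\,dt\le C|\mb{n}-\mb{m}|\int_{\rho^2}^{\infty}t^{\sigma-3/2-N/2}\,dt\le \frac{C|\mb{n}-\mb{m}|}{\rho^{\,N+1-2\sigma}},
\]
which is exactly the bound we want.

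The main obstacle is the piece $0<t<\rho^2$, because a crude estimate of $G_{t,N}(\mb{n})-G_{t,N}(\mb{m})$ leaves a non-integrable singularity at $t=0$; the remedy is Lemma \ref{lem:diff-G}, which under our hypothesis yields
\[
\bigl|G_{t,N}(\mb{n})-G_{t,N}(\mb{m})\bigr|\le C|\mb{n}-\mb{m}|\,H_{t,N}\!\Bigl(\frac{\rho}{K}\Bigr),\qquad H_{t,N}(z)=\frac{z}{t}\bigl(G_{t,1}(z)\bigr)^{N},
\]
so that the factor $|\mb{n}-\mb{m}|$ and the spatial decay are produced at the same time. I would then pick an exponent $\alpha$ with $\max\{-\tfrac12,\tfrac{1-\sigma}{N}\}<\alpha<1$ (so that $\alpha<\rho/K$ once $\rho$ is large), apply the uniform Bessel bound \eqref{eq:bound-I} with argument $2t$ and order $\rho/K$ to get $G_{t,1}(\rho/K)=e^{-2t}I_{\rho/K}(2t)\le C\,t^{\alpha}\rho^{-2\alpha-1}$, and hence $H_{t,N}(\rho/K)\le C\,t^{N\alpha-1}\rho^{-2N\alpha-N+1}$. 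Because $N\alpha+\sigma-1>0$ the leftover integral converges and the powers of $\rho$ combine correctly:
\[
\int_0^{\rho^2}\bigl|G_{t,N}(\mb{n})-G_{t,N}(\mb{m})\bigr|\,t^{\sigma-1}\,dt\le C|\mb{n}-\mb{m}|\,\rho^{-2N\alpha-N+1}\int_0^{\rho^2}t^{N\alpha+\sigma-2}\,dt\le \frac{C|\mb{n}-\mb{m}|}{\rho^{\,N+1-2\sigma}}.
\]
Adding the two pieces and recalling $\rho=|\mb{n}|+|\mb{m}|$ gives \eqref{eq:diff-K}. The only delicate choice is that of $\alpha$: it must be large enough to tame the singularity at $t=0$, and one should check that the resulting exponent of $\rho$ comes out to be $N+1-2\sigma$ independently of $\alpha$, which is the natural consistency check for the estimate.
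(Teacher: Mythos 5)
Your proof is correct and follows essentially the same route as the paper: the same split of the subordination integral at $t=(|\mb{n}|+|\mb{m}|)^2$, and on the near piece the same combination of Lemma \ref{lem:diff-G} with the uniform Bessel bound \eqref{eq:bound-I}. The only cosmetic differences are that the paper fixes $\alpha=1/N$ (which lies inside your admissible range of exponents) and handles the tail with \eqref{eq:H-infty} instead of \eqref{eq:bound-diff-P}, both choices producing the same final exponent.
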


\begin{proof}
  It is clear that
  \[
  |K_{\sigma}(\mb{n})-K_\sigma(\mb{m})|\le \int_{0}^{\infty}|G_{t,N}(\mb{n})-G_{t,n}(\mb{m})|t^{\sigma-1}\,dt=J_1+J_2,
  \]
  where
  \[
  J_1=\int_{0}^{(|\mb{n}|+|\mb{m}|)^2}|G_{t,N}(\mb{n})-G_{t,n}(\mb{m})|t^{\sigma-1}\,dt
  \]
  and
  \[
  J_2=\int_{(|\mb{n}|+|\mb{m}|)^2}^{\infty}|G_{t,N}(\mb{n})-G_{t,n}(\mb{m})|t^{\sigma-1}\,dt.
  \]
To estimate $J_1$ we consider Lemma \ref{lem:diff-G} and apply \eqref{eq:bound-I} with $\alpha=1/N$. Indeed,
  \begin{align*}
  J_1&\le C |\mb{n}-\mb{m}|(|\mb{n}|+|\mb{m}|)
  \int_{0}^{(|\mb{n}|+|\mb{m}|)^2}\frac{1}{t}\left(G_{t,1}\left(\frac{|\mb{n}|+|\mb{m}|}{K}\right)\right)^Nt^{\sigma-1}\,dt
\\&\le \frac{C|\mb{n}-\mb{m}|}{(|\mb{n}|+|\mb{m}|)^{N+1}}
  \int_{0}^{(|\mb{n}|+|\mb{m}|)^2}t^{\sigma-1}\,dt
  \le \frac{C|\mb{n}-\mb{m}|}{(|\mb{n}|+|\mb{m}|)^{N+1-2\sigma}}.
  \end{align*}
  The bound for $J_2$ is obtained from \eqref{eq:H-infty} in the following way
  \[
  J_2\le  |\mb{n}-\mb{m}|(|\mb{n}|+|\mb{m}|)\int_{(|\mb{n}|+|\mb{m}|)^2}^{\infty}t^{\sigma-N/2-2}\,dt
  \le \frac{C |\mb{n}-\mb{m}|}{(|\mb{n}|+|\mb{m}|)^{N+1-2\sigma}}.\qedhere
  \]
\end{proof}

\begin{lem}
\label{lem:diff2-G}
  Let $N\ge 1$, $\mb{n},\mb{m}\in \mathbb{Z}^N$ such that $|\mb{n}|>2|\mb{n}-\mb{m}|$, and
  \[
  \mathcal{H}_{t,N}(|\mb{n}|)=\left(\frac{1}{t}+\frac{z^2}{t^2}\right)
  \left(G_{t,1}\left(z\right)\right)^N, \qquad z>0.
  \]
  Then the inequality
  \begin{equation}
  \label{eq:diff2-G}
  |\delta_i^+(G_{t,N}(\mb{n})-G_{t,N}(\mb{m}))|\le C |\mb{n}-\mb{m}| \mathcal{H}_{t,N}\left(\frac{|\mb{n}|+|\mb{m}|}{K}\right),\qquad i=1,\dots,N,
  \end{equation}
  holds with constants $C$ and $K>1$ independent of $\mb{n}$ and $\mb{m}$.
\end{lem}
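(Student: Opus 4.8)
The plan is to adapt the proof of Lemma~\ref{lem:diff-G}, using in place of the identity \eqref{eq:rara} its analogue for the discrete gradient. Write $g(n):=G_{t,1}(n)=e^{-2t}I_{|n|}(2t)$ and $\delta^+g(n)=g(n+1)-g(n)$, so that
\[
[\delta_i^+G_{t,N}](\mb{n})=\delta^+g(n_i)\prod_{k\ne i}g(n_k).
\]
The quantity to estimate is $[\delta_i^+G_{t,N}](\mb{n})-[\delta_i^+G_{t,N}](\mb{m})$, a difference of two $N$-fold products whose $i$-th factors are $\delta^+g(n_i)$ and $\delta^+g(m_i)$ and whose remaining factors are $g(n_k)$ and $g(m_k)$ ($k\ne i$). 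First I would run the same reductions as in Lemma~\ref{lem:diff-G}: by \eqref{eq:par} and invariance under permutations assume all coordinates nonnegative and suitably monotone, and peel off the coordinates where $\mb{n}$ and $\mb{m}$ agree (if the differencing index $i$ is among those, the $i$-th factor difference vanishes and the problem reduces to an $(N-1)$-dimensional instance of Lemma~\ref{lem:diff-G} times the factor $\delta^+g(n_i)$, which \eqref{eq:diff-I} controls; so we may assume $n_i\ne m_i$). Then telescope the difference of products factor by factor, exactly as in \eqref{eq:rara}, obtaining a sum over $j=1,\dots,N$ in which the $j$-th term carries the difference of the $j$-th factors, the earlier factors evaluated at $\mb{m}$ and the later ones at $\mb{n}$.

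For $j\ne i$ the differenced factor is $g(n_j)-g(m_j)$, estimated by \eqref{eq:lem-aux-1} as $\le\tfrac{|n_j-m_j|(n_j+m_j+1)}{2t}\max\{g(n_j),g(m_j)\}$, while among the remaining factors the $i$-th one equals $\delta^+g(n_i)$ or $\delta^+g(m_i)$ and is bounded through \eqref{eq:diff-I} by $\tfrac{|n_i|+1}{2t}g(n_i)$ (resp.\ with $m_i$). Extracting the two powers of $t^{-1}$, bounding the surviving $N$-fold product of values of $g$ by a single $(g(\overline{a}))^N$ via the AM--GM inequality \eqref{eq:AM-GM-I}, and using the geometric lower bound $\overline{a}\ge(|\mb{n}|+|\mb{m}|)/K$ established within the proof of Lemma~\ref{lem:diff-G} (the index $\overline{a}$ being the average of $m_1,\dots,m_{j-1},\min\{n_j,m_j\},n_{j+1},\dots,n_N$, exactly as there), this term is at most $C|\mb{n}-\mb{m}|\,\tfrac{(|\mb{n}|+|\mb{m}|)^2}{t^2}\bigl(G_{t,1}(\tfrac{|\mb{n}|+|\mb{m}|}{K})\bigr)^N$, hence dominated by $C|\mb{n}-\mb{m}|\,\mathcal{H}_{t,N}(\tfrac{|\mb{n}|+|\mb{m}|}{K})$.

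The term $j=i$ is the essential one: its differenced factor is the true second difference $\delta^+g(n_i)-\delta^+g(m_i)=\sum_r\bigl(g(r+2)-2g(r+1)+g(r)\bigr)$, the sum running over the integers $r$ between $m_i$ and $n_i$, all nonnegative after the reduction, so that \eqref{eq:diff2-I} applies and gives (using that $g$ decreases on the nonnegative integers)
\[
\bigl|\delta^+g(n_i)-\delta^+g(m_i)\bigr|\le C|n_i-m_i|\Bigl(\tfrac1t+\tfrac{(n_i+m_i+1)^2}{t^2}\Bigr)\max\{g(n_i),g(m_i)\}.
\]
Multiplying by the remaining factors (all values of $g$), applying \eqref{eq:AM-GM-I}, and invoking once more the geometric estimate of Lemma~\ref{lem:diff-G} produces precisely $C|\mb{n}-\mb{m}|\bigl(\tfrac1t+\tfrac{(|\mb{n}|+|\mb{m}|)^2}{t^2}\bigr)\bigl(G_{t,1}(\tfrac{|\mb{n}|+|\mb{m}|}{K})\bigr)^N=C|\mb{n}-\mb{m}|\,\mathcal{H}_{t,N}(\tfrac{|\mb{n}|+|\mb{m}|}{K})$. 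Summing the $N$ contributions completes the proof. I expect this $j=i$ term to be the main obstacle: it is the only place where a genuine second difference of the one-dimensional heat kernel is needed, which forces the use of \eqref{eq:diff2-I} (hence the check, supplied by the nonnegativity reduction, that the orders involved are $\ge-1/2$), and it alone is responsible for the $1/t$ summand of $\mathcal{H}_{t,N}$ that is absent from the function $H_{t,N}$ of Lemma~\ref{lem:diff-G}; everything else is a bookkeeping variant of that lemma.
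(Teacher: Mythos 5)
Your proof is correct and follows essentially the same route as the paper's: the same reduction to nonnegative, suitably ordered coordinates with the agreeing components peeled off, a telescoping of the product as in \eqref{eq:rara}, the bounds \eqref{eq:diff-I} and \eqref{eq:lem-aux-1} for the terms not carrying the differenced coordinate, the second-difference estimate \eqref{eq:diff2-I} (after summing $G_{t,1}(r+2)-2G_{t,1}(r+1)+G_{t,1}(r)$ over $r$ between $m_i$ and $n_i$) for the main term, and then \eqref{eq:AM-GM-I} together with the geometric lower bound on the averaged index from Lemma \ref{lem:diff-G}. The only cosmetic difference is organizational: the paper applies $\delta_1^+$ directly to \eqref{eq:rara}, so the $\delta^+$-factor in the off-diagonal terms is always $\delta^+G_{t,1}(m_1)$, whereas in your telescoping it may sit at $n_i$ or $m_i$; both variants are handled identically.
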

\begin{proof}
We consider $i=1$ because the other cases can be done in the same way. We can prove the result for $n_i\not= m_i$ for $i=1,\dots,N$ because this situation implies the general case. In fact, to deduce the general case from this one, as in Lemma \ref{lem:diff-G}, let us suppose that $n_i=m_i$ for $j$ values of $i$, with $1<j<N$ and consider the decompositions
\[
\mb{n}=\mb{n}_1\cup \mb{n}_2\qquad \text{and} \qquad \mb{m}=\mb{n}_1\cup \mb{m}_2,
\]
where $\mb{n}_1$ are the common values of $\mb{n}$ and $\mb{m}$ and $\mb{n}_2$ and $\mb{m}_2$ are the other components. Now, we have to distinguish two possibilities $n_1\in \mb{n}_1$ and $n_1\notin \mb{n}_1$. In the first situation, by using \eqref{eq:diff-I} and Lemma \ref{lem:diff-G}, we have
\begin{multline*}
|\delta_1^+(G_{t,N}(\mb{n})-G_{t,N}(\mb{m}))|\le C \frac{|n_1|+1}{t}G_{t,j}(\mb{n}_1)|G_{t,N-j}(\mb{n}_2)-G_{t,N-j}(\mb{m}_2)|\\
\begin{aligned}
&\le C |\mb{n}_2-\mb{m_2}|\frac{(|\mb{n}_1|+1)(|\mb{n_2}|+|\mb{m_2}|)}{t^2}G_{t,j}(\mb{n}_1)
\left(G_{t,1}\left(\frac{|\mb{n}_2|+|\mb{m}_2|}{K}\right)\right)^{N-j}
\\&\le C |\mb{n}-\mb{m}| \left(\frac{1}{t}+\frac{(|\mb{n}|+|\mb{m}|)^2}{t^2}\right)G_{t,j}(\mb{n}_1)
\left(G_{t,1}\left(\frac{|\mb{n}_2|+|\mb{m}_2|}{K}\right)\right)^{N-j}
\end{aligned}
\end{multline*}
and we conclude as in Lemma \ref{lem:diff-G}. When $n_1\notin \mb{n}_1$, if we consider proved the case with different components, it is verified that
\begin{align*}
|\delta_1^+(G_{t,N}(\mb{n})-G_{t,N}(\mb{m}))|&= G_{t,j}(\mb{n}_1)|\delta_1^+(G_{t,N-j}(\mb{n}_2)-G_{t,N-j}(\mb{m}_2))|
\\&\le C  G_{t,j}(\mb{n}_1)|\mb{n}_2-\mb{m}_2| \mathcal{H}_{t,N-j}\left(\frac{|\mb{n}_2|+|\mb{m}_2|}{K}\right)
\end{align*}
and the proof of this case is finished again as in Lemma \ref{lem:diff-G}.

Now, we assume the restrictions \eqref{eq:mono-n} on $\mb{n}$ and $\mb{m}$ as in the proof of Lemma \ref{lem:diff-G}.
Then, using \eqref{eq:rara}, we get
\begin{multline}
\label{eq:diff2-aux1}
\delta_1^+(G_{t,N}(\mb{n})-G_{t,N}(\mb{m}))=\delta^+ (G_{t,1}(n_1)- G_{t,1}(m_1))\prod_{k=2}^{N}G_{t,1}(n_k)\\+
\delta^+ G_{t,1}(m_1)\sum_{i=2}^{N}(G_{t,1}(n_i)-G_{t,1}(m_i))\prod_{j=2}^{i-1}G_{t,1}(m_j)\prod_{k=i+1}^{N}G_{t,1}(n_k).
\end{multline}
Proceeding as in Lemma \ref{lem:diff-G} and using \eqref{eq:diff-I}, we have
\begin{multline}
\label{eq:diff2-aux2}
|\delta^+ G_{t,1}(m_1)(G_{t,1}(n_i)-G_{t,1}(m_i))|\\\le C \frac{(m_1+1)|n_i-m_i|(m_i+n_i+1)}{t^2}G_{t,1}(m_1)\max\{G_{t,1}(n_i),G_{t,1}(m_i)\}.
\end{multline}
Now, taking $n_1< m_1 $, it is obtained that
\begin{align*}
\delta^+ (G_{t,1}(n_1)- G_{t,1}(m_1))&=-\sum_{k=n_1}^{m_1-1}(\delta^+ G_{t,1}(k+1)-\delta^+ G_{t,1}(k))
\\&=-\sum_{k=n_1}^{m_1-1}(G_{t,1}(k+2)-2G_{t,1}(k+1)+G_{t,1}(k))
\end{align*}
and, applying \eqref{eq:diff2-I},
\begin{align*}
|\delta^+ (G_{t,1}(n_1)- G_{t,1}(m_1))|&\le C \sum_{k=n_1}^{m_1-1}\left(\frac{1}{t}+\frac{(k+1)(k+2)}{t^2}\right)G_{t,1}(k)
\\&\le C(m_1-n_1)\left(\frac{1}{t}+\frac{(m_1+n_1)^2}{t^2}\right)G_{t,1}(n_1).
\end{align*}
In general,
\begin{multline}
\label{eq:diff2-aux3}
|\delta^+(G_{t,1}(n_1)- G_{t,1}(m_1))|\\\le  C|m_1-n_1|\left(\frac{1}{t}+\frac{(m_1+n_1)^2}{t^2}\right)\max\{G_{t,1}(n_1),G_{t,1}(m_1)\}.
\end{multline}
With \eqref{eq:diff2-aux1}, \eqref{eq:diff2-aux2}, and \eqref{eq:diff2-aux3} the result can be concluded as in Lemma \ref{lem:diff-G}.
\end{proof}

\begin{rem}
Again, following the proof of the previous lemma, it is easy to check that
\begin{equation}
\label{eq:HH-infty}
|\delta_i^{+}(G_{t,N}(\mb{n})-G_{t,N}(\mb{m}))|\le C |\mb{n}-\mb{m}|\left(\frac{1}{t}+\frac{(|\mb{n}|+|\mb{m}|)^2}{t^2}\right)t^{-N/2}, \qquad \mb{n},\mb{m}\in \mathbb{Z}^N.
\end{equation}
\end{rem}

\begin{lem}
\label{lem:diff2-K}
  Let $N\ge 1$, $\mb{n},\mb{m}\in \mathbb{Z}^N$ such that $|\mb{n}|>2|\mb{n}-\mb{m}|$, and $0<2\sigma<N$.
  Then the inequality
  \begin{equation}
  \label{eq:diff2-K}
  |\delta_i^+ (K_{\sigma}(\mb{n})- K_\sigma(\mb{m}))|\le \frac{C|\mb{n}-\mb{m}|}{(|\mb{n}|+|\mb{m}|)^{N+2-2\sigma}},\qquad i=1,\dots,N,
  \end{equation}
  holds with a constant $C$ independent of $\mb{n}$ and $\mb{m}$.
\end{lem}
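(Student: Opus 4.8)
The plan is to repeat the argument of Lemma \ref{lem:diff-K} with one extra difference quotient, i.e.\ replacing the bounds \eqref{eq:diff-G} and \eqref{eq:H-infty} for $G_{t,N}(\mb{n})-G_{t,N}(\mb{m})$ by the bounds \eqref{eq:diff2-G} and \eqref{eq:HH-infty} for $\delta_i^+(G_{t,N}(\mb{n})-G_{t,N}(\mb{m}))$. Since the integral defining $K_\sigma$ converges absolutely (and, thanks to \eqref{eq:diff2-G} near $t=0$ and \eqref{eq:HH-infty} near $t=\infty$, so does the one below), the operator $\delta_i^+$ commutes with it and
\[
|\delta_i^+(K_\sigma(\mb{n})-K_\sigma(\mb{m}))|\le \frac{1}{\Gamma(\sigma)}\int_0^\infty |\delta_i^+(G_{t,N}(\mb{n})-G_{t,N}(\mb{m}))|\,t^{\sigma-1}\,dt=:J_1+J_2,
\]
where $J_1$ is the integral over $(0,(|\mb{n}|+|\mb{m}|)^2)$ and $J_2$ over $((|\mb{n}|+|\mb{m}|)^2,\infty)$. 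The case in which $|\mb{n}|$ stays bounded is trivial: then $|\mb{m}|$ is bounded too, the left side is controlled by the finiteness of $K_\sigma$, and the right side is bounded below when $\mb{n}\neq\mb{m}$; so we may freely assume $|\mb{n}|+|\mb{m}|$ is as large as we wish, and in particular that $|\mb{n}|>2|\mb{n}-\mb{m}|$ lets us apply \eqref{eq:diff2-G} and \eqref{eq:HH-infty}.

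For $J_1$ I would insert \eqref{eq:diff2-G}, so that everything reduces to a uniform-in-$t$ estimate for $\mathcal{H}_{t,N}(z)$ with $z=(|\mb{n}|+|\mb{m}|)/K$. Applying \eqref{eq:bound-I} with $\alpha=1/N$ gives $(G_{t,1}(z))^N\le C\,t\,(z+1)^{-(N+2)}$, and applying it again with $\alpha=2/N$ gives $(G_{t,1}(z))^N\le C\,t^2\,(z+1)^{-(N+4)}$. Using the first for the $1/t$ summand of $\mathcal{H}_{t,N}$ and the second for the $z^2/t^2$ summand yields
\[
\mathcal{H}_{t,N}\!\left(\tfrac{|\mb{n}|+|\mb{m}|}{K}\right)\le \frac{C}{(|\mb{n}|+|\mb{m}|)^{N+2}},\qquad t>0,
\]
whence $J_1\le C|\mb{n}-\mb{m}|(|\mb{n}|+|\mb{m}|)^{-(N+2)}\int_0^{(|\mb{n}|+|\mb{m}|)^2}t^{\sigma-1}\,dt = C|\mb{n}-\mb{m}|(|\mb{n}|+|\mb{m}|)^{-(N+2-2\sigma)}$.

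For $J_2$ I would use the coarse bound \eqref{eq:HH-infty}, which gives
\[
J_2\le C|\mb{n}-\mb{m}|\int_{(|\mb{n}|+|\mb{m}|)^2}^{\infty}\left(\frac{1}{t}+\frac{(|\mb{n}|+|\mb{m}|)^2}{t^2}\right)t^{-N/2+\sigma-1}\,dt;
\]
since $0<2\sigma<N$, both resulting integrals converge and each contributes a multiple of $|\mb{n}-\mb{m}|(|\mb{n}|+|\mb{m}|)^{-(N+2-2\sigma)}$. Adding the bounds for $J_1$ and $J_2$ gives \eqref{eq:diff2-K}. The argument is essentially mechanical once Lemma \ref{lem:diff2-G} and the remark with \eqref{eq:HH-infty} are available, so the real effort is upstream; the one delicate point in the present proof is the uniform-in-$t$ control of $\mathcal{H}_{t,N}$, where the $z^2/t^2$ factor forces a second application of \eqref{eq:bound-I} (with $\alpha=2/N$) to cancel the singularity at $t=0$.
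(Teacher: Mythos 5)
Your proposal is correct and follows essentially the same route as the paper: the same splitting of the integral at $t=(|\mb{n}|+|\mb{m}|)^2$, with Lemma \ref{lem:diff2-G} plus \eqref{eq:bound-I} for $\alpha=1/N$ and $\alpha=2/N$ on the near part, and \eqref{eq:HH-infty} on the far part. The extra remarks (commuting $\delta_i^+$ with the integral, the bounded-$|\mb{n}|$ case) are harmless and do not change the argument.
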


\begin{proof}
It is clear that
  \[
  |\delta_i^{+}(K_{\sigma}(\mb{n})-K_\sigma(\mb{m}))|\le \int_{0}^{\infty}|\delta_i^+(G_{t,N}(\mb{n})- G_{t,n}(\mb{m}))|t^{\sigma-1}\,dt=Q_1+Q_2,
  \]
  with
  \[
  Q_1=\int_{0}^{(|\mb{n}|+|\mb{m}|)^2}|\delta_i^+(G_{t,N}(\mb{n})-G_{t,n}(\mb{m}))|t^{\sigma-1}\,dt
  \]
  and
  \[
  Q_2=\int_{(|\mb{n}|+|\mb{m}|)^2}^{\infty}|\delta_i^+(G_{t,N}(\mb{n})-G_{t,n}(\mb{m}))|t^{\sigma-1}\,dt.
  \]

  To obtain the required bound for $Q_1$ we consider Lemma \ref{lem:diff2-G} and apply \eqref{eq:bound-I} with $\alpha=1/N$ and $\alpha=2/N$. Indeed,
  \begin{align*}
  Q_1&\le C |\mb{n}-\mb{m}|\left(
  \int_{0}^{(|\mb{n}|+|\mb{m}|)^2}\frac{1}{t}\left(G_{t,1}\left(\frac{|\mb{n}|+|\mb{m}|}{K}\right)\right)^Nt^{\sigma-1}\,dt\right.\\&\kern10pt \left.+
  (|\mb{n}|+|\mb{m}|)^2
  \int_{0}^{(|\mb{n}|+|\mb{m}|)^2}\frac{1}{t^2}\left(G_{t,1}\left(\frac{|\mb{n}|+|\mb{m}|}{K}\right)\right)^N t^{\sigma-1}\,dt\right)
\\&\le \frac{C|\mb{n}-\mb{m}|}{(|\mb{n}|+|\mb{m}|)^{N+2}}
  \int_{0}^{(|\mb{n}|+|\mb{m}|)^2}t^{\sigma-1}\,dt
  \le \frac{C|\mb{n}-\mb{m}|}{(|\mb{n}|+|\mb{m}|)^{N+2-2\sigma}}.
  \end{align*}
  The estimate for $Q_2$ can be deduced from \eqref{eq:HH-infty} in the following way
  \begin{align*}
  Q_2&\le  |\mb{n}-\mb{m}|\left(\int_{(|\mb{n}|+|\mb{m}|)^2}^{\infty}t^{\sigma-N/2-2}\,dt +
  (|\mb{n}|+|\mb{m}|)^2\int_{(|\mb{n}|+|\mb{m}|)^2}^{\infty}t^{\sigma-N/2-3}\,dt\right)\\
  &\le \frac{C |\mb{n}-\mb{m}|}{(|\mb{n}|+|\mb{m}|)^{N+2-2\sigma}}.\qedhere
  \end{align*}
\end{proof}

\begin{proof}[Proof of Theorem \ref{thm:Schauder}]
a) By using that
\[
(\delta_i^{+})^k(-\Delta_N)^{-\sigma}f(\mb{n})=(-\Delta_N)(\delta_i^{+})^k f(\mb{n}),
\]
it is enough to prove the result for $k=0$. Now, from Lemma \ref{lem:null}, it is verified that
\[
(-\Delta_N)^{-\sigma}f(\mb{n})-(-\Delta_N)^{-\sigma}f(\mb{m})=\sum_{\mb{k}\in \mathbb{Z}^N}(K_\sigma(\mb{n}-\mb{k})-K_\sigma(\mb{m}-\mb{k}))(f(\mb{k})-f(\mb{n})).
\]
Now, we split the sum on $0<|\mb{n}-\mb{k}|\le 2|\mb{n}-\mb{m}|$ and its complementary. By using \eqref{eq:bound-frac} we get
\begin{multline*}
\sum_{0<|\mb{n}-\mb{k}|\le 2|\mb{n}-\mb{m}|}K_\sigma(\mb{n}-\mb{k})|f(\mb{k})-f(\mb{n})|\\
\begin{aligned}
&\le C [f]_{C^{0,\alpha}(\mathbb{Z}^N)}\sum_{0<|\mb{n}-\mb{k}|\le 2|\mb{n}-\mb{m}|}\frac{|\mb{n}-\mb{k}|^\alpha}{|\mb{n}-\mb{k}|^{N-2\sigma}}\\
&
\le C [f]_{C^{0,\alpha}(\mathbb{Z}^N)}\sum_{0<|\mb{j}|\le 2|\mb{n}-\mb{m}|}|\mb{j}|^{\alpha+2\sigma-N}
\\&\le C [f]_{C^{0,\alpha}(\mathbb{Z}^N)}|\mb{n}-\mb{m}|^{\alpha+2\sigma}.
\end{aligned}
\end{multline*}
For the other term in the first part of the decomposition, by using that $|\mb{n}-\mb{k}|\le 2|\mb{n}-\mb{m}|$ implies $|\mb{m}-\mb{k}|\le 3|\mb{n}-\mb{m}|$ and \eqref{eq:bound-frac}, we have
\begin{multline*}
\sum_{0<|\mb{n}-\mb{k}|\le 2|\mb{n}-\mb{m}|}K_\sigma(\mb{m}-\mb{k})|f(\mb{k})-f(\mb{n})|
\\
\begin{aligned}
&\le C [f]_{C^{0,\alpha}(\mathbb{Z}^N)}\sum_{|\mb{m}-\mb{k}|\le 3|\mb{n}-\mb{m}|}\frac{|\mb{n}-\mb{k}|^\alpha}{(|\mb{m}-\mb{k}|+1)^{N-2\sigma}}\\
&\le C [f]_{C^{0,\alpha}(\mathbb{Z}^N)}\left(\sum_{|\mb{m}-\mb{k}|\le 3|\mb{n}-\mb{m}|}\frac{|\mb{n}-\mb{m}|^\alpha}{(|\mb{m}-\mb{k}|+1)^{N-2\sigma}}\right.\\&\kern20pt\left.+
\sum_{|\mb{m}-\mb{k}|\le 3|\mb{n}-\mb{m}|}(|\mb{m}-\mb{k}|+1)^{\alpha+2\sigma-N}\right)
\\&\le C [f]_{C^{0,\alpha}(\mathbb{Z}^N)}|\mb{n}-\mb{m}|^{\alpha+2\sigma}.
\end{aligned}
\end{multline*}
Finally, applying \eqref{eq:diff-K}, we have
\begin{multline*}
\sum_{|\mb{n}-\mb{k}|\ge 2|\mb{n}-\mb{m}|}|K_\sigma(\mb{n}-\mb{k})-K_\sigma(\mb{m}-\mb{k})||f(\mb{k})-f(\mb{n})|\\
\begin{aligned}
&\le C [f]_{C^{0,\alpha}(\mathbb{Z}^N)}|\mb{n}-\mb{m}|\sum_{|\mb{n}-\mb{k}|\ge 2|\mb{n}-\mb{m}|}\frac{|\mb{n}-\mb{k}|^\alpha}{|\mb{n}-\mb{k}|^{N+1-2\sigma}}\\&
\le C [f]_{C^{0,\alpha}(\mathbb{Z}^N)}|\mb{n}-\mb{m}|^{\alpha+2\sigma}
\end{aligned}
\end{multline*}
and we have concluded this part.

b) Again, we can reduce the proof to the case $k=0$. Now, from Lemma \ref{lem:null}, for any $i=1,\dots,n$ we have
\begin{multline*}
\delta_i^+ ((-\Delta_N)^{-\sigma}f(\mb{n})-(-\Delta_N)^{-\sigma}f(\mb{m}))\\=\sum_{k\in \mathbb{Z}^N}\delta_{i}^+(K_\sigma(\mb{n}-\mb{k})-K_\sigma(\mb{m}-\mb{k}))(f(\mb{k})-f(\mb{n}))).
\end{multline*}
Splitting the sum as in a), the proof follows the same steps but using in the region $|\mb{n}-\mb{k}|\le 2|\mb{n}-\mb{m}|$ the bounds (deduced from \eqref{eq:diff-K})
\[
|K_\sigma(\mb{n}+\mb{e_i}-\mb{k})-K_\sigma(\mb{n}-\mb{k})|\le C|\mb{n}-\mb{k}|^{2\sigma-N-1}, \qquad \mb{n},\mb{k}\in \mathbb{Z},
\]
and
\[
|K_\sigma(\mb{m}+\mb{e_i}-\mb{k})-K_\sigma(\mb{m}-\mb{k})|\le C|\mb{m}-\mb{k}|^{2\sigma-N-1}, \qquad \mb{m},\mb{k}\in \mathbb{Z},
\]
and in the region $|\mb{n}-\mb{k}|> 2|\mb{n}-\mb{m}|$ the estimate \eqref{eq:diff2-K}.

c) The proof of this part can be done as in a) but changing the bound $|f(\mb{n})-f(\mb{k})|\le C [f]_{C^{0,\alpha}(\mathbb{Z}^N)}|\mb{n}-\mb{k}|^\alpha$ by $|f(\mb{n})-f(\mb{k})|\le 2\|f\|_{\ell^{\infty}(\mathbb{Z}^N)}$.
\end{proof}

\subsection{The Riesz transforms}
As we have said at the beginning of this section the Riesz transform are well defined, 
because the kernel
\[
 \int_{0}^{\infty}\delta_i^+ G_{t,N}(\mb{n})t^{-1/2}\, dt
\]
is absolutely convergent.

Of course the Riesz transforms can be written for $N\ge 2$ as
\[
R_if(\mb{n})=\mathcal{R}_i \ast f(\mb{n}),
\]
with
\[
\mathcal{R}_i(\mb{n})=\frac{1}{\sqrt{\pi}}\int_{0}^{\infty}\delta_i^+ G_{t,N}(\mb{n})t^{-1/2}\, dt.
\]
%
To present our results about the Riesz transforms we need two preliminary lemmas.

\begin{lem}
  Let $N\ge 2$, then the inequalities
  \begin{equation}
  \label{eq:size-Riesz}
  |\mathcal{R}_i(\mb{n})|\le \frac{C}{(|\mb{n}|+1)^N}
  \end{equation}
  and
  \begin{equation}
  \label{eq:smooth-Riesz}
  |\mathcal{R}_{i}(\mb{n})-\mathcal{R}_{i}(\mb{m})|\le C\frac{|\mb{n}-\mb{m}|}{(|\mb{n}|+|\mb{m}|)^{N+1}},\qquad |\mb{n}|>2|\mb{n}-\mb{m}|,
  \end{equation}
  hold.
\end{lem}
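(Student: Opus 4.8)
The plan is to obtain both inequalities by integrating in $t$, against $t^{-1/2}$, the pointwise heat–kernel estimates assembled in Section~\ref{sec:semi}. Since $\int_0^\infty\delta_i^+G_{t,N}(\mb{n})\,t^{-1/2}\,dt$ converges absolutely, the finitely many $\mb{n}$ (resp.\ pairs $\mb{n},\mb{m}$) for which $|\mb{n}|$ lies below any fixed threshold may be discarded at once: for them $\mathcal{R}_i$ is bounded and the right-hand sides of \eqref{eq:size-Riesz} and \eqref{eq:smooth-Riesz} are bounded below by positive constants. So throughout I assume $|\mb{n}|$, respectively $R:=|\mb{n}|+|\mb{m}|$, is large.

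For \eqref{eq:size-Riesz} I would write $\delta_i^+G_{t,N}(\mb{n})=G_{t,N}(\mb{n}+\mb{e}_i)-G_{t,N}(\mb{n})$ and apply Lemma~\ref{lem:diff-G} with $\mb{m}=\mb{n}+\mb{e}_i$ (legitimate because $|\mb{n}|>2|\mb{n}-\mb{m}|=2$); using that $G_{t,1}$ decreases in the order, this gives $|\delta_i^+G_{t,N}(\mb{n})|\le C\,H_{t,N}(|\mb{n}|/K)$. Then I split $\int_0^\infty=\int_0^{|\mb{n}|^2}+\int_{|\mb{n}|^2}^{\infty}$ in the definition of $\mathcal{R}_i$. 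On the first piece the bound \eqref{eq:lem-2} gives $H_{t,N}(|\mb{n}|/K)\le C(|\mb{n}|+1)^{-N-1}$, so that piece is at most $C(|\mb{n}|+1)^{-N-1}\int_0^{|\mb{n}|^2}t^{-1/2}\,dt\le C(|\mb{n}|+1)^{-N}$; on the second piece \eqref{eq:lem-1} gives $H_{t,N}(|\mb{n}|/K)\le Ct^{-N/2-1/2}$, so that piece is at most $C\int_{|\mb{n}|^2}^{\infty}t^{-N/2-1}\,dt\le C|\mb{n}|^{-N}$. Adding the two yields \eqref{eq:size-Riesz}.

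For \eqref{eq:smooth-Riesz} I would use $\mathcal{R}_i(\mb{n})-\mathcal{R}_i(\mb{m})=\pi^{-1/2}\int_0^\infty\delta_i^+\big(G_{t,N}(\mb{n})-G_{t,N}(\mb{m})\big)t^{-1/2}\,dt$ and again split at $t=R^2$. On $(R^2,\infty)$ the uniform estimate \eqref{eq:HH-infty} makes the integrand at most $C|\mb{n}-\mb{m}|\big(t^{-1}+R^2t^{-2}\big)t^{-N/2-1/2}$, whose integral over $(R^2,\infty)$ is $\le C|\mb{n}-\mb{m}|R^{-N-1}$ (each of the two terms produces $R^{-N-1}$). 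On $(0,R^2)$ I use Lemma~\ref{lem:diff2-G}, i.e.\ $|\delta_i^+(G_{t,N}(\mb{n})-G_{t,N}(\mb{m}))|\le C|\mb{n}-\mb{m}|\,\mathcal{H}_{t,N}(R/K)$, and bound $\big(G_{t,1}(R/K)\big)^N\le Ct^{N\alpha}R^{-N(2\alpha+1)}$ by \eqref{eq:bound-I}; then the resulting time integrals $\int_0^{R^2}t^{N\alpha-3/2}\,dt$ and $R^2\int_0^{R^2}t^{N\alpha-5/2}\,dt$ are both comparable to $R^{2N\alpha-1}$, so after multiplying by $R^{-N(2\alpha+1)}$ this piece is again $\le C|\mb{n}-\mb{m}|R^{-N-1}$. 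Summing the two contributions gives \eqref{eq:smooth-Riesz}.

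This is essentially a bookkeeping exercise built on the lemmas already proved; the only point requiring a little care is the choice of the exponent $\alpha$ in \eqref{eq:bound-I} on $(0,R^2)$: it must satisfy $N\alpha>3/2$ so that the integral coming from the $R^2t^{-2}$ term converges at $t=0$, and $\alpha<R/K$ so that \eqref{eq:bound-I} applies, and the value $\alpha=2/N$ meets both requirements for every $R$ beyond an absolute constant, which is precisely the regime in which the argument is used.
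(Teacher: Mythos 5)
Your proposal is correct and follows essentially the same route as the paper: the same split of the $t$-integral at $|\mb{n}|^2$ (resp. $(|\mb{n}|+|\mb{m}|)^2$), with the small-$t$ piece controlled through the difference estimates of Lemmas \ref{lem:diff-G} and \ref{lem:diff2-G} together with \eqref{eq:bound-I}, and the large-$t$ piece through the uniform bounds \eqref{eq:lem-1} and \eqref{eq:HH-infty}. The only cosmetic deviations are that you reuse the prepackaged bounds \eqref{eq:lem-1}--\eqref{eq:lem-2} and a single exponent $\alpha=2/N$ with the $t$-powers kept inside the integral, where the paper applies \eqref{eq:diff-I}, \eqref{eq:AM-GM-I} directly and takes $\alpha=1/N$ and $\alpha=2/N$ to get a $t$-uniform bound before integrating; both computations yield the same $R^{-N-1}$ decay.
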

\begin{proof}
To prove \eqref{eq:size-Riesz} we consider $\mb{n}\not=\mb{0}$ because the bound $|\mathcal{R}_i(\mb{0})|\le C$ is obvious. Now, from \eqref{eq:diff-I} and \eqref{eq:AM-GM-I}, it is deduced that
\[
|\delta_i G_{t,N}(\mb{n})|\le C\frac{|n_i|+1}{t}G_{t,N}(\mb{n})\le C\frac{|\mb{n}|}{t}\left(G_{t,1}\left(\frac{|\mb{n}|}{N}\right)\right)^N.
\]
Then, using \eqref{eq:bound-I} with $\alpha=1/N$ and $\alpha=-1/2$, we deduce that
\[
|\mathcal{R}_i(\mb{n})|\le C |\mb{n}|\left(\frac{1}{|\mb{n}|^{N+2}}\int_0^{|\mb{n}|^2}t^{-1/2}\, dt+\int_{|\mb{n}|^2}^{\infty}t^{-N/2-3/2}\, dt\right)\le \frac{C}{|\mb{n}|^N}.
\]

Now, we decompose the difference in \eqref{eq:smooth-Riesz} in the following way
\begin{multline*}
|\mathcal{R}_{i}(\mb{n})-\mathcal{R}_{i}(\mb{m})|\le \int_{0}^{(|\mb{n}|+|\mb{m}|)^2}|\delta_i(G_{t,N}(\mb{n})-G_{t,N}(\mb{m}))|t^{-1/2}\, dt\\+
\int_{(|\mb{n}|+|\mb{m}|)^2}^\infty |\delta_i(G_{t,N}(\mb{n})-G_{t,N}(\mb{m}))|t^{-1/2}\, dt:=L_1+L_2.
\end{multline*}
To analyze $L_1$ we apply \eqref{eq:diff2-G} and \eqref{eq:bound-I} with $\alpha=1/N$ and $\alpha=2/N$ to obtain that
\[
|\delta_i(G_{t,N}(\mb{n})-G_{t,N}(\mb{m}))|\le C\frac{|\mb{n}-\mb{m}|}{(|\mb{n}|+|\mb{m}|)^{N+2}}
\]
and
\[
L_1\le C \frac{|\mb{n}-\mb{m}|}{(|\mb{n}|+|\mb{m}|)^{N+2}}\int_{0}^{(|\mb{n}|+|\mb{m}|)^2}t^{-1/2}\, dt\le C\frac{|\mb{n}-\mb{m}|}{(|\mb{n}|+|\mb{m}|)^{N+1}}.
\]
From \eqref{eq:HH-infty}, we have
\begin{align*}
L_2&\le C |\mb{n}-\mb{m}|\left(\int_{(|\mb{n}|+|\mb{m}|)^2}^\infty t^{-N/2-3/2}\, dt+(|\mb{n}|+|\mb{m}|)^2\int_{(|\mb{n}|+|\mb{m}|)^2}^\infty t^{-N/2-5/2}\, dt\right)\\&\le C\frac{|\mb{n}-\mb{m}|}{(|\mb{n}|+|\mb{m}|)^{N+1}}
\end{align*}
and the proof is finished.
\end{proof}

\begin{lem}
\label{lem:sum-Riesz-null}
  For $N\ge 2$ it is verified that
\begin{equation}
\label{eq:sum-Riesz-null}
\sum_{\mb{k}\in \mathbb{Z}^N}\mathcal{R}_{i}(\mb{k})=0, \qquad i=1,\dots,N,
\end{equation}
and, for a fix value $\ell\in \mathbb{N}$,
\begin{equation}
\label{eq:sum-Riesz-partial}
\left|\sum_{|\mb{k}|\ge \ell}\mathcal{R}_{i}(\mb{k})\right|\le C, \qquad i=1,\dots,N.
\end{equation}
\end{lem}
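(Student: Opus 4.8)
The plan is to exploit a reflection antisymmetry of $\mathcal{R}_i$ that makes the (merely conditionally convergent) series collapse onto a thin boundary layer. Let $\sigma_i\colon\mathbb{Z}^N\to\mathbb{Z}^N$ be the involution that replaces the $i$-th coordinate $n_i$ by $-1-n_i$ and fixes the remaining coordinates; it has no fixed point, since $n_i=-1-n_i$ is not solvable in $\mathbb{Z}$. Because $G_{t,N}(\mb{n})$ depends on $\mb{n}$ only through $(|n_1|,\dots,|n_N|)$ and $I_{-m}=I_m$ (see \eqref{eq:par}), one checks directly that $G_{t,N}(\sigma_i\mb{n}+\mb{e}_i)=G_{t,N}(\mb{n})$ and $G_{t,N}(\sigma_i\mb{n})=G_{t,N}(\mb{n}+\mb{e}_i)$, hence $\delta_i^+G_{t,N}(\sigma_i\mb{n})=-\delta_i^+G_{t,N}(\mb{n})$ for every $t>0$; integrating in $t$ gives $\mathcal{R}_i(\sigma_i\mb{n})=-\mathcal{R}_i(\mb{n})$.

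Next, for an integer $R\ge 1$ I would set $B_R=\{\mb{k}\in\mathbb{Z}^N:|\mb{k}|\le R\}$ and $S_R=\sum_{\mb{k}\in B_R}\mathcal{R}_i(\mb{k})$. The substitution $\mb{k}\mapsto\sigma_i\mb{k}$ together with the antisymmetry gives $\sum_{\mb{k}\in\sigma_i(B_R)}\mathcal{R}_i(\mb{k})=-S_R$, so that
\[
2S_R=\sum_{\mb{k}\in B_R\setminus\sigma_i(B_R)}\mathcal{R}_i(\mb{k})-\sum_{\mb{k}\in\sigma_i(B_R)\setminus B_R}\mathcal{R}_i(\mb{k}).
\]
Using $|\sigma_i\mb{k}|=|\mb{k}|+1$ when $k_i\ge 0$ and $|\sigma_i\mb{k}|=|\mb{k}|-1$ when $k_i\le -1$, a short computation identifies $B_R\setminus\sigma_i(B_R)=\{\mb{k}:k_i\ge 0,\ |\mb{k}|=R\}$ and $\sigma_i(B_R)\setminus B_R=\{\mb{k}:k_i\le -1,\ |\mb{k}|=R+1\}$, so $B_R$ and $\sigma_i(B_R)$ differ only on the $\ell^1$-spheres of radii $R$ and $R+1$. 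Since such a sphere of radius $m$ contains at most $Cm^{N-1}$ lattice points, \eqref{eq:size-Riesz} yields $|2S_R|\le C\,R^{N-1}(1+R)^{-N}\le C/R$ for all $R\ge 1$. Letting $R\to\infty$ proves \eqref{eq:sum-Riesz-null}, with the series understood as $\lim_{R\to\infty}\sum_{|\mb{k}|\le R}\mathcal{R}_i(\mb{k})$. For \eqref{eq:sum-Riesz-partial} one writes $\sum_{\ell\le|\mb{k}|\le R}\mathcal{R}_i(\mb{k})=S_R-S_{\ell-1}$ and lets $R\to\infty$, getting $\sum_{|\mb{k}|\ge\ell}\mathcal{R}_i(\mb{k})=-S_{\ell-1}$, which is bounded by the estimate just obtained (uniformly in $\ell\ge 2$, and trivially for $\ell=1$).

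The main obstacle is that $\mathcal{R}_i\notin\ell^1(\mathbb{Z}^N)$: its kernel decays only like $|\mb{n}|^{-N}$, so \eqref{eq:sum-Riesz-null} cannot be deduced by integrating the obvious identity $\sum_{\mb{k}\in\mathbb{Z}^N}\delta_i^+G_{t,N}(\mb{k})=0$ against $t^{-1/2}$ term by term (the resulting integral in $t$ diverges at infinity). The reflection symmetry is precisely what bypasses this; the remaining work—verifying the coordinatewise reflection identities for $G_{t,N}$, identifying $B_R\setminus\sigma_i(B_R)$ and $\sigma_i(B_R)\setminus B_R$ with single $\ell^1$-spheres, and counting their lattice points—is elementary.
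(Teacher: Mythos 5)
Your proof is correct and takes essentially the same route as the paper: there, the antisymmetry $\mathcal{R}_1(-\mb{k})=-\mathcal{R}_1(\mb{k}-\mb{e}_1)$ (the analogue of your reflection $n_i\mapsto-1-n_i$) is used to show that twice the partial sum over $|\mb{k}|\le M$ collapses to a shell of radius about $M$, which is bounded by $C/M$ via \eqref{eq:size-Riesz}, and the tail sum in \eqref{eq:sum-Riesz-partial} is then identified with $-S_{\ell-1}$, exactly as in your argument.
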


\begin{proof}
To prove the result we focus on $i=1$ because the other cases are similar. We consider $M\in \mathbb{N}$ big enough and the partial sums
\[
S_M=\sum_{|\mb{k}|\le M}\mathcal{R}_{1}(\mb{k}).
\]
Now, we observe that
\begin{equation}
\label{eq:oddnes-Ri}
\mathcal{R}_1(-\mb{k})=-\mathcal{R}_1(\mb{k}-\mb{e}_1).
\end{equation}
Indeed, it is clear that
\begin{align*}
\delta_1^+ G_{t,N}(-\mb{k})&=(G_{t,1}(-k_1+1)-G_{t,1}(-k_1))\prod_{j=2}^{N}G_{t,1}(-k_j)
\\&=-(G_{t,1}(k_1)-G_{t,1}(k_1-1))\prod_{j=2}^{N}G_{t,1}(k_j)=-\delta_1^+ G_{t,N}(\mb{k}-\mb{e}_1)
\end{align*}
and \eqref{eq:oddnes-Ri} follows immediately. With \eqref{eq:oddnes-Ri} we have
\begin{align*}
2S_M&=\sum_{|\mb{k}|\le M}\mathcal{R}_1(\mb{k})+\sum_{|\mb{k}|\le M}\mathcal{R}_1(-\mb{k})
\\&=\sum_{|\mb{k}|\le M}\mathcal{R}_1(\mb{k})-\sum_{|\mb{k}|\le M}\mathcal{R}_1(\mb{k}-\mb{e}_1)
=\sum_{|\mb{k}|\le M}\mathcal{R}_1(\mb{k})-\sum_{|\mb{k}+\mb{e}_1|\le M}\mathcal{R}_1(\mb{k}).
\end{align*}
Then, applying \eqref{eq:size-Riesz} we get
\[
|S_M|\le C\sum_{M-1\le |\mb{k}|\le M+1}|\mathcal{R}_1(\mb{k})|\le \frac{C}{M}
\]
and
\[
\sum_{\mb{k}\in \mathbb{Z}^N}\mathcal{R}_{1}(\mb{k})=\lim_{M\to \infty}S_M=0
\]
proving \eqref{eq:sum-Riesz-null}.

To analyze \eqref{eq:sum-Riesz-partial}, we have to note that
\[
\left|\sum_{|\mb{k}|\ge \ell}\mathcal{R}_{i}(\mb{k})\right|=\left|\sum_{\mb{k}\in \mathbb{Z}^N}\mathcal{R}_i(\mb{k})-S_{\ell-1}\right|=|S_{\ell-1}|\le \frac{C}{\ell-1}
\]
and the result follows for $\ell>1$. The case $\ell=1$ is obvious.
\end{proof}

First, we present the behaviour of the Riesz transforms on the H\"older classes.

\begin{thm}
  Let $N\ge 2$, $0<\alpha<1/2$, and $f\in C^{0,\alpha}(\mathbb{Z}^N)$. Then for $1\le i \le N$ the inequality
  \[
  [R_i f]_{C^{0,\alpha}(\mathbb{Z}^N)}\le C [f]_{C^{0,\alpha}(\mathbb{Z}^N)}
  \]
  holds.
\end{thm}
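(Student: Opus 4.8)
The plan is to exploit the convolution representation $R_i f = \mathcal{R}_i \ast f$ together with the size and smoothness estimates \eqref{eq:size-Riesz}, \eqref{eq:smooth-Riesz} for the kernel $\mathcal{R}_i$ and the cancellation property \eqref{eq:sum-Riesz-null} from Lemma \ref{lem:sum-Riesz-null}. Fix $\mb{n},\mb{m}\in\mathbb{Z}^N$ and set $d=|\mb{n}-\mb{m}|$. Since $\sum_{\mb{k}}\mathcal{R}_i(\mb{k})=0$, I can subtract $f(\mb{n})$ inside the convolution and write
\[
R_if(\mb{n})-R_if(\mb{m})=\sum_{\mb{k}\in\mathbb{Z}^N}\bigl(\mathcal{R}_i(\mb{n}-\mb{k})-\mathcal{R}_i(\mb{m}-\mb{k})\bigr)\bigl(f(\mb{k})-f(\mb{n})\bigr),
\]
using that $\sum_{\mb{k}}(\mathcal{R}_i(\mb{n}-\mb{k})-\mathcal{R}_i(\mb{m}-\mb{k}))=0$. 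I would then split the sum into the near region $0<|\mb{n}-\mb{k}|\le 2d$ and the far region $|\mb{n}-\mb{k}|>2d$, exactly as in the proof of Theorem \ref{thm:Schauder}(a).

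In the near region I estimate the two kernel terms separately. Using $|f(\mb{k})-f(\mb{n})|\le [f]_{C^{0,\alpha}(\mathbb{Z}^N)}|\mb{n}-\mb{k}|^\alpha$ and the size bound \eqref{eq:size-Riesz}, the term with $\mathcal{R}_i(\mb{n}-\mb{k})$ is bounded by
\[
C[f]_{C^{0,\alpha}(\mathbb{Z}^N)}\sum_{0<|\mb{j}|\le 2d}|\mb{j}|^{\alpha-N}\le C[f]_{C^{0,\alpha}(\mathbb{Z}^N)}\,d^{\alpha},
\]
since $\alpha-N<0$ forces the sum to behave like $d^{\alpha}$ (summing over spherical shells of radius $m$ gives $\sum_{m\le 2d} m^{\alpha-N}\cdot m^{N-1}=\sum_{m\le 2d}m^{\alpha-1}\sim d^\alpha$, using $0<\alpha<1$). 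For the term with $\mathcal{R}_i(\mb{m}-\mb{k})$ I use that $|\mb{n}-\mb{k}|\le 2d$ implies $|\mb{m}-\mb{k}|\le 3d$, so $|\mb{n}-\mb{k}|^\alpha\le C(|\mb{m}-\mb{m}|^\alpha+ \text{something}\le C d^\alpha$ — more precisely $|\mb{n}-\mb{k}|^\alpha\le C(d^\alpha+|\mb{m}-\mb{k}|^\alpha)$ — and then \eqref{eq:size-Riesz} with $|\mb{m}-\mb{k}|+1$ in the denominator gives the same $d^\alpha$ bound after summing over $|\mb{m}-\mb{k}|\le 3d$. In the far region $|\mb{n}-\mb{k}|>2d$ I use the smoothness estimate \eqref{eq:smooth-Riesz}, namely $|\mathcal{R}_i(\mb{n}-\mb{k})-\mathcal{R}_i(\mb{m}-\mb{k})|\le C d\,|\mb{n}-\mb{k}|^{-N-1}$ (valid since $|\mb{n}-\mb{k}|>2d=2|\mb{n}-\mb{m}|$), together with $|f(\mb{k})-f(\mb{n})|\le [f]_{C^{0,\alpha}}|\mb{n}-\mb{k}|^\alpha$, to get
\[
C[f]_{C^{0,\alpha}(\mathbb{Z}^N)}\,d\sum_{|\mb{j}|>2d}|\mb{j}|^{\alpha-N-1}\le C[f]_{C^{0,\alpha}(\mathbb{Z}^N)}\,d\cdot d^{\alpha-1}=C[f]_{C^{0,\alpha}(\mathbb{Z}^N)}\,d^{\alpha},
\]
where convergence of the tail sum uses $\alpha-N-1<-1$, i.e. $\alpha<N$, and the exponent arithmetic $\sum_{m>2d}m^{\alpha-2}\sim d^{\alpha-1}$ uses $\alpha<1$. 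Dividing through by $d^\alpha$ and taking the supremum over $\mb{n}\neq\mb{m}$ yields $[R_if]_{C^{0,\alpha}(\mathbb{Z}^N)}\le C[f]_{C^{0,\alpha}(\mathbb{Z}^N)}$.

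One point that needs care, and which I expect to be the main subtlety rather than the size/smoothness bookkeeping, is the \emph{convergence} of the series defining $R_if(\mb{n})$ and the legitimacy of inserting the constant $f(\mb{n})$: a priori $f\in C^{0,\alpha}$ need not decay, so $\mathcal{R}_i\ast f$ is not obviously absolutely convergent. Here the hypothesis $\alpha<1/2$ is exactly what is needed — it is the analogue of the restriction in Theorem \ref{thm:Schauder} — and I would justify the manipulation by first writing, for fixed $\mb{n}$, $R_if(\mb{n})=\sum_{\mb{k}}\mathcal{R}_i(\mb{n}-\mb{k})(f(\mb{k})-f(\mb{n}))$, which converges absolutely because $|\mathcal{R}_i(\mb{j})|(f(\mb{n}-\mb{j})-f(\mb{n}))|\le C[f]_{C^{0,\alpha}}|\mb{j}|^{\alpha-N}$ and $\alpha-N<-N$... wait, one needs $\alpha<N$ here which is automatic; the genuinely delicate issue is rather that one should check $R_i$ is well-defined on $C^{0,\alpha}$ modulo constants and that \eqref{eq:sum-Riesz-partial} controls the partial-sum cancellation. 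I would handle this by truncating to $|\mb{k}|\le M$, applying \eqref{eq:sum-Riesz-partial} to control the error from the missing cancellation, and passing to the limit. Once this is in place the rest is the routine three-region estimate above.
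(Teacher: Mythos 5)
Your overall frame (splitting at radius $2|\mb{n}-\mb{m}|$, size bound \eqref{eq:size-Riesz} in the near region, smoothness bound \eqref{eq:smooth-Riesz} in the far region) matches the paper, but your specific decomposition --- subtracting the single constant $f(\mb{n})$ in both $R_if(\mb{n})$ and $R_if(\mb{m})$ --- breaks down in the near region, and this is a genuine gap, not bookkeeping. Write $d=|\mb{n}-\mb{m}|$. The cross term you must control is
\[
\sum_{0<|\mb{n}-\mb{k}|\le 2d}|\mathcal{R}_i(\mb{m}-\mb{k})|\,|f(\mb{k})-f(\mb{n})|,
\]
and your bound $|\mb{n}-\mb{k}|^\alpha\le C(d^\alpha+|\mb{m}-\mb{k}|^\alpha)$ combined with \eqref{eq:size-Riesz} does not give $Cd^\alpha$: the piece $d^\alpha\sum_{|\mb{m}-\mb{k}|\le 3d}(1+|\mb{m}-\mb{k}|)^{-N}$ is of order $d^\alpha\log d$, because the kernel decay is exactly critical. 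Worse, this is not an artifact of a lossy triangle inequality: for $\mb{k}$ within distance $d/2$ of $\mb{m}$ one has $|\mb{n}-\mb{k}|\sim d$, so the absolute-value sum over that sub-ball alone is already of size $d^\alpha\log d$; no pointwise estimate can rescue it, cancellation of the kernel is indispensable in the near region of your decomposition. This is precisely where the present theorem differs from Theorem \ref{thm:Schauder}(a), which you cite as the model: there the kernel decays like $|\cdot|^{2\sigma-N}$ with $\sigma>0$, so the same splitting loses no logarithm.

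The paper avoids this by pairing each kernel with its own increment: it writes $R_if(\mb{n})$ with $f(\mb{k})-f(\mb{n})$ and $R_if(\mb{m})$ with $f(\mb{k})-f(\mb{m})$, so both near-region sums are bounded directly by $\sum_{0<|\mb{j}|\le 3d}|\mb{j}|^{\alpha-N}\le Cd^\alpha$; the price is paid in the far region, where after adding and subtracting $f(\mb{m})\mathcal{R}_i(\mb{n}-\mb{k})$ an extra term $(f(\mb{n})-f(\mb{m}))\sum_{|\mb{n}-\mb{k}|>2d}\mathcal{R}_i(\mb{n}-\mb{k})$ appears, and this is exactly where the tail-cancellation estimate \eqref{eq:sum-Riesz-partial} of Lemma \ref{lem:sum-Riesz-null} enters. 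In your scheme the cancellation does not disappear, it only moves: to repair the near region you would have to write $f(\mb{k})-f(\mb{n})=(f(\mb{k})-f(\mb{m}))+(f(\mb{m})-f(\mb{n}))$ and prove $\bigl|\sum_{0<|\mb{n}-\mb{k}|\le 2d}\mathcal{R}_i(\mb{m}-\mb{k})\bigr|\le C$, a bound over balls not centered at the singularity which can be deduced from \eqref{eq:sum-Riesz-null}, \eqref{eq:sum-Riesz-partial} and \eqref{eq:size-Riesz} but appears nowhere in your write-up. Two smaller points: your claim that $\sum_{\mb{k}}\mathcal{R}_i(\mb{n}-\mb{k})(f(\mb{k})-f(\mb{n}))$ converges absolutely because the summand is $O(|\mb{j}|^{\alpha-N})$ is false, since $\sum_{\mb{j}\ne\mb{0}}|\mb{j}|^{\alpha-N}$ diverges for $\alpha>0$; the convergence is conditional and must indeed be handled by truncation and \eqref{eq:sum-Riesz-partial}, as you later suggest. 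Your far-region estimate, on the other hand, is correct, and in that region your decomposition is even slightly cleaner than the paper's because it needs no partial-sum cancellation there.
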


\begin{proof}

By using Lemma \ref{lem:sum-Riesz-null}, for $\mb{n},\mb{m}\in \mathbb{Z}^N$ it is easy to check that
\begin{align*}
R_if(\mb{n})&=\sum_{k\in \mathbb{Z}^N}(f(\mb{k})-f(\mb{n}))\mathcal{R}_i(\mb{n}-\mb{k})\\
&=
\sum_{0<|\mb{n}-\mb{k}|\le 2 |\mb{n}-\mb{m}|}(f(\mb{k})-f(\mb{n}))\mathcal{R}_i(\mb{n}-\mb{k})
\\& \kern20pt+\sum_{|\mb{n}-\mb{k}|> 2 |\mb{n}-\mb{m}|}(f(\mb{k})-f(\mb{n}))\mathcal{R}_i(\mb{n}-\mb{k})\\&:=S_1(\mb{n})+S_2(\mb{n}).
\end{align*}
For the difference $S_1(\mb{n})-S_1(\mb{m})$, by using that $|\mb{n}-\mb{k}|\le 2 |\mb{n}-\mb{m}|$ implies $|\mb{m}-\mb{k}|\le 3|\mb{n}-\mb{m}|$ and the estimate \eqref{eq:size-Riesz}, it is verified that
\begin{multline*}
|S_1(\mb{n})-S_1(\mb{m})|\le |S_1(\mb{n})|+|S_1(\mb{m})|\\
\begin{aligned}
&\le C [f]_{C^{0,\alpha}(\mathbb{Z}^N)}\left(\sum_{0<|\mb{n}-\mb{k}|\le 2|\mb{n}-\mb{m}|}|\mb{k}-\mb{n}|^{\alpha-N}+\sum_{0<|\mb{m}-\mb{k}|\le 3|\mb{n}-\mb{m}|}|\mb{k}-\mb{m}|^{\alpha-N}\right)\\&
\le C [f]_{C^{0,\alpha}(\mathbb{Z}^N)}|\mb{n}-\mb{m}|^\alpha.
\end{aligned}
\end{multline*}

To treat $S_2(\mb{n})-S_2(\mb{m})$ we add the term $\pm f(\mb{m})\mathcal{R}_i(\mb{n}-\mb{k})$ and apply \eqref{eq:smooth-Riesz} and \eqref{eq:sum-Riesz-partial} to have
\begin{multline*}
|S_2(\mb{n})-S_2(\mb{m})|\\
\begin{aligned}
&\le \sum_{|\mb{n}-\mb{k}|
> 2|\mb{n}-\mb{m}|}|f(\mb{k})-f(\mb{m})||\mathcal{R}_i(\mb{n}-\mb{k})-\mathcal{R}_i(\mb{m}-\mb{k})|\\
&\kern15pt +
|f(\mb{n})-f(\mb{m})|\left|\sum_{|\mb{n}-\mb{k}|> 2|\mb{n}-\mb{m}|}\mathcal{R}_i(\mb{n}-\mb{k})\right|
\\&\le C [f]_{C^{0,\alpha}(\mathbb{Z}^N)}\left(|\mb{n}-\mb{m}| \sum_{|\mb{n}-\mb{k}|> 2|\mb{n}-\mb{m}|}|\mb{n}-\mb{k}|^{\alpha-N-1}+|\mb{n}-\mb{m}|^\alpha\right)\\&\le C [f]_{C^{0,\alpha}(\mathbb{Z}^N)}|\mb{n}-\mb{m}|^{\alpha}
\end{aligned}
\end{multline*}
and this concludes the proof.
\end{proof}

It would be possible to define another Riesz transform by mean
\[
\overline{R}_if(\mb{n})=\delta_i^{-}(-\Delta_N)^{-1/2}f(\mb{n}).
\]
The identity $\delta_i^{-}f(\mb{n})=\delta_i^{+}f(\mb{n}-\mb{e}_i)$ proves the following result.

\begin{cor}
\label{cor:Riesz-adjoint}
  Let $N\ge 2$, $0<\alpha<1/2$, and $f\in C^{0,\alpha}(\mathbb{Z}^N)$. Then for $1\le i \le N$ the inequality
  \[
  [\overline{R}_i f]_{C^{0,\alpha}(\mathbb{Z}^N)}\le C [f]_{C^{0,\alpha}(\mathbb{Z}^N)}
  \]
  holds.
\end{cor}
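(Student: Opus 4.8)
The plan is to observe that $\overline{R}_i f$ is nothing but a fixed translate of $R_i f$, so the estimate is immediate from the theorem just proved together with the translation invariance of the Hölder seminorm.

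First I would record the elementary identity $\delta_i^- g(\mb{n}) = g(\mb{n}) - g(\mb{n}-\mb{e}_i) = \delta_i^+ g(\mb{n}-\mb{e}_i)$, valid for any sequence $g$. Applying it with $g = (-\Delta_N)^{-1/2}f$ gives
\[
\overline{R}_i f(\mb{n}) = \delta_i^-(-\Delta_N)^{-1/2}f(\mb{n}) = \delta_i^+(-\Delta_N)^{-1/2}f(\mb{n}-\mb{e}_i) = R_i f(\mb{n}-\mb{e}_i),
\]
that is, $\overline{R}_i f = (R_i f)(\cdot-\mb{e}_i)$. Equivalently, at the level of kernels one has $\overline{R}_i f = \overline{\mathcal{R}}_i\ast f$ with $\overline{\mathcal{R}}_i(\mb{n}) = \frac{1}{\sqrt{\pi}}\int_0^\infty \delta_i^- G_{t,N}(\mb{n})\,t^{-1/2}\,dt = \mathcal{R}_i(\mb{n}-\mb{e}_i)$, using the same shift applied to $G_{t,N}$; this kernel inherits the size estimate \eqref{eq:size-Riesz}, the smoothness estimate \eqref{eq:smooth-Riesz}, and the cancellation $\sum_{\mb{k}\in\mathbb{Z}^N}\overline{\mathcal{R}}_i(\mb{k}) = \sum_{\mb{k}\in\mathbb{Z}^N}\mathcal{R}_i(\mb{k}) = 0$, so $\overline{R}_i f$ is well defined on $C^{0,\alpha}(\mathbb{Z}^N)$ exactly as $R_i f$ is.

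Second I would note that for any fixed $\mb{v}\in\mathbb{Z}^N$ and any sequence $g$, the change of variables $\mb{n}'=\mb{n}-\mb{v}$, $\mb{m}'=\mb{m}-\mb{v}$ — which preserves $|\mb{n}-\mb{m}|=|\mb{n}'-\mb{m}'|$ — shows $[g(\cdot-\mb{v})]_{C^{0,\alpha}(\mathbb{Z}^N)} = [g]_{C^{0,\alpha}(\mathbb{Z}^N)}$. Taking $\mb{v}=\mb{e}_i$ and $g=R_i f$, and then invoking the preceding theorem on the action of $R_i$ on the Hölder classes, yields
\[
[\overline{R}_i f]_{C^{0,\alpha}(\mathbb{Z}^N)} = [R_i f]_{C^{0,\alpha}(\mathbb{Z}^N)} \le C\,[f]_{C^{0,\alpha}(\mathbb{Z}^N)},
\]
which is the claim.

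There is essentially no obstacle here: the only point requiring a word of care is that $\overline{R}_i$ is genuinely well defined on $C^{0,\alpha}(\mathbb{Z}^N)$, and this is guaranteed by the identification above with the (already justified) operator $R_i$ composed with a translation. The estimate is then a one-line consequence of translation invariance of the seminorm.
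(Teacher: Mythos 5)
Your proposal is correct and follows essentially the same route as the paper: the paper's proof consists precisely of the observation that $\delta_i^{-}g(\mb{n})=\delta_i^{+}g(\mb{n}-\mb{e}_i)$, so that $\overline{R}_if$ is a translate of $R_if$ and the bound follows from the theorem for $R_i$ together with the translation invariance of the H\"older seminorm. Your additional remarks on the shifted kernel and its estimates are harmless but not needed for the argument.
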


To finish this section we give a result about the boundedness with weights of the Riesz transforms.

\begin{thm}
  Let $N\ge 2$, $w\in A_p(\mathbb{Z}^N)$, and $1\le i\le N$. Then the inequalities
  \[
  \|R_if\|_{\ell^{p}(\mathbb{Z}^n,w)}\le C \|f\|_{\ell^{p}(\mathbb{Z}^n,w)}, \qquad 1<p<\infty, \qquad f\in \ell^2(\mathbb{Z}^N)\cap \ell^p(\mathbb{Z}^N,w),
  \]
  and
  \[
  \|R_if\|_{\ell^{1,\infty}(\mathbb{Z}^n,w)}\le C\|f\|_{\ell^{1}(\mathbb{Z}^n,w)}, \qquad \ell^2(\mathbb{Z}^N)\cap \ell^1(\mathbb{Z}^N,w),
  \]
  hold.
\end{thm}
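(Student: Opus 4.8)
The plan is to recognize each $R_i$ as a Calderón-Zygmund operator in the sense recalled before the statement, and then quote the weighted theory from \cite{RRT,RT}. Three things have to be checked: that $\mathcal{R}_i$ is a standard scalar kernel, that $R_i$ is bounded on $\ell^2(\mathbb{Z}^N)$, and that $R_if=\mathcal{R}_i\ast f$ off the support of $f$ whenever $f$ is compactly supported. The last point is immediate from the convolution formula $R_if=\mathcal{R}_i\ast f$ together with the absolute convergence of $\int_0^\infty|\delta_i^+G_{t,N}(\mathbf{n})|\,t^{-1/2}\,dt$ already noted in this subsection.

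For the kernel conditions I would simply invoke the preceding lemma: \eqref{eq:size-Riesz} is precisely \eqref{eq:size-CZ} with $A=\mathbb{C}$, while \eqref{eq:smooth-Riesz}, applied with $\mathbf{m}$ there replaced by $\mathbf{n}+\mathbf{m}$ and with the elementary remark that $|\mathbf{n}|>2|\mathbf{m}|$ forces $|\mathbf{n}|+|\mathbf{n}+\mathbf{m}|\sim|\mathbf{n}|$, yields $|\mathcal{R}_i(\mathbf{n})-\mathcal{R}_i(\mathbf{n}+\mathbf{m})|\le C|\mathbf{m}|/|\mathbf{n}|^{N+1}$, which is \eqref{eq:smooth-CZ}.

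The $\ell^2$-bound I would obtain through the Fourier multiplier. Starting from $\mathcal{F}(G_{t,N})(x)=e^{-4t\sum_k\sin^2(\pi x_k)}$ one gets $\mathcal{F}(\delta_i^+G_{t,N})(x)=(e^{-2\pi i x_i}-1)e^{-4t\sum_k\sin^2(\pi x_k)}$, so, integrating against $t^{-1/2}$ over $(0,\infty)$ and using $\int_0^\infty e^{-at}t^{-1/2}\,dt=\sqrt{\pi/a}$,
\[
\mathcal{F}(\mathcal{R}_i)(x)=\frac{e^{-2\pi i x_i}-1}{2\bigl(\sum_{k=1}^N\sin^2(\pi x_k)\bigr)^{1/2}},\qquad |\mathcal{F}(\mathcal{R}_i)(x)|=\frac{|\sin(\pi x_i)|}{\bigl(\sum_{k=1}^N\sin^2(\pi x_k)\bigr)^{1/2}}\le 1 .
\]
By Parseval's identity this gives $\|R_if\|_{\ell^2(\mathbb{Z}^N)}\le\|f\|_{\ell^2(\mathbb{Z}^N)}$, so in particular $R_i$ is bounded on $\ell^2(\mathbb{Z}^N)$.

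With these three facts in hand, $R_i$ is a Calderón-Zygmund operator, and the strong $(p,p)$ bound for $w\in A_p(\mathbb{Z}^N)$ with $1<p<\infty$, together with the weak $(1,1)$ bound for $w\in A_1(\mathbb{Z}^N)$, follow at once from the weighted Calderón-Zygmund theory recalled above. I do not anticipate a genuine obstacle; the only slightly delicate step is justifying the interchange of the $t$-integral with the Fourier series in the multiplier computation, which is covered by the absolute convergence mentioned above, and bearing in mind that $(-\Delta_N)^{-1/2}$ by itself is \emph{not} $\ell^2$-bounded (its multiplier is singular at the origin) --- it is exactly the factor $e^{-2\pi i x_i}-1$ contributed by $\delta_i^+$ that restores boundedness.
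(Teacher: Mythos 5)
Your proposal is correct and follows essentially the same route as the paper: the $\ell^2$ bound via the Fourier multiplier $(e^{-2\pi i x_i}-1)\bigl(4\sum_k\sin^2(\pi x_k)\bigr)^{-1/2}$, bounded by $1$, the kernel conditions \eqref{eq:size-CZ} and \eqref{eq:smooth-CZ} deduced from \eqref{eq:size-Riesz} and \eqref{eq:smooth-Riesz}, and then the weighted Calder\'on--Zygmund theory of \cite{RRT,RT}. The only cosmetic difference is that you obtain the multiplier by integrating $\mathcal{F}(\delta_i^+G_{t,N})$ in $t$ directly, whereas the paper composes the multipliers of $\delta_i^+$ and $(-\Delta_N)^{-1/2}$; the computation is the same.
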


\begin{proof}
We will prove that each $R_i$ is Calder\'on-Zygmund operator.

First, we check that $R_i$ is bounded from $\ell^2(\mathbb{Z}^N)$ into itself. 
For $f\in\ell^2(\mathbb{Z}^N)$, we have
\[
\mathcal{F}(\delta_i f)(x)=\sum_{\mb{n}\in \mathbb{Z}^N}f(\mb{n}+\mb{e}_i)e^{2\pi i \langle \mb{n},x\rangle}-
\sum_{\mb{n}\in \mathbb{Z}^N}f(\mb{n})e^{2\pi i \langle \mb{n},x\rangle}=(e^{-2\pi i x_i}-1)\mathcal{F}f(x)
\]
and
\begin{align*}
\mathcal{F}((-\Delta_N)^{-1/2}f)(x)&=\frac{1}{\sqrt{\pi}}\int_{0}^{\infty}e^{-4t(\sum_{k=1}^{N}\sin^2(\pi x_k))}\mathcal{F}f(x)t^{-1/2}\,dt\\&=\left(4\sum_{k=1}^{N}\sin^2(\pi x_k)\right)^{-1/2}\mathcal{F}f(x).
\end{align*}
With this we deduce that $\delta_i^+$ and $(-\Delta_N)^{-1/2}$ can be seen as multipliers of the functions $e^{-2\pi i x_i}-1$ and $\left(4\sum_{k=1}^{N}\sin^2(\pi x_k)\right)^{-1/2}$, respectively.
Then, the operator $R_{i}$ is associated with the multiplier
\[
\frac{e^{-2\pi i x_i}-1}{\left(4\sum_{k=1}^{N}\sin^2(\pi x_k)\right)^{1/2}};
\]
i.e.,
\begin{align*}
\mathcal{F}(R_i f)(x)&=\frac{e^{-2\pi i x_i}-1}{\left(4\sum_{k=1}^{N}\sin^2(\pi x_k)\right)^{1/2}}\mathcal{F}f\\&=\frac{-ie^{\pi i x_i}\sin(\pi x_i)}{\left(\sum_{k=1}^{N}\sin^2(\pi x_k)\right)^{1/2}}\mathcal{F}f, \qquad i=1,\dots,N.
\end{align*}
In this way, using the bound
\[
\frac{|\sin(\pi x_i)|}{\left(\sum_{k=1}^{N}\sin^2(\pi x_k)\right)^{1/2}}\le 1,
\]
by Plancherel identity we have
\[
\|R_i f\|_{\ell^2(\mathbb{Z}^N)}=\|\mathcal{F}(R_i f)\|_{L^2\left([-1/2,1/2]^N\right)}\le C
\|\mathcal{F}f\|_{L^2\left([-1/2,1/2]^N\right)}=\|f\|_{\ell^2(\mathbb{Z}^N)}.
\]
Finally, the estimates \eqref{eq:size-CZ} and \eqref{eq:smooth-CZ} can be deduced from \eqref{eq:size-Riesz} and \eqref{eq:smooth-Riesz} and we have concluded.
\end{proof}

For the Riesz transforms $\overline{R}_i$ we have the next result.
\begin{cor}
  Let $N\ge 2$, $w\in A_p(\mathbb{Z}^N)$, and $1\le i \le N$. Then the inequalities
   \[
  \|\overline{R}_if\|_{\ell^{p}(\mathbb{Z}^n,w)}\le C \|f\|_{\ell^{p}(\mathbb{Z}^n,w)}, \qquad 1<p<\infty, \qquad f\in \ell^2(\mathbb{Z}^N)\cap \ell^p(\mathbb{Z}^N,w),
  \]
  and
  \[
  \|\overline{R}_if\|_{\ell^{1,\infty}(\mathbb{Z}^n,w)}\le C\|f\|_{\ell^{1}(\mathbb{Z}^n,w)}, \qquad f\in \ell^2(\mathbb{Z}^N)\cap \ell^1(\mathbb{Z}^N,w),
  \]
  hold.
\end{cor}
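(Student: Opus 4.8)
The plan is to deduce both inequalities from the preceding theorem for $R_i$ by exploiting the elementary identity
\[
\overline{R}_if(\mb{n})=\delta_i^{-}(-\Delta_N)^{-1/2}f(\mb{n})=\delta_i^{+}(-\Delta_N)^{-1/2}f(\mb{n}-\mb{e}_i)=R_if(\mb{n}-\mb{e}_i),
\]
which follows from $\delta_i^{-}g(\mb{n})=\delta_i^{+}g(\mb{n}-\mb{e}_i)$ and the fact that $\delta_i^{+}$ commutes with $(-\Delta_N)^{-1/2}$. Thus $\overline{R}_i$ is just $R_i$ followed by the unit translation $\mb{n}\mapsto\mb{n}-\mb{e}_i$, and everything should come from the corresponding properties of $R_i$ proved above.

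The cleanest route is to observe that $\overline{R}_i$ is again a Calder\'on--Zygmund operator and then quote the weighted theory used for $R_i$. Its $\ell^2(\mathbb{Z}^N)$-boundedness is immediate, since the translation is an isometry of $\ell^2(\mathbb{Z}^N)$ and $R_i$ is bounded on $\ell^2(\mathbb{Z}^N)$ (equivalently, $\overline{R}_i$ has Fourier multiplier $(1-e^{2\pi i x_i})\big(4\sum_{k=1}^{N}\sin^2(\pi x_k)\big)^{-1/2}$, whose modulus is $|\sin(\pi x_i)|\big(\sum_{k=1}^{N}\sin^2(\pi x_k)\big)^{-1/2}\le1$, exactly as for $R_i$). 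For $f$ with compact support and $\mb{n}\notin\supp f$ one has $\overline{R}_if(\mb{n})=\overline{\mathcal R}_i\ast f(\mb{n})$ with $\overline{\mathcal R}_i(\mb{n})=\mathcal R_i(\mb{n}-\mb{e}_i)$; since $|\mb{n}-\mb{e}_i|+1\sim|\mb{n}|+1$ uniformly and, for $|\mb{n}|>2|\mb{m}|$ with $\mb{m}\ne\mb{0}$, also $|\mb{n}-\mb{e}_i|\sim|\mb{n}|\sim|\mb{n}+\mb{m}-\mb{e}_i|$, the estimates \eqref{eq:size-Riesz} and \eqref{eq:smooth-Riesz} transfer to $\overline{\mathcal R}_i$, i.e.\ \eqref{eq:size-CZ} and \eqref{eq:smooth-CZ} hold (with $A=\mathbb C$, after adjusting constants). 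Invoking the weighted boundedness of Calder\'on--Zygmund operators (see \cite{RRT,RT}) then gives both asserted inequalities. Alternatively one can transfer the weighted estimates directly: with $\mb{m}=\mb{n}-\mb{e}_i$,
\[
\|\overline{R}_if\|_{\ell^p(\mathbb{Z}^N,w)}^p=\sum_{\mb{n}\in\mathbb{Z}^N}|R_if(\mb{n}-\mb{e}_i)|^pw(\mb{n})\le C\sum_{\mb{m}\in\mathbb{Z}^N}|R_if(\mb{m})|^pw(\mb{m})=C\|R_if\|_{\ell^p(\mathbb{Z}^N,w)}^p,
\]
where the middle inequality uses that any $w\in A_p(\mathbb{Z}^N)$ satisfies $w(\mb{n})\le Cw(\mb{n}-\mb{e}_i)$ with $C$ depending only on $[w]_{A_p}$, $p$, $N$ (apply the $A_p$ condition to a cube of side $2$ containing both points); the theorem for $R_i$ then finishes, and the weak-type inequality is handled identically.

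I do not expect a genuine obstacle here; the statement is essentially a transfer result. The only point deserving care is precisely the one highlighted above: either verifying that the standard kernel conditions survive the unit translation of the variable, or establishing the local comparability of $A_p(\mathbb{Z}^N)$ weights. Both are routine, so the proof should be short.
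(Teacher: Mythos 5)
Your proof is correct and follows essentially the same route as the paper, which deduces the corollary (implicitly, with no written proof) from the identity $\delta_i^{-}f(\mb{n})=\delta_i^{+}f(\mb{n}-\mb{e}_i)$, i.e.\ $\overline{R}_if(\mb{n})=R_if(\mb{n}-\mb{e}_i)$. Your additional justification — that the translated kernel still satisfies the standard estimates, or alternatively that $w\in A_p(\mathbb{Z}^N)$ gives $w(\mb{n})\le Cw(\mb{n}-\mb{e}_i)$ via the $A_p$ condition on a cube of side $2$ — is exactly the routine detail needed to make the transfer rigorous.
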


\section{Fractional powers of the Laplacian}
\label{sec:frac}
From the identity
\[
\lambda^s=\frac{1}{\Gamma(-s)}\int_{0}^{\infty}(e^{-\lambda t}-1)\frac{dt}{t^{s+1}}, \qquad 0<s<1,
\]
we define the fractional powers (in fact, positive powers) of $-\Delta_N$ by
\begin{equation}
\label{eq:def-frac}
(-\Delta_N)^s f(\mb{n})=\frac{1}{\Gamma(-s)}\int_{0}^{\infty}(W_tf(\mb{n})-f(\mb{n}))\frac{dt}{t^{s+1}}, \qquad f\in \ell^2(\mathbb{Z}^N).
\end{equation}
By using \eqref{eq:sum-L1-multi}, it is clear that
\begin{equation}
\label{eq:Wt-I}
W_tf(\mb{n})-f(\mb{n})=\sum_{\mb{k}\in \mathbb{Z}^N}G_{t,N}(\mb{n}-\mb{k})(f(\mb{k})-f(\mb{n})).
\end{equation}
Moreover, we can prove that the operators $(-\Delta_N)^{s}$ are well defined in $\ell^2(\mathbb{Z}^N)$ by \eqref{eq:def-frac}. Indeed, we consider the decomposition
\[
\int_{0}^{\infty}|W_tf(\mb{n})-f(\mb{n})|\frac{dt}{t^{s+1}}\le F_1+F_2
\]
where
\[
F_1=\int_{0}^{1}|W_tf(\mb{n})-f(\mb{n})|\frac{dt}{t^{s+1}}
\]
and $F_2$ is the integral on $[1,\infty)$ of the same function. For $F_1$, applying \eqref{eq:cota-l2}, we have
\[
F_1\le C \|f\|_{\ell^2(\mathbb{Z}^N)}\int_{0}^{1}t^{-s}\, dt\le C \|f\|_{\ell^2(\mathbb{Z}^N)}.
\]
Now, from Theorem \ref{thm:bound-heat} with $p=\infty$ and $q=2$, we deduce that
\[
|W_tf(\mb{n})-f(\mb{n})|\le C(t^{-N/4}+1)\|f\|_{\ell^2(\mathbb{Z}^N)}
\]
and
\[
F_2\le C \|f\|_{\ell^2(\mathbb{Z}^N)}\int_{1}^{\infty}(t^{-N/4}+1)t^{-s-1}\, dt\le C \|f\|_{\ell^2(\mathbb{Z}^N)}.
\]

In this section we will focus on the study of some aspects of the operators $(-\Delta_N)^s$. Firstly, we will show some basic properties of the fractional powers of the Laplacian, including a maximum principle. In the last part of the section we will deal with the mapping properties of $(-\Delta_N)^s$ on the H\"older classes. 

\subsection{Basic properties of the fractional powers of the Laplacian}
In this section we present some elementary properties of the fractional powers $(-\Delta_N)^s$. They are collected in the next theorem.

\begin{thm}
  \label{thm:frac-elem}
  Let $N\ge 1$ and $0<s<1$.
  \begin{enumerate}
  \item[i)] For $f\in \ell_{-s}(\mathbb{Z}^N)$, it is verified that
  \[
  (-\Delta_N)^s f(\mb{n})=\sum_{\begin{smallmatrix}
                                  \mb{k}\in \mathbb{Z}^N \\
                                  \mb{k}\not= \mb{n}
                                \end{smallmatrix}}\mathcal{K}_s(\mb{n}-\mb{k})(f(\mb{n})-f(\mb{k})),
  \]
  where
  \begin{equation}\label{eq:kernel-frac}
    \mathcal{K}_s(\mb{n})=\frac{1}{|\Gamma(-s)|}\int_{0}^{\infty}G_{t,N}(\mb{n})\frac{dt}{t^{s+1}}, \qquad \mb{n}\not= \mb{0},
  \end{equation}
and $\mathcal{K}_s(\mb{0})=0$.
\item[ii)] For $\mb{n}\not= \mb{0}$, the inequalities
\begin{equation}\label{eq:bound-kernel-frac}
0<\mathcal{K}_s(\mb{n})\le \frac{C}{|\mb{n}|^{N+2s}}\frac{1}{|\Gamma(-s)|}\left(\frac{1}{1-s}+\frac{2}{N+2s}\right)
\end{equation}
hold with a constant $C$ independent of $s$.

\item[iii)] For $f\in \ell_0(\mathbb{Z}^N)$, it is verified that
\[
\lim_{s\to 0^+}(-\Delta_N)^{s}f(\mb{n})=f(\mb{n}).
\]

\item[iv)] For $f\in \ell^\infty(\mathbb{Z}^N)$, it is verified that
\[
\lim_{s\to 1^{-}}(-\Delta_N)^{s}f(\mb{n})=-\Delta_N f(\mb{n}).
\]
  \end{enumerate}
\end{thm}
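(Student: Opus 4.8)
The plan is to establish the four parts in order, each reducing to estimates already available in the excerpt. For part i), I would start from \eqref{eq:def-frac} and \eqref{eq:Wt-I}, writing
\[
(-\Delta_N)^s f(\mb{n})=\frac{1}{\Gamma(-s)}\int_{0}^{\infty}\sum_{\mb{k}\in\mathbb{Z}^N}G_{t,N}(\mb{n}-\mb{k})(f(\mb{k})-f(\mb{n}))\frac{dt}{t^{s+1}},
\]
and justify interchanging sum and integral by Tonelli/Fubini. To that end I would split the $t$-integral at $t=1$: near $0$ use \eqref{eq:cota-l2} or, better, the fact that $G_{t,N}(\mb{n}-\mb{k})$ is small when $\mb{n}\ne\mb{k}$ and $t$ small together with $\sum_{\mb k}G_{t,N}=1$; near $\infty$ use \eqref{eq:Gt-decay-n} or \eqref{eq:size-G-t} to get $G_{t,N}(\mb{n}-\mb{k})\le Ct^{-N/2}$, so that the tail contributes $\sum_{\mb k}|f(\mb k)-f(\mb n)|\int_1^\infty t^{-N/2-s-1}\,dt$, which is finite once $f\in\ell_{-s}(\mathbb Z^N)=\ell^1(\mathbb Z^N,(1+|\mb n|)^{-N-2s})$ (the weight is exactly what makes $\sum_{\mb k}|f(\mb k)|(1+|\mb n-\mb k|)^{\text{something}}$ manageable; one absorbs the polynomial growth coming from the region $|\mb k|$ large using the weight, splitting $\sum_{\mb k}$ into $|\mb n-\mb k|\le|\mb n|$ and its complement). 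After the interchange, the $\mb k=\mb n$ term vanishes and the definition \eqref{eq:kernel-frac} of $\mathcal K_s$ drops out, with $\mathcal K_s(\mb 0)=0$ by convention.

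For part ii), positivity of $\mathcal K_s$ is immediate from positivity of $G_{t,N}$. For the upper bound I would split the integral in \eqref{eq:kernel-frac} at $t=|\mb n|^2$ exactly as in the proof of \eqref{eq:bound-frac}: on $(0,|\mb n|^2)$ use $G_{t,N}(\mb n)\le C(1+|\mb n|)^{-N}$ from \eqref{eq:Gt-decay-n} to get a contribution $\le \frac{C}{|\mb n|^{N}}\int_0^{|\mb n|^2}t^{-s-1}\,dt=\frac{C}{|\mb n|^{N}}\cdot\frac{|\mb n|^{-2s}}{s}$ — wait, that has the wrong $s$-dependence; instead I would use $G_{t,N}(\mb n)\le C t^{-N/2}$ on $(|\mb n|^2,\infty)$ giving $C\int_{|\mb n|^2}^\infty t^{-N/2-s-1}\,dt=\frac{C}{N/2+s}|\mb n|^{-N-2s}=\frac{2C}{N+2s}|\mb n|^{-N-2s}$, and on $(0,|\mb n|^2)$ use instead the bound $G_{t,N}(\mb n)\le C\,t\,|\mb n|^{-N-2}$ coming from \eqref{eq:H-infty} with $\mb m=\mb 0$ (since $G_{t,N}(\mb 0)-G_{t,N}(\mb n)$ is controlled and $G_{t,N}(\mb 0)\le Ct^{-N/2}\le C|\mb n|^{-N}$ on this range... ) — more cleanly, one notes $G_{t,N}(\mb n)\le C t |\mb n|^{-N-2+?}$; the point is to produce the factor $\frac{1}{1-s}$ from $\int_0^{|\mb n|^2} t^{\,1}\cdot t^{-s-1}\,dt=\int_0^{|\mb n|^2}t^{-s}\,dt=\frac{|\mb n|^{2-2s}}{1-s}$. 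Combining the two pieces yields exactly the stated bound with the explicit $s$-dependent constant $\frac{1}{|\Gamma(-s)|}\bigl(\frac{1}{1-s}+\frac{2}{N+2s}\bigr)$; the explicit tracking of constants in $s$ is what makes parts iii) and iv) work, so I would be careful to keep every constant independent of $s$.

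For part iii) and iv), the idea is to use the limits $\frac{1}{|\Gamma(-s)|}\to 0$ as $s\to 0^+$ and $\frac{1}{|\Gamma(-s)|}\to 0$ as $s\to 1^-$ — no: as $s\to1^-$, $\Gamma(-s)\to\Gamma(-1)=\infty$ so $1/|\Gamma(-s)|\to 0$, while $\Gamma(-s)\sim -1/s$ as $s\to 0^+$ so $1/|\Gamma(-s)|\sim s\to 0$. For iii), I would split $(-\Delta_N)^s f(\mb n)$ using \eqref{eq:def-frac}, \eqref{eq:Wt-I}: write $\frac{1}{\Gamma(-s)}\int_0^\infty(W_tf(\mb n)-f(\mb n))t^{-s-1}\,dt$, split at $t=1$; on $(1,\infty)$ the bound from ii)-type estimates shows this piece is $O(1/|\Gamma(-s)|)\cdot\|f\|_{\ell_0}\to 0$; on $(0,1)$ use $\frac{1}{\Gamma(-s)}\int_0^1 t^{-s-1}\,dt\cdot(\cdots)$ — the key identity is $\frac{-s}{\Gamma(1-s)}\int_0^1 t^{-s-1}\,dt\to$ something, combined with $W_tf\to f$; more precisely I would add and subtract $\frac{1}{\Gamma(-s)}\bigl(\int_0^1 t^{-s-1}dt\bigr)(W_1 f(\mb n)-f(\mb n))$ or use dominated convergence against the probability-like normalization. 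The cleanest route for iii) is: $(-\Delta_N)^s f=\frac{s}{\Gamma(1-s)}\int_0^\infty(f(\mb n)-W_tf(\mb n))t^{-s-1}\,dt$ and, since $s\int_0^\infty g(t)t^{-s-1}\,dt\to g(0^+)$ for suitable $g$ with $g(0^+)=0$ here while the normalization... — honestly the mechanism is that the $s$ in front kills everything and one must identify which scale survives; I expect this delicate limit interchange (justifying dominated convergence uniformly in $s$ near the endpoint, using the $s$-explicit bound from ii) plus \eqref{eq:cota-l2}) to be the main obstacle. For iv), the analogous statement: as $s\to 1^-$, $(W_tf(\mb n)-f(\mb n))/t\to \frac{d}{dt}W_tf(\mb n)|_{t=0}=\Delta_N f(\mb n)$, and $\frac{1}{\Gamma(-s)}\int_0^\infty(W_tf-f)t^{-s-1}\,dt=\frac{1}{\Gamma(-s)}\int_0^\infty\frac{W_tf-f}{t}t^{-s}\,dt$ with $\frac{1}{\Gamma(-s)}\int_0^\infty(\cdots)t^{-s}dt$ concentrating near $t=0$ as $s\to1^-$; again dominated convergence with the uniform-in-$s$ bounds, together with $f\in\ell^\infty$ guaranteeing $|W_tf(\mb n)-f(\mb n)|\le C t$ (from \eqref{eq:diff2-I}-type estimates or directly from the heat equation), finishes it. I would present iii) and iv) as two short arguments once the uniform bounds are in place, flagging the endpoint limit-interchange as the one point requiring genuine care.
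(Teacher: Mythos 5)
Your plan for i) and the splitting of \eqref{eq:kernel-frac} at $t=|\mb{n}|^2$ in ii) matches the paper's strategy, but two of your specific steps do not hold up. In ii), the needed near-region estimate is $G_{t,N}(\mb{n})\,t^{-1}\le C|\mb{n}|^{-N-2}$, and you propose to extract it from \eqref{eq:H-infty} with $\mb{m}=\mb{0}$; that inequality only gives $|G_{t,N}(\mb{n})-G_{t,N}(\mb{0})|\le C|\mb{n}|^{2}t^{-N/2-1}$, which is not of the required form (you yourself leave the exponent as ``$-N-2+?$''). The correct source, and the one the paper uses, is \eqref{eq:AM-GM-I} combined with \eqref{eq:bound-I} for $\alpha=1/N$, which yields $G_{t,N}(\mb{n})\le \left(G_{t,1}(|\mb{n}|/N)\right)^N\le C\,t\,|\mb{n}|^{-N-2}$ and then produces the factor $1/(1-s)$ from $\int_0^{|\mb{n}|^2}t^{-s}\,dt$. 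This part is fixable with tools already in the paper, but as written it is not a proof. (Also note that part i) is asserted for $f\in\ell_{-s}(\mathbb{Z}^N)$, not $\ell^2$, so one must first check that $W_tf$ and the defining integral make sense on that class; the paper does this with the large-order asymptotics \eqref{eq:asym-I}, and your sketch skips it.)

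The genuine gap is part iii). You assert that the piece of $\frac{1}{\Gamma(-s)}\int_0^\infty(W_tf-f)t^{-s-1}dt$ over $(1,\infty)$ is $O(1/|\Gamma(-s)|)$ and hence tends to $0$; followed literally, your argument would give $\lim_{s\to 0^+}(-\Delta_N)^sf(\mb{n})=0$, which contradicts the statement. The term $-f(\mb{n})\int_1^\infty t^{-s-1}dt=-f(\mb{n})/s$ is \emph{not} uniformly bounded in $s$: it is exactly the divergence that compensates $1/|\Gamma(-s)|\sim s$, since $s\,|\Gamma(-s)|=\Gamma(1-s)\to 1$, and this is where the limit $f(\mb{n})$ comes from. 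Equivalently, in your Tauberian phrasing the measure $s\,t^{-s-1}dt$ escapes to $t=\infty$ as $s\to 0^+$ (so the relevant limit is the value of $f(\mb{n})-W_tf(\mb{n})$ at $t=+\infty$, not at $t=0^+$ as you wrote), and you would additionally need to know that $W_tf(\mb{n})\to 0$ as $t\to\infty$ for $f\in\ell_0(\mathbb{Z}^N)$. Your proposal to add and subtract $\frac{1}{\Gamma(-s)}\bigl(\int_0^1 t^{-s-1}dt\bigr)(W_1f(\mb{n})-f(\mb{n}))$ is also vacuous, since $\int_0^1 t^{-s-1}dt$ diverges. The paper instead works with the kernel form: it shows $N_1=\sum_{\mb{k}\ne\mb{n}}\mathcal{K}_s(\mb{n}-\mb{k})\to 1$ (using $\sum_{\mb{k}\ne\mb{n}}G_{t,N}(\mb{n}-\mb{k})=1-G_{t,N}(\mb{0})$, the bound $G_{t,N}(\mb{0})\le Ct^{-N/2}$, and $s|\Gamma(-s)|=\Gamma(1-s)$) while $\sum_{\mb{k}\ne\mb{n}}\mathcal{K}_s(\mb{n}-\mb{k})|f(\mb{k})|\to 0$. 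For iv) your concentration-at-$t=0$ heuristic is sound in spirit and genuinely different from the paper (which computes $\mathcal{K}_s(\mb{e}_i)\to 1$ explicitly and kills the far part of the kernel), but it still needs the unproved ingredient $\lim_{t\to0^+}(W_tf(\mb{n})-f(\mb{n}))/t=\Delta_Nf(\mb{n})$ for merely bounded $f$, together with an approximate-identity argument for the weight $(1-s)t^{-s}dt$; as a plan it is acceptable, but iii) as proposed would fail.
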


\begin{proof}
First, we observe that $W_t f$ is well defined on the spaces $\ell_{-s}(\mathbb{Z}^N)$ for $0\le s \le 1$. Effectively, for $M>0$ big enough, with $t$ fix, we have
\[
\sum_{|\mb{k}|>M} G_{t,N}(\mb{k})f(\mb{n}-\mb{k})
\le C \|f\|_{\ell_{-s}(\mathbb{Z}^N)}\sup_{|\mb{k}|>M}G_{t,N}(\mb{k})(1+|\mb{n}-\mb{k}|)^{N+2s}.
\]
From \eqref{eq:AM-GM-I} and the asymptotic expansion \eqref{eq:asym-I}, we deduce that
\begin{multline*}
\sup_{|\mb{k}|>M}G_{t,N}(\mb{k})(1+|\mb{n}-\mb{k}|)^{N+2s}\\\le C
e^{-2Nt}N^{N/2}\sup_{{|\mb{k}|>M}}
\frac{(Net)^{|\mb{k}|}(1+|\mb{n}|+|\mb{k}|)^{N+2s}}{|\mb{k}|^{|\mb{k}|+N/2}}
\le C_{t,M,|\mb{n}|,s,N}
\end{multline*}
and we conclude.

To prove i) we use \eqref{eq:Wt-I} to get
\begin{align*}
(-\Delta_N)^s f(\mb{n})&=\frac{1}{\Gamma(-s)}\int_{0}^{\infty} \sum_{\mb{k}\not= \mb{n}}G_{t,N}(\mb{n}-\mb{k})(f(\mb{k})-f(\mb{n}))\frac{dt}{t^{s+1}}
\\&=\sum_{\mb{k}\not= \mb{n}}\mathcal{K}_s(\mb{n}-\mb{k})(f(\mb{n})-f(\mb{k}))
\end{align*}
where $\mathcal{K}_s$ is given by \eqref{eq:kernel-frac}. The proof will be completed justifying the application of Fubini theorem in the second identity. To do this we need the upper bound in ii), so we proceed with its proof. 

From \eqref{eq:AM-GM-I} and \eqref{eq:bound-I} with $\alpha=1/N$, we obtain that
\[
G_{t,N}(\mb{n})t^{-1}\le \frac{C}{|\mb{n}|^{N+2}}.
\]
Then, with \eqref{eq:size-G-t}, we deduce that
\begin{align*}
\mathcal{K}_s(\mb{n})&\le \frac{C}{|\Gamma(-s)|} \left(\frac{1}{|\mb{n}|^{N+2}}\int_{0}^{|\mb{n}|^2}t^{-s}\, dt+\int_{|\mb{n}|^2}^{\infty}t^{-N/2-s-1}\, dt\right)\\&= \frac{C}{|\mb{n}|^{N+2s}}\frac{1}{|\Gamma(-s)|}\left(\frac{1}{1-s}+\frac{2}{N+2s}\right).
\end{align*}
The positivity of the kernel $\mathcal{K}_s$ is clear. 

Let us justify the interchange of the sum and the integral. First, we apply \eqref{eq:bound-kernel-frac} and Tonelli's theorem in the term
\[
\int_{0}^{\infty}\sum_{\mb{k}\not= \mb{n}}G_{t,N}(\mb{n}-\mb{k})|f(\mb{k})|\frac{dt}{t^{s+1}}
\]
to have that it is bounded by
\[
\sum_{\mb{k}\not = \mb{n}}\frac{|f(\mb{k})|}{|\mb{n}-\mb{k}|^{N+2s}}
\]
and this term is finite for each $\mb{n}$ because $f\in \ell_{-s}(\mathbb{Z}^N)$. Similarly for the term
\[
|f(\mb{n})|
\int_{0}^{\infty}\sum_{\mb{k}\not= \mb{n}}G_{t,N}(\mb{n}-\mb{k})\frac{dt}{t^{s+1}},
\]
we obtain the bound
\[
|f(\mb{n})|\sum_{\mb{k}\not= \mb{n}}\frac{1}{|\mb{n}-\mb{k}|^{N+2s}}
\]
and this last sum is finite.

To prove iii) we check that
\[
\lim_{s\to 0^+}N_1=1\qquad\text{ and }\qquad \lim_{s\to 0^+}N_2=0
\]
where
\[
N_1=\sum_{\mb{k}\not= \mb{n}}\mathcal{K}_s(\mb{n}-\mb{k})
\qquad
\text{ and }
\qquad
N_2=\sum_{\mb{k}\not= \mb{n}}\mathcal{K}_s(\mb{n}-\mb{k})|f(\mb{k})|.
\]
We start with $N_2$. From \eqref{eq:bound-kernel-frac} and \eqref{eq:size-G-t}, we have
\[
N_2\le \frac{C}{|\Gamma(-s)|}\left(\frac{1}{1-s}+\frac{2}{N+2s}\right)
\sum_{\mb{k}\not= \mb{n}}\frac{|f(\mb{k})|}{|\mb{n}-\mb{k}|^{N+2s}}.
\]
Now, using that $f\in \ell_0(\mathbb{Z}^N)$, applying dominated convergence, we deduce that for each $\mb{n}$ it is verified that 
\[
\sum_{\mb{k}\not= \mb{n}}\frac{|f(\mb{k})|}{|\mb{n}-\mb{k}|^{N+2s}}\le C\|f\|_{\ell_0(\mathbb{Z}^N)}
\]
and then $\lim_{s\to 0^+}N_2=0$. To treat $N_1$, we consider the decomposition $N_1=N_{1,1}+N_{1,2}$ where
\[
N_{1,1}=\frac{1}{|\Gamma(-s)|}\sum_{\mb{k}\not= \mb{n}}\int_{0}^{1}G_{t,N}(\mb{n}-\mb{k})\frac{dt}{t^{s+1}}
\]
and $N_{1,2}$ is the same sum but with the integral on the interval $[1,\infty)$. To analyze $N_{1,1}$ we apply \eqref{eq:AM-GM-I} and \eqref{eq:bound-I} with $\alpha=1/(2N)$ and we consider $0<s<1/2$. In this way,
\[
N_{1,1}\le \frac{C}{|\Gamma(-s)|}\sum_{\mb{k}\not= \mb{n}}\frac{1}{|\mb{n}-\mb{k}|^{N+1}}\int_{0}^{1}t^{-1/2-s}\,dt\le
                                \frac{C}{|\Gamma(-s)|(1-2s)}
\]
and $\lim_{s\to 0^+}N_{1,1}=0$. Now, using that
\[
\sum_{\mb{k}\not= \mb{n}}G_{t,N}(\mb{n}-\mb{k})=1-G_{t,N}(\mb{0}),
\]
we obtain that
\[
N_{1,2}=\frac{1}{|\Gamma(-s)|}\left(\frac{1}{s}-\int_{1}^{\infty}G_{t,N}(\mb{0})\frac{dt}{t^{s+1}}\right).
\]
With the bound \eqref{eq:size-G-t} we get
\[
\int_{1}^{\infty}G_{t,N}(\mb{0})\frac{dt}{t^{s+1}}\le C \int_{1}^{\infty}t^{-N/2-s-1}\, dt=\frac{C}{N+2s}
\]
and, with the identity $|\Gamma(-s)|s=\Gamma(1-s)$ we conclude that $\lim_{s\to 0^+}N_{1,2}=1$.

To prove iv) we start observing
\begin{multline*}
(-\Delta_N)^{s}f(\mb{n})=\sum_{k=1}^{N}(\mathcal{K}_s(\mb{e}_k))(f(\mb{n})-f(\mb{n}-\mb{e}_k))
+\mathcal{K}_s(-\mb{e}_k))(f(\mb{n})-f(\mb{n}+\mb{e}_k)))\\+\sum_{\begin{smallmatrix}
                                 \mb{k}\in \mathbb{Z}^N \\
                                 \mb{k}\notin \{\mb{0},\pm \mb{e}_1,\dots,\pm \mb{e}_N\}
                               \end{smallmatrix}}\mathcal{K}_s(\mb{k})(f(\mb{n})-f(\mb{n}-\mb{k})):=M_1+M_2.
\end{multline*}
Taking into account that $\mathcal{K}_s(-\mb{n})=\mathcal{K}_s(\mb{n})$, we have
\[
M_1=-\sum_{k=1}^{N}\mathcal{K}_s(\mb{e}_k)\Delta_{N,i}f(\mb{n}).
\]
Now, using that for any continuous and bounded function on $[0,\infty)$,
\[
\lim_{s\to 1^{-}}\frac{1}{\Gamma(1-s)}\int_{0}^{\infty}e^{-t}f(t)\frac{dt}{t^s}=f(0)
\]
(the identity is clear for polynomials and with the hypotheses on $f$ the general result is obtained), taking the function
\[
f(z)=\frac{2I_1(z)}{z}(I_0(z))^{N-1},
\]
we deduce that
\begin{align*}
\lim_{s\to 1^{-}}\mathcal{K}_s(\mb{e}_i)&=\lim_{s\to 1^{-}}\frac{1}{\Gamma(1-s)}\int_{0}^{\infty}e^{-2Nt}\frac{I_1(2t)}{t}(I_0(2t))^{N-1}\frac{dt}{t^s}
\\&=\lim_{s\to 1^{-}}\frac{(2N)^{s-1}}{\Gamma(1-s)}\int_{0}^{\infty}e^{-w}f\left(\frac{w}{N}\right)\frac{dw}{w^s}=f(0)=1.
\end{align*}
Then,
\[
\lim_{s\to 1^{-}}M_1=-\Delta_N f(\mb{n}).
\]
Finally, let us check that $\lim_{s\to 1^{-}}M_2=0$. By using that $G_{t,N}(\mb{n})t^{-3/2}\le C |\mb{n}|^{-N-3}$ (which can be deduced from \eqref{eq:AM-GM-I} and \eqref{eq:bound-I} with $\alpha=3/(2N)$) and proceeding as in the proof of \eqref{eq:bound-kernel-frac}, we can obtain the bound
\[
\mathcal{K}_s(\mb{n})\le \frac{C}{|\mb{n}|^{N+2s}}\frac{1}{|\Gamma(-s)|}\left(\frac{1}{3-2s}+\frac{1}{N+2s}\right).
\]
Then
\[
M_2\le \frac{C}{|\Gamma(-s)|}\|f\|_{\ell^\infty(\mathbb{Z}^N)}\sum_{\mb{k}\not= \mb{0}}\frac{1}{|\mb{k}|^{N+2s}}\le  \frac{C}{|\Gamma(-s)|}\|f\|_{\ell^\infty(\mathbb{Z}^N)}
\]
and $\lim_{s\to 1^{-}}M_2=0$.
\end{proof}

To conclude this section, we present maximum and comparison principles for the fractional powers which are obvious consequences of the positivity of the kernel $\mathcal{K}_s$.

\begin{thm}
  Let $N\ge 1$ and $0<s<1$.
  \begin{enumerate}
  \item[i)] (Maximum principle) Let $f\in \ell^2(\mathbb{Z}^N)$ be such that $f \ge 0$ and $f(\mb{n}_0) = 0$ for some $\mb{n}_0\in \mathbb{Z}^N$. Then
\[
(-\Delta_N)^sf(\mb{n}_0)\le 0.
\]
Moreover, $(-\Delta_N)^sf(\mb{n}_0)= 0$ only if $f(\mb{n}) = 0$ for all $\mb{n} \in \mathbb{Z}^N$.
  \item[ii)] (Comparison principle) Let $f,g\in \ell^2(\mathbb{Z}^N)$ be such that $f \ge g$ and $f(\mb{n}_0) = g(\mb{n}_0)$ for some $\mb{n}_0\in \mathbb{Z}^N$. Then
\[
(-\Delta_N)^sf(\mb{n}_0)\le (-\Delta_N)^sg(\mb{n}_0).
\]
Moreover, $(-\Delta_N)^sf(\mb{n}_0)= (-\Delta_N)^sg(\mb{n}_0)$ only if $f(\mb{n}) = g(\mb{n})$ for all $\mb{n}\in\mathbb{Z}^N$.
  \end{enumerate}
\end{thm}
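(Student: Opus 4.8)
The plan is to derive both statements directly from the pointwise series representation in part (i) of Theorem~\ref{thm:frac-elem} together with the strict positivity of the kernel $\mathcal{K}_s$ established in part (ii). First I would check that $\ell^2(\mathbb{Z}^N)\subset \ell_{-s}(\mathbb{Z}^N)$, so that the representation actually applies to the functions in the hypotheses: by the Cauchy--Schwarz inequality,
\[
\sum_{\mb{n}\in\mathbb{Z}^N}|f(\mb{n})|(1+|\mb{n}|)^{-2s-N}\le \|f\|_{\ell^2(\mathbb{Z}^N)}\Big(\sum_{\mb{n}\in\mathbb{Z}^N}(1+|\mb{n}|)^{-2(2s+N)}\Big)^{1/2}<\infty,
\]
since $2(2s+N)>N$ for $N\ge 1$ and $0<s<1$. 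Hence for every $f\in\ell^2(\mathbb{Z}^N)$ one has the absolutely convergent representation $(-\Delta_N)^s f(\mb{n})=\sum_{\mb{k}\not=\mb{n}}\mathcal{K}_s(\mb{n}-\mb{k})\,(f(\mb{n})-f(\mb{k}))$, with $\mathcal{K}_s(\mb{m})>0$ for all $\mb{m}\not=\mb{0}$.

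For the maximum principle I would evaluate this formula at $\mb{n}_0$. Since $f(\mb{n}_0)=0$ and $f\ge 0$, each summand equals $-\mathcal{K}_s(\mb{n}_0-\mb{k})f(\mb{k})\le 0$, so $(-\Delta_N)^sf(\mb{n}_0)\le 0$. If this value is $0$, then, the series being a sum of nonpositive terms, every term must vanish; strict positivity of $\mathcal{K}_s$ on $\mathbb{Z}^N\setminus\{\mb{0}\}$ then forces $f(\mb{k})=0$ for all $\mb{k}\not=\mb{n}_0$, and together with $f(\mb{n}_0)=0$ this gives $f\equiv 0$.

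The comparison principle follows by linearity of $(-\Delta_N)^s$, which is clear from \eqref{eq:def-frac}: applying the maximum principle to $h=f-g\in\ell^2(\mathbb{Z}^N)$, which satisfies $h\ge 0$ and $h(\mb{n}_0)=0$, yields $(-\Delta_N)^s f(\mb{n}_0)-(-\Delta_N)^s g(\mb{n}_0)=(-\Delta_N)^sh(\mb{n}_0)\le 0$, with equality exactly when $h\equiv 0$, i.e.\ $f\equiv g$.

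There is no genuine obstacle here; the substantive work — the pointwise formula and the positivity (indeed strict positivity) of $\mathcal{K}_s$ — is already contained in Theorem~\ref{thm:frac-elem}. The only points that deserve a word of care are the inclusion $\ell^2(\mathbb{Z}^N)\subset\ell_{-s}(\mathbb{Z}^N)$ needed to invoke that representation, the sign bookkeeping coming from $\Gamma(-s)<0$ for $0<s<1$ (which is precisely what makes $\mathcal{K}_s$ positive in the normalization used), and the step from "a sum of nonpositive terms vanishes" to "each term vanishes", which is legitimate because the series converges absolutely.
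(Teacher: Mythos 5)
Your argument is correct and is exactly the route the paper intends: the paper dismisses these principles as ``obvious consequences of the positivity of the kernel $\mathcal{K}_s$,'' and you simply spell out that argument via the pointwise representation of Theorem \ref{thm:frac-elem}, including the useful (and in the paper implicit) check that $\ell^2(\mathbb{Z}^N)\subset\ell_{-s}(\mathbb{Z}^N)$ so the representation applies. Nothing further is needed.
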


\subsection{The fractional powers of the Laplacian on the H\"older classes}
The target of this subsection is the analysis of the behaviour of $(-\Delta_N)^s$ in the H\"older classes and it is contained in the next result.

\begin{thm}
  Let $N\ge 1$, $k\ge 0$, $0<\alpha\le 1$, $0<s<1$, and $f\in \ell_{-s}(\mathbb{Z}^N)$.
  \begin{enumerate}
  \item[i)] If $f\in C^{k,\alpha}(\mathbb{Z}^N)$ and $2s<\alpha$ then $(-\Delta_N)^{s}f\in C^{k,\alpha-2s}(\mathbb{Z}^N)$ and
      \[
      [(-\Delta_N)^{s}f]_{C^{k,\alpha-2s}(\mathbb{Z}^N)}\le C [f]_{C^{k,\alpha}(\mathbb{Z}^N)}.
      \]
 \item[ii)] If $f\in C^{k+1,\alpha}(\mathbb{Z}^N)$ and $\alpha<2s<\alpha+1$ then $(-\Delta_N)^{s}f\in C^{k,\alpha-2s+1}(\mathbb{Z}^N)$ and
      \[
      [(-\Delta_N)^{s}f]_{C^{k,\alpha-2s+1}(\mathbb{Z}^N)}\le C [f]_{C^{k+1,\alpha}(\mathbb{Z}^N)}.
      \]
  \end{enumerate}
\end{thm}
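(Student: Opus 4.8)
The plan is to follow the scheme of the proof of Theorem~\ref{thm:Schauder}. The starting point is Theorem~\ref{thm:frac-elem}\,i): for $h\in\ell_{-s}(\mathbb{Z}^N)$,
\[
(-\Delta_N)^s h(\mb{n})=\sum_{\mb{j}\neq\mb{0}}\mathcal{K}_s(\mb{j})\bigl(h(\mb{n})-h(\mb{n}-\mb{j})\bigr)=A_0\,h(\mb{n})-(\mathcal{K}_s\ast h)(\mb{n}),
\]
where $A_0=\sum_{\mb{j}\neq\mb{0}}\mathcal{K}_s(\mb{j})$, and all series converge absolutely because $\mathcal{K}_s(\mb{j})\le C|\mb{j}|^{-N-2s}$ by \eqref{eq:bound-kernel-frac} and $2s>0$. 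From this representation $(-\Delta_N)^s$ commutes with each $\delta_i^+$ on $\ell_{-s}(\mathbb{Z}^N)$, and since a unit translation changes the weight $(1+|\mb{n}|)^{-N-2s}$ by a bounded factor, $\delta_i^+$ maps $\ell_{-s}(\mathbb{Z}^N)$ into itself. Consequently every order-$k$ difference of an $f\in C^{k,\alpha}(\mathbb{Z}^N)\cap\ell_{-s}(\mathbb{Z}^N)$ lies in $C^{0,\alpha}(\mathbb{Z}^N)\cap\ell_{-s}(\mathbb{Z}^N)$, and every order-$k$ difference of an $f\in C^{k+1,\alpha}(\mathbb{Z}^N)\cap\ell_{-s}(\mathbb{Z}^N)$ lies in $C^{1,\alpha}(\mathbb{Z}^N)\cap\ell_{-s}(\mathbb{Z}^N)$; so it suffices to prove i) and ii) for $k=0$.

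For i) with $k=0$ I would fix $\mb{n}\neq\mb{m}$, put $\rho=|\mb{n}-\mb{m}|\ge1$, and write
\[
(-\Delta_N)^s f(\mb{n})-(-\Delta_N)^s f(\mb{m})=\sum_{\mb{j}\neq\mb{0}}\mathcal{K}_s(\mb{j})\bigl[(f(\mb{n})-f(\mb{n}-\mb{j}))-(f(\mb{m})-f(\mb{m}-\mb{j}))\bigr],
\]
then split the sum at $|\mb{j}|=\rho$. For $0<|\mb{j}|\le\rho$ I bound the bracket by $2[f]_{C^{0,\alpha}(\mathbb{Z}^N)}|\mb{j}|^{\alpha}$ and sum, using $\sum_{0<|\mb{j}|\le\rho}|\mb{j}|^{\alpha-N-2s}\le C\rho^{\alpha-2s}$ (valid since $2s<\alpha\le1$). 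For $|\mb{j}|>\rho$ I rewrite the bracket as $(f(\mb{n})-f(\mb{m}))-(f(\mb{n}-\mb{j})-f(\mb{m}-\mb{j}))$, bound it by $2[f]_{C^{0,\alpha}(\mathbb{Z}^N)}\rho^{\alpha}$, and use $\sum_{|\mb{j}|>\rho}|\mb{j}|^{-N-2s}\le C\rho^{-2s}$. Both pieces are $\le C[f]_{C^{0,\alpha}(\mathbb{Z}^N)}\rho^{\alpha-2s}$, which is i); note that only the size of $\mathcal{K}_s$ is used.

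For ii) with $k=0$ the new ingredient is a pair of discrete difference estimates for $g\in C^{1,\alpha}(\mathbb{Z}^N)$ and $\mb{j}\neq\mb{0}$: writing $\mb{d}=\mb{n}-\mb{m}$, $\rho=|\mb{d}|$ and $w(\mb{y})=g(\mb{n}+\mb{y})-g(\mb{m}+\mb{y})$,
\[
\bigl|2g(\mb{p})-g(\mb{p}+\mb{j})-g(\mb{p}-\mb{j})\bigr|\le C[g]_{C^{1,\alpha}(\mathbb{Z}^N)}|\mb{j}|^{1+\alpha},
\quad
\bigl|2w(\mb{0})-w(\mb{j})-w(-\mb{j})\bigr|\le C[g]_{C^{1,\alpha}(\mathbb{Z}^N)}\,\rho\,|\mb{j}|^{\alpha}.
\]
Each follows by expressing the two $g$-increments involved as telescoping sums of $\pm\delta_i^+g$ along a monotone lattice path and pairing the shifted and unshifted telescopes term by term, so that every matched pair is a difference of a single $\delta_i^+g$ at two points at distance $|\mb{j}|$ (first estimate) or $\rho$ (second estimate); for the second estimate one must telescope along the $\mb{d}$-direction, over the $\rho$ unit steps of $\mb{d}$, so that the $\rho$ resulting terms are each $\le[g]_{C^{1,\alpha}(\mathbb{Z}^N)}|\mb{j}|^{\alpha}$. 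Granting this, I symmetrize the representation via $\mathcal{K}_s(-\mb{j})=\mathcal{K}_s(\mb{j})$ (from \eqref{eq:par}),
\[
(-\Delta_N)^s g(\mb{n})=\frac12\sum_{\mb{j}\neq\mb{0}}\mathcal{K}_s(\mb{j})\bigl(2g(\mb{n})-g(\mb{n}+\mb{j})-g(\mb{n}-\mb{j})\bigr),
\]
subtract the values at $\mb{n}$ and $\mb{m}$, and split at $|\mb{j}|=\rho$: the first difference estimate together with $\sum_{0<|\mb{j}|\le\rho}|\mb{j}|^{1+\alpha-N-2s}\le C\rho^{1+\alpha-2s}$ (valid since $\alpha<2s<\alpha+1$) handles $0<|\mb{j}|\le\rho$, and the second together with $\sum_{|\mb{j}|>\rho}|\mb{j}|^{\alpha-N-2s}\le C\rho^{\alpha-2s}$ (valid since $\alpha<2s$) handles $|\mb{j}|>\rho$; both contributions are $\le C[g]_{C^{1,\alpha}(\mathbb{Z}^N)}\rho^{\alpha-2s+1}$, which is ii).

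Absolute convergence of all the series and the elementary sum estimates $\sum_{1\le m\le\rho}m^{\beta}\le C\rho^{\beta+1}$ for $-1<\beta<0$ and $\sum_{m>\rho}m^{\beta}\le C\rho^{\beta+1}$ for $\beta<-1$ present no difficulty. The main obstacle is the verification of the two discrete difference estimates for ii), and especially of the second one: the telescoping has to be organised so as to produce exactly the homogeneity $\rho\,|\mb{j}|^{\alpha}$, since a cruder bound on that double difference yields $\rho^{\alpha}|\mb{j}|$, which for $|\mb{j}|>\rho$ exceeds $\rho\,|\mb{j}|^{\alpha}$ and does not close the estimate.
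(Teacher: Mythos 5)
Your argument is correct, and it splits naturally into two halves when compared with the paper. For part i), and for the reduction to $k=0$ via $\delta_i^+(-\Delta_N)^s=(-\Delta_N)^s\delta_i^+$, you follow essentially the same route as the paper: split the sum at $|\mb{j}|\sim|\mb{n}-\mb{m}|$, bound the bracket by $C[f]_{C^{0,\alpha}(\mathbb{Z}^N)}|\mb{j}|^{\alpha}$ in the near range and by $C[f]_{C^{0,\alpha}(\mathbb{Z}^N)}|\mb{n}-\mb{m}|^{\alpha}$ in the far range, using only the size bound \eqref{eq:bound-kernel-frac}. For part ii) your route is genuinely different. The paper factorizes $(-\Delta_N)^s=(-\Delta_N)^{s-1/2}\sum_{i=1}^{N}\overline{R}_i\delta_i^{+}$ and then combines the H\"older boundedness of the transforms $\overline{R}_i$ (Corollary \ref{cor:Riesz-adjoint}) with part i) when $s>1/2$, with the Schauder estimate of Theorem \ref{thm:Schauder} when $s<1/2$, and with Corollary \ref{cor:Riesz-adjoint} alone when $s=1/2$, quoting \cite{CRSTV} for $N=1$. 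You instead symmetrize the pointwise representation using $\mathcal{K}_s(-\mb{j})=\mathcal{K}_s(\mb{j})$ (legitimate, since the series converges absolutely for $f\in\ell_{-s}(\mathbb{Z}^N)$) and run a direct second-difference argument whose only inputs are \eqref{eq:bound-kernel-frac} and your two discrete estimates; both of these are sound, the second because each of the $|\mb{n}-\mb{m}|$ unit steps of the telescope along $\mb{n}-\mb{m}$ applied to $\phi(\mb{x})=2g(\mb{x})-g(\mb{x}+\mb{j})-g(\mb{x}-\mb{j})$ produces $2\delta_i^+g(\mb{y})-\delta_i^+g(\mb{y}+\mb{j})-\delta_i^+g(\mb{y}-\mb{j})$, which is at most $2[g]_{C^{1,\alpha}(\mathbb{Z}^N)}|\mb{j}|^{\alpha}$, and the exponent bookkeeping under $\alpha<2s<\alpha+1$ is correct in both ranges. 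What each approach buys: the paper's proof is short because it recycles mapping properties already established (Riesz transforms on H\"older classes, Schauder estimates), at the price of the factorization and of the hypotheses under which those auxiliary results were proved (notably the restriction $0<\alpha<1/2$ in the Riesz-transform H\"older bounds, and the separate citation for $N=1$); your proof is self-contained, needs no smoothness of $\mathcal{K}_s$ and no Riesz-transform theory, and treats all $N\ge1$ and the whole range $\alpha<2s<\alpha+1$ uniformly.
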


\begin{proof}
We will prove both results for $k=0$ because the other cases can be deduced by using that $\delta_i^{+}(-\Delta_N)^s=(-\Delta_N)^s\delta_i^{+}$.

To prove i), we consider the identity
\[
|(-\Delta_N)^{s}f(\mb{n})-(-\Delta_N)^{s}f(\mb{m})|=|B_1+B_2|,
\]
where
\[
B_1=\sum_{\begin{smallmatrix}
            \mb{k}\in \mathbb{Z}^N \\
            1\le |\mb{k}|\le |\mb{n}-\mb{m}|
          \end{smallmatrix}}
          (f(\mb{n})-f(\mb{n}+\mb{k})-f(\mb{m})+f(\mb{m}+\mb{k}))\mathcal{K}_s(\mb{k})
\]
and $B_2$ is the sum on $|\mb{k}|> |\mb{n}-\mb{m}|$. First, from the estimate
\begin{align*}
 |f(\mb{n})-f(\mb{n}+\mb{k})-f(\mb{m})+f(\mb{m}+\mb{k})|&\le |f(\mb{n})-f(\mb{n}+\mb{k})|+|f(\mb{m})-f(\mb{m}+\mb{k})|
\\& \le C\|f\|_{C^{0,\alpha}(\mathbb{Z}^N)}|\mb{k}|^{\alpha}
 \end{align*}
and \eqref{eq:bound-kernel-frac}, we get
\[
B_1\le C [f]_{C^{0,\alpha}(\mathbb{Z}^N)}\sum_{\begin{smallmatrix}
            \mb{k}\in \mathbb{Z}^N \\
            1\le |\mb{k}|\le |\mb{n}-\mb{m}|
          \end{smallmatrix}}|\mb{k}|^{\alpha-N-2s}\le C [f]_{C^{0,\alpha}(\mathbb{Z}^N)}|\mb{n}-\mb{m}|^{\alpha-2s}
\]
and this is appropriate for our purpose. To treat $B_2$, we use that
 \begin{align*}
 |f(\mb{n})-f(\mb{n}+\mb{k})-f(\mb{m})+f(\mb{m}+\mb{k})|&\le |f(\mb{n})-f(\mb{m})|+|f(\mb{n}+\mb{k})-f(\mb{m}+\mb{k})|
\\& \le C\|f\|_{C^{0,\alpha}(\mathbb{Z}^N)}|\mb{n}-\mb{m}|^{\alpha}
 \end{align*}
 and the bound \eqref{eq:bound-kernel-frac}, to get
 \[
 B_2\le [f]_{C^{0,\alpha}(\mathbb{Z}^N)}|\mb{n}-\mb{m}|^{\alpha}\sum_{\begin{smallmatrix}
            \mb{k}\in \mathbb{Z}^N \\
            |\mb{k}|> |\mb{n}-\mb{m}|
          \end{smallmatrix}}|\mb{k}|^{-N-2s}\le C [f]_{C^{0,\alpha}(\mathbb{Z}^N)}|\mb{n}-\mb{m}|^{\alpha-2s}
 \]
 and the proof of i) is completed.

 We consider $N\ge 2$, the case $N=1$ can be found in \cite[Theorem 1.5]{CRSTV}. We consider the identity
 \begin{align*}
 (-\Delta_N)^{s}&=(-\Delta_N)^{s-1/2}(-\Delta_N)^{-1/2}(-\Delta_N)
 =(-\Delta_N)^{s-1/2}(-\Delta_N)^{-1/2}\sum_{i=1}^{N}\delta_i^{-}\delta_i^{+}
 \\&=(-\Delta_N)^{s-1/2}\sum_{i=1}^{N}\overline{R}_i\delta_i^{+}
 \end{align*}
 and we distinguish two cases. For $s-1/2>0$ it is verified that $0<\alpha-2(s-1/2)<1$ and, applying i) and Corollary \ref{cor:Riesz-adjoint}, we deduce that
 \begin{align*}
 [(-\Delta_N)^sf]_{C^{0,\alpha-2s+1}(\mathbb{Z}^N)}&\le C \sum_{i=1}^{N}[\overline{R}_i\delta_i^{+}f]_{C^{0,\alpha}(\mathbb{Z}^N)}
\\&\le C \sum_{i=1}^{N}[\delta_i^{+}f]_{C^{0,\alpha}(\mathbb{Z}^N)}\le C [f]_{C^{1,\alpha}(\mathbb{Z}^N)}.
 \end{align*}
 In the case $s-1/2<0$ we have $0<\alpha+2(-s+1/2)$ and we can apply a) in Theorem \ref{thm:Schauder} and Corollary \ref{cor:Riesz-adjoint} to conclude
 \begin{align*}
 [(-\Delta_N)^sf]_{C^{0,\alpha-2s+1}(\mathbb{Z}^N)}&
 =[(-\Delta_N)^{-(-s+1/2)}\sum_{i=1}^{N}\overline{R}_i\delta_i^{+}f]_{C^{0,\alpha-2s+1}(\mathbb{Z}^N)}\\&\le C \sum_{i=1}^{N}[\overline{R}_i\delta_i^{+}f]_{C^{0,\alpha}(\mathbb{Z}^N)}
\\&\le C \sum_{i=1}^{N}[\delta_i^{+}f]_{C^{0,\alpha}(\mathbb{Z}^N)}\le C [f]_{C^{1,\alpha}(\mathbb{Z}^N)}.
 \end{align*}
 For $s=1/2$ the result is a direct consequence of Corollary \ref{cor:Riesz-adjoint}.
\end{proof}

\section{The discrete square functions}
\label{sec:gk}
In this section we will prove weighted inequalities for the discrete Littlewood-Paley-Stein $g_k$-square functions associated to the heat semigroup and for the $\mathfrak{g}_k$-square functions associated to the Poisson semigroup. Moreover, we conclude with a result about Laplace type multipliers.

\subsection{The $g_k$-square functions associated to the heat semigroup}
The discrete $g_k$-square functions associated to the heat semigroup are defined by
\[
g_k(f)(\mb{n})=\left(\int_{0}^{\infty}t^{2k-1}\left|\frac{\partial^k}{\partial t^k}W_tf(\mb{n})\right|^2\, dt\right)^{1/2}, \qquad k=1,2,\dots.
\]
Note that taking the Banach space $\mathbb{B}_k=L^2((0,\infty),t^{2k-1}\, dt)$, we have
\[
g_kf(\mb{n})=\|P_{t,k}f(\mb{n})\|_{\mathbb{B}_k}
\]
with
\[
P_{t,k}f(\mb{n})=\sum_{\mb{k}\in \mathbb{Z}^N}f(\mb{k})\mathcal{P}_{t,k}(\mb{n}-\mb{k})
\]
and
\begin{align*}
\mathcal{P}_{t,k}(\mb{n})&=\frac{d^k}{dt^k}G_{t,N}(\mb{n})=
\frac{d^k}{dt^k}\int_{[-1/2,1/2]^N}e^{-4t\sum_{i=1}^{N}\sin^2(\pi x_i)}e^{-2\pi i\langle x, \mb{n}\rangle}\, dt
\\&=\int_{[-1/2,1/2]^N}F_{t,k}(x)e^{-2\pi i\langle x, \mb{n}\rangle}\, dt,
\end{align*}
being
\[
F_{t,k}(x)=(-4)^k \left(\sum_{i=1}^{N}\sin^2(\pi x_i)\right)^k e^{-4t\sum_{i=1}^{N}\sin^2(\pi x_i)}.
\]

Our main result about the $g_k$-square functions is the following one.
\begin{thm}
\label{thm:equiv-gk-Heat}
  Let $N\ge 1$, $1<p<\infty$, $w\in A_p(\mathbb{Z}^N)$, and $k\in \mathbb{N}$. Then the inequalities
  \begin{equation}
  \label{eq:equiv-gk-Heat}
  C_1\|f\|_{\ell^p(\mathbb{Z}^N,w)}\le \|g_k(f)\|_{\ell^p(\mathbb{Z}^N,w)}\le C_2 \|f\|_{\ell^p(\mathbb{Z}^N,w)}, \qquad f\in \ell^2(\mathbb{Z}^N)\cap \ell^{p}(\mathbb{Z}^N,w),
  \end{equation}
  hold.
\end{thm}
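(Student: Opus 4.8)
The plan is to realize $g_k$ as the norm of a vector-valued Calderón--Zygmund operator with values in the Hilbert space $\mathbb{B}_k = L^2((0,\infty), t^{2k-1}\,dt)$, and then invoke the weighted Calderón--Zygmund theory recalled in Section~\ref{sec:semi} (from \cite{RRT,RT}) together with a dual/polarization argument for the lower bound.

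\textbf{Upper bound.} First I would check the $\ell^2(\mathbb{Z}^N)$ boundedness of $g_k$ by Plancherel. Since $\mathcal{F}(P_{t,k}f)(x) = F_{t,k}(x)\mathcal{F}f(x)$ with $F_{t,k}(x) = (-4)^k\big(\sum_i \sin^2(\pi x_i)\big)^k e^{-4t\sum_i\sin^2(\pi x_i)}$, writing $\lambda(x) = 4\sum_{i=1}^N\sin^2(\pi x_i)\ge 0$ we get
\[
\|g_k f\|_{\ell^2(\mathbb{Z}^N)}^2 = \int_{[-1/2,1/2]^N}|\mathcal{F}f(x)|^2\left(\int_0^\infty t^{2k-1}\lambda(x)^{2k}e^{-2t\lambda(x)}\,dt\right)dx,
\]
and the inner integral equals a constant $c_k = 2^{-2k}\Gamma(2k)$ independent of $x$ (by the substitution $u = 2t\lambda(x)$, valid for $\lambda(x)>0$; the set $\lambda(x)=0$ has measure zero). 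Hence $\|g_k f\|_{\ell^2} = c_k^{1/2}\|f\|_{\ell^2}$. Next I would verify that $\mathcal{P}_{t,k}$ is, as a function of $\mb{n}$ with values in $\mathbb{B}_k$, a standard kernel in the sense of \eqref{eq:size-CZ} and \eqref{eq:smooth-CZ}, i.e.
\[
\|\mathcal{P}_{\cdot,k}(\mb{n})\|_{\mathbb{B}_k}\le \frac{C}{|\mb{n}|^N},\qquad
\|\mathcal{P}_{\cdot,k}(\mb{n}) - \mathcal{P}_{\cdot,k}(\mb{n}+\mb{m})\|_{\mathbb{B}_k}\le C\frac{|\mb{m}|}{|\mb{n}|^{N+1}},\quad |\mb{n}|>2|\mb{m}|.
\]
For the size estimate, note $\mathcal{P}_{t,k}(\mb{n}) = \frac{d^k}{dt^k}G_{t,N}(\mb{n})$, and since $\frac{d}{dt}\big(e^{-2t}I_{n}(2t)\big) = e^{-2t}\big(I_{n-1}(2t) - 2I_n(2t) + I_{n+1}(2t)\big)$ (the one-dimensional heat equation), a $k$-fold product-rule expansion expresses $\mathcal{P}_{t,k}(\mb{n})$ as a finite sum of products of $G_{t,1}$-factors and second-difference factors of the Bessel functions; the inequalities \eqref{eq:diff2-I}, \eqref{eq:diff-3} and the uniform bound \eqref{eq:bound-I} then give a pointwise bound of the form $|\mathcal{P}_{t,k}(\mb{n})|\le C\,t^{-N/2-k}\big(1 + |\mb{n}|^2/t\big)^{-M}$ for any $M$, from which the $\mathbb{B}_k$-norm estimates follow by splitting the $t$-integral at $t\sim|\mb{n}|^2$ exactly as in the proofs of \eqref{eq:bound-frac}, \eqref{eq:diff-K}. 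The smoothness estimate is obtained the same way, applying $\delta_i^+$ to one Bessel factor (gaining a factor $(|n_i|+1)/t$ via \eqref{eq:diff-I}, as in Lemma~\ref{lem:diff2-G}) and combining with the size bounds; the hypothesis $|\mb{n}|>2|\mb{m}|$ lets one sum the telescoping differences along a lattice path from $\mb{n}$ to $\mb{n}+\mb{m}$. Then \eqref{eq:equiv-gk-Heat} (upper inequality) follows from the weighted Calderón--Zygmund theorem, the $\ell^2$ bound serving as the required $\ell^q$-boundedness with $q=2$.

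\textbf{Lower bound.} Here I would use the standard polarization identity for the heat semigroup. Because $\frac{\partial}{\partial t}W_t = \Delta_N W_t$ is self-adjoint and $W_t\to 0$ as $t\to\infty$ on $\ell^2$ (again by Plancherel, since $e^{-4t\lambda(x)}\to 0$ a.e.), one has for $f,h\in\ell^2(\mathbb{Z}^N)$ the identity
\[
\langle f,h\rangle = \frac{1}{c_k}\int_0^\infty t^{2k-1}\Big\langle \tfrac{\partial^k}{\partial t^k}W_{t}f,\ \tfrac{\partial^k}{\partial t^k}W_{t}h\Big\rangle\,\frac{dt}{1}
\]
up to the constant $c_k$ computed above (integrating by parts / using the spectral calculus: $\int_0^\infty t^{2k-1}(\lambda^k e^{-t\lambda})^2\,dt = c_k$ is independent of $\lambda>0$). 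Applying Cauchy--Schwarz in $\mathbb{B}_k$ pointwise and then Hölder in $\ell^p(w)$--$\ell^{p'}(w^{-1/(p-1)})$ gives
\[
|\langle f,h\rangle|\le \frac{1}{c_k}\sum_{\mb{n}}g_k(f)(\mb{n})\,g_k(h)(\mb{n})\le \frac{1}{c_k}\|g_k(f)\|_{\ell^p(\mathbb{Z}^N,w)}\|g_k(h)\|_{\ell^{p'}(\mathbb{Z}^N,w')},
\]
where $w' = w^{-1/(p-1)}\in A_{p'}(\mathbb{Z}^N)$. By the already-established upper bound applied to $h$ in $\ell^{p'}(w')$, the second factor is $\le C\|h\|_{\ell^{p'}(w')}$; taking the supremum over $h$ with $\|h\|_{\ell^{p'}(w')}\le 1$ and using the duality $(\ell^p(w))^* = \ell^{p'}(w')$ yields $\|f\|_{\ell^p(w)}\le C\|g_k(f)\|_{\ell^p(w)}$ for $f$ in the dense class $\ell^2\cap\ell^p(w)$.

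\textbf{Main obstacle.} The routine-looking but genuinely technical part is establishing the kernel bounds \eqref{eq:size-CZ}--\eqref{eq:smooth-CZ} for $\mathcal{P}_{\cdot,k}(\mb{n})$ \emph{uniformly in} the order of differentiation $k$ — specifically, controlling the $k$-fold $t$-derivative of $\prod_k e^{-2t}I_{n_k}(2t)$ and showing the resulting finite sum of products of higher-order Bessel differences obeys a clean bound like $t^{-N/2-k}(1+|\mb{n}|^2/t)^{-M}$. This requires iterating \eqref{eq:diff-I}, \eqref{eq:diff2-I}, \eqref{eq:diff-3} (or a general $m$-th difference estimate proved the same way via the continued-fraction/$f_\alpha$ machinery of Section~2) and carefully tracking the powers of $t$ and of $(|\mb{n}|+1)$. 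The $\mathbb{B}_k$-integration over $t$, once the pointwise bound is in hand, is then exactly the split-at-$t\sim|\mb{n}|^2$ computation already used repeatedly in the paper, so the conceptual content is entirely in the Bessel-function estimates; everything downstream (the CZ theorem, the $\ell^2$ Plancherel step, the duality for the lower bound) is standard.
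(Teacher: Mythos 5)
Your overall skeleton (the Plancherel identity on $\ell^2$, a vector-valued Calder\'on--Zygmund argument for the upper bound, and polarization plus weighted duality for the lower bound) coincides with the paper's for $k=1$ and for the lower bound; the paper's second reduction lemma is exactly your polarization/duality step, and its Lemma~\ref{lem:L2-gk} is your $\ell^2$ computation. The divergence is in how $k\ge 2$ is handled, and that is where your argument has a genuine gap. You propose to verify the standard-kernel estimates for $\mathcal{P}_{t,k}=\frac{d^k}{dt^k}G_{t,N}$ directly for every $k$. Since $\frac{d^k}{dt^k}G_{t,N}=\Delta_N^k G_{t,N}$, the size estimate already requires controlling $2k$-th order alternating differences of $e^{-2t}I_a(2t)$, and the smoothness estimate requires $(2k+1)$-th order ones; the paper only proves the second- and third-difference bounds \eqref{eq:diff2-I} and \eqref{eq:diff-3}, and the latter already needs a nontrivial continued-fraction argument. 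Your claimed pointwise bound $|\mathcal{P}_{t,k}(\mb{n})|\le C\,t^{-N/2-k}(1+|\mb{n}|^2/t)^{-M}$ for arbitrary $M$ is asserted rather than derived, and it is precisely the crux: without a proved general $m$-th difference lemma (with the correct powers of $t$ and of $|\mb{n}|$, uniformly in the order), the CZ kernel bounds for $k\ge 2$ are not established. You flag this as the ``main obstacle,'' but flagging it does not close it.

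The paper avoids this obstacle entirely: it proves the kernel estimates only for $k=1$ (Lemmas~\ref{lem:CZ-gk} and \ref{lem:diff3-G}, which is why only third differences of Bessel functions are ever needed), and then passes from $k$ to $k+1$ by an induction that tensors the bounded operator $P_{t,1}$ with $P_{s,k}$ via Krivine's theorem on vector-valued extensions of positive results in $\ell^p_{\mathbb{B}}(\mathbb{Z}^N,w)$, using the Chapman--Kolmogorov identity to identify $\overline{P}_{t,1}\circ P_{s,k}$ with the $(k+1)$-st derivative of the semigroup and an explicit beta-type integral to recover $g_{k+1}$. If you want to keep your direct route, you must actually prove a general higher-order difference estimate for $I_a$ (or an equivalent subordination/analyticity bound for $\partial_t^k$ of the discrete heat kernel) with uniform control of the constants; otherwise, the cleanest repair is to restrict your kernel computation to $k=1$ and adopt an induction of the paper's type for larger $k$.
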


To obtain the proof of the equivalence in the previous theorem, we start proving the boundedness of the $g_k$-square functions from $\ell^2(\mathbb{Z}^N)$ into itself.

\begin{lem}
\label{lem:L2-gk}
  For $N\ge 1$ and $k\in \mathbb{N}$, it is verified that
  \begin{equation}
  \label{eq:L2-gk}
  \|g_k(f)\|_{\ell^2(\mathbb{Z}^N)}^2=\frac{\Gamma(2k)}{2^{2k}}\|f\|_{\ell^2(\mathbb{Z}^N)}^2.
  \end{equation}
\end{lem}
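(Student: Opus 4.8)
The natural approach is to compute the $\ell^2$ norm on the Fourier side using Plancherel's identity. First I would observe that $g_k(f)(\mb{n}) = \|P_{t,k}f(\mb{n})\|_{\mathbb{B}_k}$ and write out the square of the $\ell^2$-norm:
\[
\|g_k(f)\|_{\ell^2(\mathbb{Z}^N)}^2 = \sum_{\mb{n}\in\mathbb{Z}^N}\int_0^\infty t^{2k-1}\left|\frac{\partial^k}{\partial t^k}W_tf(\mb{n})\right|^2\,dt.
\]
By Tonelli's theorem I would interchange the sum and the integral, and for each fixed $t$ apply Parseval's identity to the inner sum. Since $\frac{\partial^k}{\partial t^k}W_tf$ has Fourier transform $F_{t,k}(x)\,\mathcal{F}f(x)$ where $F_{t,k}(x) = (-4)^k\big(\sum_{i=1}^N\sin^2(\pi x_i)\big)^k e^{-4t\sum_{i=1}^N\sin^2(\pi x_i)}$, this gives
\[
\|g_k(f)\|_{\ell^2(\mathbb{Z}^N)}^2 = \int_{[-1/2,1/2]^N}|\mathcal{F}f(x)|^2\left(\int_0^\infty t^{2k-1}|F_{t,k}(x)|^2\,dt\right)dx.
\]

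The core of the computation is then the inner $t$-integral. Writing $\lambda = \lambda(x) = 4\sum_{i=1}^N\sin^2(\pi x_i)\ge 0$, we have $|F_{t,k}(x)|^2 = \lambda^{2k}e^{-2t\lambda}$, so
\[
\int_0^\infty t^{2k-1}\lambda^{2k}e^{-2t\lambda}\,dt = \lambda^{2k}\cdot\frac{\Gamma(2k)}{(2\lambda)^{2k}} = \frac{\Gamma(2k)}{2^{2k}},
\]
using the change of variable $u = 2t\lambda$ and the definition of the Gamma function; crucially this value is independent of $\lambda$ (and hence of $x$). For $\lambda = 0$ (i.e. $x = 0$) the integrand vanishes, which is consistent since that is a null set anyway. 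Substituting back and applying Parseval's identity once more yields
\[
\|g_k(f)\|_{\ell^2(\mathbb{Z}^N)}^2 = \frac{\Gamma(2k)}{2^{2k}}\int_{[-1/2,1/2]^N}|\mathcal{F}f(x)|^2\,dx = \frac{\Gamma(2k)}{2^{2k}}\|f\|_{\ell^2(\mathbb{Z}^N)}^2,
\]
which is the claimed identity.

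There is essentially no serious obstacle here; the only point requiring a word of care is the justification of differentiating under the integral sign to get $\mathcal{F}\big(\frac{\partial^k}{\partial t^k}W_tf\big) = F_{t,k}\cdot\mathcal{F}f$ and of the Tonelli/Parseval interchanges, all of which are routine given the rapid (exponential in $t$, for $\lambda$ bounded below away from a neighborhood of the origin, and polynomial control near the origin) decay of the relevant integrands and the fact that $F_{t,k}(x)$ is bounded on the compact torus for each $t>0$. For $f\in\ell^2(\mathbb{Z}^N)$ with $\mathcal{F}f\in L^2$, everything is finite and the manipulations are legitimate.
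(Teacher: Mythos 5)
Your proposal is correct and follows essentially the same route as the paper: Plancherel/Parseval in the spatial variable, Tonelli to swap the sum and the $t$-integral, and the explicit evaluation $\int_0^\infty t^{2k-1}\lambda^{2k}e^{-2t\lambda}\,dt=\Gamma(2k)/2^{2k}$, independent of $\lambda$. No substantive differences from the paper's argument.
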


\begin{proof}
For each sequence in $\ell^2(\mathbb{Z}^N)$, we have
\[
P_{t,k}f=\mathcal{F}^{-1}(F_{t,k}\mathcal{F}f).
\]
Then
\[
\|g_k(f)\|_{\ell^2(\mathbb{Z}^N)}^2=\sum_{\mb{k}\in \mathbb{Z}^N}\|P_{t,k}f(\mb{k})\|_{\mathbb{B}_k}^2=
\left\|\sum_{\mb{k}\in \mathbb{Z}^N}(P_{t,k}f(\mb{k}))^2\right\|_{\mathbb{B}_k}^2.
\]
By applying Parseval's identity,
\[
\sum_{\mb{k}\in \mathbb{Z}^N}(P_{t,k}f(\mb{k}))^2=\int_{[-1/2,1/2]^N}(F_{t,k}(x))^2(\mathcal{F}f(x))^2\, dx
\]
and
\begin{align*}
\|g_k(f)\|_{\ell^2(\mathbb{Z}^N)}^2&=\int_{0}^{\infty}t^{2k-1}\int_{[-1/2,1/2]^N}(F_{t,k}(x))^2(\mathcal{F}f(x))^2\, dx\, dt\\&=\int_{[-1/2,1/2]^N}(\mathcal{F}f(x))^2\int_{0}^{\infty}t^{2k-1}(F_{t,k}(x))^2\, dt\, dx
\\&=\frac{\Gamma(2k)}{2^{2k}}\int_{[-1/2,1/2]^N}(\mathcal{F}f(x))^2\, dx=\frac{\Gamma(2k)}{2^{2k}}\|f\|_{\ell^2(\mathbb{Z}^N)}^2.\qedhere
\end{align*}
\end{proof}

Now, we proceed with two reductions to simplify the proof of Theorem \ref{thm:equiv-gk-Heat}.

\begin{lem}
Let $N\ge 1$, $1<p<\infty$, $w\in A_p(\mathbb{Z}^N)$, and $k\in \mathbb{N}$. Then the inequality
\[
\|g_k(f)\|_{\ell^p(\mathbb{Z}^N,w)}\le C \|f\|_{\ell^p(\mathbb{Z}^N,w)}, \qquad f\in \ell^2(\mathbb{Z}^N)\cap \ell^p(\mathbb{Z}^N, w)
\]
implies
\[
\|f\|_{\ell^p(\mathbb{Z}^N,w)}\le C \|g_k(f)\|_{\ell^p(\mathbb{Z}^N,w)},\qquad f\in \ell^2(\mathbb{Z}^N)\cap \ell^p(\mathbb{Z}^N).
\]
\end{lem}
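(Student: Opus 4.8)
The strategy is the classical polarization/duality argument that converts the upper bound for $g_k$ into the lower bound, via the $\ell^2$-identity of Lemma \ref{lem:L2-gk} and the reproducing formula for the heat semigroup. The key algebraic fact is an integration-by-parts identity that expresses the inner product $\langle f,h\rangle$ (up to a constant) as an integral in $t$ of a product of a $k$-th $t$-derivative of $W_tf$ against a $k$-th $t$-derivative of $W_th$; this is the discrete analogue of the standard Littlewood--Paley polarization. I would first prove
\[
\sum_{\mb{n}\in\mathbb{Z}^N}f(\mb{n})\overline{h(\mb{n})}=\frac{2^{2k}}{\Gamma(2k)}\int_{0}^{\infty}t^{2k-1}\sum_{\mb{n}\in\mathbb{Z}^N}\partial_t^kW_tf(\mb{n})\,\overline{\partial_t^kW_th(\mb{n})}\,dt,
\]
for $f,h\in\ell^2(\mathbb{Z}^N)$, by passing to the Fourier side: $\mathcal{F}(\partial_t^kW_tf)=F_{t,k}\mathcal{F}f$ with $F_{t,k}(x)=(-4)^k(\sum_i\sin^2(\pi x_i))^k e^{-4t\sum_i\sin^2(\pi x_i)}$, so the right-hand side becomes $\int_{[-1/2,1/2]^N}\mathcal{F}f\,\overline{\mathcal{F}h}\big(\tfrac{2^{2k}}{\Gamma(2k)}\int_0^\infty t^{2k-1}F_{t,k}(x)^2\,dt\big)dx$, and the inner $t$-integral is identically $1$ by the Gamma integral (this is exactly the computation already done in the proof of Lemma \ref{lem:L2-gk}).

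Granting that identity, the deduction is routine. Fix $f\in\ell^2(\mathbb{Z}^N)\cap\ell^p(\mathbb{Z}^N,w)$. Let $h\in\ell^2(\mathbb{Z}^N)\cap\ell^{p'}(\mathbb{Z}^N,w^{-1/(p-1)})$ with $\|h\|_{\ell^{p'}(\mathbb{Z}^N,w')}\le 1$ be arbitrary (here $w'=w^{-1/(p-1)}$), noting that such $h$ with, say, finite support are dense enough to compute the norm of $f$ in $\ell^p(\mathbb{Z}^N,w)$ by duality, and that $w\in A_p(\mathbb{Z}^N)$ guarantees $w'\in A_{p'}(\mathbb{Z}^N)$. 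Applying the polarization identity and then the Cauchy--Schwarz inequality in $t$ pointwise in $\mb{n}$,
\[
\left|\sum_{\mb{n}}f(\mb{n})\overline{h(\mb{n})}\right|\le \frac{2^{2k}}{\Gamma(2k)}\sum_{\mb{n}}g_k(f)(\mb{n})\,g_k(h)(\mb{n}),
\]
and then by Hölder's inequality with exponents $p,p'$ and the weights $w^{1/p}$, $w^{-1/p}$,
\[
\left|\sum_{\mb{n}}f(\mb{n})\overline{h(\mb{n})}\right|\le \frac{2^{2k}}{\Gamma(2k)}\|g_k(f)\|_{\ell^p(\mathbb{Z}^N,w)}\,\|g_k(h)\|_{\ell^{p'}(\mathbb{Z}^N,w')}.
\]
Now invoke the assumed upper bound for $g_k$, applied in the dual space $\ell^{p'}(\mathbb{Z}^N,w')$ (legitimate since $w'\in A_{p'}(\mathbb{Z}^N)$ and $h\in\ell^2\cap\ell^{p'}(\mathbb{Z}^N,w')$): $\|g_k(h)\|_{\ell^{p'}(\mathbb{Z}^N,w')}\le C\|h\|_{\ell^{p'}(\mathbb{Z}^N,w')}\le C$. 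Taking the supremum over such $h$ yields $\|f\|_{\ell^p(\mathbb{Z}^N,w)}\le C\|g_k(f)\|_{\ell^p(\mathbb{Z}^N,w)}$, which is the claim.

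The main obstacle is purely technical bookkeeping rather than conceptual: one must justify (i) the interchange of the sum over $\mb{n}$ and the integral over $t$ in the polarization identity — handled by Parseval on $\ell^2$ and the finite value of the scalar integral $\int_0^\infty t^{2k-1}F_{t,k}(x)^2\,dt$, exactly as in Lemma \ref{lem:L2-gk} — and (ii) the density/duality step, i.e. that it suffices to test against $h$ in $\ell^2(\mathbb{Z}^N)\cap\ell^{p'}(\mathbb{Z}^N,w')$ with finite support, which is immediate because finitely supported sequences are dense in $\ell^{p'}(\mathbb{Z}^N,w')$ and the $\ell^p(\mathbb{Z}^N,w)$-norm is recovered by this restricted duality. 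The fact that $w\in A_p(\mathbb{Z}^N)\iff w^{-1/(p-1)}\in A_{p'}(\mathbb{Z}^N)$ is standard and lets the hypothesis be reused on the dual side. No new estimates on the kernel are needed for this lemma; the heavy kernel analysis is reserved for proving the upper bound itself elsewhere.
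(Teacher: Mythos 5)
Your proposal is correct and takes essentially the same route as the paper: polarize the $\ell^2$ identity of the lemma $\|g_k(f)\|_{\ell^2}^2=\tfrac{\Gamma(2k)}{2^{2k}}\|f\|_{\ell^2}^2$, bound the pairing by $\sum_{\mathbf{n}}g_k(f)(\mathbf{n})g_k(h)(\mathbf{n})$, and conclude by weighted duality, invoking the assumed boundedness at the dual exponent $p'$ with the dual weight $w^{-1/(p-1)}\in A_{p'}(\mathbb{Z}^N)$ --- exactly what the paper does implicitly by taking $h=w^{1/p}f_1$. Your explicit justification of the interchange of sum and integral and of the dual-side application of the hypothesis only fills in details the paper leaves to the reader.
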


\begin{proof}
Polarising the identity \eqref{eq:L2-gk}, we have
\[
\sum_{\mb{k}\in \mathbb{Z}^N}f(\mb{k})h(\mb{k})=\frac{2^{2k}}{\Gamma(2k)}\sum_{\mb{k}\in\mathbb{Z}^N}
\int_{0}^{\infty}t^{2k-1}\left(\frac{d^k}{dt^k}W_tf(\mb{k})\right)\left(\frac{d^k}{dt^k}W_th(\mb{k})\right)\, dt
\]
and
\[
\left|\sum_{\mb{k}\in \mathbb{Z}^N}f(\mb{k})h(\mb{k})\right|\le C \sum_{\mb{k}\in \mathbb{Z}^N}g_k(f)(\mb{k})g_k(h)(\mb{k}).
\]
In this way, taking $h(\mb{k})=w^{1/p}(\mb{k})f_1(\mb{k})$ and applying a duality argument the result follows.
\end{proof}

\begin{lem}
Let $N\ge 1$, $1<p<\infty$, and $w\in A_p(\mathbb{Z}^N)$. Then the inequality
\[
\|g_1(f)\|_{\ell^p(\mathbb{Z}^N,w)}\le C \|f\|_{\ell^p(\mathbb{Z}^N,w)},\qquad f\in \ell^2(\mathbb{Z}^N)\cap \ell^p(\mathbb{Z}^N),
\]
implies
\[
\|g_k(f)\|_{\ell^p(\mathbb{Z}^N,w)}\le C \|f\|_{\ell^p(\mathbb{Z}^N,w)}, \qquad k>1,\qquad f\in \ell^2(\mathbb{Z}^N)\cap \ell^p(\mathbb{Z}^N).
\]
\end{lem}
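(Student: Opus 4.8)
The goal is a standard subordination-type reduction: express the $k$-th derivative of the heat semigroup in terms of the first derivative, so that $g_k$ is dominated (after integration by parts / rescaling) by an average of $g_1$-type quantities. The natural identity to exploit is the semigroup relation $W_t = W_{t/2}W_{t/2}$, which lets one write $\frac{d^k}{dt^k}W_t f$ as an iterated derivative hitting two factors of the semigroup. Concretely, I would start from
\[
t^k \frac{d^k}{dt^k} W_t f(\mb{n}) = t^k \sum_{j} \binom{k}{j} \left(\frac{1}{2}\right)^k \left(\frac{d^{j}}{ds^{j}}W_s\right)\left(\frac{d^{k-j}}{dr^{r}}W_r\right)\Big|_{s=r=t/2} f(\mb{n}),
\]
and more usefully iterate the elementary fact $\frac{d}{dt}W_t = \Delta_N W_t$ together with the representation $t\frac{d}{dt}W_t f(\mb{n}) = t\frac{d}{dt}W_{t/2}(W_{t/2}f)(\mb{n})$, reducing a $g_k$ estimate to a $g_{k-1}$ estimate applied to $g_1$-truncated pieces. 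By induction on $k$ this collapses everything to the case $k=1$, which is the hypothesis of the lemma.

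The cleanest route I would follow is: first write $t^{2k-1}\big|\frac{\partial^k}{\partial t^k}W_t f(\mb n)\big|^2 = t^{2k-1}\big|\frac{\partial^k}{\partial t^k}W_{t/2}(W_{t/2}f)(\mb n)\big|^2$ and observe that $\frac{\partial^k}{\partial t^k}W_{t/2} = 2^{-k}\big(\frac{\partial^k}{\partial s^k}W_s\big)|_{s=t/2}$. Then, using that $\frac{\partial^{k}}{\partial s^{k}}W_s g = \frac{\partial}{\partial s}\big(\frac{\partial^{k-1}}{\partial s^{k-1}}W_s g\big)$ and that $\frac{\partial^{k-1}}{\partial s^{k-1}}W_s = W_{s/2}\circ \frac{\partial^{k-1}}{\partial s^{k-1}}W_{s/2}$ (again the semigroup property), one gets, after a change of variables $t = 2u$ in the $\mathbb B_k$-integral, a pointwise domination of $g_k(f)(\mb n)$ by a constant times $g_1\big(g_{k-1}(f)\big)$-type object — more precisely by $C\,\|(W_\cdot)\text{-truncation of } g_{k-1}f\|$. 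I would then invoke the $k=1$ bound on the "outer" $g_1$ applied to the auxiliary sequence built from the inner derivative, and the inductive hypothesis on the inner $g_{k-1}$, exactly as in the classical Stein-type argument (cf. \cite{Stein}, Chapter IV). The $\ell^2$ identity \eqref{eq:L2-gk} guarantees all auxiliary objects lie in $\ell^2(\mathbb Z^N)$ so the hypotheses of the already-proved cases are applicable, and the $A_p$ weight plays no role beyond being carried along passively through each application.

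The main obstacle is bookkeeping rather than anything deep: one must be careful that the auxiliary sequences to which $g_1$ (or $g_{k-1}$) are applied genuinely belong to $\ell^2(\mathbb Z^N)\cap \ell^p(\mathbb Z^N,w)$, and that the change of variables and the Minkowski / Cauchy–Schwarz steps used to peel off one derivative do not lose the correct power of $t$. Writing $\frac{\partial^k}{\partial t^k}W_t f = \frac{\partial}{\partial t}\big(\frac{\partial^{k-1}}{\partial t^{k-1}}W_t f\big)$ and then inserting $W_{t/2}$ via the semigroup property, the factor $t^{2k-1}$ must be split as $t^{2(k-1)-1}\cdot t^2$ and the extra $t^2$ absorbed into $\big|t\,\partial_t(\cdot)\big|^2$; keeping this balance straight through the induction is the only delicate point. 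Once the bookkeeping is arranged, the conclusion
\[
\|g_k(f)\|_{\ell^p(\mathbb Z^N,w)} \le C\,\|g_1(\text{inner})\|_{\ell^p(\mathbb Z^N,w)} \le C\,\|g_{k-1}(f)\|_{\ell^p(\mathbb Z^N,w)} \le C\,\|f\|_{\ell^p(\mathbb Z^N,w)}
\]
follows immediately by the inductive hypothesis and the assumed $k=1$ bound.
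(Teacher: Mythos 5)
Your skeleton (induction on $k$ plus the semigroup property to write the $(k+1)$-st time derivative as a first derivative of $W_t$ acting on a $k$-th derivative) matches the paper's, but there is a genuine gap exactly at the step where you ``invoke the $k=1$ bound on the outer $g_1$ applied to the auxiliary sequence built from the inner derivative.'' After the semigroup splitting, the object the outer $g_1$ acts on is not a scalar sequence: it is the whole family $\{\partial_s^k W_s f\}_{s>0}$, i.e.\ a $\mathbb{B}_k$-valued sequence, and what the induction requires is boundedness of $P_{t,1}$ on $\mathbb{B}_k$-valued data, from $\ell^p_{\mathbb{B}_k}(\mathbb{Z}^N,w)$ into $\ell^p_{\mathbb{B}_k\times\mathbb{B}_1}(\mathbb{Z}^N,w)$. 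This does not follow from the scalar hypothesis applied at each fixed $s$: writing $G_s(\mb{n})=g_1(\partial_s^k W_s f)(\mb{n})$, one has $g_{k+1}(f)(\mb{n})^2=c_k\int_0^\infty s^{2k-1}G_s(\mb{n})^2\,ds$, and to use $\|G_s\|_{\ell^p(w)}\le C\|\partial_s^k W_s f\|_{\ell^p(w)}$ you must interchange the $\ell^p(w)$-norm with the $s$-integral; Minkowski's inequality goes only one way (and only for $p\ge 2$), and even when it applies it leaves you with $\int_0^\infty s^{2k-1}\|\partial_s^k W_s f\|^2_{\ell^p(w)}\,ds$, a mixed norm with the integration orders swapped, which the inductive hypothesis $\|g_k(f)\|_{\ell^p(w)}\le C\|f\|_{\ell^p(w)}$ does not control. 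Likewise the pointwise domination you suggest, $g_k(f)(\mb{n})\le C\,g_1\bigl(g_{k-1}(f)\bigr)(\mb{n})$, is not available: $\partial_t W_t$ is not a positive operator, so $\bigl|\partial_t W_t h(\mb{n})\bigr|_{\mathbb{B}_{k-1}}$ cannot be compared with $\partial_t W_t\bigl(|h|_{\mathbb{B}_{k-1}}\bigr)(\mb{n})$, and no such pointwise bound is justified.

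The paper closes precisely this gap with a vector-valued extension theorem: since $\ell^p(\mathbb{Z}^N,w)$ and $\ell^p_{\mathbb{B}_1}(\mathbb{Z}^N,w)$ are Banach lattices and $\mathbb{B}_k$ is an $L^2$-space, Krivine's theorem (Lindenstrauss--Tzafriri, Theorem 1.f.14) upgrades the hypothesis on $P_{t,1}$ to boundedness of its $\mathbb{B}_k$-valued extension $\overline{P}_{t,1}$; composing with $P_{s,k}$ (bounded by the inductive hypothesis), using the Chapman--Kolmogorov identity to recognize $\partial_t W_t(\partial_s^k W_s f)$ as $\partial_u^{k+1}W_uf$ at $u=s+t$, and evaluating $\int_0^r t(r-t)^{2k-1}\,dt=r^{2k+1}/(2k(2k+1))$ identifies the composed norm with a constant multiple of $g_{k+1}(f)$. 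Without some such vector-valued ingredient (Krivine, or a $\mathbb{B}_k$-valued Calder\'on--Zygmund argument for $g_1$), your induction does not close, so the proposal as written is incomplete.
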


\begin{proof}
We use an induction argument to prove the result. Let us suppose that the operator $P_{t,k}$ is bounded from $\ell^p(\mathbb{Z}^N,w)$ into $\ell^p_{\mathbb{B}_k}(\mathbb{Z}^N,w)$. Taking $k=1$ and applying Krivine's theorem (see \cite[Theorem 1.f.14]{L-Z}), we deduce that the operator $\overline{P}_{t,1}: \ell^p_{\mathbb{B}_k}(\mathbb{Z}^N,w)\longrightarrow \ell^p_{\mathbb{B}_k\times \mathbb{B}_1}(\mathbb{Z}^N,w)$, given by
\[
\{f_s(n)\}_{s\ge 0}\longmapsto \{P_{t,1}f_s\}_{t,s\ge 0},
\]
is bounded. Moreover, $\overline{P}_{t,1}\circ P_{s,k}$ is a bounded operator from $\ell^p(\mathbb{Z}^N,w)$ into $\ell^p_{\mathbb{B}_k\times \mathbb{B}_1}(\mathbb{Z}^N,w)$. With the Chapman-Kolmogorov type identity
\[
\sum_{\mb{k}\in \mathbb{Z}^N}G_{t,N}(\mb{k})G_{s,N}(\mb{n}-\mb{k})=G_{t+s,N}(\mb{n}),
\]
which can be deduced from Neumann's identity \eqref{eq:Neumann}, we obtain that
\[
\frac{\partial}{\partial t}\left(W_t\left(\frac{\partial^k}{\partial s^k}W_sf\right)\right)=\left.\frac{\partial^{k+1}}{\partial u^{k+1}}W_u f\right|_{u=s+t},
\]
and using this we have
\begin{align*}
\left\|\overline{P}_{t,1}\circ P_{s,k}f\right\|_{\mathbb{B}_k\times \mathbb{B}_1}^2
&=\int_{0}^{\infty}\int_{0}^{\infty}ts^{2k-1}\left|\left.\frac{\partial^{k+1}}{\partial u^{k+1}}W_u f\right|_{u=s+t}\right|^2\,ds\, dt\\&=
\int_{0}^{\infty}\int_{t}^{\infty}t(r-t)^{2k-1}\left|\left.\frac{\partial^{k+1}}{\partial u^{k+1}}W_u f\right|_{u=r}\right|^2\,dr\, dt
\\&=\int_{0}^{\infty}\left|\frac{\partial^{k+1}}{\partial r^{k+1}}W_r f\right|^2\int_{0}^{r}t(r-t)^{2k-1}\,dt\, dr
\\&=\frac{1}{(2k+1)(2k)}\int_{0}^{\infty}r^{2k+1}\left|\frac{\partial^{k+1}}{\partial r^{k+1}}W_r f\right|^2\, dr
\\&=\frac{g_{k+1}(f)}{(2k+1)(2k)}
\end{align*}
and the proof is finished.
\end{proof}

After the reductions in the previous lemmas, Theorem \ref{thm:equiv-gk-Heat} will be a consequence of the following result.

\begin{thm}
  Let $N\ge 1$ and $w\in A_p$. Then the inequalities
  \[
  \|g_1(f)\|_{\ell^{p}(\mathbb{Z}^n,w)}\le C \|f\|_{\ell^{p}(\mathbb{Z}^n,w)}, \qquad 1<p<\infty, \qquad \ell^2(\mathbb{Z}^N)\cap \ell^p(\mathbb{Z}^N)
  \]
  and
  \[
  \|g_1(f)\|_{\ell^{1,\infty}(\mathbb{Z}^n,w)}\le C\|f\|_{\ell^{1}(\mathbb{Z}^n,w)},\qquad \ell^2(\mathbb{Z}^N)\cap \ell^1(\mathbb{Z}^N),
  \]
  hold.
\end{thm}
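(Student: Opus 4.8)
The strategy is to realize $g_1$ as a Calderón--Zygmund operator with values in the Hilbert space $\mathbb{B}_1=L^2((0,\infty),t\,dt)$, so that the general weighted theory quoted after \eqref{eq:smooth-CZ} (boundedness on $\ell^p(\mathbb{Z}^N,w)$ for $w\in A_p$, $1<p<\infty$, and weak $(1,1)$ for $w\in A_1$) applies directly. Concretely, writing $g_1(f)(\mb{n})=\|P_{t,1}f(\mb{n})\|_{\mathbb{B}_1}$ with kernel $\mathcal{P}_{t,1}(\mb{n})=\frac{d}{dt}G_{t,N}(\mb{n})$, I must check three things: (i) $g_1$ is bounded from $\ell^2(\mathbb{Z}^N)$ into itself; (ii) the vector-valued kernel $\mb{n}\mapsto \mathcal{P}_{t,1}(\mb{n})$ satisfies the size estimate \eqref{eq:size-CZ} in the $\mathbb{B}_1$-norm; and (iii) it satisfies the smoothness estimate \eqref{eq:smooth-CZ} in the $\mathbb{B}_1$-norm. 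Item (i) is already done: it is the case $k=1$ of Lemma \ref{lem:L2-gk}, which gives $\|g_1(f)\|_{\ell^2(\mathbb{Z}^N)}^2=\tfrac14\|f\|_{\ell^2(\mathbb{Z}^N)}^2$.

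For the size estimate (ii) I would bound $\big\|\tfrac{d}{dt}G_{t,N}(\mb{n})\big\|_{\mathbb{B}_1}^2=\int_0^\infty t\,|\mathcal{P}_{t,1}(\mb{n})|^2\,dt$. Since $\frac{d}{dt}G_{t,N}(\mb{n})=\Delta_N G_{t,N}(\mb{n})$ is a finite linear combination of heat kernels $G_{t,N}$ evaluated at nearby lattice points, the pointwise bounds already available do the job. For $\mb{n}\neq\mb{0}$ one splits $\int_0^\infty=\int_0^{|\mb{n}|^2}+\int_{|\mb{n}|^2}^\infty$. On the first piece use $|\mathcal{P}_{t,1}(\mb{n})|\le C\,G_{t,N}(\mb{n})/t\le C\,(G_{t,1}(|\mb{n}|/N))^N/t\le C\,|\mb{n}|^{-N-2}$, coming from \eqref{eq:AM-GM-I} and \eqref{eq:bound-I} with $\alpha=1/N$ (this is exactly the estimate used for $\mathcal{K}_s$ and $\mathcal{R}_i$); then $\int_0^{|\mb{n}|^2}t\cdot|\mb{n}|^{-2N-4}\,dt\le C\,|\mb{n}|^{-2N}$. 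On the second piece use $|\mathcal{P}_{t,1}(\mb{n})|\le C\,G_{t,N}(\mb{n})/t\le C\,t^{-N/2-1}$ from \eqref{eq:size-G-t}, giving $\int_{|\mb{n}|^2}^\infty t\cdot t^{-N-2}\,dt\le C\,|\mb{n}|^{-2N}$. Together these yield $\|\mathcal{P}_{t,1}(\mb{n})\|_{\mathbb{B}_1}\le C\,(|\mb{n}|+1)^{-N}$, which is \eqref{eq:size-CZ}.

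For the smoothness estimate (iii), fix $\mb{n},\mb{m}\in\mathbb{Z}^N$ with $|\mb{n}|>2|\mb{m}|$ and estimate $\big\|\mathcal{P}_{t,1}(\mb{n})-\mathcal{P}_{t,1}(\mb{n}+\mb{m})\big\|_{\mathbb{B}_1}$. Again writing $\mathcal{P}_{t,1}=\tfrac{d}{dt}G_{t,N}$ as a combination of translated heat kernels and commuting $\tfrac{d}{dt}$ with the translation, the quantity to control is a combination of terms $\delta_i^+(G_{t,N}(\mb{n}')-G_{t,N}(\mb{m}'))$ with $\mb{n}',\mb{m}'$ at bounded lattice distance from $\mb{n},\mb{n}+\mb{m}$ respectively; under $|\mb{n}|>2|\mb{m}|$ these still satisfy $|\mb{n}'|>2|\mb{n}'-\mb{m}'|$ with $|\mb{n}'-\mb{m}'|\sim|\mb{m}|$ and $|\mb{n}'|+|\mb{m}'|\sim|\mb{n}|+|\mb{n}+\mb{m}|$. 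Hence Lemma \ref{lem:diff2-G} together with \eqref{eq:bound-I} (with $\alpha=1/N$ and $\alpha=2/N$) gives, exactly as in the proof of \eqref{eq:smooth-Riesz}, the time-local bound $|\mathcal{P}_{t,1}(\mb{n})-\mathcal{P}_{t,1}(\mb{n}+\mb{m})|\le C\,|\mb{m}|\,(|\mb{n}|+|\mb{n}+\mb{m}|)^{-N-2}$ on $t\le(|\mb{n}|+|\mb{n}+\mb{m}|)^2$ and, from \eqref{eq:HH-infty}, the bound $C\,|\mb{m}|\big(t^{-N/2-2}+(|\mb{n}|+|\mb{n}+\mb{m}|)^2 t^{-N/2-3}\big)$ on the complementary range; multiplying by $t$, taking $L^2(dt)$ over each range and splitting at $t=(|\mb{n}|+|\mb{n}+\mb{m}|)^2$ yields $\|\mathcal{P}_{t,1}(\mb{n})-\mathcal{P}_{t,1}(\mb{n}+\mb{m})\|_{\mathbb{B}_1}\le C\,|\mb{m}|\,(|\mb{n}|+|\mb{n}+\mb{m}|)^{-N-1}$, which is \eqref{eq:smooth-CZ}.

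With (i), (ii), (iii) in hand, $g_1$ is a Calderón--Zygmund operator taking values in $\mathbb{B}_1$ in the sense defined before the statement (one should note that for $f$ with finite support and $\mb{n}\notin\supp f$ the representation $P_{t,1}f(\mb{n})=\mathcal{P}_{t,1}\ast f(\mb{n})$ holds, which is immediate from the convolution formula for $P_{t,1}$), so the quoted weighted Calderón--Zygmund theorem gives both claimed inequalities. \textbf{The main obstacle} is purely technical bookkeeping in step (iii): one has to track how differentiating in $t$ turns $G_{t,N}$ into a sum of translated heat kernels and verify that the geometric hypotheses $|\mb{n}|>2|\mb{m}|$ propagate (up to harmless constants) to each translated pair so that Lemma \ref{lem:diff2-G} and \eqref{eq:HH-infty} apply; the $\mathbb{B}_1$-integrations themselves are then the same two-range splittings already carried out for $\mathcal{R}_i$.
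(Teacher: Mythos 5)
Your overall strategy --- realizing $g_1$ as a Calder\'on--Zygmund operator with values in $\mathbb{B}_1$, using Lemma \ref{lem:L2-gk} for the $\ell^2$ bound and then verifying \eqref{eq:size-CZ} and \eqref{eq:smooth-CZ} for $\mathcal{P}_{t,1}$ --- is the same as the paper's, but both kernel estimates as you derive them have genuine gaps. For the size estimate, the pointwise bound $|\mathcal{P}_{t,1}(\mb{n})|\le C\,G_{t,N}(\mb{n})/t$ is false precisely in the regime $t\le|\mb{n}|^2$ where you invoke it: $\mathcal{P}_{t,1}(\mb{n})=\sum_i\Delta_{N,i}G_{t,N}(\mb{n})$ is a second difference, and by \eqref{eq:diff2-I} (compare the continuous analogue, where $\partial_t h_t(x)\approx \tfrac{x^2}{4t^2}h_t(x)$ for $|x|\gg\sqrt t$) the correct bound is $C\bigl(\tfrac1t+\tfrac{|\mb{n}|^2}{t^2}\bigr)\bigl(G_{t,1}\bigl(\tfrac{|\mb{n}|-1}{N}\bigr)\bigr)^N$; the factor $|\mb{n}|^2/t$ is not harmless when $|\mb{n}|\gg\sqrt t$, and bounding the translated kernels by absolute values loses the cancellation and gives no factor $1/t$ at all. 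The conclusion $\|\mathcal{P}_{t,1}(\mb{n})\|_{\mathbb{B}_1}\le C(|\mb{n}|+1)^{-N}$ is still true, but to reach it you must keep the $\bigl(\tfrac1t+\tfrac{|\mb{n}|^2}{t^2}\bigr)$ factor and compensate with \eqref{eq:bound-I} for a larger $\alpha$, as in Lemma \ref{lem:CZ-gk} where $\alpha=3/(2N)$ is used.

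The more serious gap is in step (iii). Writing $\Delta_{N,i}(G_{t,N}(\mb{n})-G_{t,N}(\mb{m}))$ as two first differences and estimating each via Lemma \ref{lem:diff2-G} discards the cancellation between them, and that loss cannot be repaired later: the resulting bound is $C|\mb{n}-\mb{m}|\bigl(\tfrac1t+\tfrac{R^2}{t^2}\bigr)\bigl(G_{t,1}(R/K)\bigr)^N$ with $R=|\mb{n}|+|\mb{m}|$, and its $\mathbb{B}_1$-norm over $(0,R^2)$ is of order $|\mb{n}-\mb{m}|\,R^{-N}$ no matter which $\alpha$ you take in \eqref{eq:bound-I}, since the integral is saturated at $t\sim R^2$, where the bound is $\sim|\mb{n}-\mb{m}|R^{-N-2}$ while $\bigl(\int_0^{R^2}t\,dt\bigr)^{1/2}\sim R^2$. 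In particular the phrase ``exactly as in the proof of \eqref{eq:smooth-Riesz}'' does not transfer: there the kernel is an $L^1(t^{-1/2}dt)$ integral, so the uniform-in-$t$ bound $|\mb{n}-\mb{m}|R^{-N-2}$ integrates to $|\mb{n}-\mb{m}|R^{-N-1}$, whereas in $L^2(t\,dt)$ the same input only yields $|\mb{n}-\mb{m}|R^{-N}$, which fails the H\"ormander-type condition \eqref{eq:smooth-CZ} (the sum over $|\mb{n}|>2|\mb{n}-\mb{m}|$ of $|\mb{n}-\mb{m}|\,|\mb{n}|^{-N}$ diverges). This is exactly why the paper proves the additional Lemma \ref{lem:diff3-G}, built on the third-order Bessel difference inequality \eqref{eq:diff-3}, whose bound $\mathfrak{H}_{t,N}$ gains an extra factor of order $R/t$ over $\mathcal{H}_{t,N}$; combined with \eqref{eq:bound-I} for $\alpha=5/(2N)$ and \eqref{eq:HHH-Infty} it gives the required $C|\mb{n}-\mb{m}|R^{-N-1}$. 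Without that extra cancellation (or an equivalent substitute) your smoothness estimate does not close, even though the rest of the architecture (the $\ell^2$ identity, the vector-valued Calder\'on--Zygmund machinery, and the convolution representation off the support) is correct.
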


To prove this theorem we check that $g_1$ is a Calder\'on-Zygmund operator. With Lemma \ref{lem:L2-gk}
 and the estimates in the following lemma this fact will be clear.

\begin{lem}
\label{lem:CZ-gk}
  Let $N\ge 1$, then inequalities
  \begin{equation}
  \label{eq:size-g}
  \|\mathcal{P}_{t,1}(\mb{n})\|_{\mathbb{B}_1}\le \frac{C}{(|\mb{n}|+1)^N}
  \end{equation}
  and
  \begin{equation}
  \label{eq:smooth-g}
  \|\mathcal{P}_{t,1}(\mb{n})-\mathcal{P}_{t,1}(\mb{m})\|_{\mathbb{B}_1}\le C\frac{|\mb{n}-\mb{m}|}{(|\mb{n}|+|\mb{m}|)^{N+1}},\qquad |\mb{n}|>2|\mb{n}-\mb{m}|
  \end{equation}
  hold.
\end{lem}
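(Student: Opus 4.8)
The plan is to reduce everything to pointwise estimates on $\mathcal{P}_{t,1}(\mb{n}) = \frac{d}{dt}G_{t,N}(\mb{n})$ and its discrete differences, after computing the $\mathbb{B}_1$-norm explicitly. Recall that $\mathbb{B}_1 = L^2((0,\infty), t\,dt)$, so for \eqref{eq:size-g} I must show $\int_0^\infty t\,|\partial_t G_{t,N}(\mb{n})|^2\,dt \le C(|\mb{n}|+1)^{-2N}$. The first step is to record the pointwise bound on $\partial_t G_{t,N}(\mb{n})$. Writing $G_{t,N}(\mb{n}) = \prod_{k=1}^N e^{-2t}I_{n_k}(2t)$ and differentiating (using $I_a' = \tfrac12(I_{a-1}+I_{a+1})$ together with \eqref{eq:par} and the reductions $G_{t,N}(\mb{n})=G_{t,N}(\mb{n}_+)$), one gets $\partial_t G_{t,N}(\mb{n})$ as a sum of products of modified Bessel functions whose total order differs from $|\mb{n}|$ by a bounded amount; combining \eqref{eq:diff-I}, \eqref{eq:diff2-I} and \eqref{eq:AM-GM-I} in the same spirit as the computation of $\delta_i^+ G_{t,N}$ in the proof of \eqref{eq:size-Riesz}, one obtains
\[
|\partial_t G_{t,N}(\mb{n})| \le \frac{C}{t}\Big(1 + \frac{|\mb{n}|^2}{t}\Big)\Big(G_{t,1}\big(\tfrac{|\mb{n}|}{N}\big)\Big)^N.
\]
Then I split $\int_0^\infty = \int_0^{|\mb{n}|^2} + \int_{|\mb{n}|^2}^\infty$: on the first piece I apply \eqref{eq:bound-I} with $\alpha = 1/N$ (and $\alpha=2/N$) to bound $(G_{t,1}(|\mb{n}|/N))^N$ by a negative power of $|\mb{n}|$, and on the second I apply \eqref{eq:bound-I} with $\alpha = -1/2$ to get the decay in $t$; each piece contributes $C|\mb{n}|^{-2N}$ after integration. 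For $|\mb{n}|$ bounded the bound $\|\mathcal{P}_{t,1}(\mb{n})\|_{\mathbb{B}_1}\le C$ follows from \eqref{eq:cota-l2}-type estimates together with the $\ell^2\to\ell^2$ boundedness recorded in Lemma \ref{lem:L2-gk}, or directly from the above pointwise bound with $|\mb{n}|$ replaced by $1$ in the large-$t$ regime and a crude bound $|\partial_t G_{t,N}(\mathbf 0)|\le C$ near $t=0$.

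For the smoothness estimate \eqref{eq:smooth-g} the strategy is identical, replacing $\partial_t G_{t,N}(\mb{n})$ by $\partial_t(G_{t,N}(\mb{n}) - G_{t,N}(\mb{m}))$ for $|\mb{n}| > 2|\mb{n}-\mb{m}|$. Here the key input is a mixed second-order estimate analogous to Lemma \ref{lem:diff2-G}: combining the telescoping identity \eqref{eq:rara} with \eqref{eq:diff-I}, \eqref{eq:diff2-I}, \eqref{eq:diff-3} one obtains, for such $\mb{n},\mb{m}$,
\[
\big|\partial_t\big(G_{t,N}(\mb{n}) - G_{t,N}(\mb{m})\big)\big| \le C\,|\mb{n}-\mb{m}|\Big(\frac{1}{t^2} + \frac{(|\mb{n}|+|\mb{m}|)^2}{t^3}\Big)\Big(G_{t,1}\big(\tfrac{|\mb{n}|+|\mb{m}|}{K}\big)\Big)^N,
\]
for suitable $K>1$, with a companion crude bound $|\partial_t(G_{t,N}(\mb{n})-G_{t,N}(\mb{m}))| \le C|\mb{n}-\mb{m}|(t^{-2} + (|\mb{n}|+|\mb{m}|)^2 t^{-3})t^{-N/2}$ in the spirit of \eqref{eq:HH-infty}. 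Then $\int_0^\infty t\,|\partial_t(G_{t,N}(\mb{n})-G_{t,N}(\mb{m}))|^2\,dt$ is split at $t = (|\mb{n}|+|\mb{m}|)^2$: on $(0,(|\mb{n}|+|\mb{m}|)^2)$ use \eqref{eq:bound-I} with $\alpha = 1/N$, $2/N$, $3/N$ to turn the Bessel factor into $(|\mb{n}|+|\mb{m}|)^{-N-3}$-type decay, on $((|\mb{n}|+|\mb{m}|)^2,\infty)$ use $\alpha = -1/2$; in both cases the integral evaluates to $C|\mb{n}-\mb{m}|^2(|\mb{n}|+|\mb{m}|)^{-2N-2}$, and taking square roots gives \eqref{eq:smooth-g}.

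The main obstacle is the bookkeeping for $\partial_t(G_{t,N}(\mb{n})-G_{t,N}(\mb{m}))$: differentiating in $t$ raises the "order" by one unit in the Bessel hierarchy, so where Lemma \ref{lem:diff-G} needed only first differences of $I_a$ and Lemma \ref{lem:diff2-G} needed second differences, here one needs control of up to \emph{third} consecutive differences of the modified Bessel functions — precisely the content of \eqref{eq:diff-3}. One must carefully track, across the telescoping identity \eqref{eq:rara} and the reduction \eqref{eq:mono-n} to ordered coordinates, that every term is either a $\delta^+$ of a second difference (handled by \eqref{eq:diff-3}) or a product of a first or second difference with a first difference (handled by \eqref{eq:diff-I}, \eqref{eq:diff2-I}), and that the total Bessel order in every product stays comparable to $|\mb{n}|+|\mb{m}|$ so that \eqref{eq:AM-GM-I} can concentrate it into a single $G_{t,1}((|\mb{n}|+|\mb{m}|)/K)^N$ factor. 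Once this pointwise estimate is in hand, the integration in $t$ is routine and follows the pattern already used repeatedly (e.g.\ in Lemmas \ref{lem:diff-K} and \ref{lem:diff2-K}).
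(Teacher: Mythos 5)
Your strategy is essentially the paper's. Differentiating the product $\prod_k e^{-2t}I_{n_k}(2t)$ in $t$ is the same as using the heat equation, since $\tfrac{d}{dt}\bigl(e^{-2t}I_n(2t)\bigr)=\Delta G_{t,1}(n)$, so $\partial_t G_{t,N}=\Delta_N G_{t,N}=\sum_i\Delta_{N,i}G_{t,N}$; the paper then proves \eqref{eq:size-g} exactly as you do (second differences via \eqref{eq:diff2-I}, concentration via \eqref{eq:AM-GM-I}, split at $t=|\mb{n}|^2$, \eqref{eq:bound-I} on each piece), and proves \eqref{eq:smooth-g} through precisely the third-difference lemma you identify as the main obstacle (Lemma \ref{lem:diff3-G}, obtained from \eqref{eq:rara}, \eqref{eq:diff-3}, \eqref{eq:diff2-I}, \eqref{eq:diff-I} and the ordering \eqref{eq:mono-n}), followed by the split at $t=(|\mb{n}|+|\mb{m}|)^2$.

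One correction to your displayed intermediate bounds in the smoothness step: they are stronger than what \eqref{eq:diff-3} yields, and in fact too strong. Since \eqref{eq:diff-3} carries $\frac{a+2}{x^2}+\frac{(a+1)(a+2)(a+3)}{x^3}$, telescoping over the $|m_1-n_1|$ third differences produces
\[
\bigl|\partial_t\bigl(G_{t,N}(\mb{n})-G_{t,N}(\mb{m})\bigr)\bigr|\le C\,|\mb{n}-\mb{m}|\left(\frac{z}{t^2}+\frac{z^3}{t^3}\right)\bigl(G_{t,1}(z)\bigr)^N,\qquad z=\frac{|\mb{n}|+|\mb{m}|}{K},
\]
i.e.\ the paper's $\mathfrak{H}_{t,N}$ in \eqref{eq:diff3-G} (and analogously \eqref{eq:HHH-Infty}), which has one more power of $|\mb{n}|+|\mb{m}|$ in each term than your $\frac{1}{t^2}+\frac{(|\mb{n}|+|\mb{m}|)^2}{t^3}$; the Gaussian model $\partial_t\nabla_x\bigl(t^{-N/2}e^{-|x|^2/(4t)}\bigr)$ shows your sharper version already fails in the critical regime $t\sim(|\mb{n}|+|\mb{m}|)^2$, so it cannot be derived from the cited inequalities. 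This slip is harmless: inserting the correct bound into your own splitting still gives $C|\mb{n}-\mb{m}|^2(|\mb{n}|+|\mb{m}|)^{-2N-2}$ on both pieces, provided you take $\alpha$ up to $5/(2N)$ in \eqref{eq:bound-I} (as the paper does) rather than $3/N$. A similar cosmetic point occurs in the size bound, where \eqref{eq:diff2-I} plus \eqref{eq:AM-GM-I} naturally gives the Bessel order $(|\mb{n}|-1)/N$ rather than $|\mb{n}|/N$ (and the bound with order $|\mb{n}|/N$ is not an immediate consequence, since $I_a$ decreases in the order); keeping $(|\mb{n}|-1)/N$ costs nothing in the subsequent integration. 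Also, Lemma \ref{lem:L2-gk} gives an $\ell^2$ operator bound, not a pointwise kernel bound, so for $\mb{n}=\mb{0}$ just argue directly as you indicate with the crude bounds near $t=0$ and $t\to\infty$.
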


Before proving the previous lemma, we need another one.
\begin{lem}
\label{lem:diff3-G}
  Let $N\ge 1$, $\mb{n},\mb{m}\in \mathbb{Z}^N$ such that $|\mb{n}|>2|\mb{n}-\mb{m}|$, and
  \[
  \mathfrak{H}_{t,N}(z)=\left(\frac{z}{t^2}+\frac{z^3}{t^3}\right)
  \left(G_{t,1}(z)\right)^N, \qquad z>0.
  \]
  Then the inequality
  \begin{equation}
  \label{eq:diff3-G}
  |\Delta_{N,i}(G_{t,N}(\mb{n})-G_{t,N}(\mb{m}))|\le C |\mb{n}-\mb{m}| \mathfrak{H}_{t,N}\left(\frac{|\mb{n}|+|\mb{m}|}{k}\right),\qquad i=1,\dots,N,
  \end{equation}
  holds with constants $C$ and $K>1$ independent of $\mb{n}$ and $\mb{m}$.
\end{lem}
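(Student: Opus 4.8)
The plan is to follow exactly the template of Lemmas \ref{lem:diff-G} and \ref{lem:diff2-G}, since this is the ``third order'' analogue. First I would perform the reduction to the case $n_i \neq m_i$ for all $i$: suppose $n_i = m_i$ for $j$ of the coordinates and decompose $\mb{n} = \mb{n}_1 \cup \mb{n}_2$, $\mb{m} = \mb{n}_1 \cup \mb{m}_2$ as in Lemma \ref{lem:diff2-G}. One must distinguish whether the distinguished index (say $i=1$) lies among the common coordinates $\mb{n}_1$ or not. If $1 \notin \mb{n}_1$, then $\Delta_{N,1}$ acts only on the $\mb{n}_2$ block and $\Delta_{N,1}(G_{t,N}(\mb{n})-G_{t,N}(\mb{m})) = G_{t,j}(\mb{n}_1)\Delta_{N-j,1}(G_{t,N-j}(\mb{n}_2)-G_{t,N-j}(\mb{m}_2))$, so one invokes the case with all distinct components and then glues using \eqref{eq:AM-GM-I} and monotonicity of $I_a$ exactly as before. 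If $1 \in \mb{n}_1$, then $\Delta_{N,1}G_{t,j}(\mb{n}_1)$ is controlled by \eqref{eq:diff2-I} (giving a factor $C(1/t + (|\mb{n}_1|)^2/t^2)G_{t,j}(\mb{n}_1)$ up to shifting orders) times $|G_{t,N-j}(\mb{n}_2)-G_{t,N-j}(\mb{m}_2)|$, which is handled by Lemma \ref{lem:diff-G}; multiplying the two bounds produces the $\mathfrak{H}_{t,N}$-type expression.

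After this reduction, I would assume the monotonicity normalization \eqref{eq:mono-n} on $\mb{n}$ and $\mb{m}$ (using evenness \eqref{eq:par} and permutation invariance of $G_{t,N}$). The key algebraic identity is the telescoping formula \eqref{eq:rara}; applying $\Delta_{N,1} = \delta_1^-\delta_1^+$ to it gives
\[
\Delta_{N,1}(G_{t,N}(\mb{n})-G_{t,N}(\mb{m})) = \Delta(G_{t,1}(n_1)-G_{t,1}(m_1))\prod_{k=2}^N G_{t,1}(n_k) + \Delta G_{t,1}(m_1)\sum_{i=2}^N (G_{t,1}(n_i)-G_{t,1}(m_i))\prod_{j=2}^{i-1}G_{t,1}(m_j)\prod_{k=i+1}^N G_{t,1}(n_k),
\]
in parallel with \eqref{eq:diff2-aux1}. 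For the first summand I would write $\Delta(G_{t,1}(n_1)-G_{t,1}(m_1))$ as a telescoping sum $\pm\sum_{\ell} (G_{t,1}(\ell+3)-3G_{t,1}(\ell+2)+3G_{t,1}(\ell+1)-G_{t,1}(\ell))$ of third differences, and bound each term by Lemma on the alternating sum of four Bessel functions \eqref{eq:diff-3}, producing a factor $C|m_1-n_1|\big(\tfrac{m_1+n_1+1}{t^2} + \tfrac{(m_1+n_1+1)^3}{t^3}\big)\max\{G_{t,1}(n_1),G_{t,1}(m_1)\}$. For the second summand, $\Delta G_{t,1}(m_1)$ is bounded via \eqref{eq:diff2-I} by $C\big(\tfrac1t + \tfrac{(m_1+1)(m_1+2)}{t^2}\big)G_{t,1}(m_1)$, and $|G_{t,1}(n_i)-G_{t,1}(m_i)|$ by \eqref{eq:lem-aux-1}; the product again fits under $\mathfrak{H}_{t,N}$ after using $|m_j - n_j| \le |\mb{n}-\mb{m}|$, $m_j + n_j \le |\mb{n}| + |\mb{m}|$, and pulling all the surviving Bessel factors into a single $N$-th power via \eqref{eq:AM-GM-I}. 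Finally I would run the same argument as in Lemma \ref{lem:diff-G} showing that the average of the retained orders is $\ge (|\mb{n}|+|\mb{m}|)/K$ for some $K>1$, which converts $\prod G_{t,1}(\cdot)$ into $(G_{t,1}((|\mb{n}|+|\mb{m}|)/K))^N$ and completes the estimate.

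The main obstacle is purely bookkeeping: combining the two telescoped pieces so that \emph{both} the $z/t^2$ and $z^3/t^3$ growth factors appear with the correct powers, and verifying that after applying \eqref{eq:diff-3} — whose right-hand side carries the extra constant $C$ and the shape $\tfrac{a+2}{x^2} + \tfrac{(a+1)(a+2)(a+3)}{x^3}$ — the order shifts ($a \to a+1$, etc.) do not spoil the uniform bound. Since each order shift changes $I_a$ by a factor $I_{a+1}/I_a < 1$, this only helps, so no genuine difficulty arises there; the monotonicity-of-averages step is the one place requiring care, and it is identical to what was already carried out in Lemma \ref{lem:diff-G}, so it can be cited rather than repeated.
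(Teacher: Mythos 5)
Your outline follows the paper's route almost verbatim (reduction to distinct coordinates, normalization \eqref{eq:mono-n}, the identity \eqref{eq:rara}, telescoping the first term into third differences estimated by \eqref{eq:diff-3}, and estimating the second term via \eqref{eq:diff2-I} together with \eqref{eq:lem-aux-1}, then the AM--GM/averaging-of-orders step). However, there is a genuine gap at the second summand. The quantity $\Delta G_{t,1}(m_1)=e^{-2t}\bigl(I_{m_1-1}(2t)-2I_{m_1}(2t)+I_{m_1+1}(2t)\bigr)$ is a \emph{centered} second difference, so \eqref{eq:diff2-I} applies with $a=m_1-1$ and yields a factor $G_{t,1}(m_1-1)$, not the factor $G_{t,1}(m_1)$ you wrote (your bound corresponds to $a=m_1$, i.e.\ the second difference centered at $m_1+1$). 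This downward shift of the order is exactly the delicate point: your closing remark that order shifts ``only help'' because $I_{a+1}/I_a<1$ is backwards here, since replacing $m_1$ by $m_1-1$ makes the Bessel factor \emph{larger}. Under the normalization \eqref{eq:mono-n}, $m_1$ is the largest entry of $\mb{m}$, so this is the order that must remain comparable to $|\mb{n}|+|\mb{m}|$ for the final step $\prod G_{t,1}(\cdot)\le (G_{t,1}((|\mb{n}|+|\mb{m}|)/K))^N$ to hold; one cannot simply absorb the loss, because $G_{t,1}(m_1-1)\le C\,G_{t,1}(m_1)$ fails uniformly in $t$ (the ratio $I_{m_1-1}(2t)/I_{m_1}(2t)$ blows up as $t\to0$).

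The paper resolves this by a case analysis that your plan omits: for $m_1\ge 2$ it uses $m_1-1\ge m_1/2$, so the retained order is still proportional to $m_1$ and the averaging argument goes through with a modified $K$; the cases $m_1=0$ and $m_1=1$ are treated separately. For $m_1=0$, \eqref{eq:diff2-I} is not even applicable (it would need $a=-1<-1/2$), and one uses $\Delta G_{t,1}(0)=2(G_{t,1}(1)-G_{t,1}(0))$ together with \eqref{eq:diff-I}; for $m_1=1$ one splits $t\le 1$ (trivial bound, using $G_{t,1}(1)\sim t$) and $t>1$ (converting $G_{t,1}(0)$ into $G_{t,1}(1)$ via the ratio bound \eqref{eq:cantabros}, where $1/f_1(t)\le C$ for $t>1$). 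A parallel off-by-one issue arises in your reduction step when the distinguished index lies in the common block. Incorporating this case analysis would make your argument complete; as written, the step producing the $\mathfrak{H}_{t,N}$ factor is not justified for small $m_1$, and the uniformity in $t$ is lost precisely there.
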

\begin{proof}
Again, it is enough to analyze the case $i=1$ because the other ones work similarly.

By using the ideas in Lemma \ref{lem:diff-G} and Lemma \ref{lem:diff2-G}, we can reduce the proof to the case where $n_i\not=m_i$ for $i=1,\dots,N$. Moreover, the assumptions \eqref{eq:mono-n} on $\mb{n}$ and $\mb{m}$ in the proof of Lemma \ref{lem:diff-G} can be considered also.

We start with the case $m_1\ge 2$. From \eqref{eq:rara} we deduce that
\begin{multline*}
\Delta_{N,1}(G_{t,N}(\mb{n})-G_{t,N}(\mb{m}))=\Delta(G_{t,1}(n_1)- G_{t,1}(m_1))\prod_{k=2}^{N}G_{t,1}(n_k)\\+
\sum_{j=2}^{N}\Delta G_{t,1}(m_1)(G_{t,1}(n_i)-G_{t,1}(m_i))\prod_{j=2}^{i-1}G_{t,1}(m_j)\prod_{k=i+1}^{N}G_{t,1}(n_k).
\end{multline*}
We can control $G_{t,1}(n_i)-G_{t,1}(m_i)$ as in the proof of Lemma \ref{lem:diff-G} and, applying \eqref{eq:diff2-I}, we have
\begin{multline}
\label{eq:Delta-m}
|\Delta G_{t,1}(m_1)(G_{t,1}(n_i)-G_{t,1}(m_i))|\\
\begin{aligned}
&\le C \frac{|n_i-m_i|(m_i+n_i+1)}{t}\left(\frac{1}{t}+\frac{m_1^2}{t^2}\right)G_{t,1}(m_1-1)\max\{G_{t,1}(n_i),G_{t,1}(m_i)\}
\\& \le C \frac{|n_i-m_i|(m_i+n_i+1)}{t}\left(\frac{1}{t}+\frac{m_1^2}{t^2}\right)G_{t,1}(m_1/2)\max\{G_{t,1}(n_i),G_{t,1}(m_i)\},
\end{aligned}
\end{multline}
where in the last step we have used that $m_1-1\ge m_1/2$ because $m_1\ge 2$.

For $n_1< m_1 $, we can obtain that
\begin{multline*}
\Delta(G_{t,1}(n_1)-G_{t,1}(m_1))=-\sum_{k=n_1}^{m_1-1}\Delta(G_{t,1}(k+1)-G_{t,1}(k)))
\\=-\sum_{k=n_1}^{m_1-1}(G_{t,1}(k+3)-3G_{t,1}(k+2)+3G_{t,1}(k+1)-G_{t,1}(k))
\end{multline*}
and, applying \eqref{eq:diff-3},
\begin{align*}
|\Delta(G_{t,1}(n_1)-G_{t,1}(m_1))|&\le C \sum_{k=n_1}^{m_1-1}\left(\frac{k+2}{t^2}+\frac{(k+1)(k+2)(k+3)}{t^3}\right)G_{t,1}(k)
\\&\le C(m_1-n_1)\left(\frac{m_1+n_1}{t^2}+\frac{m_1^3+n_1^3}{t^3}\right)G_{t,1}(n_1).
\end{align*}
More generally,
\begin{multline*}
|\Delta(G_{t,1}(n_1)-G_{t,1}(m_1))|\\\le C|m_1-n_1|\left(\frac{m_1+n_1}{t^2}+\frac{m_1^3+n_1^3}{t^3}\right)\max\{G_{t,1}(n_1),G_{t,1}(m_1)\}.
\end{multline*}
Now, the proof of this case can be concluded as in Lemma \ref{lem:diff-G}.

To finish we have to analyze the cases $m_1=0,1$. To obtain the result in these cases we need an appropriate estimate for $\Delta G_{t,1}(m_1)$ to insert it in \eqref{eq:Delta-m}. For $m_1=0$, we have
\[
\Delta G_{t,1}(0)=G_{t,1}(1)-2G_{t,1}(0)+G_{t,1}(-1)=2(G_{t,1}(1)-G_{t,1}(0))
\]
and, applying \eqref{eq:diff-I}, it is verified that
\[
|\Delta_{t,1}(0)|\le \frac{C}{t}G_{t,1}(0)
\]
and this is enough for our purpose. In the case $m_1=1$, for $0<t\le 1$ it is clear that
\[
|\Delta_{t,1}(1)|\le C\le \frac{C}{t}G_{t,1}(1)
\]
and for $t>1$, applying \eqref{eq:diff-3} and \eqref{eq:cantabros}, we have
\begin{align*}
|\Delta_{t,1}(1)|&\le C\left(\frac{1}{t}+\frac{1}{t^3}\right)G_{t,1}(0)\\&\le C \left(\frac{1}{t}+\frac{1}{t^3}\right)\frac{1}{f_1(t)}G_{t,1}(1)\le C \left(\frac{1}{t}+\frac{1}{t^3}\right)G_{t,1}(1),
\end{align*}
where in the last step we have used that $1/f_1(t)\le C$ for $t>1$.
\end{proof}

\begin{rem}
  From the proof of the previous result, it is cleat that
  \begin{equation}
  \label{eq:HHH-Infty}
    |\Delta_{N,i}(G_{t,N}(\mb{n})-G_{t,N}(\mb{m}))|\le C |\mb{n}-\mb{m}| \left(\frac{|\mb{n}|}{t^2}+\frac{|\mb{n}|^3}{t^3}\right)t^{-N/2}, \qquad \mb{n},\mb{m}\in \mathbb{Z}^N.
  \end{equation}
\end{rem}

\begin{proof}[Proof of Lemma \ref{lem:CZ-gk}]
First, it is easy to check that
\begin{equation*}
\|\mathcal{P}_{t,1}(\mb{n})\|_{\mathbb{B}_1}^2=\left\|\frac{\partial}{\partial t}G_{t,N}(\mb{n})\right\|_{\mathbb{B}_1}^2
=\left\|\Delta_N G_{t,N}(\mb{n})\right\|_{\mathbb{B}_1}^2\le \sum_{i=1}^{N}\left\|\Delta_{N,i}G_{t,N}(\mb{n})\right\|_{\mathbb{B}_1}^2.
\end{equation*}
Now, for $|\mb{n}|\ge 1$, by using \eqref{eq:diff2-I}, we have
\[
|\Delta_{N,i}G_{t,N}(\mb{n})|\le C\left(\frac{1}{t}+\frac{|\mb{n}|^2}{t^2}\right)\left(G_{t,1}\left(\frac{|\mb{n}|-1}{N}\right)\right)^N.
\]
We consider the decomposition
\[
\left\|\Delta_{N,i}G_{t,N}(\mb{n})\right\|_{\mathbb{B}_1}^2\le C(D_1+D_2),
\]
where
\[
D_1=\int_{0}^{|\mb{n}|^2}t\left(\frac{1}{t}+\frac{|\mb{n}|^2}{t^2}\right)^2
\left(G_{t,1}\left(\frac{|\mb{n}|-1}{N}\right)\right)^{2N}\, dt
\]
and $D_2$ the integral of the same function on $[|\mb{n}|^2,\infty)$. Applying \eqref{eq:bound-I} with $\alpha=3/2N$ we deduce that
\begin{align*}
D_1&\le C \int_{0}^{|\mb{n}|^2}(t+|\mb{n}|^2)^2\left(t^{-3/(2N)}G_{t,1}\left(\frac{|\mb{n}|-1}{N}\right)\right)^{2N}\, dt\\&\le \frac{C}{|\mb{n}|^{2N+6}}\int_{0}^{|\mb{n}|^2}(t+|\mb{n}|^2)^2\, dt=\frac{C}{|\mb{n}|^{2N}}.
\end{align*}
For $D_2$, using the bound $(G_{t,1}((|\mb{n}|-1)/N))^N\le C t^{-N/2}$, we obtain the estimate
\[
D_2\le C \left(\int_{|\mb{n}|^2}^{\infty}t^{-N-1}\, dt+|\mb{n}|^4\int_{|\mb{n}|^2}^{\infty}t^{-N-3}\, dt\right)=\frac{C}{|\mb{n}|^{2N}}.
\]
The case $|\mb{n}|=0$ is clear and the proof of \eqref{eq:size-g} is completed.

To obtain \eqref{eq:smooth-g}, it is enough to prove that
\begin{equation}
  \label{eq:smooth-g-aux}
  \|\Delta_{N,i}(G_{t,N}(\mb{n})-G_{t,N}(\mb{m}))\|_{\mathbb{B}_1}\le C\frac{|\mb{n}-\mb{m}|}{(|\mb{n}|+|\mb{m}|)^{N+1}},\qquad |\mb{n}|>2|\mb{n}-\mb{m}|.
  \end{equation}
To do this we consider \eqref{eq:diff3-G} and we have 
\[
 \|\Delta_{N,i}(G_{t,N}(\mb{n})-G_{t,N}(\mb{m}))\|_{\mathbb{B}_1}^2\le C |\mb{n}-\mb{m}|^2 (E_1+E_2),
\]
with
\[
E_1=\int_{0}^{(|\mb{n}|+|\mb{m}|)^2} t\left(\mathfrak{H}_{t,N}\left(\frac{|\mb{n}|+|\mb{m}|}{k}\right)\right)^2\, dt
\]
and $E_2$ equals to the integral of the same function on $[(|\mb{n}|+|\mb{m}|)^2,\infty)$. For $E_1$, we use \eqref{eq:bound-I} with $\alpha=5/(2N)$ to obtain that
\begin{align*}
E_1&\le \frac{C}{(|\mb{n}|+|\mb{m}|)^{2N+10}}\int_{0}^{(|\mb{n}|+|\mb{m}|)^2} (t(|\mb{n}|+|\mb{m}|)+(|\mb{n}|+|\mb{m}|)^3)^2\, dt\\&\le \frac{C}{(|\mb{n}|+|\mb{m}|)^{2N+2}}.
\end{align*}
Finally, to estimate $E_2$ we use \eqref{eq:HHH-Infty} to obtain the required bound. Indeed,
\begin{align*}
E_2&\le C\left((|\mb{n}|+|\mb{m}|)^2\int_{(|\mb{n}|+|\mb{m}|)^2}^{\infty} t^{-N-3}\, dt+(|\mb{n}|+|\mb{m}|)^6\int_{(|\mb{n}|+|\mb{m}|)^2}^{\infty} t^{-N-5}\, dt\right)\\&\le \frac{C}{(|\mb{n}|+|\mb{m}|)^{2N+2}}
\end{align*}
and the proof of \eqref{eq:smooth-g-aux} is finished.
\end{proof}

\subsection{The $\mathfrak{g}_k$-square functions associated to the Poisson semigroup}
It is very common to define some square functions in terms of the Poisson semigroup
instead of the heat semigroup. In our case such $\mathfrak{g}_k$-square functions are given by
\[
\mathfrak{g}_k(f)(\mb{n})=\left(\int_{0}^{\infty}t^{2k-1}\left|\frac{\partial^k}{\partial t^k}P_tf(\mb{n})\right|^2\, dt\right)^{1/2}, \qquad k=1,2,\dots.
\]
By using that
\[
P_tf(\mb{n})=\sum_{\mb{k}\in \mathbb{Z}^N}f(\mb{k})Q_{t}(\mb{n}-\mb{k})
\]
with
\[
Q_t(\mb{n})=\int_{[-1/2,1/2]^N}e^{-2t\sqrt{\sum_{i=1}^{N}\sin^2(\pi x_i)}}e^{-2\pi i\langle x,\mb{n}}\rangle \, dx,
\]
it is clear that
\begin{equation}
\label{eq:gk-Pois-Banach}
\mathfrak{g}_k(f)(\mb{n})=\|R_{t,k}f(\mb{n})\|_{\mathbb{B}_k}
\end{equation}
where
\[
R_{t,k}f(\mb{n})=\sum_{\mb{k}\in \mathbb{Z}^N}f(\mb{k})Q_{t,k}(\mb{n}-\mb{k})
\]
being
\[
Q_{t,k}(\mb{n})=\frac{d^k}{dt^k}Q_{t}(\mb{n})=\int_{[-1/2,1/2]^N}S_{t,k}(x)e^{-2\pi i\langle x,\mb{n}}\rangle \, dx
\]
and
\[
S_{t,k}(x)=(-2)^k \left(\sum_{i=1}^{N}\sin^2(\pi x_i)\right)^{k/2}e^{-2t\sqrt{\sum_{i=1}^{N}\sin^2(\pi x_i)}}e^{-2\pi i\langle x,\mb{n}}\rangle \, dx.
\]

For this family of square functions we have the following result.
\begin{thm}
\label{thm:equiv-gk-Poisson}
  Let $N\ge 1$, $1<p<\infty$, $w\in A_p(\mathbb{Z}^N)$, and $k\in \mathbb{N}$. Then the inequalities
  \begin{equation*}
  C_1\|f\|_{\ell^p(\mathbb{Z}^N,w)}\le \|\mathfrak{g}_k(f)\|_{\ell^p(\mathbb{Z}^N,w)}\le C_2 \|f\|_{\ell^p(\mathbb{Z}^N,w)}, \qquad f\in \ell^2(\mathbb{Z}^N)\cap \ell^{p}(\mathbb{Z}^N,w),
  \end{equation*}
  hold.
\end{thm}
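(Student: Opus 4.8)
The plan is to reduce Theorem~\ref{thm:equiv-gk-Poisson} to Theorem~\ref{thm:equiv-gk-Heat} for the heat semigroup via the subordination formula \eqref{eq:def-Poisson}, exactly as the corresponding maximal operator result for $P^\star$ was reduced to the one for $W^\star$. The first step is to prove the $\ell^2(\mathbb{Z}^N)$ boundedness of $\mathfrak{g}_k$; here I would mimic Lemma~\ref{lem:L2-gk}: since $R_{t,k}f=\mathcal{F}^{-1}(S_{t,k}\mathcal{F}f)$, Parseval's identity gives
\[
\|\mathfrak{g}_k(f)\|_{\ell^2(\mathbb{Z}^N)}^2=\int_{[-1/2,1/2]^N}|\mathcal{F}f(x)|^2\int_{0}^{\infty}t^{2k-1}|S_{t,k}(x)|^2\,dt\,dx,
\]
and the inner integral equals a finite constant times $1$ (after the substitution $t\sqrt{\sum\sin^2(\pi x_i)}=u$ the spectral factor cancels), so $\|\mathfrak{g}_k(f)\|_{\ell^2(\mathbb{Z}^N)}=C\|f\|_{\ell^2(\mathbb{Z}^N)}$. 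Then the two reduction lemmas preceding Theorem~\ref{thm:equiv-gk-Heat} apply verbatim: polarising the $\ell^2$ identity reduces the lower bound to the upper bound by duality, and a Krivine/Chapman--Kolmogorov argument (using $P_{t_1}P_{t_2}=P_{t_1+t_2}$) reduces general $k$ to $k=1$.

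The core of the proof is then to show that $\mathfrak{g}_1$ is a Calder\'on--Zygmund operator with values in $\mathbb{B}_1$, i.e. to establish the size estimate $\|Q_{t,1}(\mb{n})\|_{\mathbb{B}_1}\le C(|\mb{n}|+1)^{-N}$ and the smoothness estimate $\|Q_{t,1}(\mb{n})-Q_{t,1}(\mb{m})\|_{\mathbb{B}_1}\le C|\mb{n}-\mb{m}|(|\mb{n}|+|\mb{m}|)^{-N-1}$ for $|\mb{n}|>2|\mb{n}-\mb{m}|$; once these are in hand, the weighted $\ell^p$ and weak $(1,1)$ bounds follow from the Calder\'on--Zygmund machinery cited in the paper (\cite{RRT,RT}) together with the $\ell^2$ bound, and combining with the lower bound finishes Theorem~\ref{thm:equiv-gk-Poisson}. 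To get these kernel estimates I would pass through subordination: writing
\[
Q_{t,1}(\mb{n})=\frac{\partial}{\partial t}P_t\text{-kernel}=\frac{1}{\sqrt{\pi}}\int_{0}^{\infty}\frac{e^{-u}}{\sqrt{u}}\,\frac{\partial}{\partial t}\bigl(W_{t^2/(4u)}\text{-kernel}\bigr)(\mb{n})\,du,
\]
and using $\frac{\partial}{\partial t}G_{t^2/(4u),N}=\frac{t}{2u}\Delta_N G_{t^2/(4u),N}$, one reduces everything to pointwise bounds on $\Delta_{N,i}G_{s,N}(\mb{n})$ and on its differences $\Delta_{N,i}(G_{s,N}(\mb{n})-G_{s,N}(\mb{m}))$, which are precisely what \eqref{eq:diff2-I}, \eqref{eq:bound-I}, Lemma~\ref{lem:diff3-G}, and the remark \eqref{eq:HHH-Infty} provide. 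A more direct route, parallel to the proof of Lemma~\ref{lem:CZ-gk}, is to compute $\|Q_{t,1}(\mb{n})\|_{\mathbb{B}_1}^2$ and $\|Q_{t,1}(\mb{n})-Q_{t,1}(\mb{m})\|_{\mathbb{B}_1}^2$ directly through the subordination integral for $Q_t$ and $Q_{t,1}=\partial_t Q_t=-\Delta_N^{1/2}$-type kernel, splitting the $t$-integral at $t=|\mb{n}|$ (resp.\ $t=|\mb{n}|+|\mb{m}|$) and estimating each piece with the heat-kernel bounds just named.

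The main obstacle I anticipate is controlling the Poisson kernel $Q_t$ and its $t$-derivatives near the origin and for small $t$: unlike the heat case, the symbol $e^{-2t(\sum\sin^2(\pi x_i))^{1/2}}$ is not smooth at $x=0$, so the naive differentiation-under-the-integral bounds deteriorate, and one must rely on the subordination representation to transfer the good heat estimates. Concretely, the delicate point is the bound for $\Delta_{N,i}(G_{s,N}(\mb{n})-G_{s,N}(\mb{m}))$ integrated against the subordination weight $\frac{t}{2u}e^{-u}u^{-1/2}$ over $u\in(0,\infty)$, where one needs Lemma~\ref{lem:diff3-G} for $s=t^2/(4u)$ small (large $u$) and \eqref{eq:HHH-Infty} for $s$ large (small $u$), and then to check that the resulting $u$-integrals converge and reproduce the right power $(|\mb{n}|+|\mb{m}|)^{-N-1}|\mb{n}-\mb{m}|$. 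The $k=1$ restriction afforded by the reduction lemma is what keeps the number of $t$-derivatives — and hence the powers of $|\mb{n}|/t$ appearing in Lemma~\ref{lem:diff3-G} — manageable; for general $k$ one would otherwise need higher-order analogues of \eqref{eq:diff-3}, which the reduction cleverly avoids.
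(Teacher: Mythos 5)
Your proposal is essentially correct, but it follows a genuinely different route from the paper for the key step, namely the upper bound $\|\mathfrak{g}_k(f)\|_{\ell^p(\mathbb{Z}^N,w)}\le C\|f\|_{\ell^p(\mathbb{Z}^N,w)}$. The preliminary steps coincide: the $\ell^2$ identity via Parseval (the paper's Lemma \ref{lem:L2-gk-P}) and the polarization/duality reduction of the lower bound to the upper bound are exactly what the paper does. For the upper bound, however, the paper does not reduce to $k=1$ and does not develop any Calder\'on--Zygmund theory for the Poisson kernel. Instead it proves the pointwise comparison of Lemma \ref{lem:gkPoi-gkHeat}, $\mathfrak{g}_k(f)(\mb{n})\le \sum_{j=0}^{\lfloor k/2\rfloor}A_{k,j}\,g_{k-j}(f)(\mb{n})$, by differentiating the subordination formula \eqref{eq:def-Poisson} $k$ times (a Fa\`a di Bruno type expansion of $\partial_t^k h(t^2/(4u))$), applying Minkowski's integral inequality, and changing variables so that the subordination weight integrates out; the weighted bound then follows at once from Theorem \ref{thm:equiv-gk-Heat} applied to each $g_{k-j}$. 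Your route---reduce to $k=1$ via the Krivine/Chapman--Kolmogorov argument (legitimate, since $P_{t_1}P_{t_2}=P_{t_1+t_2}$) and then prove that $\mathfrak{g}_1$ is a $\mathbb{B}_1$-valued Calder\'on--Zygmund operator, transferring the kernel estimates from $\Delta_{N,i}G_{s,N}$ by subordination---does work: after Minkowski in $u$, the change of variable $s=t^2/(4u)$ shows $\|\tfrac{t}{2u}\Delta_N G_{t^2/(4u),N}(\mb{n})\|_{\mathbb{B}_1}=\sqrt{2}\,\|\Delta_N G_{s,N}(\mb{n})\|_{\mathbb{B}_1}$ uniformly in $u$, so the size and smoothness bounds of Lemma \ref{lem:CZ-gk} pass to $Q_{t,1}$ with no loss, answering the concern you raise about the nonsmooth symbol. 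What each approach buys: the paper's comparison lemma is shorter, handles all $k$ simultaneously, and needs no new kernel estimates (it sidesteps higher-order difference bounds exactly as your reduction to $k=1$ does); your CZ route is heavier but yields in addition the weighted weak $(1,1)$ estimate for $\mathfrak{g}_1$, which the paper's argument does not provide for the Poisson square functions.
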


To prove the previous result we need the two next lemmas.
\begin{lem}
\label{lem:L2-gk-P}
  For $N\ge 1$ and $k\in \mathbb{N}$, it is verified that
  \begin{equation}
  \label{eq:L2-gk-P}
  \|\mathfrak{g}_k(f)\|_{\ell^2(\mathbb{Z}^N)}^2=\frac{\Gamma(2k)}{2^{2k}}\|f\|_{\ell^2(\mathbb{Z}^N)}^2.
  \end{equation}
\end{lem}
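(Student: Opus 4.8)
The plan is to reproduce, essentially word for word, the proof of Lemma~\ref{lem:L2-gk}, with the heat kernel replaced by the Poisson kernel. The first step is to note that for $f\in\ell^2(\mathbb{Z}^N)$ the operator $R_{t,k}$ from \eqref{eq:gk-Pois-Banach} acts as a Fourier multiplier, $R_{t,k}f=\mathcal{F}^{-1}(S_{t,k}\,\mathcal{F}f)$: indeed $\mathcal{F}(P_tf)(x)=e^{-2t\sqrt{\sum_{i=1}^{N}\sin^2(\pi x_i)}}\,\mathcal{F}f(x)$, and differentiating $k$ times with respect to $t$ produces exactly the symbol $S_{t,k}(x)$.

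Next I would expand $\|\mathfrak{g}_k(f)\|_{\ell^2(\mathbb{Z}^N)}^2=\sum_{\mb{k}\in\mathbb{Z}^N}\|R_{t,k}f(\mb{k})\|_{\mathbb{B}_k}^2$, write each $\mathbb{B}_k$-norm as the integral $\int_0^\infty t^{2k-1}|R_{t,k}f(\mb{k})|^2\,dt$, interchange the sum and the integral by Tonelli (the integrand is nonnegative), and apply Parseval's identity for the fixed-$t$ sequence $R_{t,k}f$ to turn $\sum_{\mb{k}}|R_{t,k}f(\mb{k})|^2$ into $\int_{[-1/2,1/2]^N}|S_{t,k}(x)|^2|\mathcal{F}f(x)|^2\,dx$. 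After one more interchange of integrals this leaves
\[
\|\mathfrak{g}_k(f)\|_{\ell^2(\mathbb{Z}^N)}^2=\int_{[-1/2,1/2]^N}|\mathcal{F}f(x)|^2\left(\int_0^\infty t^{2k-1}|S_{t,k}(x)|^2\,dt\right)dx.
\]

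The only actual computation is the inner $t$-integral. Writing $\lambda=\lambda(x)=\bigl(\sum_{i=1}^{N}\sin^2(\pi x_i)\bigr)^{1/2}$ one has $|S_{t,k}(x)|^2=4^k\lambda^{2k}e^{-4t\lambda}$, and the substitution $u=4\lambda t$ gives $\int_0^\infty t^{2k-1}e^{-4t\lambda}\,dt=\Gamma(2k)/(4\lambda)^{2k}$, hence $\int_0^\infty t^{2k-1}|S_{t,k}(x)|^2\,dt=\Gamma(2k)/2^{2k}$ for almost every $x$. Since this constant is independent of $x$, it factors out of the integral, and a final use of Parseval's identity gives $\|\mathfrak{g}_k(f)\|_{\ell^2}^2=\frac{\Gamma(2k)}{2^{2k}}\|f\|_{\ell^2}^2$.

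I do not expect any real obstacle: the argument is structurally identical to the heat case. The point that makes it work is that $\int_0^\infty t^{2k-1}|S_{t,k}(x)|^2\,dt$ does not depend on $x$, which holds because the Poisson multiplier $e^{-2t\lambda(x)}$ is a function of the single variable $t\lambda(x)$, so that the factor $\lambda(x)^{2k}$ coming from the $k$-fold $t$-derivative is exactly cancelled by the $\lambda(x)^{-2k}$ produced by the change of variables in the $t$-integral. The only minor point to mention is the frequency $x=\mb{0}$, where $\lambda(x)=0$; this is a single point and $S_{t,k}$ vanishes there for $k\ge1$, so it plays no role.
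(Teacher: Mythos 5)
Your proof is correct and is exactly the argument the paper intends: the paper simply states that the proof of Lemma \ref{lem:L2-gk-P} is the same as that of Lemma \ref{lem:L2-gk} with the Poisson symbol $S_{t,k}$ in place of $F_{t,k}$, and you have carried out precisely those details, including the key computation $\int_0^\infty t^{2k-1}|S_{t,k}(x)|^2\,dt=\Gamma(2k)/2^{2k}$ independent of $x$. Nothing is missing.
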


The proof of this lemma can be done exactly as the proof of Lemma \ref{lem:L2-gk} but using \eqref{eq:gk-Pois-Banach} so we omit the details.

\begin{lem}
Let $N\ge 1$, $1<p<\infty$, $w\in A_p(\mathbb{Z}^N)$, and $k\in \mathbb{N}$. Then the inequality
\[
\|\mathfrak{g}_k(f)\|_{\ell^p(\mathbb{Z}^N,w)}\le C \|f\|_{\ell^p(\mathbb{Z}^N,w)}, \qquad f\in\ell^2(\mathbb{Z}^N)\cap \ell^p(\mathbb{Z}^N),
\]
implies
\[
\|f\|_{\ell^p(\mathbb{Z}^N,w)}\le C \|\mathfrak{g}_k(f)\|_{\ell^p(\mathbb{Z}^N,w)}, \qquad f\in \ell^2(\mathbb{Z}^N)\cap \ell^p(\mathbb{Z}^N).
\]
\end{lem}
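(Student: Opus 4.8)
The plan is to mirror, almost verbatim, the duality argument used above for the heat--semigroup square functions $g_k$, now with $W_t$ replaced by $P_t$ and Lemma \ref{lem:L2-gk} replaced by Lemma \ref{lem:L2-gk-P}. First I would polarise the $L^2$--identity \eqref{eq:L2-gk-P}: applying it to $f+h$ and to $f-h$ and subtracting gives, for all $f,h\in\ell^2(\mathbb{Z}^N)$,
\[
\sum_{\mb{k}\in \mathbb{Z}^N}f(\mb{k})h(\mb{k})=\frac{2^{2k}}{\Gamma(2k)}\sum_{\mb{k}\in\mathbb{Z}^N}\int_{0}^{\infty}t^{2k-1}\Bigl(\frac{\partial^k}{\partial t^k}P_tf(\mb{k})\Bigr)\Bigl(\frac{\partial^k}{\partial t^k}P_th(\mb{k})\Bigr)\, dt.
\]
Applying the Cauchy--Schwarz inequality in $\mathbb{B}_k=L^2((0,\infty),t^{2k-1}\,dt)$ inside the sum over $\mb{k}$, this yields the bilinear estimate
\[
\Bigl|\sum_{\mb{k}\in \mathbb{Z}^N}f(\mb{k})h(\mb{k})\Bigr|\le C\sum_{\mb{k}\in \mathbb{Z}^N}\mathfrak{g}_k(f)(\mb{k})\,\mathfrak{g}_k(h)(\mb{k}).
\]

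Next I would run the standard weighted--duality argument. Set $w'=w^{1-p'}$; since $w\in A_p(\mathbb{Z}^N)$, the (discrete) Muckenhoupt duality gives $w'\in A_{p'}(\mathbb{Z}^N)$, and the pairing $\langle f,h\rangle=\sum_{\mb{k}}f(\mb{k})h(\mb{k})$ exhibits $\ell^{p'}(\mathbb{Z}^N,w')$ as the dual of $\ell^p(\mathbb{Z}^N,w)$, so for any sequence $f$ one has $\|f\|_{\ell^p(\mathbb{Z}^N,w)}=\sup_h\bigl|\sum_{\mb{k}\in\mathbb{Z}^N}f(\mb{k})h(\mb{k})\bigr|$, the supremum being over all finitely supported $h$ with $\|h\|_{\ell^{p'}(\mathbb{Z}^N,w')}\le 1$. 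Fix $f\in\ell^2(\mathbb{Z}^N)\cap\ell^p(\mathbb{Z}^N)$ and such an $h$; then $h\in\ell^2(\mathbb{Z}^N)\cap\ell^{p'}(\mathbb{Z}^N)$, so the bilinear estimate applies, and Hölder's inequality in $\ell^p(\mathbb{Z}^N)$ with the splitting $w^{1/p}\cdot w^{-1/p}$ gives
\[
\Bigl|\sum_{\mb{k}\in\mathbb{Z}^N}f(\mb{k})h(\mb{k})\Bigr|\le C\sum_{\mb{k}\in\mathbb{Z}^N}\mathfrak{g}_k(f)(\mb{k})\,\mathfrak{g}_k(h)(\mb{k})\le C\,\|\mathfrak{g}_k(f)\|_{\ell^p(\mathbb{Z}^N,w)}\,\|\mathfrak{g}_k(h)\|_{\ell^{p'}(\mathbb{Z}^N,w')}.
\]
Finally I would invoke the hypothesis of the lemma with exponent $p'$ and weight $w'\in A_{p'}(\mathbb{Z}^N)$ to obtain $\|\mathfrak{g}_k(h)\|_{\ell^{p'}(\mathbb{Z}^N,w')}\le C\|h\|_{\ell^{p'}(\mathbb{Z}^N,w')}\le C$; taking the supremum over $h$ completes the proof.

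There is no genuine obstacle here: the only points requiring a little care are the density reduction to finitely supported $h$ (which makes the polarised $L^2$--identity legitimate, since then $h\in\ell^2(\mathbb{Z}^N)$, and which also lets the hypothesis be applied in $\ell^2(\mathbb{Z}^N)\cap\ell^{p'}(\mathbb{Z}^N)$) together with the observation that conjugation $w\mapsto w^{1-p'}$ maps $A_p(\mathbb{Z}^N)$ into $A_{p'}(\mathbb{Z}^N)$. In fact this is precisely the proof given above for the heat $g_k$--square function, carried over with the obvious changes, so one could simply refer to it.
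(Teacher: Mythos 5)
Your argument is correct and is essentially the paper's own: the paper omits the details precisely because the proof is obtained by polarising \eqref{eq:L2-gk-P} and running the same weighted duality argument (with the dual weight $w^{1-p'}\in A_{p'}$) as in the heat-semigroup case, which is exactly what you carry out. Your explicit remarks on the reduction to finitely supported $h$ and on applying the hypothesis at the dual pair $(p',w^{1-p'})$ just make precise what the paper leaves implicit.
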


In this case, the proof of this lemma can be obtained polarising the identity \eqref{eq:L2-gk-P} and using duality. Again we omit the details.

Finally, to conclude the proof of Theorem \ref{thm:equiv-gk-Poisson} we will use the estimate
\[
\|g_k(f)\|_{\ell^p(\mathbb{Z}^N,w)}\le C \|f\|_{\ell^p(\mathbb{Z}^N,w)}, \qquad f\in \ell^2(\mathbb{Z}^N)\cap \ell^{p}(\mathbb{Z}^N,w)
\]
and the next result.

\begin{lem}
\label{lem:gkPoi-gkHeat}
Let $k\in \mathbb{N}$, then
\[
\mathfrak{g}_{k}(f)(\mb{n})\le \sum_{j=0}^{\lfloor k/2 \rfloor} A_{k,j} g_{k-j}(f)(\mb{n}),
\]
where $A_{k,j}$ are some positive constants and $\lfloor \cdot \rfloor$ denotes the floor function.
\end{lem}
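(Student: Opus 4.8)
The plan is to reduce everything to the heat‑semigroup square functions by subordination. At the level of a single point $\mb{n}$, the formula \eqref{eq:def-Poisson} reads
\[
P_tf(\mb{n})=\frac{1}{\sqrt{\pi}}\int_{0}^{\infty}\frac{e^{-u}}{\sqrt{u}}\,W_{t^2/(4u)}f(\mb{n})\,du .
\]
I would differentiate $k$ times in $t$ under the integral sign. Since $s(t)=t^2/(4u)$ satisfies $s'(t)=t/(2u)$, $s''(t)=1/(2u)$ and $s^{(j)}(t)=0$ for $j\ge 3$, the Fa\`a di Bruno formula applied to $t\mapsto W_{s(t)}f(\mb{n})$ collapses to a finite sum and gives
\[
\frac{\partial^k}{\partial t^k}P_tf(\mb{n})=\frac{1}{\sqrt{\pi}}\int_{0}^{\infty}\frac{e^{-u}}{\sqrt{u}}\sum_{m=\lceil k/2\rceil}^{k}c_{k,m}\left(\frac{t}{2u}\right)^{2m-k}\left(\frac{1}{2u}\right)^{k-m}\left.\frac{\partial^m}{\partial s^m}W_sf(\mb{n})\right|_{s=t^2/(4u)}du,
\]
with explicit combinatorial constants $c_{k,m}=k!/((2m-k)!\,(k-m)!\,2^{k-m})>0$; the admissible indices are exactly $\lceil k/2\rceil\le m\le k$, because $s''$ occurs to the power $k-m\ge 0$ and $s'$ to the power $2m-k\ge 0$. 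The interchange of $\partial_t^k$ with $\int_0^\infty\cdots\,du$ is legitimate: for $f\in\ell^2(\mathbb{Z}^N)$ the Fourier description of the heat semigroup ($\mathcal{F}(W_sf)=e^{-4s\sum_i\sin^2(\pi x_i)}\mathcal{F}f$) gives at once $\|\partial_s^m W_sf\|_{\ell^2(\mathbb{Z}^N)}\le C_m\min\{1,s^{-m}\}\|f\|_{\ell^2(\mathbb{Z}^N)}$, which furnishes an integrable majorant in $u$ locally uniformly in $t$; alternatively the displayed identity can be obtained on the Fourier side from the scalar identity $e^{-t\beta}=\frac{1}{\sqrt{\pi}}\int_0^\infty\frac{e^{-u}}{\sqrt{u}}e^{-t^2\beta^2/(4u)}\,du$ differentiated $k$ times in $t$.

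Next I would take $\mathbb{B}_k$-norms and apply Minkowski's integral inequality to pull the (normalized) average $\frac{1}{\sqrt{\pi}}\int_0^\infty e^{-u}u^{-1/2}\,du=1$ outside the sum. The crucial computation is that, for fixed $u$ and fixed $m$, the substitution $s=t^2/(4u)$ (so that $t=2\sqrt{us}$ and $t^{2k-1}\,dt=2^{2k-1}u^k s^{k-1}\,ds$) makes every power of $u$ cancel:
\[
\left\|\left(\frac{t}{2u}\right)^{2m-k}\left(\frac{1}{2u}\right)^{k-m}\left.\frac{\partial^m}{\partial s^m}W_sf(\mb{n})\right|_{s=t^2/(4u)}\right\|_{\mathbb{B}_k}^2=2^{2m-1}\int_{0}^{\infty}s^{2m-1}\left|\frac{\partial^m}{\partial s^m}W_sf(\mb{n})\right|^2ds=2^{2m-1}\bigl(g_m(f)(\mb{n})\bigr)^2,
\]
which is independent of $u$. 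Integrating out $u$ then yields
\[
\mathfrak{g}_k(f)(\mb{n})\le\sum_{m=\lceil k/2\rceil}^{k}c_{k,m}\,2^{m-1/2}\,g_m(f)(\mb{n}),
\]
and reindexing by $j=k-m$, which ranges over $0\le j\le\lfloor k/2\rfloor$, gives exactly the assertion with $A_{k,j}=c_{k,k-j}\,2^{k-j-1/2}>0$.

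The only delicate point is the bookkeeping: checking that the Fa\`a di Bruno expansion contributes no term with $m<\lceil k/2\rceil$ (equivalently $j\le\lfloor k/2\rfloor$), and verifying that all powers of $u$ cancel after the change of variable — both reduce to elementary exponent arithmetic. Everything else is routine, and all the required input about $W_s$ (its Fourier multiplier $e^{-4s\sum_i\sin^2(\pi x_i)}$ and the consequent decay of $\|\partial_s^m W_sf\|_{\ell^2(\mathbb{Z}^N)}$) is already available from Section \ref{sec:semi}.
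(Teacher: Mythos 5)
Your argument is essentially the paper's proof: both differentiate the subordination formula \eqref{eq:def-Poisson} $k$ times via the chain rule (the paper states the same expansion $\partial_t^k h(t^2/(4u))=\sum_{j\le\lfloor k/2\rfloor}B_{k,j}\,\partial_s^{k-j}h\,\frac{t^{k-2j}}{(4u)^{k-j}}$ that you derive from Fa\`a di Bruno), apply Minkowski's integral inequality in the $\mathbb{B}_k$-norm, and use the substitution $s=t^2/(4u)$ to see the powers of $u$ cancel, leaving a constant multiple of $g_{k-j}(f)(\mb{n})$ for each term. Your version merely adds explicit constants and a justification of differentiation under the integral sign, which the paper omits; the computation (e.g.\ the factor $2^{2m-1}$ and the index range $\lceil k/2\rceil\le m\le k$) checks out.
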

\begin{proof}
First, we observe that
\[
\frac{\partial^k}{\partial t^k}h\left(\frac{t^2}{4u}\right)=\sum_{j=0}^{[k/2]}B_{k,j}\left.\frac{\partial^{k-j} }{\partial s^{k-j}}h(s)\right|_{s=\frac{t^2}{4u}}\frac{t^{k-2j}}{(4u)^{k-j}},
\]
for some constants $B_{k,j}$. Then, from \eqref{eq:def-Poisson}, we have
\[
\frac{\partial^k}{\partial t^k}P_tf(\mb{n})=\frac{1}{\sqrt{\pi}}\sum_{j=0}^{[k/2]}B_j\int_{0}^{\infty}\frac{e^{-u}}{\sqrt{u}}
\left(\left.\frac{\partial^{k-j} }{\partial s^{k-j}}W_{s}f(\mb{n})\right|_{s=\frac{t^2}{4u}}\right)\frac{t^{k-2j}}{(4u)^{k-j}}\, du
\]
and, by Minkowsky's integral inequality,
\[
\mathfrak{g}_k(f)(\mb{n})\le \sum_{j=0}^{[k/2]}B_{k,j} S_j(\mb{n})
\]
where
\begin{multline*}
S_j(\mb{n})\\=\frac{1}{\sqrt{\pi}}\int_{0}^{\infty}\frac{e^{-u}}{\sqrt{u}(4u)^{k-j}}\left(\int_{0}^{\infty}t^{4k-4j-1}\left(\left.\frac{\partial^{k-j} }{\partial s^{k-j}}W_{s}f(\mb{n})\right|_{s=\frac{t^2}{4u}}\right)^2\, dt\right)^{1/2}\, du.
\end{multline*}
Now, by using an appropriate change of variables, we have
\begin{align*}
S_j(\mb{n})&=\frac{1}{\sqrt{2\pi}}\int_{0}^{\infty}\frac{e^{-u}}{\sqrt{u}}\left(\int_{0}^{\infty}s^{2k-2j-1}\left(\frac{\partial^{k-j} }{\partial s^{k-j}}W_{s}f(\mb{n})\right)^2\, ds\right)^{1/2}\, du\\&=\frac{1}{\sqrt{2}}g_{k-j}(f)(\mb{n})
\end{align*}
and the result follows.
\end{proof}

\subsection{The Laplace type multipliers}
Given a bounded function $M$ defined on $[0,4N]$, the multiplier associated with $M$ is the operator, initially defined on $\ell^{2}(\mathbb{N})$, by the identity
\[
T_Mf(\mb{n})=\int_{[-1/2,1/2]^N}M\left(4\sum_{i=1}^{N}\sin^2(\pi x_i)\right)\mathcal{F}f(x)e^{-2\pi i\langle x,\mb{n}\rangle}\, dx.
\]
We say that $T_M$ is a Laplace type multiplier when
\[
M(x)=x \int_{0}^{\infty}e^{-xt}a(t)\, dt,
\]
with $a$ being a bounded function. 

The Laplace type multipliers were introduced by Stein in \cite[Ch. 2]{Stein}. There, it is observed that they verify $|x^k M^{(k)}(x)|\le C_k$ for $k=0,1,\dots$, and then form a subclass of Marcinkiewicz multipliers. For the operators $T_M$ we have the following result.
\begin{thm}
\label{th:laplace-multi}
  Let $1<p<\infty$ and $w\in A_p(\mathbb{Z}^N)$. Then,
\[
\|T_Mf\|_{\ell^p(\mathbb{Z}^N,w)}\le C\|f\|_{\ell^p(\mathbb{Z}^N,w)},\qquad  f\in \ell^2(\mathbb{Z}^N)\cap\ell^{p}(\mathbb{Z}^N,w),
\end{equation*}
where $C$ is a constant independent of $f$.
\end{thm}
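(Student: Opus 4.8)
The plan is to deduce the weighted boundedness of $T_M$ from the Littlewood--Paley estimates of Theorem \ref{thm:equiv-gk-Heat}, by establishing the pointwise domination
\[
g_1(T_Mf)(\mb{n})\le \|a\|_{\infty}\,g_2(f)(\mb{n}),\qquad \mb{n}\in\mathbb{Z}^N,\quad f\in\ell^2(\mathbb{Z}^N).
\]
Granting it, since $M$ is bounded, Plancherel gives $T_Mf\in\ell^2(\mathbb{Z}^N)$; then the lower bound in \eqref{eq:equiv-gk-Heat} (with $k=1$) applied to $T_Mf$, together with the upper bound (with $k=2$) applied to $f$, yields
\[
\|T_Mf\|_{\ell^p(\mathbb{Z}^N,w)}\le C\|g_1(T_Mf)\|_{\ell^p(\mathbb{Z}^N,w)}\le C\|a\|_{\infty}\|g_2(f)\|_{\ell^p(\mathbb{Z}^N,w)}\le C\|f\|_{\ell^p(\mathbb{Z}^N,w)},
\]
which is the claim. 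One should note here that the lower bound in \eqref{eq:equiv-gk-Heat} was obtained by duality from the upper bound for the dual weight $w^{1-p'}\in A_{p'}(\mathbb{Z}^N)$, and in that form it is available for every $\ell^2$ sequence (tested against compactly supported sequences), so no a priori membership of $T_Mf$ in $\ell^p(\mathbb{Z}^N,w)$ is required.

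To prove the pointwise domination I argue on the Fourier side. Put $\Phi(x)=4\sum_{i=1}^{N}\sin^2(\pi x_i)$, so that $W_t$ is the Fourier multiplier $e^{-t\Phi}$ and $T_M$ is the Fourier multiplier $M(\Phi)=\Phi\int_{0}^{\infty}e^{-s\Phi}a(s)\,ds$. Consequently $W_tT_Mf=\mathcal{F}^{-1}\bigl(\Phi\int_{0}^{\infty}e^{-(t+s)\Phi}a(s)\,ds\cdot\mathcal{F}f\bigr)$, and differentiating in $t$, then substituting $v=t+s$ and recalling that $\Phi^2e^{-v\Phi}$ is the multiplier of $\frac{\partial^2}{\partial v^2}W_v$, one gets
\[
\frac{\partial}{\partial t}W_tT_Mf(\mb{n})=-\int_{t}^{\infty}\frac{\partial^2}{\partial v^2}W_vf(\mb{n})\,a(v-t)\,dv.
\]
The interchange of $\mathcal{F}^{-1}$ with the $s$-integral and the differentiation under the integral are justified by dominated convergence, using $|a|\le\|a\|_{\infty}$, $\Phi e^{-t\Phi}\le 1/(et)$, and the boundedness of $\mathcal{F}f$ on the bounded torus $[-1/2,1/2]^N$.

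Now bound $|a|$ by $\|a\|_{\infty}$ and apply Cauchy--Schwarz with the weight $v^2$, using $\int_t^\infty v^{-2}\,dv=t^{-1}$:
\[
\left|\frac{\partial}{\partial t}W_tT_Mf(\mb{n})\right|\le\|a\|_{\infty}\int_{t}^{\infty}\left|\frac{\partial^2}{\partial v^2}W_vf(\mb{n})\right|\,dv\le \|a\|_{\infty}\,t^{-1/2}\left(\int_{t}^{\infty}v^{2}\left|\frac{\partial^2}{\partial v^2}W_vf(\mb{n})\right|^2\,dv\right)^{1/2}.
\]
Squaring, multiplying by $t$, integrating in $t\in(0,\infty)$, and applying Tonelli's theorem over $\{0<t<v\}$ (the inner $t$-integral yielding $\int_0^v dt=v$) produces exactly
\[
g_1(T_Mf)(\mb{n})^2\le\|a\|_{\infty}^2\int_{0}^{\infty}v^{3}\left|\frac{\partial^2}{\partial v^2}W_vf(\mb{n})\right|^2\,dv=\|a\|_{\infty}^2\,g_2(f)(\mb{n})^2,
\]
which is the required inequality; recall $g_2(f)(\mb{n})<\infty$ for every $\mb{n}$ by Lemma \ref{lem:L2-gk} (with $k=2$).

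The main obstacle is not the idea but its rigorous execution: justifying the multiplier identities, the differentiation under the integral sign, and the Tonelli exchange within the $\ell^2$ framework, together with the a priori integrability point noted above. An equivalent route, bypassing square functions, is to use the representation $T_Mf=\int_0^\infty a(t)(-\Delta_N)W_tf\,dt$, so that for compactly supported $f$ and $\mb{n}\notin\supp f$ one has $T_Mf(\mb{n})=\mathcal{K}_M\ast f(\mb{n})$ with $\mathcal{K}_M(\mb{n})=-\sum_{i=1}^{N}\int_0^\infty a(t)\Delta_{N,i}G_{t,N}(\mb{n})\,dt$; the size estimate \eqref{eq:size-CZ} for $\mathcal{K}_M$ follows from the bound $|\Delta_{N,i}G_{t,N}(\mb{n})|\le C(t^{-1}+|\mb{n}|^2t^{-2})(G_{t,1}((|\mb{n}|-1)/N))^N$ used in the proof of Lemma \ref{lem:CZ-gk}, splitting at $t=|\mb{n}|^2$, while the smoothness estimate \eqref{eq:smooth-CZ} follows from Lemma \ref{lem:diff3-G} by splitting at $t=(|\mb{n}|+|\mb{m}|)^2$ and invoking \eqref{eq:bound-I} and \eqref{eq:size-G-t}; then $T_M$ is a scalar Calder\'on--Zygmund operator and the cited weighted theory applies.
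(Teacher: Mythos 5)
Your proposal is correct and follows essentially the same route as the paper: reduce to the pointwise bound $g_1(T_Mf)\le C\,g_2(f)$ via Theorem \ref{thm:equiv-gk-Heat}, represent $\frac{\partial}{\partial t}W_t(T_Mf)(\mb{n})$ as $-\int_{t}^{\infty}a(v-t)\frac{\partial^2}{\partial v^2}W_vf(\mb{n})\,dv$ (the paper phrases this through the semigroup property rather than wholly on the Fourier side, but it is the same identity), then apply Cauchy--Schwarz with weight $v^2$ and Fubini. The extra remarks on justifying interchanges and the alternative Calder\'on--Zygmund route are fine but not needed; the core argument matches the paper's proof.
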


From the identity
\[
x^{i\gamma}=\frac{x}{\Gamma(1-i\gamma)}\int_{0}^{\infty}e^{-xt}t^{-i\gamma}\, dt,\qquad \gamma \in \mathbb{R},
\]
we deduce the following corollary.
\begin{cor}
  Let $1<p<\infty$, $\gamma\in \mathbb{R}$, and $w\in A_p(\mathbb{Z}^N)$. Then,
\[
\|(-\Delta_N)^{i\gamma}f\|_{\ell^p(\mathbb{Z}^N,w)}\le C\|f\|_{\ell^p(\mathbb{Z}^N,w)}, \qquad  f\in \ell^2(\mathbb{Z}^N)\cap\ell^{p}(\mathbb{Z}^N,w),
\end{equation*}
where $C$ is a constant independent of $f$.
\end{cor}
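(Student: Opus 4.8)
The plan is to show that every Laplace type multiplier $T_M$ is a Calder\'on--Zygmund operator (with scalar kernel, $A=\mathbb{C}$) and then to invoke the weighted theory for such operators recalled in Section \ref{sec:semi} (see \cite{RRT,RT}). The bridge to the heat semigroup is the elementary identity, valid for $\lambda>0$,
\[
M(\lambda)=\lambda\int_0^\infty e^{-\lambda t}a(t)\,dt=-\int_0^\infty a(t)\,\partial_t(e^{-\lambda t})\,dt ,
\]
together with the fact that the Fourier multiplier of $\Delta_N W_t=\partial_t W_t$ is $\partial_t(e^{-\lambda(x)t})$, where $\lambda(x)=4\sum_{i=1}^N\sin^2(\pi x_i)$. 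For $f$ of finite support (so that $\mathcal{F}f$ is a trigonometric polynomial), Tonelli applied to the bound $\|a\|_\infty\lambda(x)e^{-\lambda(x)t}|\mathcal{F}f(x)|$, whose $dx\,dt$-integral over $[-1/2,1/2]^N\times(0,\infty)$ is finite, legitimizes the interchange
\[
T_Mf(\mb{n})=-\int_0^\infty a(t)\,\Delta_N W_tf(\mb{n})\,dt ,
\]
and hence isolates the candidate kernel
\[
K_M(\mb{n})=-\int_0^\infty a(t)\,\Delta_N G_{t,N}(\mb{n})\,dt=-\int_0^\infty a(t)\,\partial_t G_{t,N}(\mb{n})\,dt ,\qquad \mb{n}\not=\mb{0}.
\]
Boundedness on $\ell^2(\mathbb{Z}^N)$ is immediate, since $|M(\lambda)|\le\|a\|_\infty\lambda\int_0^\infty e^{-\lambda t}\,dt=\|a\|_\infty$, so $\|T_Mf\|_{\ell^2(\mathbb{Z}^N)}\le\|a\|_\infty\|f\|_{\ell^2(\mathbb{Z}^N)}$ by Parseval. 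It is essential to keep $\Delta_N G_{t,N}(\mb{n})$ undivided: each translate $G_{t,N}(\mb{n}\pm\mb{e}_i)$ decays only like $t^{-N/2}$ as $t\to\infty$ (not integrable for $N\le2$ against the bounded weight $a$), whereas the second difference $\Delta_{N,i}G_{t,N}$ gains the extra factor $t^{-1}$ coming from \eqref{eq:diff2-I}, which is what makes $\int_0^\infty a(t)\,\Delta_NG_{t,N}(\mb{n})\,dt$ converge for $\mb{n}\not=\mb{0}$.

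The core of the argument will be to verify the standard kernel estimates \eqref{eq:size-CZ} and \eqref{eq:smooth-CZ} for $K_M$. For the size bound I split the integral defining $K_M(\mb{n})$ at $t=|\mb{n}|^2$ and use $|\Delta_N G_{t,N}(\mb{n})|\le\sum_{i=1}^N|\Delta_{N,i}G_{t,N}(\mb{n})|\le C(t^{-1}+|\mb{n}|^2t^{-2})(G_{t,1}((|\mb{n}|-1)/N))^N$, which is exactly the pointwise bound obtained (from \eqref{eq:diff2-I} and \eqref{eq:AM-GM-I}) inside the proof of Lemma \ref{lem:CZ-gk}. On $(0,|\mb{n}|^2)$ I apply \eqref{eq:bound-I} with the same exponents $\alpha=3/(2N)$ and $\alpha=5/(2N)$ used there, so that $(G_{t,1}((|\mb{n}|-1)/N))^N$ becomes a negative power of $|\mb{n}|$ times a small positive power of $t$, making the integral $O(|\mb{n}|^{-N})$; on $(|\mb{n}|^2,\infty)$ I use $(G_{t,1}(\cdot))^N\le C t^{-N/2}$ from \eqref{eq:size-G-t} and integrate $(t^{-1}+|\mb{n}|^2t^{-2})t^{-N/2}$, again getting $O(|\mb{n}|^{-N})$ (the range of small $|\mb{n}|$ is trivial because $|\Delta_N G_{t,N}(\mb{n})|\le C$ uniformly). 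The smoothness estimate follows the same two-region scheme applied to $K_M(\mb{n})-K_M(\mb{n}+\mb{m})$: writing $\mb{m}'=\mb{n}+\mb{m}$, the hypothesis $|\mb{n}|>2|\mb{m}|$ of \eqref{eq:smooth-CZ} is precisely what Lemma \ref{lem:diff3-G} requires for the pair $(\mb{n},\mb{m}')$ and it also gives $|\mb{n}|\sim|\mb{m}'|$; on $(0,|\mb{n}|^2)$ I invoke Lemma \ref{lem:diff3-G}, i.e.\ $|\Delta_{N,i}(G_{t,N}(\mb{n})-G_{t,N}(\mb{m}'))|\le C|\mb{m}|\,\mathfrak{H}_{t,N}((|\mb{n}|+|\mb{m}'|)/K)$, again bounding $(G_{t,1})^N$ through \eqref{eq:bound-I}; on $(|\mb{n}|^2,\infty)$ I use \eqref{eq:HHH-Infty}. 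Both contributions come out $O(|\mb{m}|\,|\mb{n}|^{-N-1})$, which is \eqref{eq:smooth-CZ}.

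With these estimates in hand, for $f$ of finite support and $\mb{n}\notin\supp f$ Fubini's theorem gives
\[
T_Mf(\mb{n})=-\sum_{\mb{k}\in\supp f}f(\mb{k})\int_0^\infty a(t)\,\Delta_N G_{t,N}(\mb{n}-\mb{k})\,dt=\sum_{\mb{k}\in\mathbb{Z}^N}f(\mb{k})K_M(\mb{n}-\mb{k}),
\]
the interchange being justified by \eqref{eq:size-CZ} since each $\mb{n}-\mb{k}\not=\mb{0}$ and the sum is finite. Hence $T_M$ is a Calder\'on--Zygmund operator and the claimed weighted inequality follows from \cite{RRT,RT}; the corollary for $(-\Delta_N)^{i\gamma}$ is the special case $a(t)=t^{-i\gamma}/\Gamma(1-i\gamma)$, which is a bounded weight since $|t^{-i\gamma}|=1$ for $t>0$. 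I expect the only genuinely delicate point to be the endpoint behaviour of the $t$-integral defining $K_M$: keeping $\Delta_N G_{t,N}$ undivided secures integrability at $t=\infty$, and the order-uniform estimate \eqref{eq:bound-I} secures it at $t=0^+$; beyond that, the computation is a routine reprise of the kernel bounds already established for the $g_1$-function in Lemma \ref{lem:CZ-gk}.
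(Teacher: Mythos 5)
Your argument is correct, but it follows a genuinely different route from the paper. The paper deduces the corollary from Theorem \ref{th:laplace-multi}, which it proves by Stein's semigroup method: starting from the same representation $T_Mf(\mb{n})=-\int_0^\infty a(s)\,\partial_s W_sf(\mb{n})\,ds$, it establishes the pointwise inequality $g_1(T_Mf)\le C\,g_2(f)$ and then invokes the two-sided weighted estimates for the $g_k$-functions (Theorem \ref{thm:equiv-gk-Heat}), so no kernel analysis specific to $T_M$ is needed. You instead show that $T_M$ is itself a scalar Calder\'on--Zygmund operator: $\ell^2$-boundedness from $|M(\lambda)|\le\|a\|_\infty$ via Plancherel, and the standard estimates \eqref{eq:size-CZ}--\eqref{eq:smooth-CZ} for $K_M(\mb{n})=-\int_0^\infty a(t)\,\Delta_N G_{t,N}(\mb{n})\,dt$ by the same splitting at $t=(|\mb{n}|+|\mb{m}|)^2$ and the same Bessel inputs (the second-difference bound inside Lemma \ref{lem:CZ-gk}, Lemma \ref{lem:diff3-G}, \eqref{eq:HHH-Infty}, and \eqref{eq:bound-I}); your choices of $\alpha$ and the bookkeeping of exponents do produce $O(|\mb{n}|^{-N})$ and $O(|\mb{m}|\,|\mb{n}|^{-N-1})$, and your remarks about keeping $\Delta_N G_{t,N}$ undivided for integrability at $t=\infty$ and about the finitely many small $|\mb{n}|$ (where \eqref{eq:bound-I} is not applicable because $\alpha<a$ fails, but plain finiteness of the integral suffices) are the right way to close those gaps. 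What each approach buys: yours yields in addition the weighted weak-$(1,1)$ endpoint for $T_M$ (hence for $(-\Delta_N)^{i\gamma}$), which the paper's square-function argument does not give since the converse inequality $\|f\|\le C\|g_1(f)\|$ is obtained by polarization/duality and needs $p>1$; the paper's route is shorter once the $g_k$ machinery is in place, avoids re-deriving kernel estimates for every new multiplier, and illustrates the general principle that Laplace-type multipliers are controlled by square functions. Both proofs specialize identically to the corollary via $a(t)=t^{-i\gamma}/\Gamma(1-i\gamma)$, $|t^{-i\gamma}|=1$.
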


\begin{proof}[Proof of Theorem \ref{th:laplace-multi}]
We only need prove that
\begin{equation}
\label{eq:laplace}
g_1(T_Mf)(\mb{n})\le C g_2(f)(\mb{n}),
\end{equation}
since by Theorem \ref{thm:equiv-gk-Heat} we get that
\[
\|T_Mf\|_{\ell^p(\mathbb{Z}^N,w)}\le C \|g_1(T_Mf)\|_{\ell^p(\mathbb{Z}^N,w)}\le C \|g_2(f)\|_{\ell^p(\mathbb{Z}^N,w)}\le C \|f\|_{\ell^p(\mathbb{Z}^N,w)}.
\]

Moreover, it is enough to prove \eqref{eq:laplace} for sequences in $c_{00}$, the space of sequences having a finite number of non-null terms. First, we have
\[
T_Mf(\mb{n})=-\int_{0}^{\infty}a(s)\frac{\partial}{\partial s}W_sf(\mb{n})\, ds,
\]
which is an elementary consequence of the relation
\begin{multline*}
\int_{[-1/2,1/2]^N}
M\left(4\sum_{i=1}^{N}\sin^2(\pi x_i)\right)e^{-2\pi i \langle x, \mb{n}\rangle}\, dx\\
\begin{aligned}
&= \int_{0}^{\infty}a(s)\int_{[-1/2,1/2]^N}4\sum_{i=1}^{N}\sin^2(\pi x_i)e^{-4s\sum_{i=1}^{N}\sin^2(\pi x_i)}e^{-2\pi i \langle x, \mb{n}\rangle}\, dx\, ds
\\&=-\int_{0}^{\infty}a(s)\frac{\partial}{\partial s}\int_{[-1/2,1/2]^N}4\sum_{i=1}^{N}\sin^2(\pi x_i)e^{-4s\sum_{i=1}^{N}\sin^2(\pi x_i)}e^{-2\pi i \langle x, \mb{n}\rangle}\, dx\, ds
\\& =-\int_{0}^{\infty}a(s)\frac{\partial}{\partial s}G_{s,N}(\mb{n})\, ds.
\end{aligned}
\end{multline*}
Then, applying the semigroup property of $W_t$ we obtain
\begin{equation*}
W_t(T_Mf)(\mb{n})=
-\int_{0}^{\infty}a(s)\frac{\partial}{\partial s}W_{s+t}f(\mb{n})\, ds
\end{equation*}
and hence,
\begin{equation*}
\frac{\partial}{\partial t}W_t(T_Mf)(\mb{n})=-\int_{0}^{\infty}a(s)\frac{\partial}{\partial t}\frac{\partial}{\partial s}W_{s+t}f(\mb{n})\, ds=-\int_{0}^{\infty}a(s)\frac{\partial^2}{\partial s^2}W_{s+t}f(\mb{n})\, ds.
\end{equation*}
In this way,
\begin{align*}
\left|\frac{\partial}{\partial t}W_t(T_Mf)(\mb{n})\right|&\le C \int_{t}^{\infty}s\left|\frac{\partial^2}{\partial s^2}W_{s}f(\mb{n})\right|\,\frac{ds}{s}\\&\le C t^{-1/2}\left(\int_{t}^{\infty}s^2\left|\frac{\partial^2}{\partial s^2}W_{s}f(\mb{n})\right|^2\,ds\right)^{1/2}.
\end{align*}
Finally,
\begin{align*}
(g_1(T_Mf)(\mb{n}))^2&=\int_{0}^{\infty} t\left|\frac{\partial}{\partial t}W_t(T_Mf)(\mb{n})\right|^2\, dt
 \le C
\int_{0}^{\infty}\int_{t}^{\infty}s^2\left|\frac{\partial^2}{\partial s^2}W_{s}f(\mb{n})\right|^2\,ds\, dt\\
&=C
\int_{0}^{\infty}s^3\left|\frac{\partial^2}{\partial s^2}W_{s}f(\mb{n})\right|^2\,ds=C (g_2(f)(\mb{n}))^2
\end{align*}
and the proof of \eqref{eq:laplace} is completed.
\end{proof}




\end{document}